\documentclass[reqno]{amsart}

\usepackage{amsmath}
\usepackage{amsthm}
\usepackage{amssymb}
\usepackage{amsfonts,yfonts}
\usepackage{mathtools}
\usepackage[colorlinks=true,linkcolor=blue,citecolor=blue,urlcolor=blue,breaklinks]{hyperref}
\usepackage{color} % to define own color dgreen
\usepackage{cases}
\usepackage{enumitem}
\usepackage{empheq,graphicx}
\usepackage{multimedia}
\usepackage{longtable}
\usepackage{verbatim}
\usepackage[font=small,skip=2pt]{caption}
\usepackage{esint}
\usepackage{geometry}
\usepackage[normalem]{ulem} %strike out text

\graphicspath{{./}{Figures/}}

\title[Nonlinear Diffusion Equations: Full characterization of Entropies]{Nonlinear Diffusion Equations: Full characterization of Entropies}

\author{Anton Arnold}
\address{Institute for Analysis and Scientific Computing, TU Wien, Wiedner Hauptstra\ss e 8-10, 1040 Vienna, Austria}
\email{anton.arnold@tuwien.ac.at}
\author{Jose A. Carrillo}
\address{Mathematical Institute, University of Oxford, Woodstock Road,
Oxford OX2 6GG, UK}
\email{jose.carrillo@maths.ox.ac.uk}
\author{Daniel Matthes}
\address{Department of Mathematics, School of Computation, Information and Technology, TU M\"unchen, Boltzmannstra\ss e 3, 85748 Garching bei M\"unchen, Germany}
\email{matthes@ma.tum.de}

\newtheorem{lemma}{Lemma}[section]
\newtheorem{theorem}[lemma]{Theorem}
\newtheorem{corollary}[lemma]{Corollary}
\newtheorem{remark}[lemma]{Remark}
\newtheorem{prop}[lemma]{Proposition}
\newtheorem{definition}[lemma]{Definition}
\newtheorem{example}[lemma]{Example}

\newcommand{\tr}{\operatorname{tr}}

\newcommand{\setR}{\mathbb{R}}
\newcommand{\R}{\mathbb{R}}
\newcommand{\Rnn}{\mathbb{R}_{\ge0}}

\newcommand{\N}{\mathbb{N}}
\newcommand{\SSS}{\mathbb{S}}

\newcommand{\Rd}{{\R^d}}

\newcommand{\dv}{\nabla\cdot}

\newcommand{\eps}{{\varepsilon}}
\renewcommand{\H}{{\mathcal{H}}}
\newcommand{\Q}{{\mathcal{Q}}}

\newcommand{\ximin}{\xi_{\text{min}}}

\newcommand{\intRd}{{\int_\Rd}}

\newcommand{\uinf}{u_\infty}

\DeclareMathOperator{\supp}{supp}

\newcommand{\be}{\begin{equation}}
\newcommand{\ee}{\end{equation}}

\definecolor{dgreen}{rgb}{0,.8,0.2}
\definecolor{brown}{rgb}{0.6,0.5,0.2}

\begin{document}

\date{\today}
\begin{abstract}
This paper is concerned with the large-time behavior of quasilinear Fokker-Planck equations with confinement on the whole space $\R^d$. It aims at characterizing all relative entropy functionals such that the entropy method \`a la Bakry-\'Emery yields exponential convergence of all solutions towards the unique steady state (with the same mass as the initial condition). 
We call such entropies admissible. 
The convergence rate is determined by the uniform convexity parameter of the confinement potential. As such, this program extends the analogous study of linear Fokker-Planck equations \cite{BaEm85, AMTU} to the nonlinear case, and it derives additional functionals for the nonlinear case --- beyond the Ralston-Newman entropies used in \cite{CJMTU}. 

Two key results are the characterization of those nonlinear Fokker-Planck equations which admit all entropy functionals that are admissible for the corresponding linear Fokker-Planck equation, and vice versa, the characterization of all admissible entropies for a given nonlinearity. The latter quest for power-law nonlinearities yields a large family of entropies for the porous-medium equations, but only the Ralston-Newman entropy for the fast-diffusion equations. Additional results include the derivation of new generalized Csisz\'ar-Kullback and generalized Log-Sobolev inequalities for our entropy functionals as well as moment-weighted $L^1$--convergence estimates for the Fokker-Planck solutions.
\end{abstract}

\maketitle

\section{Introduction}

In this article, we give a comprehensive analysis of exponentially-in-time decaying Lyapunov functionals
for nonlinear diffusion equations of Fokker-Planck type on $\R^d$, in arbitrary space dimensions $d\ge1$,
\begin{align}
  \label{eq:PDE0}
  \partial_t u = \Delta P(u) + \dv(u\,\nabla V),\quad t>0,
\end{align}
with $u(x,t)\ge0$, a strictly monotone nonlinearity $P$, and a uniformly convex confinement potential $V:\R^d\to\R$. It is well-known \cite{McCann,otto,CJMTU} that under mild additional hypotheses on $P$, there exists a unique weak stationary solution $u_\infty$ of any prescribed mass $M>0$, satisfying
\begin{align*}
  \phi\big(u_\infty(x)\big) + V(x) \ge \bar C,
  \quad
  \phi(r):=\int_1^r\frac{P'(s)}{s}\,ds,
\end{align*}
with a suitable constant $\bar C$ depending on $M$, for each $x\in\R^d$, with equality for each $x$ in the support of $u_\infty$, see \S\ref{sec:equilibria} for a recap. We shall provide a detailed analysis of the decay behaviour for a class of functionals $\mathcal H_g(u(t)|u_\infty)$ that quantify the proximity of the solution $u(t)$ to $u_\infty$. It is well-know that, in the linear case, i.e.\ \eqref{eq:PDE0}  with $P(u)=u$, there exists a large family of such functionals (or ``relative entropies'') \cite{BaEm84, AMTU}. 
Our main goal is now to find the largest possible family of relative entropy-like functionals for the nonlinear case. 

Specifically, we consider in \S\ref{sec:gen-entropies} functionals of the form
\begin{align}
  \label{eq:H}
  \mathcal H_g(u|u_\infty) := \int_{\R^d} G\big(u(x),u_\infty(x);x\big)\,dx, 
\end{align}
where the nonlinear function $G$ is given in terms of another, monotone function $g$,
\begin{align}
  \label{eq:G}
  G(r,r';x) := \int_{r'}^r g\big(\phi(s)+V(x)-\bar  C\big)\,ds .
\end{align}
Note that this definition strongly depends on the choice of $P$, via the function $\phi$. 
In the non-degenerate situation, i.e.\ $P'(0+)>0$, where $\phi(u_\infty(x))=\bar C-V(x)$ for all $x\in\R^d$, the explicit $x$-dependence can be eliminated from \eqref{eq:G}, which then simplifies to
\begin{align*}
%    \label{eq:GG}
    G(r,r') = \int_{r'}^r g\big(\phi(s)-\phi(r')\big)\,ds.
\end{align*}
The $g$ above will be our primary parameter throughout this work, our results are most easily formulated in terms of $g$. The simplest choice for $g$ is the identity, $g_1(\xi)=\xi$. This gives rise to the functional
\begin{align*}
%    \label{eq:canonical}
    \mathcal H_{g_1}(u|u_\infty) 
    = \int_{\setR^d} \big[\Phi(u) + u V\big]\,dx
    - \int_{\setR^d} \big[\Phi(u_\infty) + u_\infty V\big]\,dx\,,
    \quad \Phi(r):=\int_0^r\phi(s)\,ds.
\end{align*}
In the context of the porous medium equation, i.e. $P(r)=r^m$ with $m>1$, this functional $\mathcal H_{g_1}$ is known as the \emph{Ralston-Newman} entropy \cite{New84,Ral84}. In other communities this entropy is also referred to as \emph{Tsallis entropy} \cite{T88,To14,CT14}, and applied to the porous medium equation without confinement potential, see \cite{To14,CT14} for other generalized R\'enyi entropies. This canonical entropy choice for the family of PDEs \eqref{eq:PDE0} was extensively studied in \cite{otto,CT01,CJMTU,DD02,J16}.

We note that the entropies constructed here resemble those from \cite{BLMV14} where slightly different nonlinear drift-diffusion equations in bounded domains with Dirichlet boundary conditions are discussed, see Remark \ref{rem:ent} for more details.  

The main goal of this paper is to identify relative entropy-type functionals \eqref{eq:H} that decay exponentially in time for the nonlinear equations \eqref{eq:PDE0}, particularly with the rate $2\lambda$ specified by $\nabla^2 V(x)\ge \lambda I_d$ on $\R^d$. Our ansatz is to extend to a certain degree the celebrated approach by Bakry and Émery from the 1980's from the linear to the nonlinear setting. Indeed, recall that if \eqref{eq:PDE0} is \emph{linear} and thus $u_\infty(x)=e^{-V(x)}/Z$ with suitable normalization constant $Z$, a large variety of exponentially decaying functionals has been identified in \cite{BaEm84}. The following is a particular result from that work, and following the Bakry-\'Emery method it is sharp under certain assumptions, see \cite[Section 3.5]{AMTU} and the discussion in \S\ref{sct:linear}.
\begin{theorem}[\cite{BaEm84,BaEm85,BaEm85a, AMTU}]
  \label{thm:BE}
  Assume $P(u)=u$.
  For each $p\in[1,2]$, the functionals
  \begin{align}
    \label{eq:Hp}
    \mathcal H_{g_p}(u|u_\infty) :=
    \begin{cases}
      \displaystyle \int_{\R^d} u\log\left(\frac{u}{u_\infty}\right) \,dx & \text{if $p=1$}, \\[3mm] 
      \displaystyle \frac1{p-1}\int_{\R^d} \left[\left(\frac{u}{u_\infty}\right)^p-1\right]u_\infty\,dx & \text{if $1<p\le2$},
    \end{cases}
  \end{align}
  decay to zero at exponential rate $\exp(-2\lambda t)$.
\end{theorem}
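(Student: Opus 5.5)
Set $h:=u/u_\infty$ and $\psi_p(h):=\frac{h^p-1-p(h-1)}{p-1}$ for $1<p\le2$, $\psi_1(h):=h\log h-h+1$. Since mass is conserved, $\int (h-1)u_\infty\,dx=0$, so $\mathcal H_{g_p}(u|u_\infty)=\int_{\R^d}\psi_p(h)\,u_\infty\,dx$ with $\psi_p\ge0$ convex and $\psi_p(1)=0$. I will run the Bakry--\'Emery argument on this form: first show $\frac{d}{dt}\mathcal I\le-2\lambda\,\mathcal I$ for the entropy production $\mathcal I$, then integrate in time.

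\emph{Step 1 (first derivative).} With $P(u)=u$ the equation reads $\partial_t u=\dv\big(u_\infty\nabla h\big)$, i.e.\ $\partial_t h=Lh:=\Delta h-\nabla V\cdot\nabla h$, and $L$ is symmetric on $L^2(u_\infty\,dx)$. Integrating by parts gives
$$\frac{d}{dt}\mathcal H_{g_p}=-\mathcal I(h),\qquad \mathcal I(h):=\int_{\R^d}\psi_p''(h)\,|\nabla h|^2\,u_\infty\,dx .$$

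\emph{Step 2 (second derivative).} Differentiate $\mathcal I$ along the flow and use the Bochner identity $\frac12 L|\nabla h|^2-\nabla h\cdot\nabla Lh=\|\nabla^2 h\|^2+\nabla h^T\nabla^2V\,\nabla h$. After integration by parts I expect
$$-\frac{d}{dt}\mathcal I=2\int_{\R^d}\Big[\psi_p''\big(\|\nabla^2h\|^2+\nabla h^T\nabla^2V\nabla h\big)+2\psi_p'''\,\nabla h^T\nabla^2h\,\nabla h+\tfrac12\psi_p''''|\nabla h|^4\Big]u_\infty\,dx .$$
The term with $\nabla^2V\ge\lambda I_d$ is bounded below by $2\lambda\,\mathcal I$. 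The remaining integrand is a quadratic form in $(\nabla^2 h,\,\nabla h\otimes\nabla h)$. It is nonnegative pointwise provided the $2\times2$ matrix with entries $\psi''$, $\psi'''$, $\tfrac12\psi''''$ is positive semidefinite, which amounts to $(\psi''')^2\le\tfrac12\psi''\psi''''$. To reduce to this matrix condition I bound $|\nabla h^T\nabla^2h\,\nabla h|\le\|\nabla^2 h\|\,|\nabla h|^2$ by Cauchy--Schwarz.

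\emph{Step 3 (checking the condition).} Since $\psi_p''=p\,h^{p-2}$, the condition reduces to $(p-2)^2\le\tfrac12(p-2)(p-3)$, i.e.\ $(2-p)(p-1)\ge0$. This holds exactly for $p\in[1,2]$, and for $p=1$ one checks directly that $\psi_1''=1/h$ saturates it. Hence $\frac{d}{dt}\mathcal I\le-2\lambda\,\mathcal I$ and $\mathcal I(t)\le e^{-2\lambda t}\mathcal I(0)$.

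\emph{Step 4 (integration and main obstacle).} Since $\mathcal H_{g_p}(t)\to0$ and $\mathcal I(t)\to0$ as $t\to\infty$, integrating $\frac{d}{dt}(\mathcal I-2\lambda\mathcal H_{g_p})\le0$ from $t$ to $\infty$ gives the convex Sobolev inequality $2\lambda\,\mathcal H_{g_p}\le\mathcal I$. Then $\frac{d}{dt}\mathcal H_{g_p}\le-2\lambda\,\mathcal H_{g_p}$, and Gronwall yields the rate $e^{-2\lambda t}$. The main obstacle is justifying the integrations by parts on $\R^d$ and the convergence to equilibrium, which requires decay and regularity of $h$. I would handle this first for smooth initial data with $h$ bounded above and away from zero, preserved by the maximum principle, and extend to general data by density together with lower semicontinuity of $\mathcal H_{g_p}$. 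The algebra in Step 2 is routine but sign-sensitive, so I would double-check the coefficient of $\psi''''$ there.
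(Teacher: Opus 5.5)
Your proposal is correct and is the classical Bakry--\'Emery argument of \cite{AMTU} that the paper cites for this theorem. Your Step~2 identity and the condition $(\psi''')^2\le\frac12\psi''\psi''''$ are right, and Step~4 matches the paper's conclusions \eqref{mainconclusion1}--\eqref{mainconclusion3}, where the paper likewise takes $\mathcal H_{g}(u(t)|u_\infty)\to0$ as a standing assumption. The paper recovers the result in Remark~\ref{linear}(a) by the same computation written in $\xi=\log h$ with $g_p'=e^{f_p}$; there your condition becomes $f''+f'-(f')^2\ge0$ from \eqref{f-inequ}, i.e.\ $(p-1)(2-p)\ge0$ for $f_p'\equiv p-1$.
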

The functionals from \eqref{eq:Hp} are a special case of \eqref{eq:H} above, obtained by choosing 
\begin{align}
  \label{eq:gp}
  g_p(\xi)=\frac{p\,\left(e^{(p-1)\xi}-1\right)}{p-1},
\end{align}
for $p\in(1,2]$.
Functionals for different $p$'s provide different information about the tail behaviour of $u$. 
For instance, $\mathcal H_{g_1}(u|u_\infty)<\infty$ implies that $u$ has a finite second moment,
but no better in general,
while $\mathcal H_{g_2}(u|u_\infty)<\infty$ implies that \emph{all} polynomial moments of $u$ are finite. Moreover, their exponential-in-time decay provides convergence of $u(t)$ to $u_\infty$ in different topologies. 
In this context note that, for a fixed $u$, the values $\H_{g_p}(u|u_\infty)$ are increasing in $p\in[1,2]$. The objective of \S\ref{sct:linear} is to reformulate the linear diffusion results in a framework suitable for a nonlinear extension. 

We pursue two different directions of extending Theorem \ref{thm:BE} to nonlinear equations \eqref{eq:PDE0}. The first is to determine nonlinearities $P$ for which the $g_p$ from \eqref{eq:gp}
still give rise to exponentially decaying functionals $\mathcal H_{g_p}$. Note that, for a nonlinear diffusivity $P$, the definition of the functional $\mathcal H_{g_p}$ in \eqref{eq:H} does not simplify any more to the form \eqref{eq:Hp}.
One of our results in this direction is that Theorem \ref{thm:BE} carries over to a variety of $P$'s that are ``almost linear'' for $u\searrow0$ and $u\to\infty$ (see \S\ref{sec:admiss-sets-p} for details). More generally, \S\ref{sec:entropygivennonlin} is devoted to finding nonlinearities $P$ such that a relative entropy (characterized by a given $g$) is exponentially decaying. 

The second direction, see \S \ref{sec-quasilin}, is to assume $P$ given and to determine all entropies of the form \eqref{eq:H} for which the Bakry-\'Emery method yields exponential decay. Moreover, we want to identify (whenever it exists) an analog of the ``strongest'' entropy $\mathcal H_{g_2}$ from the linear case. That is, we look for $g_2$ that is in a certain sense maximal among all $g$'s which give rise to an exponentially decaying $\mathcal H$. In a variety of cases, we are able to provide a characterization of this maximal $g_2$ in terms of a nonlinear ODE involving $P'$, and there is an interpolating family $g_p$ still giving rise to exponentially decaying entropies. 

An important special case is the power law nonlinearities $P(r)=r^m$ with $m\ge\frac{d-1}{d}$. It turns out (see Proposition \ref{prop:genm}) that the fast-diffusion type equations with $\frac{d-1}{d}\le m<1$ only admit a single entropy, i.e.\ the Ralston-Newman entropy. The reason for this phenomenon is that the just mentioned nonlinear ODE  admits only $g_1$ as a global solution. By contrast, linear Fokker-Planck equations (with $m=1$) and porous-medium type equations (with $m>1$) allow for a large family of entropies. 
In the latter case all the $g_p$ are explicit (see Subsection \ref{sct:power-laws}): With $g_1(\xi)=\xi$ and, e.g.\ for $m=2$, they take the form
\begin{align*}
  g_p(\xi) = B_p\big[ (\xi-\ximin)^{1+\frac{p-1}{\kappa}}-(-\ximin)^{1+\frac{p-1}{\kappa}}\big], \quad p\in(1,2],\quad \kappa=\frac98\frac{d}{d+1}
\end{align*}
with appropriate normalization constant $B_p>0$.
Here, $\ximin:=\phi(0+)+\inf V-\bar C$ is the smallest value attainable by $\phi(u)+V-\bar C$.
Moreover, for any $m>1$ the functionals $\mathcal H_{g_p},\,p\in[1,2]$ are again ordered
such that $\mathcal H_{g_1}$ provides control of the second moment but none above,
while $\mathcal H_{g_2}$ allows to control moments up to a finite order larger than two depending on $m,\,d$ (see \S\ref{prop:momentcontroldegenerate} for details).

Our strategy of proof in \S\ref{sec:BakEm} is a further nonlinear generalization of the Bakry-\'Emery method \cite{BaEm85,AMTU} with respect to \cite{CT01,otto,CJMTU}: we establish a linear control on the second time derivative of the entropy functional by the first time derivative, which then implies a linear control of the entropy dissipation in terms of the entropy itself, and eventually leads to exponential decay via the Gronwall inequality. A significant observation is that this procedure requires a particular condition on the nonlinearity $P$, namely that 
$(d-1)P(u) \leq d u P'(u)$. 
This condition is known as \emph{McCann condition} in the context of the representation of diffusion equations as gradient flows in the $L^2$-Wasserstein metric. It is the sharp condition for the canonical entropy $\H_{g_1}$ to be displacement convex. 

In \S\ref{sec-funcineq} we present three applications of our novel entropy decay results. We first derive generalized Csisz\'ar-Kullback inequalities involving the functionals $\H_g$, hence enlarging the set of possible entropies and nonlinear diffusions compared to \cite{CJMTU} for which exponential decay in $L^1$ at rate $\lambda$ is obtained. As mentioned earlier we find new moment estimates on the solutions for degenerate diffusions with nonlinearity $P$ controlled by the generalized entropy $\H_g$. This also implies exponential convergence in moment-weighted $L^1$-spaces. The nonlinear entropy method from \S\ref{sec:BakEm} also provides new generalized Log-Sobolev inequalities between the entropy $\H_g$ and its entropy dissipation, expanding the results from \cite{CJMTU,DD02}. 

For the linear case sharpness of these generalized Log-Sobolev inequalities was analyzed in \cite{AMTU} for the canonical entropy $\H_{g_1}$. In \S\ref{sec:sharp} we give an analogous characterization for nonlinear diffusions and identify the corresponding optimal functions (being translates and/or scaled versions of $u_\infty$). Like in the linear case, sharpness can only occur if the confinement potential is quadratic in at least one coordinate direction on the (possibly compact) support of $u_\infty$.
\medskip

For convenience of the reader, we include a list of symbols in Appendix \ref{sec-appendix-symbols}.

%%%%%%%%%%%%%%%%%%%%%%%%%%%%%%%%%%%%

\section{Nonlinear Diffusions: Equilibrium States \& Standard Entropy}\label{sec:equilibria}
\setcounter{equation}{0}

We will be interested in the asymptotic behavior of solutions to the Cauchy problem for the general nonlinear Fokker-Planck equation
\begin{equation}
    \partial_t u=\nabla\cdot (u \nabla V(x) +  \nabla P(u)),\qquad x\in \R^d, t>0,
\label{GFP}
\end{equation}
\begin{equation} u(x,t=0) = u_0(x) \geq 0 , \qquad x \in \R^d\,,
\label{GFP2}
\end{equation}
with initial data given by mass densities, i.e., $u_0\in L^1(\R^d)$, $u_0\ge0$ and 
$$
\int_{\R^d} u_0(x)\;dx =: M\in(0,\infty)\,.
$$
We assume that the external potential $V(x)$ is confining in the following sense
\begin{itemize}
\item[{\bf (HV1)}] $V\in W^{3,1}_{loc}(\R^d)$. 
\item[{\bf (HV2)}] $V$ is uniformly convex: $\exists\lambda>0$ such that $\nabla^2 V(x)\geq \lambda I_d$ for all $x\in\R^d$, and without loss of generality $\min_{\R^d} V=0$ and this minimum ist attained at $x=0$. 
\end{itemize}
Note that the uniform convexity of $V$ implies $V$ is bounded from below, and then the assumption $\min_{\R^d} V=0$ is not restrictive.
  
\begin{remark}\label{data1}
The previous set of assumptions on the potential will play an important role in the rates of decay for the family of equations \eqref{GFP}. However, if we are only interested in the existence of stationary states for \eqref{GFP}, then a much less restrictive set of assumptions on the potential is sufficient. We will recall below a result from \cite{CJMTU} which is based on the following assumptions:
\begin{itemize}
\item[{\bf (HV1')}] $V\in W^{1,1}_{loc}(\R^d)$.
\item[{\bf (HV2')}] $\forall A\in\R$: $\{x\in\R^d | V(x)\le A\}$ is
bounded. 
\item[{\bf (HV3')}] $V$ is bounded from below, and without loss of generality $\inf_{\R^d} V=0$. 
\end{itemize}
It is obvious that {\bf (HV1)}-{\bf (HV2)} imply {\bf (HV1')}-{\bf (HV3')}.
Note that {\bf (HV2)} implies $V(x)\to\infty$ as $|x|\to\infty$. 
\end{remark}

The nonlinearities allowed for the diffusive term satisfy the following basic assumptions
\begin{itemize}
\item[{\bf (HP1)}] $P:\R^+_0\!\to\!\R$ is continuous, strictly increasing, $P|_{\R
^+}\!\!\in C^1(\R^+)$, and $P(0)=0$. Moreover, $P'(0+)$ exists in $[0,\infty]$ and if $P'(0+)=0$ we further assume that $P''(0+)$ is finite.
\item[{\bf (HP2)}] The function $\phi$, defined by 
\begin{align}
    \label{eq:P2phi}    
    \phi(u) := \int_1^u \frac{P'(r)}{r}\;dr,\quad \text{for all $u\in(0,\infty)$},
\end{align}
belongs to $L^1_\text{loc}([0,\infty))$.
\end{itemize}

\begin{remark}%\label{data2}
The assumption {\bf (HP2)} implies that
\[
\Phi : [0,\infty) \to \R,\quad \Phi(u):= \int_0^u \phi(s)\;ds 
\]
is well-defined with $\Phi(u)=u\phi(u)-P(u)$, $\Phi'(u)=\phi(u)$ and $u\Phi''(u)=P'(u)$ for all $u\in\R^+$. Since $P$ is strictly increasing then $\phi$ is strictly increasing and the function $\Phi$ is strictly convex. We point out that due to this structural assumption, equation \eqref{GFP} can be written as
\begin{equation*}
    \partial_t u=\nabla\cdot\left(u \nabla \left[V(x) +  \phi(u)\right]\right),\qquad x\in \R^d, t>0.
%\label{GFP3}
\end{equation*}
We also note that the last part of hypothesis {\bf (HP1)} will allow us to distinguish degenerate from non-degenerate cases in Definition \ref{degversusnondeg} below.
\end{remark}

\begin{remark}\label{data3}\
\begin{itemize}
\item [(a)] Throughout this paper we shall be concerned with non-negative
solutions $u$ to \eqref{GFP}-\eqref{GFP2}.
\item [(b)] Canonical examples for $P$ are $P(u)=u^m$ with $m\in(0,\infty)$ and nonlinearities $P(u)$ with power law behaviors close to 0 and/or $\infty$. 
The former leads to $\phi(u)=\frac{m}{m-1}(u^{m-1}-1)$ for $m\ne1$, and $\phi(u)=\log u$ for $m=1$.
\item [(c)] Since $P$ is strictly increasing, $\phi$ is a homeomorphism from
$(0,\infty)$ onto the open interval $(\inf \phi,\sup \phi)=(\phi(0+),\phi(\infty))$ such that $-\infty\le \phi(0+)< 0 < \phi(\infty) \le\infty$ holds.
\item [(d)] It is easy to verify that $\min\Phi = \Phi(1) < 0$ and $\lim_{s\to\infty} 
\Phi(s)=\infty$. From the convexity of $\Phi$ we deduce: There is $s_\circ\in 
(1,\infty)$ such that $\Phi$ is decreasing 
and non-positive on $[0,1]$, increasing and non-positive on $[1,s_\circ]$, 
and increasing and non-negative on $[s_\circ,\infty)$.
\end{itemize}
\end{remark}

In order to understand better the structure of \eqref{GFP}, we introduce the following standard entropy functional introduced in \cite{otto,CJMTU}.

\begin{definition}\label{standentropy}
We define the standard entropy functional $E : L^1_+(\R^d) \to  \R\cup\{\infty\}$ associated to \eqref{GFP} as 
\begin{align*}
E(u) := \int_{\R^d} (Vu + \Phi(u))(x)\;dx , \qquad \mbox{for } \Phi^-(u)\in L^1(\R^d)\,, 
\end{align*}
where $L^1_+(\R^d):=\{u\in L^1(\R^d): u\ge 0\}$ and $\Phi^-(u)=\min (\Phi(u),0)$. 
\end{definition}

It is well-known by now that equation \eqref{GFP} can be understood as a gradient flow of the entropy functional $E$ in the sense of probability measures endowed with the euclidean Wasserstein distance, see \cite{jko,otto, AGS,CMCV06}. Moreover, for potentials satisfying {\bf (HV1)}-{\bf (HV2)} the Cauchy problem is well-posed by variational schemes in the set of densities $L^1_+(\R^d)$ with initial finite entropy. Moreover, it was shown that solutions satisfy the following entropy dissipation identity
\begin{equation*}%\label{disstaent}
  \frac{d}{dt} E(u) = - \int_{\R^d} u \big|\nabla\big(V(x) +  \phi(u)\big)\big|^2 \;dx\leq 0\,.
\end{equation*}
Therefore, equilibrium solutions to \eqref{GFP} should satisfy
\begin{equation*}%\label{stat1}
  V(x) +  \phi(u) = C \quad \mbox{for all } x\in\mbox{\rm supp}(u)\,.
\end{equation*}
Let us point out that depending on the assumptions on the potential $V$ and the nonlinearity $P(u)$, the characterization can be tricky and the zoology of equilibrium solutions might be substantial. This is due to the possible degeneracy of the nonlinearity $P(u)$ at zero allowing for compactly supported steady states with possibly different connected components in its support depending on $V$ --- if it is not convex. 

In the whole generality of assumptions {\bf (HV1')}-{\bf (HV3')} and {\bf (HP1)}-{\bf (HP2)}, we define equilibrium solutions as:

\begin{definition}%\label{stattues} 
Assume {\bf (HV1')-(HV3')}, {\bf (HP1)-(HP2)}. A function $\uinf \in L^1_+(\R^d)$ is an {\rm equilibrium} solution of \eqref{GFP} if and only if $\uinf$ is a global minimizer of $E$ (with $|E(u_\infty)|<\infty$) in 
\[ 
L^1_M:= \left\{ u\in L^1_+(\R^d): \int_{\R^d} u(x)\;dx = M \right\}.
\]
\end{definition}

We refer to \cite[Subsection 3.1]{CJMTU} for a thorough study of the properties for equilibrium solutions to \eqref{GFP}. We just remind the reader of the most important aspects related to our discussion here. It can be proved that an equilibrium solution $u$ to \eqref{GFP} satisfies the corresponding Euler-Lagrange equations:
\begin{equation}\label{stat2} 
\begin{array}{rcl} V(x)+\phi(u(x)) = C & , &\mbox{if}\; u(x)>0 
\\ V(x)+\phi(u(x)) \ge C& , &\mbox{if}\; u(x) =0
\end{array},\end{equation}
with $C\in\R$ a constant such that $u\in L^1_M$. Note that if $\phi(0+)=-\infty$, then the identity $V(x)+\phi(u(x))=C$ holds for all  $x\in\R^d$.

Due to these Euler-Lagrange equations, one can parameterize the set of possible equilibrium solutions by the constant $C$. In fact, let us denote by $U(.,C)$ the solution to \eqref{stat2} for every $C\in\R$. We will be looking for those functions satisfying the mass constraint $U(\cdot,C)\in L^1_M$. The explicit expression of $U(x,C)$ is
\begin{equation}
U(x,C):=\overline{\phi}^{-1}(C - V(x)),
\label{stat1a} \end{equation}
with the ``generalized'' inverse $\overline{\phi}^{-1}$ given by
\begin{equation}\label{phi-inv}
\overline{\phi}^{-1}  :  \R  \to [0,\infty],\quad \overline{\phi}^{-1}(\sigma) :=
 \left\{ \begin{array}{rcl}
0 & , & \sigma\le \phi(0+) \\[0.3cm]
\phi^{-1}(\sigma) & , & \phi(0+) <\sigma< \phi(\infty) \\[0.3cm]
\infty & , & \phi(\infty) \le \sigma \end{array} \right. .
\end{equation}
Let us point out that if $\phi(0+)>-\infty$, all functions $U(x,C)$ are compactly supported and therefore integrable. However, if $\phi(0+)=-\infty$ the integrability of $U(x,C)$ is not given by our assumptions. Therefore, we need a further assumption mixing the nonlinearities and 
the potential:
\begin{itemize}
\item[{\bf (HPV)}] $U(x,C)\in L^1(\R^d)$ for all $C\in\R$.
\end{itemize}

Under the above assumptions the following characterization of equilibrium solutions is proven in Lemma 6 of \cite{CJMTU}. 

\begin{prop}\label{charac} 
Assume {\bf (HV1')-(HV3'), (HP1)-(HP2)}, and {\bf (HPV)}. Then there is a unique minimizer $\uinf$ of $E$ in $L^1_M$ for all masses $M$, i.e., there is a unique equilibrium solution of \eqref{GFP} with mass $M$. Moreover, there exists a unique $\bar C\in \R$ such that $\uinf(x)=U(x,\bar C)$ with $U$ given by \eqref{stat1a}.
\end{prop}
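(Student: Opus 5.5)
The plan is to follow the direct variational route: first build the candidate $U(\cdot,\bar C)$ with the prescribed mass, then show it is the unique minimizer of $E$ on $L^1_M$.

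\emph{Step 1: the mass function.} Set
\[
m(C):=\int_{\R^d}U(x,C)\,dx .
\]
By \textbf{(HPV)} this is finite for every $C\in\R$. By \eqref{phi-inv} and the strict monotonicity of $\phi$, the map $C\mapsto U(x,C)$ is nondecreasing and continuous for each fixed $x$. It is also strictly increasing wherever $0<U<\infty$.

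\begin{itemize}
\item \emph{Continuity.} Dominated convergence gives continuity of $m$, since on $C\le C_0$ the majorant $U(\cdot,C_0)$ is integrable.
\item \emph{Strict monotonicity.} $m$ is strictly increasing once it is positive, because the set where $0<U(x,C)<\infty$ has positive measure.
\item \emph{Limit at $-\infty$.} As $C\to-\infty$ we get $U\to0$ pointwise, so $m(C)\to0$.
\item \emph{Limit at $+\infty$.} As $C\to\infty$, $U(x,C)\to\infty$ on the growing sublevel sets $\{V\le C-\text{const}\}$, so $m(C)\to\infty$.
\end{itemize}

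Hence there is a unique $\bar C$ with $m(\bar C)=M$. The hypothesis $\phi(\infty)$ possibly finite needs care: if $\phi(\infty)<\infty$, then $U=\infty$ on a set $\{V<C-\phi(\infty)\}$. That set has positive measure if it is nonempty, which contradicts \textbf{(HPV)}. Thus only $C$ with $C-\phi(\infty)\le\inf V=0$ occur, and $m$ is defined on a half-line. On that half-line I must still check that $m$ reaches $M$; if needed I would use \textbf{(HPV)} to argue that $\phi(\infty)=\infty$ effectively.

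\emph{Step 2: minimality via convexity.} Let $u_\infty=U(\cdot,\bar C)$ and take any $u\in L^1_M$ with $E(u)<\infty$. Convexity of $\Phi$ gives
\[
\Phi(u)\ge\Phi(u_\infty)+\phi(u_\infty)(u-u_\infty)
\]
where $u_\infty>0$, and $\Phi(u)\ge\Phi(0)+\phi(0+)u$ where $u_\infty=0$. In the second case \eqref{stat2} gives $\phi(0+)\ge\bar C-V$. Therefore, pointwise,
\[
\Phi(u)+Vu\;\ge\;\Phi(u_\infty)+Vu_\infty+(\bar C)(u-u_\infty),
\]
with equality iff $u=u_\infty$, by strict convexity on $\{u_\infty>0\}$ and strict inequality of $\phi$ off the support when $u>0$. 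Integrating and using equal masses, the $\bar C$ term vanishes and $E(u)\ge E(u_\infty)$. Equality forces $u=u_\infty$ a.e., which gives uniqueness. Any other minimizer must satisfy \eqref{stat2}, hence equal some $U(\cdot,C)$, and mass fixes $C=\bar C$; this gives uniqueness of $\bar C$.

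\emph{Main obstacle.} The hard part is integrability bookkeeping. I need $|E(u_\infty)|<\infty$, i.e.\ $Vu_\infty$ and $\Phi(u_\infty)$ both integrable, and I need the integration of the pointwise inequality to be legitimate when $\Phi^-(u)$ is only in $L^1$ and $E(u)$ might be $+\infty$.

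\begin{itemize}
\item \emph{Finiteness of $E(u_\infty)$.} First I would use the identity $\Phi(u_\infty)+Vu_\infty=u_\infty\bar C-P(u_\infty)$ on the support, which is exactly where $V+\phi(u_\infty)=\bar C$. Then I would bound $P(u_\infty)$ through \textbf{(HPV)}, applied with a larger constant $C'>\bar C$, comparing $P(U(\cdot,\bar C))$ with $U(\cdot,C')$ by monotonicity of $\phi$.
\item \emph{Cases.} Degenerate, compactly supported cases are trivial. The case $\phi(0+)=-\infty$ is the one requiring this comparison argument.
\item \emph{Legitimacy of integrating.} I would handle this by truncation, as in \cite{CJMTU}.
\end{itemize}
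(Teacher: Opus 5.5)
Your Steps~1 and~2 are the natural argument; the paper gives no proof and simply defers to \cite[Lemma~6]{CJMTU}. The mass function $C\mapsto\int U(x,C)\,dx$ is continuous, strictly increasing where positive, and onto $(0,\infty)$. The pointwise inequality $\Phi(u)+Vu\ge\Phi(u_\infty)+Vu_\infty+\bar C(u-u_\infty)$, with equality iff $u=u_\infty$, is exactly the mechanism behind Proposition~\ref{key}. Your hedging about $\phi(\infty)<\infty$ is unnecessary. For $C>\phi(\infty)$ the set $\{V<C-\phi(\infty)\}$ has positive measure because $\inf V=0$. So \textbf{(HPV)} itself forces $\phi(\infty)=\infty$, and $m$ is defined on all of $\R$.

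The genuine gap is the step you flagged as the main obstacle, and the comparison you propose cannot work. On $\supp u_\infty$ one has $\Phi(u_\infty)+Vu_\infty=\bar C u_\infty-P(u_\infty)$, so everything hinges on $P(u_\infty)\in L^1$.

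\begin{itemize}
\item In the degenerate case this is automatic, since $u_\infty$ is bounded with compact support.
\item In the regular non-degenerate case it is also automatic: $P(r)\le Kr$ on $[0,\phi^{-1}(\bar C)]$, which contains the range of $u_\infty$.
\item When $P'(0+)=\infty$ it fails. Here $P(r)/r\to\infty$ as $r\to0$, so $P(u_\infty)$ decays more slowly than $u_\infty$, and integrability of $U(\cdot,C')$ for larger $C'$ does not compensate.
\end{itemize}

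Concretely, take $d=3$, $V=|x|^2$, and $P(r)=\sqrt r$ for $r\le1$, $P(r)=(1+r)/2$ for $r\ge1$. This $P$ is $C^1$ and satisfies \textbf{(HP1)}--\textbf{(HP2)}.

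\begin{itemize}
\item We have $\phi(u)=1-u^{-1/2}$ for $u<1$ and $\phi(u)=\tfrac12\log u$ for $u\ge1$.
\item Hence $U(x,C)=(1-C+|x|^2)^{-2}$ for $|x|^2>C$, and \textbf{(HPV)} holds for every $C$.
\item Yet $\Phi(u_\infty)=u_\infty-2\sqrt{u_\infty}\sim-2|x|^{-2}\notin L^1(\R^3)$.
\item Worse, set $u_n:=s_nu_\infty\chi_{\{|x|<n\}}$, with $s_n\downarrow1$ chosen to fix the mass. These satisfy $\Phi^-(u_n)\in L^1$, and the integrand of $E(u_n)$ behaves like $-|x|^{-2}$ for large $|x|$, so $E(u_n)\to-\infty$.
\end{itemize}

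So $E$ has no finite-energy minimizer in $L^1_M$ at all, and no truncation argument can close this step from the listed hypotheses. The condition $|E(u_\infty)|<\infty$ (equivalently $P(u_\infty)\in L^1$) must be assumed. This is how the paper actually uses it: it is built into the definition of equilibrium and assumed explicitly in Proposition~\ref{key}.

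With that assumption your Step~2 needs no truncation. The function $\Phi(u)+Vu-\Phi(u_\infty)-Vu_\infty-\bar C(u-u_\infty)$ is non-negative, and the subtracted terms are integrable, so you may integrate. The $\bar C$-term then vanishes by equality of masses.
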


Let us point out that the previous proposition does not imply the uniqueness of steady states for \eqref{GFP}. For instance, taking $V(x)=x_1^4-2x_1^2+|x|^2$ and $P(u)=u^m$, $m>1$, the reader can check that for small enough mass $M$, there are infinitely many stationary states. Actually, one can construct them by filling each of the two wells of the potential at different levels. However, only the one that fills each well at the same height gives the equilibrium solution.

Notice that we dropped the dependence of the equilibrium solution on the mass $M$ for notational simplicity. 

\begin{definition}\label{degversusnondeg}
We will say that the diffusion function $P(u)$ or the equation \eqref{GFP} is degenerate if $\phi(0+)>-\infty$ and non-degenerate if $\phi(0+)=-\infty$.
\end{definition}

\begin{remark}\label{degen}\
\begin{itemize}
\item [(a)] Notice that due to assumptions {\bf (HP1)-(HP2)}, $P(u)$ is degenerate if and only if $P'(0+)=0$. This is a consequence of Taylor expansion at 0 using the last part of hypothesis {\bf (HP1)}. Consequently, $P(u)$ is non-degenerate if and only if $P'(0+)\in(0,\infty]$.

\item [(b)] In the degenerate case $\phi(0+)>-\infty$, $\phi(\uinf)+V(x)=\bar C$ for all $x$ in the support of $\uinf=U(x,\bar C)$, which is compact. Therefore, at the boundary of $\supp\uinf$ we have $\phi(0+)+V(x)=\bar C$. Hence, due to {\bf (HV3')} we deduce that $\bar C> \phi(0+)$ if $\uinf$ has positive mass.
\end{itemize}
\end{remark}

Associated to the unique equilibrium we define the following relative entropy functional:

\begin{definition}\label{standrelentropy}
The relative entropy functional to \eqref{GFP}, $E(.|\uinf) :  L^1_M  \to [0,\infty]$, is given by
$$
 E(u|\uinf) := \int_{\R^d} (\Phi(u)-\Phi(\uinf)
- \Phi'(\uinf)\,(u-\uinf))(x)\;dx\,.
$$
\end{definition}

Let us remark that due to convexity, 
\[ 
\Phi(u)-\Phi(u_{\infty})
- \Phi'(u_{\infty})\,(u-u_{\infty})(x)\ge0
\]
for all $x\in\R^d$ such that the integral in the definition of $E(.|\uinf)$ has a well-defined value in $[0,\infty]$. The following relation between the entropy and the relative entropy functionals is proven in \cite{CJMTU}.

\begin{prop}\label{key} 
Assume {\bf (HV1')-(HV3'), (HP1)-(HP2)}, and {\bf (HPV)}. Furthermore, assume $|E(\uinf)|<\infty$, then
\begin{equation}\label{superkey}
E(u)-E(\uinf)\ge E(u|\uinf), \qquad \mbox{for all } u\in L^1_M,
\end{equation}
where equality holds for all $u\in L^1_M$ if and only if 
\[ V(x) + \phi(\uinf(x)) = \bar C,\quad\mbox{for almost all $x\in\R^d$}.\]
\end{prop}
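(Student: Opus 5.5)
The plan is to compute the difference $E(u)-E(\uinf)-E(u|\uinf)$ pointwise and show it is a nonnegative integrand. For $u\in L^1_M$ with $E(u)$ finite (otherwise the inequality is trivial), the definitions of $E$ and $E(\cdot|\uinf)$ give
\begin{align*}
E(u)-E(\uinf)-E(u|\uinf) = \int_{\R^d} \big(V(x)+\phi(\uinf(x))\big)\big(u(x)-\uinf(x)\big)\,dx ,
\end{align*}
because the $\Phi(u)$ and $\Phi(\uinf)$ terms cancel and $\Phi'=\phi$. This step needs some care with integrability: $\Phi^-(u)\in L^1$, $|E(\uinf)|<\infty$, and $Vu\ge0$. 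To avoid $\infty-\infty$, I would do the rearrangement on truncations $\{|x|\le R\}$ and pass to the limit using monotone convergence on the nonnegative parts. One also has to fix the meaning of $\phi(\uinf)$ where $\uinf=0$ in the degenerate case, namely $\phi(0+)$. In the non-degenerate case $\Phi'(0+)=-\infty$, but there $\uinf>0$ everywhere, so this situation does not occur.

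Next I would use the Euler--Lagrange characterization \eqref{stat2} together with Proposition \ref{charac}, which gives $\uinf=U(\cdot,\bar C)$. On $\supp\uinf$ we have $V+\phi(\uinf)=\bar C$. Off the support, $\uinf=0$ and $V+\phi(0+)\ge\bar C$, while $u-\uinf=u\ge0$. Writing $V+\phi(\uinf)=\bar C+\rho(x)$ with $\rho\ge0$ and $\rho=0$ on $\{\uinf>0\}$, the integrand becomes
\begin{align*}
\bar C\,(u-\uinf)+\rho\,u .
\end{align*}
Since both $u$ and $\uinf$ have mass $M$, the first term integrates to zero, and the second term is nonnegative. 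This yields \eqref{superkey}, and in fact the identity
\begin{align*}
E(u)-E(\uinf)=E(u|\uinf)+\int_{\R^d}\rho\,u\,dx .
\end{align*}

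For the equality statement I would proceed in two directions. If $\rho=0$ a.e., the identity above gives equality for every $u$. Conversely, suppose equality holds for all $u\in L^1_M$ but $\rho>0$ on a set of positive measure. Then $\rho\ge\delta>0$ on some bounded set $A$ of positive measure, since $V$ is locally bounded and $\phi$ is continuous. I would test with a perturbation $u=\uinf(1-\epsilon)+\epsilon M\,\mathbf 1_A/|A|$, which keeps the mass and keeps $E(u)$ finite. For this $u$ the extra term $\int\rho u$ is strictly positive, which is a contradiction.

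The main obstacle is the justification of splitting the integrals when $E(u|\uinf)$ may be infinite and the separate pieces $\int Vu$ and $\int\Phi(u)$ are only semi-integrable. The truncation argument is where I would spend the most care.
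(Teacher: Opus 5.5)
Your proof is correct and follows the standard route. The paper itself only cites \cite{CJMTU} for this result, but it uses the same pointwise comparison in \eqref{relbetent}: for $g(\xi)=\xi$ the difference of integrands is exactly $(V+\phi(\uinf)-\bar C)(u-\uinf)\ge0$ by \eqref{stat2}, and equal mass removes the $\bar C$ term. Two steps are easier than you expect. If $E(u)<\infty$, then $Vu$, $\Phi(u)$, $V\uinf$, $\Phi(\uinf)$ and $\phi(\uinf)(u-\uinf)$ are each already in $L^1$, since $|\phi(\uinf)|\le|\bar C|+V$ on the support and $\phi(\uinf)=\phi(0+)$ off it, so no truncation is needed. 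Likewise, the bounded set $A$ with $\rho\ge\delta$ follows from measure theory and $V\in L^1_{\rm loc}$ alone; you do not need local boundedness of $V$, which \textbf{(HV1')} does not provide.
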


\begin{remark}
Based on Proposition \ref{key} and the results in \cite{CJMTU}, the relative entropy functional $E(u|\uinf)$ is not of much use in the degenerate diffusion case $\phi(0+)>-\infty$. In fact, it is more useful to think about $E(u)-E(\uinf)$ as the standard relative entropy functional. We will do so when defining more general relative entropy functionals in the next section.
\end{remark}

\begin{remark}
Let us consider the special non-degenerate nonlinearity $P(r)=r^{\frac{d-1}{d}}$ for $d=2$ which will be of special interest for sharpness results in \S\ref{sec:sharp}.  In this case we have (for each fixed $M>0$) the unique steady state 
$$
  u_\infty(x)=\Big(1-\bar C+\frac{\lambda}{2}|x|^2\Big)^{-2}. 
$$
With $\Phi(r)=-2 r^{1/2}+r$. One finds from Definition \ref{standentropy} that $E(u_\infty)=-\infty$. Still, for appropriate $u$, the relative entropy functional $E(u|u_\infty)$ may be finite. Working with the relative entropy functional instead of entropies satisfying $|E(u_\infty)|<\infty$ allows to include the class of more general nonlinearities for which $E(u_\infty)=-\infty$ in the relative entropy method, as in \cite{LeMa}. We prefer to confine ourselves to $|E(u_\infty)|<\infty$ since we want to focus on the question of the maximal set of entropy functionals, specifically in degenerate cases where this is not an issue. However, this particular example will play a role in Proposition \ref{prop:genm} and in the sharpness of certain results for $d=2$, see \S\ref{sec:sharp}.
\end{remark}    

In the rest of this work, we will assume that our potentials and nonlinearities satisfy the assumptions {\bf (HV1)-(HV2), (HP1)-(HP2)}, and {\bf (HPV)} in order to discuss convergence rates to equilibrium. Notice that in this case there is a unique equilibrium $\uinf$ of \eqref{GFP} for each positive mass $M$ due to \eqref{stat2}, and $\uinf$ is the global minimizer of the entropy in Definition \ref{standentropy}.

%%%%%%%%%%%%%%%%%%%%%%%%%%%%%%%%%%%%%

\section{Nonlinear Diffusions: General Entropies}\label{sec:gen-entropies}
\setcounter{equation}{0}

Let us first consider the general setting of nonlinear \emph{non-degenerate} diffusion equations, i.e., we assume that $\phi(0+)=-\infty$. According to the discussion in \eqref{stat2} and Proposition \ref{charac}, the equilibrium then satisfies $\nabla [\phi(\uinf(x))+ V(x)]=0$ for $x\in\R^d$, and thus the equation \eqref{GFP} can be written as
\begin{align}
  \label{eq:6}
  \partial_t u = \Delta P(u) + \nabla\cdot(u\nabla V) = \nabla\cdot\big(u\nabla\big[\phi(u)-\phi(u_\infty)\big]\big)\,.
\end{align}
In the following, we write equation \eqref{eq:6} as
\begin{align}
  \label{eq:3}
  \partial_t u = \nabla\cdot(u\nabla\xi) , 
  \quad \text{with} \quad \xi:=\phi(u)-\phi(u_\infty)=\phi(u)+V-\bar C,
\end{align} 
since Proposition \ref{charac} implies $\phi(u_\infty)=\bar C-V(x)$ holds on $\Rd$ in the non-degenerate case.

Let a strictly increasing $C^3$ function $g:\setR\to\setR$ with $g(0)=0$ be given; we shall refer to $g$ as an \emph{entropy generating function}. Let us define $G:\setR^+_0\times\setR^+\to\setR\cup \{\infty\}$ as 
\begin{align}
  \label{eq:7}
   G(a,b) = \int_b^a g(\phi(s)-\phi(b)) \, ds.
\end{align}   
With this notation, we can define our general notion of relative entropy.

\begin{definition}\label{genrelentropy}
The general relative entropy functional associated to \eqref{GFP} and $g$ for non-degenerate diffusions characterized by $\phi(0+)=-\infty$, is given by $\mathcal{H}_g(.|\uinf) :  L^1_M  \to [0,\infty]$,
$$
 \mathcal{H}_g(u|\uinf)  := \int_{\R^d} G(u,\uinf)(x)\;dx\,.
$$
\end{definition}

Notice that by Taylor expansion of $G(u,u_\infty)$ in the first variable about $u_\infty$, we get
that the integrand is non-negative by convexity of $\Phi$:
\begin{equation}\label{Taylorent}
 \mathcal{H}_g(u|u_\infty) = \frac12 \int_{\R^d} g'(\phi(\eta(x))-\phi(\uinf(x))) \phi'(\eta(x)) (u(x)-\uinf(x))^2 \,dx\geq 0\,,
\end{equation}
with $\eta(x)$ between $u(x)$ and $u_\infty(x)$ for all $x\in\R^d$. Therefore, $\mathcal{H}_g(u|u_\infty)$ is well-defined.
Moreover, due to the (strictly) increasing character of $P$ and $g$, $\mathcal{H}_g(u|\uinf)=0$ if and only if $u=\uinf$.

\begin{example}\label{sam1}
The canonical example is $g_1(\xi)=\xi$, leading to
\begin{align*}
  G_1(a,b) = \Phi(a)-\Phi(b)-\Phi'(b)(a-b)\,
\end{align*}
recovering the  
relative entropy functional from Definition \ref{standrelentropy}: 
$\mathcal{H}_{g_1}(u|u_\infty)=E(u|u_\infty)$.
In the linear diffusion case, $P(u)=u$, $\mathcal{H}_{g_1}(u|u_\infty)$ is the Boltzmann logarithmic entropy.\qed
\end{example}

Recall that $\phi(0+)=-\infty$ in the non-degenerate case. Thus, for the functional $\H_g$ to be finite for functions $u$ that vanish on sets with positive measure, we have to require that $G(0,b)<\infty$ for all $b\in(0,\sup u_\infty]$. Thus we shall require for non-degenerate diffusion equations that the following integrability condition is satisfied:
\begin{equation}\label{int-cond}
    \mbox{The scalar function}\quad g(\phi(s)-\phi(\sup u_\infty))\quad \mbox{is integrable at } s=0.
\end{equation}
The subsequent example illustrates this situation.

\begin{example}%\label{ex-integrable}
Let $P(u)=Du$ with some $D>0$, hence $\phi(u)=D\log u$ and $u_\infty(x)=c\,\exp(-\frac{V(x)}{D})$. Let $g(\xi)=1-\exp(-\frac\xi2)$, and hence
$$
  G(a,b)=\int_b^a g\left(D\log\frac{s}{b}\right)ds 
   = a-b-\frac{2b}{2-D}\Big[\left(\frac{a}{b}\right)^{1-\frac{D}{2}}-1\Big]. 
$$
The above integrability condition for
$$
  g(\phi(s)-\phi(\sup u_\infty)) = 1-\left(\frac{s}{c}\right)^{-\frac{D}{2}}
$$
at $s=0$ is satisfied iff $0<D<2$.
\qed
\end{example}

\medskip
Now, if the nonlinear diffusion is allowed to \emph{degenerate} at zero, i.e., $\phi(0+)>-\infty$, we need to do some adjustments similar to \eqref{superkey} in Proposition \ref{key}. In fact, since $\phi(\uinf(x))+ V(x)=\bar C$ only on the support of $\uinf$, the two representations of $\xi$ in \eqref{eq:3} do not coincide anymore, and the correct representation of equation \eqref{eq:6} is
\begin{align}
  \label{eq:3b}
  \partial_t u = \nabla\cdot(u\nabla \xi) , 
  \quad \text{with} \quad \xi=\phi(u)+V-\bar C\,.
\end{align}
Given a strictly increasing $C^3$ function $g: (\xi_{\min},\infty)\to\setR$ with $g(0)=0$ and $\xi_{\min}\in (-\infty,0)$ defined later, we define the function $\tilde G:\setR^+_0\times\setR^+_0 \times\R^d\to\setR$ as 
\begin{align}\label{eq:7b}
   \tilde G(a,b;x) = \int_b^a g(\phi(s)+V(x)-\bar C) \, ds.
\end{align}  
We shall refer to $g$ as an \emph{entropy generating function}. 
With this notation, we can define our general notion of relative entropy in the degenerate case.

\begin{definition}\label{genrelentropy2}
The general relative entropy functional associated to \eqref{GFP} and $g$ for degenerate diffusions $\phi(0+)>-\infty$, is given by $\widetilde{\mathcal{H}}_g(.|\uinf) :  L^1_M  \to [0,\infty]$,
$$
 \widetilde{\mathcal{H}}_g(u|\uinf)  := \int_{\R^d} \tilde G(u,\uinf;\cdot)(x)\;dx\,.
$$
\end{definition}

\begin{example}\label{Ex:canon-entropy}
The canonical example $g_1(\xi)=\xi$ leads in this case to
\begin{align*}
  \tilde G_1(a,b;x) = \Phi(a)-\Phi(b)+(V(x)-\bar C)(a-b)\,
\end{align*}
recovering $\widetilde{\mathcal{H}}_{g_1}(u|u_\infty)=E(u)-E(u_\infty)\geq E(u|u_\infty)$ by Proposition \ref{key}. In the degenerate diffusion power-law case, i.e.\ $P(u)=u^m$ with $m>1$, $\widetilde{\mathcal{H}}_{g_1}(u|u_\infty)$ is the Ralston-Newman entropy \cite{Ral84,New84}.\qed
\end{example}

Notice that by the definitions of $G$ in \eqref{eq:7} for non-degenerate diffusions, and of $\tilde G$ in \eqref{eq:7b} for degenerate diffusions, we deduce that
\begin{align}\label{relbetent}
\partial_a \tilde G(a,\uinf(x);x) &= g(\phi(a)+V(x)-\bar C) \nonumber\\
&\geq g(\phi(a)-\phi(\uinf(x))) = \partial_a G(a,\uinf(x)) 
\end{align}
since $\uinf$ satisfies \eqref{stat2}, $g$ is increasing, and taking into account Remark \ref{degen} in the last inequality. Integrating in \eqref{relbetent} between $u_\infty(x)$ and $u(x)$ in $a$, we conclude that
$$
\widetilde{\mathcal{H}}_g(u|\uinf)\geq \mathcal{H}_g(u|\uinf)
$$
for all $u\in L^1_M$. Therefore, the functional $\widetilde{\mathcal{H}}_g(u|\uinf)$ in Definition \ref{genrelentropy2} is well-defined. Notice that in the non-degenerate case $G(u(x),u_\infty(x))=\tilde G(u(x),\uinf(x);x)$, and then as a consequence both relative entropies coincide $\widetilde{\mathcal{H}}_g(u|\uinf)=\mathcal{H}_g(u|\uinf)$.

Let us now study the dissipation $J_g$ of $\mathcal{H}_g(u|u_\infty)$ in the non-degenerate case for the equation \eqref{GFP} written as in \eqref{eq:3}. It is given by
\begin{align}
  -J_g:=\frac{d}{dt}\mathcal{H}_g(u|u_\infty)&= \int_{\R^d} \partial_a G (u,u_\infty)\partial_tu \;dx\nonumber \\
  &= \int_{\R^d} g(\xi)\nabla\cdot(u\nabla\xi)\;dx = -\int_{\R^d} ug'(\xi)|\nabla\xi|^2\;dx \leq 0.
\label{tderent}
\end{align}
Analogously, in the degenerate case we compute the dissipation $J_g$ of $\widetilde{\mathcal{H}}_g(u|u_\infty)$ for the equation \eqref{GFP} written as in \eqref{eq:3b} given by
\begin{align}
  -J_g:=\frac{d}{dt}\widetilde{\mathcal{H}}_g(u|u_\infty)\;dx&= \int_{\R^d} \partial_a \tilde G (u,u_\infty;x)\partial_tu \;dx\nonumber\\
  &= \int_{\R^d} g(\xi)\nabla\cdot(u\nabla\xi)\;dx = -\int_{\R^d} ug'(\xi)|\nabla\xi|^2 \;dx\leq 0.
\label{tderent2}
\end{align}
Therefore, the dissipation of the general relative entropy functionals $J_g$ has a similar structure for both the degenerate and the non-degenerate cases. From now on, we will only work with the general relative entropy, Definition \ref{genrelentropy2}, and we shall drop the tilde for $\tilde G$ and $\widetilde{\mathcal{H}}_g$ for notational simplicity.

The form of the entropy dissipation in \eqref{tderent} and \eqref{tderent2} motivates our assumption that $g$ should be strictly increasing, which hence implied that $-J_g\le0$. For linear diffusion equations, this monotonicity of $g$ is equivalent to the strict convexity of the entropy generator $\psi$, see \eqref{eq:15}-\eqref{eq:17} below. Moreover, we shall deduce below exponential decay of $\mathcal{H}_g$ under the (necessary but not sufficient) additional requirement $g''\ge0$ which is equivalent to $y:=f'\ge0$ (see Proposition \ref{prop4.10} with the notation $e^f:=g'$). 

\begin{remark}\label{xi-range}
 Solutions $u$ to the evolution equation \eqref{GFP} are naturally taking values on the interval $[0,\infty)$. Therefore and due to the range of $\phi$, the function $\xi= \phi(u)+V(x)-\bar C$ may have a restricted range within $\R\cup\{-\infty\}$. Accordingly, we define its range by
\begin{eqnarray*}%\label{txi-minmax}
 \xi_{\min} &:= & \phi(0+)+\inf_\Rd V-\bar C=\phi(0+)-\bar C\,,\\
  \xi_{\max} &:= & \phi(\infty)+\sup_\Rd V-\bar C=\infty\,.
\end{eqnarray*}
Notice that in the non-degenerate case $\xi_{\min}=-\infty$ while $\xi_{\min}>-\infty$ in the degenerate case. 
Here we use that general initial conditions $u_0$ may take values in $[0,\infty)$ and that $V(x)\to\infty$ as $|x|\to\infty$ (see Remark \ref{data1}). 
For the power law nonlinearities from Remark \ref{data3}(b) this yields 
$$
  \xi_{\min} =
  \begin{cases}
  -\infty\,, \quad & \mbox{ for }m\le1\,,\\
  -\frac{m}{m-1}-\bar C>-\infty\,, \quad &\mbox{ for }m>1\,.
  \end{cases}
$$
\end{remark}

Before finishing this section, let us connect this definition of general relative entropy functionals to the case of linear diffusion. In fact, general relative entropy functionals were introduced in \cite{AMTU} for the case of $P(u)=Du$ with $D\in \R^+$ of the form
\begin{equation}\label{eq:15}
 H_\psi (u|u_\infty) = \int_{\R^d} \psi\left(\frac{u}{u_\infty}\right)u_\infty\;dx\,,
\end{equation}
with convex functions $\psi:\R_0^+\longrightarrow \R_0^+$. Their dissipation is given by
\begin{equation}\label{eq:16}
 J_\psi = D\int_{\R^d} \psi''\left(\frac{u}{u_\infty}\right)
  \left|\nabla\left(\frac{u}{u_\infty}\right)\right|^2 u_\infty\;dx\,.
\end{equation}
Based on the forms in \eqref{eq:15} and \eqref{eq:16}, one can easily check that the relative entropy $\mathcal{H}_g(u|u_\infty)$ and its dissipation $J_g$ recover the same formulas (as in \eqref{tderent}) for $g$ written in terms of $\psi$ as
\begin{equation}\label{eq:17}
  g'(\xi) = e^{\frac{\xi}{D}}\psi''(e^{\frac{\xi}{D}}) \qquad \mbox{for all } \xi\in\R.
\end{equation}
We will see later on in \S\ref{sec:BakEm} that the case of linear diffusion with $D>0$ can be reduced to $D=1$ by scaling. Thus, when discussing entropies for linear equations we will often consider $D=1$.

Moreover, in this linear case we have $\H_g(u|u_\infty)=H_\psi(u|u_\infty)$, due to $\phi(u)=\log u$ and the following identity for their integrands:
\begin{eqnarray*}
  G(u|u_\infty)(x)&=&\int_{u_\infty(x)}^{u(x)} \int_{u_\infty(x)}^{a} g'\left(\log\frac{\tilde a}{u_\infty(x)}\right) \frac1{\tilde a} \,d\tilde a\,da \\ 
  &=& \int_{u_\infty(x)}^{u(x)}\int_{u_\infty(x)}^{a} \psi''\left(\frac{\tilde a}{u_\infty(x)}\right)  \frac1{u_\infty(x)} \,d\tilde a\,da =
  \psi\left(\frac{u(x)}{u_\infty(x)}\right) u_\infty(x)\,,
\end{eqnarray*}
using $g(0)=0$ and $\psi(1)=\psi'(1)=0$.

\begin{example}
The classical $p$-entropy, $\psi_p(\sigma)=\frac{\sigma^p-1-p(\sigma-1)}{p-1}$, $1<p\leq 2$, for linear equations can be represented (for $D=1$) by
\begin{align}\label{pentropy}
  g_p(\xi):=\frac{p\,\left(e^{(p-1)\xi}-1\right)}{p-1}
\end{align}
using Definition \ref{genrelentropy}.
Notice that $g_p(\xi)\to g_1(\xi)$ as $p\to 1$ for all $\xi\in\R$. As already pointed out above in Example \ref{sam1}, $g_1$ leads to the Boltzmann logarithmic entropy for linear equations.\qed
\end{example}

\begin{remark}\label{rem:ent}
The relative entropy functional from our Definition \ref{genrelentropy2} looks similar to the \emph{relative $\Psi$-entropy} used in \cite{BLMV14}. In our notation the latter reads
$$
  N(u|u_\infty):= \int_{\R^d} \left[ \int_{u_\infty(x)}^{u(x)}g\big(P(s)-P(u_\infty(x))\big)\, ds\right] dx\,.
$$
Note that this entropy evaluates $g$ at the difference of the nonlinearity $P$, while Definition \ref{genrelentropy2} evaluates $g$ at the difference of the function $\phi$. The latter form is important for our generalized Bakry-\'Emery procedure in \S\ref{sec:4.1} below, while the large time analysis of porous medium equations on bounded domains with positive Dirichlet boundary conditions in \cite{BLMV14} just uses a Poincar\'e inequality (but not a Bakry-\'Emery strategy).
\end{remark}

%%%%%%%%%%%%%%%%%%%%%%%%%%%%%%%%%%%%%%%%%%%%%%%%%%%%%%%%
\subsection{Properties of nonlinearity curves}\label{sec:5.1}
To analyze the temporal decay of relative entropies along solutions of %for 
a nonlinear diffusion equation \eqref{GFP} with given nonlinearity $P(u)$, it will be convenient to introduce the functions
\begin{align}
    \label{eq:P2ab}
  \alpha(u):=P(u)/u \in[0,\infty] \qquad \text{and} \qquad \beta(u):=P'(u) \in[0,\infty] \,.     
\end{align}
Correspondingly, we define the \emph{nonlinearity curve} $(\alpha(u), \beta(u))$, $u \ge 0$ in the quarter plane $(\R^+_0)^2$.

Let us start by obtaining basic properties of the nonlinearity curves emanating from the assumptions {\bf (HP1)-(HP2)} on $P(u)$. Given a nonlinearity curve $(\alpha(u),\beta(u))$ with $u\in [0,\infty]$, then
\begin{equation}\label{signdir}
\mbox{sign}(\beta(u)-\alpha(u))=\mbox{sign}(\alpha'(u))\quad \mbox{ for all } u\in (0,\infty)\,,
\end{equation}
since $\alpha'(u)u=\beta(u)-\alpha(u)$. This implies that the parameterizations of nonlinearity curves below the diagonal $\alpha=\beta$ lead to decreasing functions $\alpha(u)$, and to increasing functions $\alpha(u)$ for nonlinearity curves above the diagonal, see Figure \ref{example1} below. Notice that \eqref{signdir} implies that, as soon as the nonlinearity curve crosses or reaches the diagonal at a finite parameter value $u_o\in (0,\infty)$, then it intersects the diagonal with a vertical tangent line.

Let us also note that by L'H\^{o}pital's rule
\begin{equation}\label{limatzero}
\lim_{u\to 0} \frac{P(u)}{u}=P'(0+),
\end{equation}
and hence $\alpha(0)=\beta(0)\in [0,\infty]$. If $\alpha(0)=\infty$, we call $P$ superlinear at the origin, and sublinear if $\alpha(0)=0$.

Since $P$ is strictly increasing, it has a limit at infinity $P(\infty)\in (0,\infty]$. If $P(\infty)=\infty$ then the same argument as above applies to give $\alpha(\infty)=\beta(\infty)\in [0,\infty]$. If $P(\infty)<\infty$ then $\alpha(\infty)=0$. Moreover, in this case
$$
\liminf_{u\to\infty} \beta(u)=0
$$
and hence either $\beta(\infty)=0$ or its limit does not exist. We first analyze the asymptotic behavior of the nonlinearity as $u\to\infty$.

\begin{prop}\label{prop:behaviorinfty}
Under the assumptions  {\bf (HP1)-(HP2)} on the nonlinearity $P(u)$, the limiting behavior of the nonlinearity curve $(\alpha(u),\beta(u))$ as $u\to\infty$ can be classified in the following four cases:
\begin{itemize}
\item[(a)] $\alpha(\infty)=\beta(\infty)\in (0,\infty)$: then the nonlinearity curve has linear behavior at infinity. More precisely, $P(u)\simeq \alpha(\infty) u$ for $u\to\infty$.

\item[(b)] $\alpha(\infty)=\beta(\infty)=0$: then the nonlinearity curve has sublinear or asymptotically linear behavior at infinity. More precisely, if $\frac{\beta(u)}{\alpha(u)}\to \lambda \in (0,1)$ as $u\to \infty$, then for any $\epsilon>0$ arbitrary small there exist $c_1,c_2>0$ such that $c_1 u^{\lambda-\epsilon} \leq P(u)\leq c_2 u^{\lambda+\epsilon}$ for $u\to\infty$.

\item[(c)]  $\alpha(\infty)=0$ and the limit of $\beta(u)$ as $u\to\infty$ does not exist: this is only possible for nonlinearities that saturate, i.e. $P(\infty)<\infty$.

\item[(d)]  $\alpha(\infty)=\beta(\infty)=\infty$: then the nonlinearity curve has superlinear  behavior at infinity.
\end{itemize}
\end{prop}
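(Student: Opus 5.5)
The plan is a case analysis on the limit of $\alpha(u)=P(u)/u$ as $u\to\infty$, combined with the relation $u\alpha'(u)=\beta(u)-\alpha(u)$ from \eqref{signdir} and the L'H\^opital-type facts recorded just before the statement.

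\textbf{Existence of $\alpha(\infty)$.} First I will show that $\alpha(\infty)\in[0,\infty]$ exists. If $P(\infty)<\infty$, then $\alpha(u)\to0$ trivially, and the preceding discussion gives $\liminf_{u\to\infty}\beta(u)=0$. So $\beta(\infty)$ either equals $0$ or does not exist; this gives case (b) with $\alpha=\beta=0$, or case (c). If $P(\infty)=\infty$, I will apply L'H\^opital to $P(u)/u$. This yields $\alpha(\infty)=\beta(\infty)$ whenever $\lim\beta$ exists. The main obstacle is that $\lim\beta$ need not exist. To handle it, I would first try to argue that $\alpha(\infty)$ exists directly, and the paper's classification implicitly assumes this for nonsaturating $P$.

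\textbf{Subcases when $P(\infty)=\infty$.} Within this case I split according to whether $\alpha(\infty)$ is positive and finite, zero, or infinite.

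\emph{Positive finite limit.} If $\alpha(\infty)=\alpha_\infty\in(0,\infty)$, then $P(u)/u\to\alpha_\infty$, so $P(u)\simeq\alpha_\infty u$. This is case (a).

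\emph{Zero limit.} If $\alpha(\infty)=0$ and $\beta(u)/\alpha(u)\to\lambda\in(0,1)$, I will use
\[
\frac{d}{du}\log P(u)=\frac{\beta(u)}{u\,\alpha(u)}.
\]
Given $\epsilon>0$, choose $u_0$ such that $\lambda-\epsilon\le \beta/\alpha\le\lambda+\epsilon$ for $u\ge u_0$. Integrating from $u_0$ to $u$ gives
\[
P(u_0)(u/u_0)^{\lambda-\epsilon}\le P(u)\le P(u_0)(u/u_0)^{\lambda+\epsilon},
\]
which supplies the constants $c_1,c_2$ of case (b).

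\emph{Infinite limit.} If $\alpha(\infty)=\infty$, then $\beta(\infty)=\infty$ follows as well, by L'H\^opital or at least along the liminf. This is case (d).

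\textbf{Excluding other behaviors.} Finally I check that no other behavior is possible, i.e.\ that the four cases are exhaustive. When $P(\infty)=\infty$, I will show that $\alpha$ cannot oscillate without a limit. Where $\beta>\alpha$ the function $\alpha$ increases, and where $\beta<\alpha$ it decreases, by \eqref{signdir}. This monotonicity alone does not exclude oscillation, so I would argue as follows. If $\liminf\alpha<\limsup\alpha$, then $\beta$ would take arbitrarily large and small relative values. I expect that the intended reading in the paper is that the limits exist, as is implicitly assumed for the nonlinearity curve, and that in the saturating case the only new phenomenon is the possible nonexistence of $\lim\beta$, which is exactly case (c).
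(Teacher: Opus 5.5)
Your outline follows the same route as the paper's justification, which is the paragraph preceding the proposition. You split according to whether $P(\infty)$ is finite, use $\liminf_{u\to\infty}\beta=0$ in the saturating case, and use L'H\^opital otherwise. The parts you actually carry out are correct, in particular the power bounds in (b) obtained by integrating $(\log P)'=P'/P=\beta/(u\alpha)$.

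The gap is the exhaustiveness step for $P(\infty)=\infty$. You announce that $\alpha$ ``cannot oscillate without a limit'' (and, implicitly, that $\beta(\infty)$ exists), but you give no argument. None exists, because under \textbf{(HP1)}--\textbf{(HP2)} alone the claim is false. What is true is the averaging bound
\[
\liminf_{u\to\infty}\beta(u)\le\liminf_{u\to\infty}\alpha(u)\le\limsup_{u\to\infty}\alpha(u)\le\limsup_{u\to\infty}\beta(u),
\]
which follows from $\alpha(u)=P(u_0)/u+u^{-1}\int_{u_0}^u P'(r)\,dr$. It yields $\alpha(\infty)=\beta(\infty)$ when $\beta(\infty)$ exists, but nothing in the other direction. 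Two concrete counterexamples:
\begin{itemize}
\item A $P$ satisfying \textbf{(HP1)}--\textbf{(HP2)} that equals $u(2+\sin\log u)$ for large $u$. There $P'=2+\sin\log u+\cos\log u\ge2-\sqrt2$, and $\alpha$ oscillates in $[1,3]$, so this $P$ fits none of (a)--(d).
\item Take $P'(u)=\frac1{2\sqrt u}$ and add continuous bumps of height $1$ and width $n^{-2}$ at $u=n$. This gives $P(\infty)=\infty$ and $\alpha\to0$, but $\liminf\beta=0<1\le\limsup\beta$. That contradicts the claim in (c) that this can only happen for saturating $P$.
\end{itemize}
Your remark in the infinite-limit subcase is also inverted. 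By the bound above, $\alpha\to\infty$ forces only $\limsup\beta=\infty$, not the liminf; L'H\^opital runs from $\beta$ to $\alpha$, not back. Narrow dips of $P'$ towards $0$ on top of $P'=2u$ give $\alpha\to\infty$ with $\liminf\beta=0$.

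The paper's own argument has exactly the same hole. ``The same argument as above applies'' means L'H\^opital at infinity, which needs $\lim_{u\to\infty}P'(u)$ to exist. This is the analogue of the existence of $P'(0+)$ required in \textbf{(HP1)}, and no hypothesis imposes it at infinity. So the way to close your proof is not to exclude oscillation. Instead, add the hypothesis that $\beta(\infty)\in[0,\infty]$ exists whenever $P(\infty)=\infty$, or equivalently read the proposition as a classification of such $P$. With it, your outline is complete:
\begin{itemize}
\item the averaging bound gives $\alpha(\infty)=\beta(\infty)\in[0,\infty]$, whose three possible values give (a), (b), (d);
\item in the saturating case, $\alpha\to0$ and $\liminf\beta=0$ give (b) or (c);
\item a non-existent $\lim\beta$ can then only occur for $P(\infty)<\infty$, which is the content of (c).
\end{itemize}
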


\begin{remark}\label{examplesnonlin}\
Let us give some examples illustrating the different cases above. Case (a) is illustrated by Example \ref{exnon1}. Case (b) can be subdivided as:
\begin{itemize}
\item[(b1)] $\frac{\beta(u)}{\alpha(u)}\to \lambda \in (0,1)$ as $u\to \infty$: We can have $P(u)\simeq c u^\lambda$ for $u\to\infty$. Also, Examples \ref{exnon2} and \ref{exnon3} correspond to this case. However, the generic behavior can be more general, as stated in Proposition \ref{prop:behaviorinfty} (b). Taking $P(u)\simeq u^\lambda \log (u)$ as $u\to \infty$ shows that the asymptotic behavior does not have to be a power function here.

\item[(b2)] $\frac{\beta(u)}{\alpha(u)}\to 0$ as $u\to \infty$: $P(u)\simeq \log (u)$ or $P(u)\simeq c-\frac1{u}$ as $u\to \infty$ are included here.

\item[(b3)] $\frac{\beta(u)}{\alpha(u)}\to 1$ as $u\to \infty$: $P(u)\simeq u^{1-\frac1{\sqrt{\log(u)}}}$ as $u\to \infty$ is included here.

\item[(b4)] It is also possible that the limit of $\frac{\beta(u)}{\alpha(u)}$ as $u\to \infty$ does not exist. 
\end{itemize}
For case (c) one can construct a nonlinearity $P(u)$ that saturates with $P(\infty)<\infty$, for which we can find two sequences $\{u_n\}_{n\in\N}\nearrow\infty$, $\{\tilde u_n\}_{n\in\N}\nearrow\infty$ such that $\{P'(u_n)\}_{n\in\N}\to 0$ and $P'(\tilde u_n)\geq \epsilon_o$ for all $n\in\N$ for some $\epsilon_o>0$ (see Figure \ref{remark62c}).
\end{remark}

\begin{figure}[htbp]
\begin{center}
\includegraphics[width=6cm]{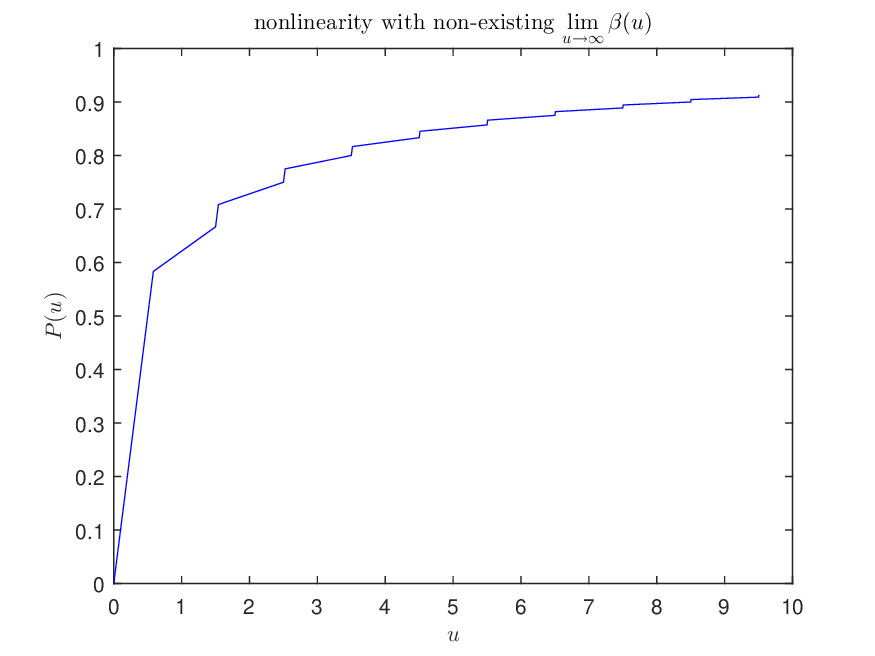}\hfill\includegraphics[width=6cm]{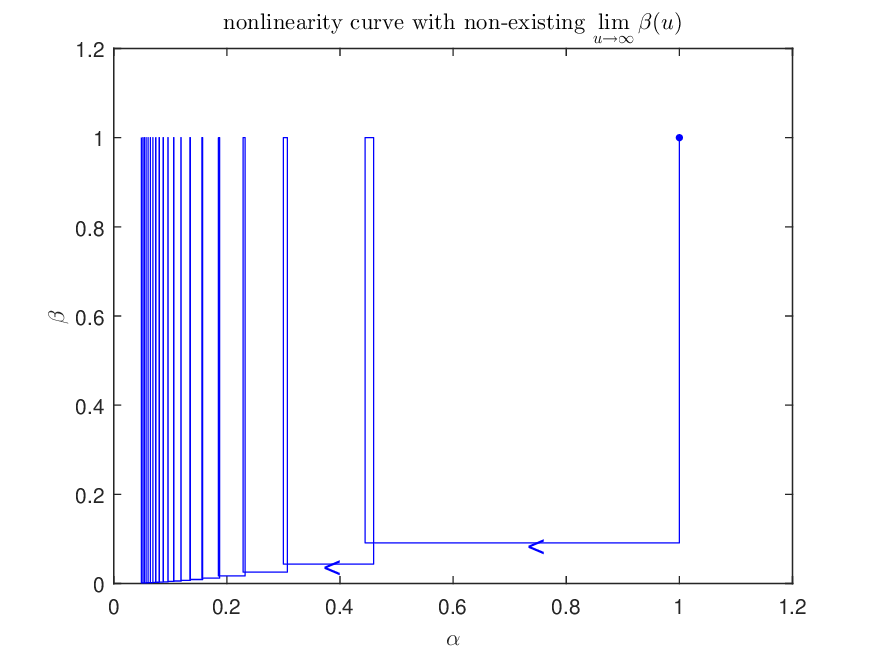}
\end{center}
%\vspace{-0.3cm}
\caption{\label{remark62c} {\footnotesize This nonlinearity $P(u)$ illustrates Proposition \ref{prop:behaviorinfty}(c) and Remark \ref{examplesnonlin}(c). We use $u_n=n$ with $P'(u_n)=\frac{1}{2(n+1)(n+2)-1}$ and $\tilde u_n=n-0.5$ with $P(\tilde u_n)=\frac{n}{n+1}$, $P'(\tilde u_n+)=1$. The right figure shows that $\liminf_{u\to\infty} \beta(u)=0$, $\limsup_{u\to\infty} \beta(u)=1$.\\
For simplicity we used a $P$ that is continuous but only piecewise linear. But a small regularization of it will be $C^1$ with $\alpha(u)$ and $\beta(u)$ behaving almost the same. The arrows indicate the orientation of $\alpha$ and $\beta$ w.r.t.\ $u$.}}
\end{figure}

We now classify our nonlinearities according to their behavior at the origin. We have the following three cases:
\begin{itemize}
\item[1.] {\bf Regular non-degenerate diffusions.} We say that $P(u)$ is a regular non-degenerate diffusion if $\alpha(0)=\beta(0)=P'(0+)\in(0,\infty)$. Notice that the nonlinearity curve emerges from the point $(\alpha(0),\alpha(0))$ on the diagonal. The nonlinearity curve has linear behavior at the origin. More precisely, $P(u)\simeq \alpha(0) u$ for $u\to 0+$.

\item[2.] {\bf Singular non-degenerate diffusions.} We say that $P(u)$ is a singular non-degenerate diffusion if $\alpha(0)=\beta(0)=P'(0+)=\infty$. Notice that the starting point of the nonlinearity curve is at infinity and the behavior at the origin of $P(u)$ is superlinear since $\alpha(0)=\infty$. 

\item[3.] {\bf Degenerate diffusions.} As introduced before, we say that $P(u)$ is a degenerate diffusion if $\alpha(0)=\beta(0)=P'(0+)=0$. Notice that the starting point of the nonlinearity curve is at the origin and the behavior at the origin of $P(u)$ is sublinear since $\alpha(0)=0$. 
\end{itemize}

\begin{remark}%\label{examplesnonlinsing}\
The linear diffusion $P(u)=Du$ is the simplest regular non-degenerate diffusion.
The archetypal example of a singular non-degenerate diffusion is the fast-diffusion equation with $P(u)=u^m$, $0<m<1$. Similarly, the archetypal example of a degenerate diffusion is the porous-medium equation with $P(u)=u^m$, $m>1$. In the last two cases the nonlinearity curve corresponds to the ray $\beta=m\alpha$, $\alpha\ge0$. 
\end{remark}

%%%%%%%%%%%%%%%%%%%%%%%%%%%%%%%%%%%%%%%%%%%%%%%%%%%
\section{Admissible relative entropies for linear diffusion equations}
\label{sct:linear}
\setcounter{equation}{0}

In this section we shall revisit the class of admissible entropies for linear Fokker-Planck equations that make the entropy method (or Bakry-\'Emery approach) possible. It will turn out that exactly this family provides also the relevant entropies for many non-degenerate nonlinear diffusions (see Remark \ref{remlin}-(d) below). Moreover, they are a subset of admissible entropies for some degenerate diffusion equations.

In \cite{BaEm84, AMTU} the entropy method for linear Fokker-Planck equations of the form 
\be\label{linFP}
  \frac{\partial u}{\partial t} = \nabla\cdot(u\nabla V(x)+D\nabla u)\,,
\ee
with some diffusion constant $D>0$, was developed. It applies to the following relative entropies.

\begin{definition}
\label{d:2.1}
Let $\psi\in C({\R^+_0})\cap C^4(\R^+)$
satisfy the conditions
\begin{equation*}
  \psi(1)=\psi'(1)=0,
\end{equation*}
\begin{equation*}
  \psi''\ge 0, \quad\psi''\not\equiv 0\quad\mbox{on } \R^+,
\end{equation*}
\begin{equation}
  \label{e:2.11c}
  (\psi''')^2\le\frac12 \psi''\psi^{IV}\quad\mbox{on } \R^+.
\end{equation}

Let $u_1,\,u_2 \in L^1_+(\Rd)$ with $\int u_1dx=\int u_2dx=1$
and
$u_1/u_2\in {\R^+_0} \;u_2(dx)-$ a.e.
Then
\begin{equation}
  \label{e:2.12}
   H_\psi(u_1|u_2):=\intRd\psi\left(\frac{u_1}{u_2}\right)\,u_2\;dx \ge0
\end{equation}
is called an \emph {admissible relative entropy} (of $u_1$ with respect
to $u_2$) with {\em generating function} $\psi$.
\end{definition}

The condition \eqref{e:2.11c} is equivalent to
\begin{equation}
 \label{e4:2.130}
 \left( \frac 1{\psi''}\right)^{''} \le 0
\end{equation}
whenever $\psi''>0$. Since \eqref{e4:2.130} excludes positive poles of $\frac 1{\psi''}$ we conclude $\psi''>0$ on $\R^+$. Thus admissible entropies are generated by strictly convex functions $\psi$.

The most typical examples of such admissible entropies are the \emph{$p$-entropies} \cite{AMTU, Bec89}, defined by
\begin{equation}\label{psi-p}
  \psi_p(\sigma):=\frac{\sigma^p-1-p(\sigma-1)}{p-1}\,,\qquad\mbox{for } 1<p\le2\,,
\end{equation}
and the logarithmic entropy as its $p\to1$---limit:
$$
  \psi_1(\sigma):=\sigma\log\sigma-\sigma+1\,.
$$
A simple computation shows that these $p$-entropies satisfy the following monotonicity:
$$
  \psi_{p_1}(\sigma)\le \psi_{p_2}(\sigma)\,,\qquad\mbox{for } p_1\le p_2\,, \:\:\sigma\ge0.
$$
For scaled variants of this family see \S2.2 of \cite{AMTU}.\\

The goal of this section is to rewrite the admissibility condition \eqref{e:2.11c} in a form that is more practical for dealing with nonlinear diffusion equations in the subsequent section. We start with the relation \eqref{eq:17} and recall that $g'>0$. Hence we substitute 
\be\label{entrof}
  g'(\xi)=e^{\xi/D}\psi''(e^{\xi/D})=:e^{f(\xi)}\,,\qquad\mbox{for } \xi\in(\xi_{\min},\infty)\,.
\ee
Then, for $D=1$ the condition \eqref{e:2.11c} is equivalent to
\be\label{f-inequ}
  f''(\xi) + f'(\xi) - f'(\xi)^2\ge0\,,\qquad\mbox{for } \xi\in (\xi_{\min},\infty)\,,
\ee
with $\xi_{\min}=-\infty$ for linear equations. For general $D>0$ the condition \eqref{f-inequ} is replaced by 
\be\label{f-inequ2}
 D f''(\xi) + f'(\xi) - D f'(\xi)^2\ge0\,,\qquad\mbox{for } \xi\in (\xi_{\min},\infty)\,.
\ee

For the linear Fokker-Planck equations \eqref{linFP} written as $\partial_t u = \nabla\cdot(u\nabla\xi)$, we have $\xi=D\log u+V(x)$ which can take values in all of $\R$. But for the nonlinear diffusion equations of \S\ref{sec:gen-entropies}, $\xi$ may vary only in semi-infinite intervals. Hence, the following lemma considered for $D=1$ will take into account both cases.

\begin{lemma}\label{f-solutions}\
\begin{enumerate}
\item[(a)] Global $C^2$-solutions to the differential inequality \eqref{f-inequ}, i.e.\ for $\xi\in\R$, satisfy $0\le f'\le1$. 
If $f'(\xi_0)= 1$ and the condition \eqref{f-inequ} holds for all $\xi \in \R$, then $f''(\xi)= 0$ for all $\xi\in (-\infty,\xi_0]$.
If $f'(\xi_0)= 0$ and the condition \eqref{f-inequ} holds, then $f''(\xi)= 0$ for all $\xi\in [\xi_0,\infty)$.

\item[(b)] $C^2$-solutions to \eqref{f-inequ} that exist only on the semi-infinite interval $\xi\in(\xi_1,\infty)$ with $\xi_1>-\infty$ (and are non-extendable) satisfy $f'\le0$. 
If $f'(\xi_0)= 0$ and the condition \eqref{f-inequ} holds for all $\xi \in (\xi_1,\infty)$ with $\xi_1>-\infty$, then $f''(\xi)= 0$ for all $\xi\in [\xi_0,\infty)$.
\end{enumerate}
As a consequence, the admissible entropies for linear diffusion equations satisfy $0\leq f'\leq 1$. 
\end{lemma}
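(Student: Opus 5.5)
The plan is to reduce the second-order inequality \eqref{f-inequ} to a first-order Riccati-type inequality for $y:=f'$, namely
\begin{equation*}
  y'(\xi) \ge y(\xi)^2 - y(\xi) = y(\xi)\big(y(\xi)-1\big),
\end{equation*}
and then to argue by comparison with the explicit solutions of the Riccati equation $z'=z(z-1)$. These are the constants $z\equiv0$, $z\equiv1$, and
\begin{equation*}
  z(\xi)=\frac{1}{1-Ke^{\xi}}, \qquad K\in\R.
\end{equation*}
The equilibria $0$ and $1$ split the phase line. Above $1$ the right-hand side is positive and the comparison solutions blow up in finite forward time. Below $0$ they blow up to $-\infty$ in finite backward time, since $z\sim 1/(1-Ke^\xi)$ with $K>1$ has a pole at a finite $\xi$ to the left.

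For part (a), suppose first that $y(\xi_0)>1$ for some $\xi_0$. Since $y'\ge y(y-1)>0$ wherever $y>1$, $y$ stays above $1$ for $\xi>\xi_0$. Comparison with the Riccati solution through $(\xi_0,y(\xi_0))$, which blows up at the finite point $\xi_0-\log\big(1-1/y(\xi_0)\big)$, forces $y$ to blow up in finite time as well. This contradicts global existence.

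Now suppose $y(\xi_0)<0$. On $\{y<0\}$ we have $y'\ge y^2-y>0$, so $y$ is increasing there. Going backwards in $\xi$, $y$ therefore stays negative and decreases. Writing $w=-y>0$, the inequality becomes $w'\le -w(w+1)$, i.e.\ $-w'\ge w^2+w$ backward in time. This gives blow-up to $-\infty$ of $y$ at a finite $\xi<\xi_0$, again contradicting global existence on $\R$. Hence $0\le f'\le1$.

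The rigidity statements follow from the same inequality combined with these bounds. Suppose $y(\xi_0)=1$. For $\xi\le\xi_0$ we have $y\le1$, so $y(y-1)\le0$ and $y'\ge y(y-1)$. If $y(\xi_1)<1$ for some $\xi_1<\xi_0$, I would argue via $h=1-y\ge0$, which satisfies $h'\le y h\le h$, i.e.\ $(e^{-\xi}h)'\le0$. Thus $h(\xi_0)\ge e^{\xi_0-\xi_1}h(\xi_1)>0$, a contradiction, so $y\equiv1$ and $f''=0$ on $(-\infty,\xi_0]$.

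Similarly, suppose $y(\xi_0)=0$ and consider $\xi\ge\xi_0$. There $y'\ge -y(1-y)\ge -y$, so $(e^{\xi}y)'\ge0$ and $y\ge0$; we also have $y\le 1$. To show $y$ cannot become positive, I would use the full bound $y'\ge y(y-1)$ together with $y\ge 0$ from (a). This is the delicate point: the lower bound $y'\ge y^2-y$ does not obviously prevent $y$ from growing.

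So instead I would run the comparison in the other direction. If $y(\xi_1)>0$ for some $\xi_1>\xi_0$, then the Riccati solution through $(\xi_1,y(\xi_1))$ with $0<y(\xi_1)<1$, followed backwards, satisfies $z\to$ a positive limit. I expect this step to be the main obstacle and would double-check that it indeed relies on global existence on $\R$ rather than on the local data.

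For part (b), the solution exists only on $(\xi_1,\infty)$, so only forward blow-up is excluded. Non-extendability at $\xi_1$ is compatible with $y\to-\infty$ there. The forward argument above still rules out $y>1$. To obtain the stronger claim $f'\le0$, I would show that any point with $0<y<1$ propagates backward, since $y'\ge y(y-1)$ keeps $y$ controlled on the left, which would make $y$ bounded near $\xi_1$ and $f$ extendable. This contradicts non-extendability. The rigidity for $y(\xi_0)=0$ is handled as in (a).

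The concluding sentence then follows by applying (a) with $\xi_{\min}=-\infty$, after scaling $D$ to $1$.
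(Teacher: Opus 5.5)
Your proof that global solutions satisfy $0\le f'\le 1$ is the paper's argument, and it is fine. You compare $y=f'$ with solutions of $y'=y(y-1)$, getting a lower bound forward and an upper bound backward, and use finite-time blow-up above $1$ (forward) and below $0$ (backward). The remaining parts have genuine gaps, and they cannot be closed, because the statement as printed is wrong there.

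\emph{Rigidity.} In the case $f'(\xi_0)=1$ your inequality is reversed. The bound $(e^{-\xi}h)'\le 0$ means $e^{-\xi}h$ is non-increasing, so for $\xi_1<\xi_0$ you only get $h(\xi_0)\le e^{\xi_0-\xi_1}h(\xi_1)$, which is compatible with $h(\xi_0)=0$. No repair is possible. Every non-decreasing $C^1$ function $y$ with values in $[0,1]$ satisfies $y'\ge 0\ge y(y-1)$. For example, $f'(\xi)=1-e^{1/(\xi-\xi_0)}$ for $\xi<\xi_0$ and $f'(\xi)=1$ for $\xi\ge\xi_0$ gives a smooth global solution of \eqref{f-inequ} with $f'(\xi_0)=1$ and $f''>0$ on $(-\infty,\xi_0)$. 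Likewise, $f'\equiv0$ on $(-\infty,\xi_0]$ followed by a non-decreasing $C^1$ continuation with values in $[0,1]$ that becomes positive violates the claim for $f'(\xi_0)=0$. These are exactly the configurations in items (b) and (c) of the remark after the lemma: the intervals in the statement are swapped.

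The true statements are $f''\equiv0$ on $[\xi_0,\infty)$ if $f'(\xi_0)=1$, and $f''\equiv0$ on $(-\infty,\xi_0]$ if $f'(\xi_0)=0$. They follow from \eqref{f-ineq} with the constant comparison solutions $1$ and $0$. They also follow from your own inequalities integrated in the other direction:
\begin{itemize}
\item $(e^{-\xi}h)'\le0$, $h\ge0$ and $h(\xi_0)=0$ give $h\equiv0$ for $\xi\ge\xi_0$;
\item $(e^{\xi}y)'=e^{\xi}(y'+y)\ge e^{\xi}y^2\ge0$, $y\ge0$ and $y(\xi_0)=0$ give $y\equiv0$ for $\xi\le\xi_0$.
\end{itemize}
So the ``delicate point'' you flag is not a technical obstacle. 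The forward claim for $f'(\xi_0)=0$ is false, and the backward Riccati comparison you sketch yields no contradiction.

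\emph{Part (b).} The step ``a point with $0<y<1$ keeps $y$ controlled on the left'' fails. Backward in $\xi$, \eqref{f-inequ} only gives the upper bound $y\le z$, never a lower bound. A solution can rise from $-\infty$ at a finite $\xi_1$ (e.g.\ along $y'=y^2-y+1$), cross zero, and then level off as a non-decreasing $C^1$ function with values in $(0,1)$. It is non-extendable to the left, yet $f'>0$ on a half-line. Hence $f'\le0$ cannot hold on the whole interval, and the claim has to be read as in the paper's proof: $f'$ is negative on part of $(\xi_1,\infty)$.

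What you can prove is the following. First, $y\le1$, since otherwise there is forward blow-up. Second, $y'\ge-\tfrac14$, so $y$ has a limit in $[-\infty,1]$ as $\xi\downarrow\xi_1$. Consequently a solution that blows up at $\xi_1$ must have $y\to-\infty$, hence $y<0$ near $\xi_1$. The rigidity claim in (b) is false as well: at the zero of $f'$ in the example above, $f''=1>0$.
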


\begin{figure}[htbp]
\begin{center}
\includegraphics[width=10cm]{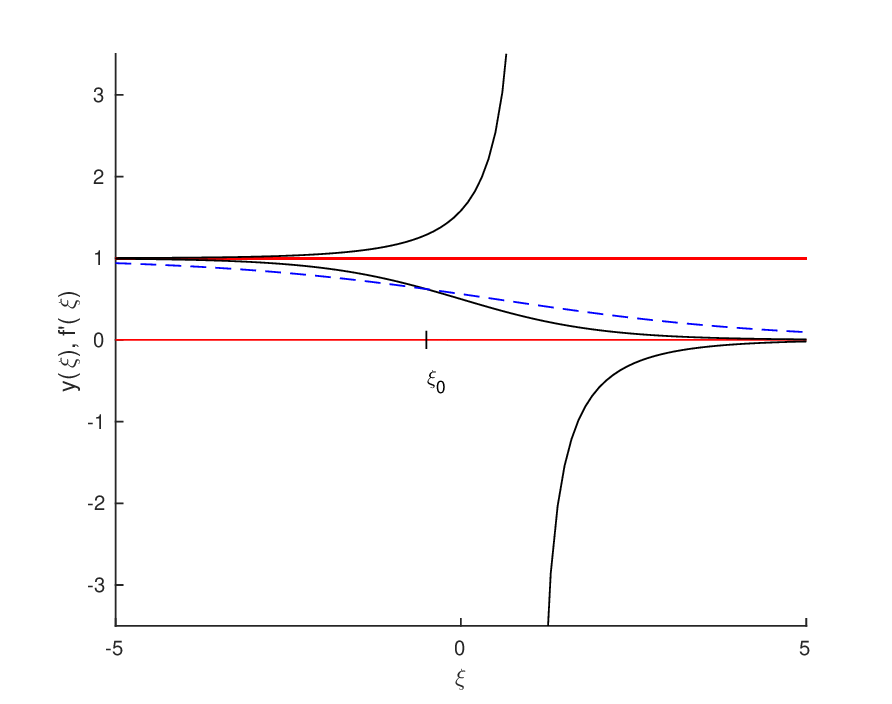}
\end{center}
%\vspace{-0.3cm}
\caption{\label{y_stability} {\footnotesize This plot visualizes the arguments in the proof of Lemma \ref{f-solutions}. Plotted are sample solutions $y(\xi)$ to the ODE \eqref{y-ODE} (solid black curves) and a solution $f'(\xi)$ to the differential inequality \eqref{f-inequ} (dashed blue curve). The two red horizontal lines represent the two critical values of $f'$: 0, 1. [colors only online]}}
\end{figure}

\begin{proof}
(a) We shall use here a simple comparison principle, based on the case of equality in \eqref{f-inequ}: With the substitution $y(\xi)=f'(\xi)$ we consider the ODE
\be\label{y-ODE}
  y'=y(y-1)\,,\qquad\mbox{for } \xi\in\R\,,
\ee
a Bernoulli equation with the general solution $y(\xi)=\frac{1}{1+ce^\xi}$ with $c\ge 0$, as well as $y\equiv0$.
A simple stability analysis shows that all of its solutions with an initial condition $y_0>1$ or $y_0<0$ diverge at a finite value of $\xi$ and are hence not global. Solutions with an initial condition $0\le y_0\le 1$ are global and monotonically decreasing.

Now we compare $y(\xi)$ to a solution $f'(\xi)$ of the differential inequality \eqref{f-inequ}, both having the same initial condition $f'(\xi_0)=y_0$. They satisfy
\begin{equation}\label{f-ineq}
  f'(\xi)\ge y(\xi)\,,\quad\mbox{for } \xi>\xi_0\,;\qquad 
  f'(\xi)\le y(\xi)\,,\quad\mbox{for } \xi<\xi_0\,. 
\end{equation}
Hence, no solution to \eqref{f-inequ} with an initial condition $y_0>1$ or $y_0<0$ can be global. 

\noindent
(b) Due to (a), solutions $f'(\xi)$ with an initial condition $0\le y_0\le1$ stay within this bound and can be extended to all of $\R$. Due to the first inequality in \eqref{f-ineq}, solutions with an initial condition $y_0>1$ cannot exist up to $\xi=\infty$. Hence, $f'$ must 
here also be negative on (a part of) $(\xi_{\min},\infty)$ but, as a consequence of \eqref{f-ineq}, it cannot be extended to all of $\R$. For an example cf.\ to the solid curve in Fig.\ \ref{y_stability}.
\end{proof}

We just proved that the admissible entropies for linear diffusion equations satisfy $0\leq f'\leq 1$, since $\xi_{\min}=-\infty$. However, this will not be the case for degenerate diffusion equations since $\xi_{\min}>-\infty$. We will elaborate on this in Remark \ref{remlin} and Section \ref{sec-quasilin}.

\begin{remark}
\begin{enumerate}
\item[(a)] The two critical values of $f'$ have the following interpretation for the entropy generators $\psi$: $f'\equiv0$ implies $\psi''(\sigma)=\frac{c}{\sigma}$ for some constant $c>0$, and the corresponding entropy is logarithmic. 
$f'\equiv1$ implies $\psi''=const$, and the corresponding entropy is quadratic. 
\item[(b)] $f'$ may satisfy $f'\equiv0$ on some interval $(-\infty,\xi_*]$ and $0<f'\le1$ on $(\xi_*,\infty)$. Then, the corresponding entropy generator $\psi(\sigma)$ is logarithmic for $\sigma\le e^{\xi_*}$. 
\item[(c)] $f'$ may satisfy $0\le f'<1$ on some interval $(-\infty,\xi_*)$ and $f'\equiv1$ on $[\xi_*,\infty)$. Then, the corresponding entropy generator $\psi(\sigma)$ is quadratic for $\sigma\ge e^{\xi_*}$. 
\item[(d)] As a combination of (b) and (c), a logarithmic entropy generator (for $\sigma$ small) may be connected to a quadratic behavior (for $\sigma$ large).
\end{enumerate}
\end{remark}

We first recover the classical examples of $p$-entropies. For simplicity we shall use here $D=1$.

\begin{example}\label{p-entropy}
For the $p$-entropies we start from \eqref{pentropy}:
$$
  g_p'(\xi) = pe^{(p-1)\xi}\,,\qquad 1\le p\le2\,,
$$
which yields $f_p(\xi)=(p-1)\xi+\log p$, $f_p'(\xi)=p-1\in[0,1]$.
Hence, $f_p''\equiv0$ if and only if $H_\psi$ is a $p$-entropy.

We remark that $f_p$ are particularly simple solutions of \eqref{f-inequ}, but they are not generic (e.g.\ maximal) solutions to it.\qed
\end{example}

Next we present a large family of entropies, all for $\xi\in\R$. 

\begin{example}
We consider the functions $f'(\xi)=\frac1{1+c_1e^{a\xi}}$ with $c_1\ge0$, $a\le1$ and $a\ne0$ that all satisfy \eqref{f-inequ}. Hence 
$$
  g'(\xi)=e^\xi\psi''(e^\xi)=e^{f(\xi)}=c_2e^\xi\frac1{(1+c_1e^{a\xi})^{1/a}}\,;
  \quad\mbox{ and } \psi''(\sigma)=c_2\big(1+c_1\sigma^{a}\big)^{-1/a}\,,
$$
for any $c_2>0$. 

For $a=1$ this represents the borderline cases for the inequalities \eqref{f-inequ} and \eqref{e4:2.130}, in the sense of satisfying the corresponding equalities, i.e., $y=f'$ then is a global solution of \eqref{y-ODE}. 
For $c_1>0$ this yields a family of logarithmic entropies with the generator
$$
  \psi(\sigma)=c_3\left[(\sigma+c_4) \log\frac{\sigma+c_4}{1+c_4}-\sigma+1\right]\,,
$$
for $c_3>0$, $c_4\ge0$, which was already presented in \cite{AMTU}.

For $a=\frac12$ (with $c_1=c_2=1$, e.g.) it yields
$$
  \psi(\sigma)=6\sqrt\sigma + 2(\sigma-3)\log(1+\sqrt \sigma)-2(1+\log2)\sigma+6\log2-4\,,
$$
and for $a=-1$ it yields
$$
  \psi(\sigma)=c_2\left(\frac{\sigma^2}{2} + c_1(\sigma\log\sigma-\sigma+1)-\sigma+\frac12\right)\,.
$$
For general $a\le1$, the entropy generator $\psi(\sigma)$ is a hypergeometric function.

For all $a<0$, the functions $f$ are convex on $\R$, which will be relevant in Theorem \ref{fconvex} below.\qed
\end{example}

In preparation for a later discussion we next show another example where $f$ is also a convex function on $\R$:

\begin{example}
We consider the functions $f'(\xi)=\frac{A}{1+c_1e^{-\xi}}$ with $c_1\ge0$ and $A>0$. Hence 
\[
  g'(\xi)=e^\xi\psi''(e^\xi)=e^{f(\xi)}=c_2(e^\xi+c_1)^A\,;
  \quad\mbox{ and } \psi''(\sigma)=c_2\frac{(\sigma+c_1)^A}{\sigma}\,,
\]
for any $c_2>0$.
For $A=2$, e.g., this yields the entropy generators
\[
  \psi(\sigma)=c_2\left(\frac{\sigma^3-3\sigma+2}{6}+c_1(\sigma-1)^2+c_1^2(\sigma\log\sigma-\sigma+1)
  \right)\,.
\]
\qed
\end{example}

Finally, we give an example of a nontrivial non-convex $f$ satisfying \eqref{f-inequ} on $\R$.

\begin{example}
We consider the functions
\[
  g'(\xi)=\exp\left(\frac{1}{\sqrt2}\arctan\big(\frac{\xi}{\sqrt2}\big)\right)\,;\quad\mbox{ and } \psi''(\sigma)=\frac{\exp\left(\frac{1}{\sqrt2}\arctan\big(\frac{\log\sigma}{\sqrt2}\big)\right)}{\sigma}\,.
\]
\qed
\end{example}

In the next sections we shall extend the concept of \emph{admissible entropies} to nonlinear diffusion equations, analyzing the interplay of nonlinearities and (corresponding) admissible relative entropies such that the entropy method is applicable.

%%%%%%%%%%%%%%%%%%%%%%%%%%%%%%%%%%%%%

\section{Dissipation estimates for nonlinear diffusions and general entropies}\label{sec:BakEm}
\setcounter{equation}{0}
Now, we proceed with the computation of the derivative of the dissipation of the general relative entropy defined in \eqref{tderent} or \eqref{tderent2} for general nonlinearities $P(u)$.
Throughout this section, we assume that the potential $V$ and the nonlinearity $P(u)$ satisfy {\bf (HV1)}-{\bf (HV2)} and {\bf (HP1)}-{\bf (HP2)}, respectively, together with {\bf (HPV)}. 
Our goal is to formulate sufficient (and close to optimal) conditions on $g$ under which the dissipation inequality 
\begin{align}
  \label{eq:alphaomega}
  -\frac{d}{dt}J_g(u(t)) - 2\lambda J_g(u(t)) \ge 0
\end{align}
holds for all sufficiently regular solutions to \eqref{GFP}.
The constant $\lambda>0$ in \eqref{eq:alphaomega} is the one from our general hypothesis {\bf (HV2)};
we are neither interested in improving that constant (as could be done, e.g., with perturbation arguments \`a la Holley-Stroock \cite{HolSto87, AMTU}),
nor in the validity of \eqref{eq:alphaomega} for sub-optimal constants $\lambda'<\lambda$.

Validity of \eqref{eq:alphaomega} for a given pair $(P(u),g(\xi))$ leads to a variety of consequences via the so-called Bakry-\'Emery procedure, 
see \cite{BaEm84,BaEm85a,BaEm85,AMTU} for the linear case and \cite{CJMTU} for the nonlinear case. 
Some of these consequences are:
\begin{enumerate}
\item[(a)] Along each solution to \eqref{GFP}, 
  the dissipation $J_g$ goes to zero exponentially fast at rate $2\lambda$,
  \begin{align}\label{mainconclusion1}
    J_g(u(t)) \le e^{-2\lambda(t-s)}J_g(u(s)) \quad \text{for all $t\ge s\ge 0$},
  \end{align}
  where we assume that $J_g(u(s))<\infty$.
\item[(b)] For each solution to \eqref{GFP}, along which ${\mathcal H}_g(u(t)|\uinf)$ goes to zero as $t\to\infty$,
  this convergence is exponentially fast with rate $2\lambda$,
  \begin{align}\label{mainconclusion2}
    {\mathcal H}_g(u(t)|\uinf) \le e^{-2\lambda(t-s)}{\mathcal H}_g(u(s)|\uinf) \quad \text{for all $t\ge s\ge 0$},
  \end{align}
  where we assume that ${\mathcal H}_g(u(s)|\uinf)<\infty$.
\item[(c)] For the initial condition $\bar u=u_0$ of each solution to \eqref{GFP}, along which ${\mathcal H}_g(u(t)|\uinf)\to0$,
  the following functional inequality holds:
  \begin{align}\label{mainconclusion3}
    {\mathcal H}_g(\bar u|\uinf) \le \frac1{2\lambda}J_g(\bar u).
  \end{align}
\end{enumerate}

In the following computations we assume that all terms are smooth and that integration by parts is allowed. This can be made rigorous by approximations from bounded domain cases with no-flux boundary conditions as in \cite{otto,CJMTU} in all nonlinearity types. For the degenerate diffusion cases, this needs a further approximation of the degenerate diffusion nonlinearity $P(u)$ by a sequence of non-degenerate diffusion nonlinearities behaving linearly for small values of $u$. This procedure has been properly done in \cite[Section 5]{otto} for the case of power-law nonlinearities and quadratic confinement potentials, and generalized for nonlinearities $P(u)$ and confinement potentials under the assumptions {\bf (HP1)}-{\bf (HP2)} and {\bf (HV1)}-{\bf (HV2)} in \cite[Section 3]{CJMTU}. In this same spirit, let us remark that the subsequent computations involve up to third derivatives of $V$ (see \eqref{V-3rd}) covered by the assumption {\bf (HV1)}. Note, however, that this higher regularity is only required for the intermediate steps, but not for the final remainder term in \eqref{jjj}, so it could be reduced to two derivatives of $V$ by approximation, although we do not pursue this here. Moreover, in the above mentioned approximations of the diffusion equation, such intermediate steps would be carried out for approximating nonlinearities $P$ by non-degenerate diffusions. Let us finally mention that a direct proof of the inequality \eqref{mainconclusion3} for power-law cases was obtained in \cite{DD02} in terms of Gagliardo-Nirenberg inequalities with sharp constants.

\subsection{Generalized Bakry-\'Emery procedure}\label{sec:4.1}
We differentiate $J_g$ in $t$ and integrate by parts in two of the three integrals to get for $t>0$
\begin{align}
  -\frac12\frac{d}{dt} J_g 
  =&\, -\frac12\int_{\R^d} \partial_tu g'(\xi)|\nabla\xi|^2 \;dx
  - \frac12\int_{\R^d} u''(u)\partial_t u g''(\xi)|\nabla\xi|^2 \;dx\nonumber\\
  &- \int_{\R^d} ug'(\xi)\nabla\xi\cdot\nabla\big(\phi'(u)\partial_tu\big) \;dx\nonumber\\
  = &\, \frac12\int_{\R^d} u\nabla\xi\cdot\nabla\big[g'(\xi)|\nabla\xi|^2\big] \;dx - \frac12\int_{\R^d} u\phi'(u)g''(\xi)|\nabla\xi|^2\big[\nabla u\cdot\nabla\xi + u\Delta\xi\big] \;dx\nonumber\\
  &+ \int_{\R^d} \phi'(u)\nabla\cdot\big[ug'(\xi)\nabla\xi\big]\big[\nabla u\cdot\nabla\xi + u\Delta\xi\big]\;dx\nonumber\\
  =&\, \int_{\R^d} ug'(\xi)\nabla\xi\cdot\nabla^2\xi \cdot \nabla\xi \;dx
  + \frac12 \int_{\R^d} ug''(\xi)|\nabla\xi|^4 \;dx\nonumber\\
  & -\frac12\int_{\R^d} g''(\xi)\nabla P(u)\cdot\nabla\xi|\nabla\xi|^2 \;dx- \frac12\int_{\R^d} u^2\phi'(u)g''(\xi)|\nabla\xi|^2\Delta\xi \;dx\nonumber\\
  & + \int_{\R^d} u\phi'(u)g''(\xi)|\nabla\xi|^2\big[\nabla u\cdot\nabla\xi + u\Delta\xi\big]\;dx\nonumber\\
& + \int_{\R^d} \phi'(u)g'(\xi)\nabla\cdot\big[u\nabla\xi\big]\big[\nabla u\cdot\nabla\xi + u\Delta\xi\big]\;dx\,.
\label{fin1}  
\end{align}
Here and in the sequel we use the notation $a\cdot b:=a^Tb$ and $a\cdot C \cdot b:=a^T C b$ for vectors $a,\,b$, and square matrices $C$.
Let us identify the ``good term'' if convexity of the potential {\bf (HV2)} is assumed.
For the first term in the expression \eqref{fin1}, we obtain
\begin{equation}
  \label{eq:nonopt}
  \begin{split}
    \int_{\R^d} ug'(\xi)\nabla\xi\cdot\nabla^2\xi\cdot \nabla\xi \;dx
    &= \int_{\R^d} ug'(\xi)\nabla\xi\cdot\nabla^2 V \cdot \nabla\xi \;dx\\
    &\quad+ \int_{\R^d} ug'(\xi)\nabla\xi\cdot\nabla^2\phi(u)\cdot\nabla\xi \;dx\\
    &\ge \lambda J_g + \int_{\R^d} ug'(\xi)\nabla\xi\cdot\nabla\big[\nabla \phi(u)\big]\cdot\nabla\xi \;dx\,.
  \end{split}
\end{equation}
Plugging it back in \eqref{fin1}, we deduce for $t>0$
\begin{align}
  -\frac12\frac{d}{dt} J_g - \lambda J_g
  \geq&\,  \int_{\R^d} ug'(\xi)\nabla\xi\cdot\nabla\big[\nabla \phi(u)\big]\cdot\nabla\xi\;dx
  + \frac12 \int_{\R^d} ug''(\xi)|\nabla\xi|^4 \;dx\nonumber\\&-\frac12\int_{\R^d} g''(\xi)\nabla P(u)\cdot\nabla\xi|\nabla\xi|^2\;dx \nonumber\\
  & - \frac12\int_{\R^d} u^2\phi'(u)g''(\xi)|\nabla\xi|^2\Delta\xi\;dx + \int_{\R^d} u\phi'(u)g''(\xi)|\nabla\xi|^2\big[\nabla u\cdot\nabla\xi + u\Delta\xi\big] \;dx\nonumber\\
  & + \int_{\R^d} \phi'(u)g'(\xi)\nabla\cdot\big[u\nabla\xi\big]\big[\nabla u\cdot\nabla\xi + u\Delta\xi\big]\;dx\nonumber\\
  := &\,I_1+I_2+I_3+I_4+I_5+I_6.
 \label{fin2}
\end{align}
Integration by parts in the first term $I_1$ of \eqref{fin2} gives
\begin{align}\label{V-3rd}
  I_1&= - \int_{\R^d} g'(\xi) \nabla\cdot(u\nabla\xi) \big[\nabla \phi(u)\cdot\nabla\xi \big] \;dx
  - \int_{\R^d} ug''(\xi) |\nabla\xi|^2 \big[\nabla \phi(u)\cdot\nabla\xi \big] \;dx\ \nonumber \\
  &\quad - \int_{\R^d} ug'(\xi)\nabla\xi\cdot\nabla^2\xi\cdot\nabla \phi(u) \;dx\nonumber \\
  &= -\int_{\R^d} \phi'(u)g'(\xi) \nabla\cdot(u\nabla\xi)(\nabla u\cdot\nabla\xi)\;dx
  - \int_{\R^d} u\phi'(u)g''(\xi) |\nabla\xi|^2 (\nabla u\cdot\nabla\xi)\;dx\nonumber  \\
  &\quad - \int_{\R^d} g'(\xi) \nabla P(u)\cdot\nabla^2\xi\cdot\nabla\xi \;dx\nonumber \\
  &= -\int_{\R^d} \phi'(u)g'(\xi) \nabla\cdot(u\nabla\xi)(\nabla u\cdot\nabla\xi)\;dx
  - \int_{\R^d} u\phi'(u)g''(\xi) |\nabla\xi|^2 (\nabla u\cdot\nabla\xi)\;dx \nonumber \\
  &\quad 
  + \int_{\R^d} P(u)g''(\xi) \nabla\xi\cdot\nabla^2\xi\cdot\nabla\xi \;dx
  + \int_{\R^d} P(u)g'(\xi)\|\nabla^2\xi\|^2\;dx \nonumber \\
  &\quad+ \int_{\R^d} P(u)g'(\xi)\nabla\xi\cdot\nabla\Delta\xi\;dx\,.
\end{align}
Here and in the sequel, we will denote by $\|\nabla^2\xi\|$ the Frobenius norm of $\nabla^2\xi$.
For the third term in expression \eqref{fin2}, we also integrate by parts to obtain
\begin{align*}
  I_3&=\frac12\int_{\R^d} P(u)g'''(\xi)|\nabla\xi|^4 \;dx+ \frac12\int_{\R^d} P(u)g''(\xi)|\nabla\xi|^2\Delta\xi \;dx\nonumber\\
  &\quad+ \int_{\R^d} P(u)g''(\xi)\nabla\xi\cdot\nabla^2\xi\cdot\nabla\xi\;dx\,.
\end{align*}
Substituting $I_1$ and $I_3$ in \eqref{fin2} yields 
\begin{align*}
  -\frac12\frac{d}{dt}J_g -\lambda J_g
  &\ge \int_{\R^d} P(u)g'(\xi)\|\nabla^2\xi\|^2 \;dx+ 2 \int_{\R^d} P(u)g''(\xi)\nabla\xi\cdot\nabla^2\xi\cdot\nabla\xi \;dx\\
  &\quad+ \frac12\int_{\R^d}\big[P(u)g'''(\xi)+ug''(\xi)\big]|\nabla\xi|^4\;dx\\
  &\quad + \frac12\int_{\R^d} \big[P(u)-u^2\phi'(u)\big]g''(\xi)|\nabla\xi|^2\Delta\xi \;dx+ \int_{\R^d} P(u)g'(\xi)\nabla\xi\cdot\nabla\Delta\xi \;dx\\
  &\quad + \int_{\R^d} u^2\phi'(u)g''(\xi)|\nabla\xi|^2\Delta\xi\;dx
  + \int_{\R^d} u\phi'(u)g'(\xi)\nabla\cdot\big[u\nabla\xi\big]\Delta\xi\;dx
\end{align*}
for $t>0$, leading to
\begin{align*}
  -\frac12\frac{d}{dt}J_g -\lambda J_g
  &\ge \int_{\R^d} P(u)g'(\xi)\|\nabla^2\xi\|^2 \;dx+ 2 \int_{\R^d} P(u)g''(\xi)\nabla\xi\cdot\nabla^2\xi\cdot\nabla\xi \;dx\\
  &\quad + \frac12\int_{\R^d}\big[P(u)g'''(\xi)+ug''(\xi)\big]|\nabla\xi|^4 \;dx+\! \int_{\R^d} uP'(u)g'(\xi)\big(\Delta\xi\big)^2dx\\
  &\quad + \frac12 \int_{\R^d} \big[uP'(u)-P(u)\big]g''(\xi)|\nabla\xi|^2\Delta\xi \;dx\\
  &\quad + \int_{\R^d} P(u)g'(\xi)\nabla\xi\cdot\nabla\Delta\xi \;dx+ \int_{\R^d} P(u) g''(\xi)|\nabla\xi|^2\Delta\xi\;dx\\
  &\quad 
  + \int_{\R^d} P'(u)g'(\xi)\big(\nabla u\cdot \nabla\xi\big)\Delta\xi \;dx\,,
\end{align*}
for $t>0$, by rearranging several terms where we used $u\phi'(u)=P'(u)$.
For final simplification, we use that
\begin{align*}
  \nabla\cdot\big[P(u)g'(\xi)\nabla\xi\Delta\xi\big] &= P(u)g'(\xi)\nabla\xi\cdot\nabla\Delta\xi
  + P(u)g''(\xi)|\nabla\xi|^2\Delta\xi\\
  & \quad
  + P'(u)g'(\xi)\big(\nabla u\cdot\nabla\xi\big)\Delta\xi + P(u)g'(\xi)(\Delta\xi)^2\,,
\end{align*}
to substitute the last three terms from above. This leaves us with the final expression
\begin{align}
  -\frac12\frac{d}{dt}J_g - \lambda J_g
  &\ge 
\int_{\R^d} P(u)g'(\xi)\|\nabla^2\xi\|^2 \;dx+ 2 \int_{\R^d} P(u)g''(\xi)\nabla\xi\cdot\nabla^2\xi\cdot\nabla\xi \;dx\nonumber\\
  &\quad + \frac12\int_{\R^d} \big[P(u)g'''(\xi)+ug''(\xi)\big]|\nabla\xi|^4 \;dx\nonumber\\
  &\quad + \frac12\int_{\R^d} \big[uP'(u)-P(u)\big]g''(\xi)|\nabla\xi|^2\Delta\xi \;dx\nonumber\\
  &\quad + \int_{\R^d} \big[uP'(u)-P(u)\big]g'(\xi)(\Delta\xi)^2 \;dx=: \mathcal{R}\,,
  \label{jjj}
\end{align}
for $t>0$. 

We now use the substitution \eqref{entrof} of the generators of general entropies. 
While \eqref{entrof} was used in the linear case only for $D=1$, we will use the same transformation 
\begin{equation}\label{entrofgen}
    g'(\xi)=e^{f(\xi)} \qquad \mbox{for all } \xi\in(\xi_{\min},\infty)\,,
\end{equation} 
as in \eqref{entrof} for the general case since $g'>0$. This substitution yields
\begin{equation}\label{finform1}
  -\frac12\frac{d}{dt}J_g - \lambda J_g\ge \mathcal{R} =\int_{\R^d} u e^{f(\xi)} R(\alpha,\beta,\xi) \;dx
\end{equation}
with
\begin{align}
R(\alpha,\beta,\xi) &=  \alpha \|\nabla^2\xi\|^2 
                        + 2 \alpha f'(\xi)\nabla\xi\cdot\nabla^2\xi\cdot\nabla\xi 
                        + \frac12 \big[\alpha (f'(\xi)^2+f''(\xi))+f'(\xi)\big]|\nabla\xi|^4 \nonumber\\
                        &\quad+ \frac12 (\beta-\alpha)f'(\xi)|\nabla\xi|^2\Delta\xi 
                        + (\beta-\alpha)(\Delta\xi)^2\,, 
\label{finform2}
\end{align}
and we recall the definitions 
\begin{equation}\label{finform3}
  \alpha(u):=P(u)/u \qquad \mbox{and} \qquad \beta(u):=P'(u) \,. 
\end{equation}
In formula \eqref{finform1} we use, for factoring out $u$, that $u>0$ or $P'(0)$ exists in $[0,+\infty)$. If $P'(0)$ is finite, note that $\alpha$ is well defined when $u\to 0$ by L'H\^opital's rule since $P(0)=0$ with $\alpha(0)=\beta(0)$. If $P'(0)=+\infty$, one expects solutions to \eqref{GFP} to be instantaneously positive as in the classical fast-diffusion equations, see \cite{VazquezPME} and the references therein. 

Note that $R$ in \eqref{finform1} could be modified by adding some ``null forms'', but we do not know if this may help the analysis.

\begin{remark}[Implications on the non-degenerate diffusion case]\label{linear}
\
\begin{enumerate}
\item[(a)]
Choosing $P(u)=u$ in \eqref{finform1}-\eqref{finform2}-\eqref{finform3}, we obtain the formula in \cite{AMTU} where
\begin{align*}%\label{Rlin}
R(1,1,\xi) 
&= \|\nabla^2\xi\|^2 
                        + 2 f'(\xi)\nabla\xi\cdot\nabla^2\xi\cdot\nabla\xi 
                        + \frac12 \big[ f'^2(\xi)+f''(\xi)+f'(\xi)\big]|\nabla\xi|^4 \,.
\end{align*}
It is easy to check that 
$R(1,1,\xi)= \tr(XY)$ with
$$
X:= \left(\begin{array}{cc} 1 & f'(\xi) \\ 
                        f'(\xi) & \frac12 \big[f'^2(\xi)+f''(\xi)+f'(\xi)\big]\end{array}\right),
                        \quad
Y:=\left(\begin{array}{cc} \|\nabla^2\xi\|^2 & \nabla\xi\cdot\nabla^2\xi\cdot\nabla\xi \\ \nabla\xi\cdot\nabla^2\xi\cdot\nabla\xi & |\nabla\xi|^4 \end{array}\right).
$$
The Cauchy-Schwarz inequality gives that $Y$ is positive semidefinite. The matrix $X$ is positive semidefinite if and only if $\det(X)=\tfrac12 (f''(\xi)+f'(\xi)-f'^2(\xi))\geq 0$, which is the equivalent condition \eqref{f-inequ} to the admissibility condition \eqref{e:2.11c} for entropies in the linear case. Hence, \eqref{f-inequ} implies the non-negativity of the remainder, i.e., $R(1,1,\xi)\ge0$.
Actually, we will show below, see Remark \ref{remlin}-(a), that \eqref{f-inequ} is even equivalent to the remainder condition \eqref{rem-condb} for the linear case.
\item[(b)] 
Next we apply the remainder formula \eqref{finform2} to the general linear diffusion $P(u)=Du$ with $D>0$ (and hence $\alpha=\beta=D$) and general entropies of the form \eqref{eq:17}. Motivated by the form of \eqref{eq:17} we set $f(\xi)=:\tilde f(\xi/D)$ and $\bar\xi:=\xi/D$. This yields
\begin{align*}
  R(D,D,\xi)
 =  D^3\Big\{ \|\nabla^2\bar\xi\|^2 
                        + 2 \tilde f'(\bar\xi)\nabla\bar\xi\cdot\nabla^2\bar\xi\cdot\nabla \bar\xi 
                        + \frac12 \big[\tilde f'^2(\bar\xi)+\tilde f''(\bar\xi)+\tilde f'(\bar\xi)\big]\,|\nabla\bar\xi|^4 \Big\}\,,
\end{align*}
and we recover again the condition \eqref{f-inequ} on the entropies, i.e., $\tilde f''(\bar \xi)+\tilde f'(\bar \xi)-\tilde f'^2(\bar \xi)\geq 0$ for each fixed $D>0$.
\end{enumerate}
\end{remark}

\begin{remark}[Implications on the nonlinear diffusion case]\label{StandardEntropy}
Choosing $g_1(\xi)=\xi$, i.e. $f_1(\xi)=0$, in \eqref{finform1}-\eqref{finform2}-\eqref{finform3} for general equations of the form \eqref{GFP}, we recover the formula in the proof of \cite[Theorem 11]{CJMTU} where
\begin{align}\label{cjmtu}
R(\alpha,\beta,\xi) &= \alpha \|\nabla^2\xi\|^2 
                        + (\beta-\alpha)(\Delta\xi)^2 \,.
\end{align}
It is easy to check using Cauchy-Schwarz for symmetric matrices (see \eqref{CS}) that $R(\alpha,\beta,\xi)\ge 0$ for all $u\geq 0$ and all functions $\xi$ if and only if $\tfrac{\alpha}{d}+\beta-\alpha\geq 0$ for all $u\geq 0$, or equivalently
\be\label{mccann}
\frac{d-1}{d} P(u) \leq uP'(u)\quad \mbox{for all } u\geq 0\,.
\ee
The latter is McCann's condition \cite{McCann} for displacement convexity of the standard entropy functional.
Specifically for $d=1$, the standard entropy always yields $R(\alpha,\beta,\xi)\ge 0$ and hence \eqref{eq:alphaomega}. Note that if $u=0$, then \eqref{mccann} is satisfied by dividing by $u$ and taking the limit as $u\to 0^+$ due to \eqref{limatzero}.
\end{remark}

\subsection{Conclusions from the generalized Bakry-\'Emery procedure}
We will now simplify in several steps the condition under which the generalized Bakry-\'Emery procedure is successful, meaning that the right hand side of \eqref{finform1} is non-negative.
\begin{prop}\label{expdecay}
  Suppose that the integral $\mathcal R$ on the right-hand side of \eqref{finform1} is non-negative for all sufficiently smooth functions $u$.
  Then \eqref{eq:alphaomega} holds for all solutions to \eqref{GFP}.
\end{prop}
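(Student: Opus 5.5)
The plan is to read Proposition \ref{expdecay} as a bookkeeping statement on top of the computation \eqref{fin1}--\eqref{finform2}. That computation already shows, for smooth solutions with enough decay to justify the integrations by parts,
\[
-\tfrac12\tfrac{d}{dt}J_g(u(t))-\lambda J_g(u(t))\ \ge\ \mathcal R[u(t)] .
\]
The only inequality used there is \eqref{eq:nonopt}, which relies on {\bf (HV2)}. So if $\mathcal R[v]\ge0$ for every sufficiently smooth $v$, we may take $v=u(t)$ for each $t>0$ and obtain \eqref{eq:alphaomega} pointwise in $t$ for smooth solutions.

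The remaining step is to pass from smooth solutions to all solutions of \eqref{GFP}. For this I would follow the approximation route already described in the paper, after \cite{otto,CJMTU}. The steps are:
\begin{enumerate}
\item Solve the problem on large balls $B_R$ with no-flux boundary conditions $u\nabla\xi\cdot\nu=0$.
\item In the degenerate case, replace $P$ by non-degenerate nonlinearities $P_\varepsilon$ that are linear near $u=0$, and regularize the initial data, so that the approximate solutions $u_{\varepsilon,R}$ are smooth and strictly positive.
\item Redo the computation on $B_R$. The boundary terms produced by the integrations by parts in \eqref{fin1}, \eqref{V-3rd} and for $I_3$ must then be controlled. Since $\nabla\xi\cdot\nu=0$ on $\partial B_R$, most of them vanish. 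The term containing $\nabla|\nabla\xi|^2\cdot\nu$ equals $-2\,\mathrm{II}(\nabla\xi,\nabla\xi)$ in terms of the second fundamental form of the boundary, and it has a favorable sign on convex domains such as balls.
\item This yields $-\tfrac12\tfrac{d}{dt}J_g-\lambda J_g\ge\mathcal R_{B_R}[u_{\varepsilon,R}]\ge0$, where the last inequality is the hypothesis applied to a suitable smooth extension, or equivalently to the same pointwise integrand.
\end{enumerate}

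Integrating in time gives
\[
J_g(u_{\varepsilon,R}(t))\le e^{-2\lambda(t-s)}J_g(u_{\varepsilon,R}(s)),
\]
which is the integrated form of \eqref{eq:alphaomega}. I would then pass to the limit, first $R\to\infty$ and then $\varepsilon\to0$. Lower semicontinuity of $J_g$ gives the left-hand side. On the right-hand side we need convergence $J_g(u_{\varepsilon,R}(s))\to J_g(u(s))$, which is handled by choosing $s>0$ and using the regularization estimates of \cite{CJMTU}. The result is \eqref{eq:alphaomega} in its integrated (distributional-in-time) form, and the consequences \eqref{mainconclusion1}--\eqref{mainconclusion3} follow from it.

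The main obstacle is this limit passage, in two places. First, the hypothesis is stated for the whole-space integral $\mathcal R$, whereas the approximations live on bounded domains with modified $P_\varepsilon$. Second, the right-hand side $J_g(u_{\varepsilon,R}(s))$ must converge, not merely satisfy a lower-semicontinuity bound. I would handle the first point by noting that positivity of $\mathcal R$ for all smooth $u$ is in practice the pointwise condition $R(\alpha,\beta,\xi)\ge0$; the paper's subsequent reductions go this way. The approximations $P_\varepsilon$ would then be chosen so that their nonlinearity curves preserve this property, i.e.\ they stay in the admissible region. For the second point I would invoke the convergence results of \cite[Section 3]{CJMTU} and \cite[Section 5]{otto}.
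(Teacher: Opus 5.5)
Your argument is essentially the paper's: Proposition \ref{expdecay} is read off directly from \eqref{finform1}, whose derivation uses only the convexity estimate \eqref{eq:nonopt}, and the formal integrations by parts are deferred to the bounded-domain and non-degenerate approximations of \cite{otto,CJMTU}, which the paper cites but does not carry out. Your more detailed sketch of that approximation is sound as far as it goes (the boundary term on the convex domain $B_R$ does have the favorable sign), but there is a caveat you only partly noticed: positivity of the whole-space integral $\mathcal R$ for the original $P$ does not transfer to the integral over $B_R$ with $P_\varepsilon$, so you need the stronger pointwise condition \eqref{eq:ptw}, which the paper does not claim is equivalent to the integral hypothesis, and in the degenerate case you also need an extension of $g$ below $\xi_{\min}$, since $\phi_\varepsilon(0+)=-\infty$.
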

The implication stated in Proposition \ref{expdecay} above is presumably even an equivalence.
Indeed, any non-equivalence between $\mathcal R\ge0$ and the validity of \eqref{eq:alphaomega} can only result from the single estimate that we performed in the Bakry-\'Emery calculations, namely the convexity estimate on $V$ in step \eqref{eq:nonopt}.
Thus, non-equivalence would mean that there is a choice of $P$, $V$, and $g$ such that 
\begin{align*}
  \mathcal R(u) + \int_{\R^d}ug'(\xi)\,\big(\nabla\xi\cdot\nabla^2V \cdot\nabla \xi-\lambda|\nabla\xi|^2\big)\,dx \ge 0
\end{align*}
\emph{for all} sufficiently smooth $u$, whereas $\mathcal R(u^*)<0$ \emph{for some} (non-smooth) $u^*$.
This seems highly unlikely, however, we are not aware of a proof of equivalence in Proposition \ref{expdecay}.

We continue by formulating sufficient conditions for the non-negativity of the integral $\mathcal R$ in terms of pointwise conditions on the integrand. That is, instead of asking that $\mathcal R(u)\ge 0$ for all relevant functions $u$, we ask that its integrand $R$ satisfies 
\begin{align}
  \label{eq:ptw}
\boxed{  R\big(\alpha,\beta,\xi\big)(x)\ge 0  \quad \text{for all $x\in\R^d$}} 
\end{align}
where $\alpha=P(u)/u$, $\beta=P'(u)$, and $\xi=\phi(u)+V(x)-\bar C$ for all $u>0$. The boxed inequality \eqref{eq:ptw} is our decisive criterion in the rest of this paper, to judge whether the entropy method works.

\begin{remark}
This pointwise condition \eqref{eq:ptw} is presumably stronger than the integral condition.
Indeed, optimality would mean that whenever there is a function $u^*$ such that $R(\alpha^*,\beta^*,\xi^*)$ is negative \emph{at some point $x$}, there should also exist an admissible function $\tilde u$ (possibly, but not necessarily, $\tilde u=u^*$) such that the integral $\mathcal R(\tilde u)$ is negative. Such ``trial functions'' $\tilde u$ have been constructed, for instance, 
to prove optimality of certain entropy estimates for the fourth order thin film equation on a one-dimensional periodic domain by Laugesen \cite{laugesen}, and that procedure has been generalized subsequently \cite{JM}. In the situation at hand, it is unclear to the authors when to expect equivalence of the integral and the pointwise condition, and how to construct suitable trial functions in that case --- even for linear equations with $P(u)=u$.
\end{remark}

Actually, we go one step further and replace in the expression for $R(\alpha,\beta,\xi)$ on the right-hand side of \eqref{finform2}
the derivatives $\nabla\xi$ and $\nabla^2\xi$ of $\xi$ by a vector $v$ and a symmetric matrix $S$, respectively,
that are no more related to $u$.
In a chain of steps (see Lemmata \ref{prop4.5}, \ref{lem-char}, and \ref{lem-char2}, below) this will yield a criterion that is equivalent to \eqref{eq:ptw} but easier for computations. 
\begin{lemma}\label{prop4.5}
  For a given nonlinearity $P(r)$ and an entropy generated by $f=\log g'$, suppose that
  \be\label{rem-cond}
  \bar R\big(\alpha(r),\beta(r),z,v,S\big)\ge0\quad \text{for all 
  $r>0$, $z\ge\phi(r)-\bar C$, $v\in\R^d$, $S\in\R^{d\times d}_\text{symm}$},
  \ee
  with (below, $\alpha$ and $\beta$ act as placeholders) 
  \begin{align}
    \bar R(\alpha,\beta,z,v,S) 
    &:=  \alpha \|S\|^2 
      + 2 \alpha f'(z)v\cdot S \cdot v 
      + \frac12 \big[\alpha (f'(z)^2+f''(z))+f'(z)\big]|v|^4 \nonumber\\
    &\quad+ \frac12 (\beta-\alpha)f'(z)|v|^2\tr S 
      + (\beta-\alpha)(\tr S)^2\,.
      \label{finform2b}
  \end{align}
  Then \eqref{eq:alphaomega} holds.
\end{lemma}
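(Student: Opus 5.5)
The plan is to reduce the statement to Proposition \ref{expdecay} by showing that the pointwise condition \eqref{rem-cond} implies $\mathcal R\ge0$ in \eqref{finform1}. Take a sufficiently smooth solution $u$ and fix $t>0$ and a point $x\in\R^d$ with $u(x)>0$. Set $r:=u(x)$, $z:=\xi(x)=\phi(r)+V(x)-\bar C$, $v:=\nabla\xi(x)$ and $S:=\nabla^2\xi(x)$, which is symmetric. Comparing \eqref{finform2} with \eqref{finform2b} term by term, including the identifications $\Delta\xi=\tr S$ and $\|\nabla^2\xi\|=\|S\|$ (Frobenius norm), gives
\[
R(\alpha,\beta,\xi)(x)=\bar R\big(\alpha(r),\beta(r),z,v,S\big).
\]
By \textbf{(HV2)} we have $V\ge0$, so $z\ge\phi(r)-\bar C$. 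The admissibility range in \eqref{rem-cond} is therefore met, and $R(\alpha,\beta,\xi)(x)\ge0$. This is exactly the pointwise criterion \eqref{eq:ptw}.

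Next comes integration. Since $g'=e^{f}>0$ and $u\ge0$, the integrand $u e^{f(\xi)}R$ in \eqref{finform1} is nonnegative at every point where $u>0$.

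Points with $u(x)=0$ need separate treatment. They contribute zero through the factor $u$. Here one must check that $R$ does not blow up so badly that the product is ill defined. Following the remark after \eqref{finform3}, in the non-degenerate singular case solutions are positive, so no such points occur. In the degenerate case, $\alpha$ and $\beta$ extend continuously to $0$ by \eqref{limatzero}, and the product vanishes. Under the standing smoothness and approximation convention of this section, where degenerate $P$ are approximated by non-degenerate ones as in \cite{otto,CJMTU}, I would simply note that it suffices to argue for positive smooth $u$ and then pass to the limit.

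Hence $\mathcal R\ge0$ for all sufficiently smooth $u$. Proposition \ref{expdecay} then yields \eqref{eq:alphaomega}; equivalently, \eqref{finform1} gives $-\frac12\frac{d}{dt}J_g-\lambda J_g\ge\mathcal R\ge0$ directly.

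The only delicate point is the bookkeeping that \eqref{finform2} and \eqref{finform2b} coincide under this substitution, together with the justification of the integrations by parts behind \eqref{jjj}. The latter is taken as given in this section. The rest is immediate, since the lemma deliberately weakens the hypothesis by decoupling $v$ and $S$ from $u$, so no optimality argument is required.
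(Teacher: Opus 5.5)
Your proposal is correct and follows the paper's own (implicit) argument. You substitute $r=u(x)$, $z=\xi(x)$, $v=\nabla\xi(x)$, $S=\nabla^2\xi(x)$ to identify $R$ with $\bar R$, use $\min V=0$ from \textbf{(HV2)} to get $z\ge\phi(r)-\bar C$, and conclude $\mathcal R\ge0$ in \eqref{finform1}, so Proposition \ref{expdecay} yields \eqref{eq:alphaomega}; your treatment of $\{u=0\}$ via the section's approximation convention matches the paper's level of rigor.
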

Notice that the hypothesis above, which uses general $z$, $v$ and $S$, respectively, in place of $\xi(x)$, $\nabla\xi(x)$, $\nabla^2\xi(x)$,
is still equivalent to the original pointwise condition \eqref{eq:ptw},
since for each admissible choice of $\bar r,\bar z,\bar v,\bar S$,
one can easily find a suitable point $\bar x$ and a smooth function $\bar u$ such that 
\begin{align*}
  &\phi(\bar r) + V(\bar x) -\bar C = \bar z, \quad
  \bar u(\bar x) = \bar r,\\
  &\nabla (\phi(\bar u)) (\bar x) = \bar v -\nabla V (\bar x),\quad
  \nabla^2 (\phi(\bar u)) (\bar x) = \bar S -  \nabla^2 V (\bar x).
\end{align*}
However, for keeping the largest possible set of admissible entropies, it is essential that $\bar z$ satisfies the condition $\bar z\in[\phi(\bar r)-\bar C,\infty)$, since by our general hypotheses {\bf (HV1)-(HV2)}, $V$ has minimal value zero and grows to infinity for $|x|\to\infty$.

While $z$ and $r$ are clearly coupled above, it is tempting to simplify condition \eqref{rem-cond} further by decoupling $z$ from $r$: 
\begin{lemma}
  Suppose that
  \be\label{rem-condb}
  \bar R\big(\alpha(r),\beta(r),z,v,S\big)\ge0\,\quad \text{for all $r>0$, $z>\xi_{\min}$, $v\in\R^d$, $S\in\R^{d\times d}_{\text{symm}}$},
  \ee
  with $\bar R$ given in \eqref{finform2b} above, and $\xi_{\min}$ defined in Remark {\rm\ref{xi-range}}.
  Then \eqref{eq:alphaomega} holds.
\end{lemma}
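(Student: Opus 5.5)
The plan is to reduce the statement to Lemma \ref{prop4.5}. That lemma already yields \eqref{eq:alphaomega} under hypothesis \eqref{rem-cond}, so it suffices to show that \eqref{rem-condb} implies \eqref{rem-cond}. Since both conditions ask for non-negativity of the same function $\bar R(\alpha(r),\beta(r),z,v,S)$, only over different index sets, this comes down to a set inclusion: every tuple $(r,z,v,S)$ admissible in \eqref{rem-cond} must also be admissible in \eqref{rem-condb}. In other words, for each $r>0$ we need
\[
\{z: z\ge \phi(r)-\bar C\}\subset (\xi_{\min},\infty).
\]

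To verify this, we recall from Remark \ref{xi-range} that $\xi_{\min}=\phi(0+)-\bar C$, using $\inf V=0$ from {\bf (HV2)}. By {\bf (HP1)}, $P$ is strictly increasing, and hence $\phi$ is strictly increasing (Remark \ref{data3}(c)). So for every $r>0$ we get $\phi(r)>\phi(0+)$, where $\phi(0+)\in[-\infty,0)$. Then any $z\ge\phi(r)-\bar C$ satisfies $z>\phi(0+)-\bar C=\xi_{\min}$. The two cases are handled together: in the non-degenerate case $\xi_{\min}=-\infty$ and the inclusion is trivial, while in the degenerate case the strict inequality comes from strict monotonicity of $\phi$. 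Also, $f=\log g'$ is defined on $(\xi_{\min},\infty)$ by \eqref{entrofgen}, so $\bar R$ makes sense at every such $z$.

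It follows that \eqref{rem-condb} implies \eqref{rem-cond}, and Lemma \ref{prop4.5} gives \eqref{eq:alphaomega}. There is no genuine obstacle here. The only point needing care is to check that the strict inequality $z>\xi_{\min}$ really holds, so that $z$ lies in the open domain of $f$; this is exactly where strict monotonicity of $\phi$ and the normalization $\min V=0$ from {\bf (HV2)} are used. One might add a remark that the decoupled condition is in general strictly stronger than \eqref{rem-cond}, since it also tests values of $z$ below $\phi(r)-\bar C$.
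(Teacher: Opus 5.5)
Your proposal is correct and is the argument the paper intends. The paper gives no separate proof: it presents this lemma right after Lemma \ref{prop4.5} as an immediate strengthening of that lemma's hypothesis. For every $r>0$ one has $\phi(r)-\bar C>\phi(0+)-\bar C=\xi_{\min}$, so the index set in \eqref{rem-cond} lies inside that of \eqref{rem-condb}, and Lemma \ref{prop4.5} then gives \eqref{eq:alphaomega}.
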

In the linear case $P(u)=Du$, condition \eqref{rem-condb} is actually equivalent to condition \eqref{rem-cond}: this is obvious from $\alpha\equiv\beta\equiv D$, which makes $\bar R(\alpha,\beta,z,v,S)$ independent of $r$. For nonlinear diffusions, on the other hand, condition \eqref{rem-condb} is typically substantially stronger than \eqref{rem-cond}. The difference between these two conditions is illustrated in Section \ref{sct:xmp-pme2} below in the case $P(r)=r^2$.

For that reason, we shall \emph{not} pursue the simplification \eqref{rem-condb} above, but preserve the relation between $r$ and $z$. Instead, we shall see that it is possible to simplify \eqref{rem-cond} by eliminating the ``dummy variables'' $v$ and $S$. For brevity, introduce
\begin{align}
\label{eq:def-nu}
    \mu:=\beta-\frac{d-1}d\alpha
    \quad\text{and}\quad
    \kappa := \bar\kappa_d + \frac{(\mu-3\alpha/d)^2}{8\alpha\mu}
    \quad\text{with}\quad \bar\kappa_d=1-\frac1{2d},
\end{align}
that are functions of $r>0$.
\begin{lemma}\label{lem-char}
    For a given pair $r>0$ and $z\in(\xi_{\min},\infty)$, 
    the condition 
    \begin{align}\label{rem-cond2a}
        \bar R\big(\alpha(r),\beta(r),z,v,S\big)\ge0\,\quad \text{for all $v\in\R^d$, $S\in\R^{d\times d}_{\text{symm}}$},
    \end{align}
    which is part of the hypothesis \eqref{rem-cond}, is equivalent to the following condition:
    \begin{align}
        \label{rem-condz}
        \mu(r)\ge0 \quad \text{and}\quad Z\big(\alpha(r),\mu(r),f'(z),f''(z)\big)\le 0
    \end{align}
    with the polynomial (below, $\alpha$, $\mu$, and $a$, $b$ act as placeholders)
    \begin{align}
        \label{eq:Z}
        Z(\alpha,\mu,a,b) := \big(\mu^2+2(4-5/d)\mu\alpha+9\alpha^2/d^2\big)a^2-8\mu(a+\alpha b)\,,
    \end{align}
    and in addition, 
\begin{equation}\label{rem-condz2}
    \mbox{if }\mu(r)=0,\mbox{ then } f''(z)\ge0.
\end{equation}    
\end{lemma}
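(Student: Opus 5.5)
The proof proceeds by explicitly minimizing $\bar R$ over the dummy variables. Fix $r>0$ and $z$. Write $\alpha=\alpha(r)>0$ (positive since $P$ is strictly increasing with $P(0)=0$), $\beta=\beta(r)$, $a=f'(z)$ and $b=f''(z)$. Since $\bar R$ is invariant under simultaneous rotation of $v$ and $S$, I write $v=\sqrt{w}\,e$ with $w=|v|^2\ge0$ and $|e|=1$. I split $S=s\,I_d+S_0$ with $\tr S_0=0$, so that $\tr S=ds$ and $\|S\|^2=\|S_0\|^2+ds^2$. Using $\beta-\alpha=\mu-\alpha/d$, the two purely quadratic terms in $S$ combine to
\[
\alpha\|S\|^2+(\beta-\alpha)(\tr S)^2=\alpha\|S_0\|^2+d^2\mu s^2 .
\]
This is where $\mu$ enters, exactly as in the McCann computation of Remark \ref{StandardEntropy}. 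The remaining $S$-dependent terms are
\[
2\alpha a w\,(s+e\cdot S_0\cdot e)+\tfrac12(\mu-\alpha/d)\,a\,w\,ds .
\]

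\emph{Step 1: minimize over $S_0$.} Only $\alpha\|S_0\|^2+2\alpha a w\, e\cdot S_0\cdot e$ depends on $S_0$. For traceless symmetric $S_0$, Cauchy--Schwarz gives $|e\cdot S_0\cdot e|=|\langle S_0,\,ee^T-I_d/d\rangle|\le\sqrt{1-1/d}\,\|S_0\|$. Hence the minimum is attained along $S_0=t\,(ee^T-I_d/d)$ and equals $-\alpha a^2w^2(1-1/d)$; for $d=1$ this term is simply absent. Step 1 therefore reduces \eqref{rem-cond2a} to non-negativity of the binary quadratic form
\[
q(s,w)=d^2\mu\,s^2+\tfrac{d}{2}\big(\mu+3\alpha/d\big)\,a\,w\,s+c\,w^2,\qquad c=\tfrac12\big[\alpha(a^2+b)+a\big]-\alpha a^2\big(1-\tfrac1d\big),
\]
for all $s\in\R$ and $w\ge0$. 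Here I used $2\alpha+\tfrac d2(\beta-\alpha)=\tfrac d2(\mu+3\alpha/d)$. Since $q$ is even under $(s,w)\mapsto(-s,-w)$, requiring it only for $w\ge0$ is the same as requiring it for all $(s,w)$.

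\emph{Step 2: positivity of $q$.} Taking $w=0$ (i.e.\ $v=0$, $S=sI_d$) shows that $\mu\ge0$ is necessary.

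If $\mu>0$, then $q\ge0$ is equivalent to the discriminant condition $16\,d^2\mu c\ge d^2a^2(\mu+3\alpha/d)^2$. Expanding $16\mu c=8\mu(a+\alpha b)+8\mu\alpha a^2-16\mu\alpha a^2(1-1/d)$ and collecting the $a^2$ terms gives
\[
\mu^2+6\mu\alpha/d+9\alpha^2/d^2+8\mu\alpha-16\mu\alpha/d=\mu^2+2(4-5/d)\mu\alpha+9\alpha^2/d^2 .
\]
So the discriminant condition is precisely $Z(\alpha,\mu,a,b)\le0$, with $Z$ as in \eqref{eq:Z}.

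If $\mu=0$, then $q=\tfrac32\alpha a\,ws+c\,w^2$ is non-negative for all $s$ iff $a=0$ and $c\ge0$. When $a=0$ we have $c=\alpha b/2$, so the condition becomes $a=0$ and $b\ge0$. On the other side, $Z(\alpha,0,a,b)=9\alpha^2a^2/d^2\le0$ is equivalent to $a=0$, and the remaining requirement $b\ge0$ is exactly \eqref{rem-condz2}.

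Both implications follow, since every step above is an equivalence. The main obstacle is Step 1: one must justify that the worst-case traceless part is aligned with $ee^T-I_d/d$ (this is the sharp Cauchy--Schwarz bound for traceless matrices), and then carry out the algebraic bookkeeping that turns the discriminant into the polynomial $Z$. The only remaining care is the degenerate case $\mu=0$.
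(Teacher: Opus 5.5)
Your proof is correct and follows essentially the same route as the paper. Your Cauchy--Schwarz bound $|e\cdot S_0\cdot e|\le\sqrt{1-1/d}\,\|S_0\|$ on the traceless part is exactly Lemma~\ref{lem:algebra}, including its sharpness, and your discriminant condition on $q(s,w)$ is the paper's condition $4a_1a_2c-a_2b_1^2-a_1b_2^2\ge0$ once the $y_1$-variable has been minimized out; the only cosmetic difference is that you keep $w=|v|^2$ homogeneous and use the evenness of $q$, which handles $v=0$ and $d=1$ in the same sweep rather than by continuity and a separate case.
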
 
Before proving Lemma \ref{lem-char}, we derive an alternative representation of the conditions \eqref{rem-condz} and \eqref{rem-condz2} that is useful in several computations. These will be the final conditions really used in the sequel.
\begin{lemma}\label{lem-char2}
    For a given pair $r>0$ and $z\in(\xi_{\min},\infty)$, 
    the conditions in Lemma \ref{lem-char} are equivalent to the following: 
    \begin{itemize}
    \item Either $\mu(r)=0$ and $f'(z)=0$ and $f''(z)\ge0$,
    \item or $\mu(r)>0$, and
        \begin{align}
        \label{rem-cond2old}
            \kappa(r)f'(z)^2
            \le f''(z) + \frac1{\alpha(r)} f'(z).
        \end{align}
    \end{itemize}
\end{lemma}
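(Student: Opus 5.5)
The proof is a direct algebraic case distinction on the sign of $\mu(r)$. It starts from the conditions \eqref{rem-condz}--\eqref{rem-condz2} of Lemma \ref{lem-char}, taking that lemma as given. Since both formulations require $\mu(r)\ge0$, it suffices to treat $\mu(r)=0$ and $\mu(r)>0$ separately. Throughout we use $\alpha(r)=P(r)/r>0$ for $r>0$, which holds by {\bf (HP1)} because $P$ is strictly increasing with $P(0)=0$. Write $a=f'(z)$ and $b=f''(z)$.

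\emph{Case $\mu(r)=0$.} Substituting $\mu=0$ into \eqref{eq:Z}, every term containing $\mu$ drops out, leaving
$$Z(\alpha,0,a,b)=\frac{9\alpha^2}{d^2}\,a^2 .$$
Because $\alpha>0$, the condition $Z\le0$ holds exactly when $a=f'(z)=0$. The additional requirement \eqref{rem-condz2} is $f''(z)\ge0$. Together these give precisely the first alternative of the present lemma, and the argument reverses without change.

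\emph{Case $\mu(r)>0$.} Here $8\mu\alpha>0$, so dividing $Z\le0$ by this quantity preserves the inequality, and condition \eqref{rem-condz2} is void. The condition $Z\le 0$ becomes
$$\frac{\mu^2+2(4-5/d)\mu\alpha+9\alpha^2/d^2}{8\mu\alpha}\,a^2\le \frac{a}{\alpha}+b .$$
It remains to check that the coefficient on the left equals $\kappa$ from \eqref{eq:def-nu}. Put $\kappa$ over the common denominator $8\alpha\mu$ and expand $(\mu-3\alpha/d)^2$:
$$8\alpha\mu\Big(1-\frac1{2d}\Big)+\mu^2-\frac{6\alpha\mu}{d}+\frac{9\alpha^2}{d^2}=\mu^2+\Big(8-\frac{10}{d}\Big)\alpha\mu+\frac{9\alpha^2}{d^2}.$$
This is exactly the numerator above, so the inequality is \eqref{rem-cond2old}. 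Each step is an equivalence, which proves the lemma in this case.

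No step poses a real obstacle. The only points needing care are the positivity of $\alpha$, which justifies the division and the conclusion $a=0$ in the first case, and the bookkeeping in the coefficient identity $8\alpha\mu\,\kappa=\mu^2+2(4-5/d)\mu\alpha+9\alpha^2/d^2$.
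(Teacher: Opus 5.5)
Your proof is correct and follows the paper's own argument. The paper likewise splits on $\mu(r)=0$ versus $\mu(r)>0$, reduces $Z$ to $(3\alpha(r)f'(z)/d)^2$ in the first case, and in the second obtains \eqref{rem-cond2old} from $Z\le0$ by multiplying through by $8\alpha(r)\mu(r)>0$; your explicit check of $8\alpha\mu\kappa=\mu^2+2(4-5/d)\mu\alpha+9\alpha^2/d^2$ simply spells out the algebra the paper leaves implicit.
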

\begin{proof}[Proof of Lemma \ref{lem-char2}]
    For $r>0$ we have $\alpha(r)>0$. 
    If $\mu(r)>0$, then \eqref{rem-cond2old} is equivalent to $Z\big(\alpha(r),\mu(r),f'(z),f''(z)\big)\le 0$ from \eqref{rem-condz}: simply multiply \eqref{rem-cond2old} by $8\alpha(r)\mu(r)>0$. If $\mu(r)=0$, then 
    \begin{align*}
     Z\big(\alpha(r),\mu(r),f'(z),f''(z)\big) = \big(3\alpha(r)f'(z)/d\big)^2,
    \end{align*}
    and $Z\le0$ is equivalent to $f'(z)=0$.
\end{proof}
\begin{remark}\label{noticevalues}
    The criteria \eqref{rem-condz} and \eqref{rem-cond2old} are central for the rest of the paper. Several remarks are in order.
    \begin{enumerate}
    \item[(a)] If $f'(z)=0$ and $f''(z)\ge0$, then $Z(\alpha(r),\mu(r),f'(z),f''(z))\le0$, independently of the values of $\alpha(r)>0$ and $\mu(r)\ge0$.
    \item[(b)] $\kappa$ is bounded below by $\bar\kappa_d>0$. It diverges to $+\infty$ as $\mu(r)\downarrow0$.
    \item[(c)] \eqref{rem-cond2old} is the generalization of \eqref{f-inequ} to nonlinear diffusions; for $P(u)=u$ it reduces to \eqref{f-inequ}, since then $\alpha=\kappa\equiv 1$. For nonlinear diffusions (even for the power laws $P(u)=u^m$), there is apparently no simple analog of \eqref{e:2.11c} that characterizes all admissible entropies.  
  \end{enumerate}
\end{remark}
Our proof of Lemma \ref{lem-char} requires the following auxiliary result about matrices.
\begin{lemma}
    \label{lem:algebra}
    Let $d>1$. 
    For any symmetric matrix $A\in\R^{d\times d}$ and any unit vector $w\in\R^d$,
    \begin{align}
        \label{eq:algebra}
        \|A\|^2 \ge \frac1d(\tr A)^2 + \frac{d}{d-1}\left(w\cdot Aw - \frac{\tr A}d\right)^2.
    \end{align}
    Moreover, for any prescribed values of $w\cdot Aw$ and $\tr A$, there exist $A$ and $w$ such that \eqref{eq:algebra} is an equality.
\end{lemma}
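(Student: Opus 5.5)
We reduce to the traceless part of $A$ and then use a Cauchy–Schwarz type estimate in an orthonormal basis adapted to $w$.

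\textbf{Reduction.} Write $A = \frac{\tr A}{d} I_d + A_0$ with $\tr A_0 = 0$. Since $\langle I_d, A_0\rangle = \tr A_0 = 0$ in the Frobenius inner product, we have $\|A\|^2 = \frac1d(\tr A)^2 + \|A_0\|^2$. Moreover $w\cdot A w - \frac{\tr A}{d} = w\cdot A_0 w$ because $|w|=1$. So \eqref{eq:algebra} is equivalent to
\begin{align*}
\|A_0\|^2 \ge \frac{d}{d-1}\,(w\cdot A_0 w)^2
\end{align*}
for every symmetric traceless $A_0$ and every unit vector $w$.

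\textbf{Traceless inequality.} Complete $w$ to an orthonormal basis $e_1=w, e_2,\dots,e_d$. Set $a_{ij}=e_i\cdot A_0 e_j$. Since the Frobenius norm is invariant under orthogonal change of basis,
\begin{align*}
\|A_0\|^2=\sum_{i,j}a_{ij}^2\ge \sum_{i=1}^d a_{ii}^2 .
\end{align*}
Tracelessness gives $\sum_{i\ge2}a_{ii}=-a_{11}$. Cauchy–Schwarz on the $d-1$ remaining diagonal entries then yields
\begin{align*}
\sum_{i\ge2}a_{ii}^2\ge \frac{a_{11}^2}{d-1}.
\end{align*}
Hence $\|A_0\|^2\ge a_{11}^2\bigl(1+\frac1{d-1}\bigr)=\frac{d}{d-1}a_{11}^2$, and $a_{11}=w\cdot A_0 w$. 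This proves \eqref{eq:algebra}. The hypothesis $d>1$ is used exactly here, since the Cauchy–Schwarz step needs at least one remaining index.

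\textbf{Equality case.} Equality holds in both steps precisely when $A_0$ is diagonal in this basis and $a_{22}=\dots=a_{dd}=-a_{11}/(d-1)$. Suppose prescribed values $w\cdot Aw=s$ and $\tr A=t$ are given. Take $w=e_1$ and set
\begin{align*}
A=\frac td I_d+c\,\mathrm{diag}\Bigl(1,-\tfrac1{d-1},\dots,-\tfrac1{d-1}\Bigr),\qquad c=s-\frac td .
\end{align*}
Then $\tr A=t$, $w\cdot Aw=s$, and $A$ attains equality in \eqref{eq:algebra}.

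No step is a genuine obstacle. The only care needed is the orthogonal splitting in the reduction step, where the cross term $\langle I_d, A_0\rangle$ must be checked to vanish.
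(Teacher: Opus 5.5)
Your proof is correct and follows essentially the same route as the paper: an orthonormal basis adapted to $w$, discarding the off-diagonal entries, Cauchy--Schwarz on the remaining $d-1$ diagonal entries, and a diagonal extremal matrix whose entries in the directions orthogonal to $w$ are all equal. The only difference is that you first split off the multiple of $I_d$ (using $\langle I_d, A_0\rangle = 0$), whereas the paper works with $A$ directly and does an algebraic rearrangement at the end.
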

\begin{proof}[Proof of Lemma \ref{lem:algebra}]
    Let $z_1,\ldots,z_d$ be an orthonormal basis of $\R^d$, with $z_d=w$. Then
    \begin{align*}
        \|A\|^2 
        = \sum_{i,j=1}^d (z_j\cdot Az_i)^2 
        \ge \sum_{j=1}^d (z_j\cdot Az_j)^2 
        &= \sum_{j=1}^{d-1} (z_j\cdot Az_j)^2 + (z_d\cdot Az_d)^2 \\
        &\ge \frac1{d-1}\left(\sum_{j=1}^{d-1} z_j\cdot Az_j\right)^2 + (w\cdot Aw)^2 \\
        &= \frac1{d-1}\big(\tr A-(w\cdot Aw)\big)^2 + (w\cdot Aw)^2 \\
        &= \frac1d(\tr A)^2 + \frac{d}{d-1}\left(w\cdot Aw - \frac{\tr A}d\right)^2,
    \end{align*}
    where we have use the Cauchy-Schwarz inequality for sums for the second estimate. To prove the sharpness result, let $a:=w\cdot Aw$ and $b:=\tr A$ be given. Define $z_1,\ldots,z_d$ as the canonical basis, so in particular $w=z_d=(0,\ldots,0,1)$, and $A$ as diagonal matrix with entries $(b-a)/(d-1)$ in lines one to $d-1$, and $a$ in the last line. In the calculation above, the first inequality is an equality because the chosen $A$ is diagonal, and the second inequality is an equality because the Cauchy-Schwarz inequality is saturated for identical entries.
\end{proof}
\begin{proof}[Proof of Lemma \ref{lem-char}]
\noindent
\underline{Case $d>1$:} 
The non-negativity of \eqref{finform2b} for all $v\in\R^d\setminus \{0\}$, $S\in\R^{d\times d}_{\text{symm}}$ is equivalent to $\bar R(\alpha,\beta,z,v,|v|^2 S)\geq 0$ for all $v\in\R^d\setminus \{0\}$, $S\in\R^{d\times d}_{\text{symm}}$: Indeed, by homogeneity, we have
$$
\bar R(\alpha,\beta,z,v,|v|^2 S)=|v|^4 \bar R\left(\alpha,\beta,z,\frac{v}{|v|}, S\right)\,.
$$
Introducing the variables
  \begin{align*}
y_1:=\frac{v\cdot S\cdot v}{|v|^2} - \frac{\tr S}{d},      \quad
    y_2:=\frac{\tr S}{d}, 
\end{align*}
  inequality \eqref{eq:algebra} implies with $w:=v/|v|$:
  \begin{align}\label{S-inequ}
    \|S\|^2 \ge d y_2^2 + \frac{d}{d-1}y_1^2.
  \end{align}
Lemma \ref{lem:algebra} also states that this inequality is sharp in the sense that for any given values of $\bar y_1$ and $\bar y_2\in\R$,
  there are a vector $v$ and a symmetric matrix $S$ with $y_1(v,S)=\bar y_1$ and $y_2(S)=\bar y_2$, 
  for which equality holds in \eqref{S-inequ}.
Using \eqref{S-inequ} in \eqref{finform2b} we conclude that
  $$
  \bar R(\alpha,\beta,z,v,|v|^2S)\geq |v|^4 p_2(y)
  $$ 
with
  \begin{align*}
    p_2(y) := &\,\alpha\left(d y_2^2 + \frac{d}{d-1}y_1^2\right) + d^2(\beta-\alpha) y_2^2 
       +2\alpha f'(z) (y_1+y_2) + \frac{d}2(\beta-\alpha) f'(z) y_2  \\
       &+\frac12\big(\alpha[f'(z)^2+f''(z)]+f'(z)\big) \\
             =&\, d^2\mu y_2^2+\frac{d\alpha}{d-1}y_1^2 +\frac12(3\alpha+d\mu)f'(z)y_2+2\alpha f'(z)y_1+\frac\alpha2\big(f'(z)^2+f''(z)\big)+\frac12f'(z).
  \end{align*}

As a consequence of the cases of equality in \eqref{S-inequ}, $\bar R(\alpha,\beta,z,v,S)$
is non-negative for all $v\in\R^d$, $S\in\R^{d\times d}_{\text{symm}}$ if and only if $p_2(y)$ is non-negative. Note that we re-included here the case $v=0$ due to continuity of $\bar R$ w.r.t.\ $v$. 
The polynomial $p_2$ is of the special form
  \begin{align*}
    p_2(y) = a_2y_2^2+b_2y_2+a_1y_1^2+b_1y_1+c, \quad y=(y_1,y_2),
  \end{align*}
in which $a_1=d\alpha/(d-1)>0$. Moreover, $a_2=d^2\mu$ must be non-negative, as $p_2$ would not be bounded below otherwise.

First, assume that $a_2=d^2\mu>0$. Then $p_2$ is $c$ plus the sum of the two parabolas $a_1y_1^2+b_1y_1$ and $a_2y_2^2+b_2y_2$, with respective minima $-b_1^2/(4a_1)$ and $-b_2^2/(4a_2)$. The polynomial is thus non-negative if and only if the sum of these minimal values plus $c$ is non-negative, or equivalently
  \begin{align*}
      4a_1a_2c-a_2b_1^2-a_1b_2^2 \ge 0.
  \end{align*}
  After substitution of the respective expressions for $a_1$ to $c$ and elementary manipulations, this inequality becomes equivalent to $Z\le0$ for the polynomial $Z$ in \eqref{eq:Z}. 
  
  Next assume that $\mu=0$, then $a_2=d^2\mu=0$. Boundedness from below of $p_2$ is equivalent to $0=b_2=3\alpha f'(z)/2$, which in view of $\alpha>0$ implies $f'(z)=0$ and hence $Z=0$. The polynomial then simplifies to
  \begin{align}\label{p2}
      p_2(y)=\frac{d\alpha}{d-1}y_1^2+\frac\alpha2f''(z).
  \end{align}
  The minimum is non-negative if and only if $f''(z)\ge0$, which hence has to hold if condition \eqref{rem-cond2a} is satisfied. 
  
  For the reverse direction, note that $Z$ from \eqref{eq:Z} reduces to $Z=(3\alpha f'(z)/d)^2$ in the case $\mu=0$. So, $Z\le0$ implies $f'(z)=0$, and the assumption $f''(z)\ge0$ implies that $p_2$ from \eqref{p2} is non-negative. 

\smallskip
\noindent
\underline{Case $d=1$:} Since this case is similar to the previous case (but simpler), we shall only sketch it. 
The remainder term reads
$$
  \bar R(\alpha, \beta, z, v, v^2S)=v^4 \tilde p_2(S)\,,
$$
with the quadratic polynomial 
$$
  \tilde p_2(S)=\beta S^2 + \frac{3\alpha+\beta}{2}f'(z)S +\frac12 [\alpha\big(f'(z)^2+f''(z)\big)+f'(z)]\,.
$$
If $\mu=\beta>0$, $\tilde p_2$ takes the minimal value $-Z/(16\beta)$. Hence, non-negativity of $\tilde p_2$ and non-positivity of $Z$ are equivalent. If $\mu=\beta=0$, boundedness below of $\tilde p_2$ implies $f'(z)=0$. Thus, $\tilde p_2(S)$ reduces to the constant $\alpha f''(z)/2$, and hence $f''(z)\ge0$ must hold.
\end{proof}
\begin{remark}\label{remlin}\
    \begin{enumerate}
     \item[(a)] Notice that in the linear case $P(u)=u$, the (simplified) remainder condition \eqref{rem-condb} (with $z$ and $r$ decoupled) is equivalent to the admissibility condition \eqref{f-inequ} on entropies -- due to Lemma \ref{lem-char} and Remark \ref{noticevalues}-(c). In \cite{AMTU,BaEm84} only one of the directions was proven, i.e.: Under the Bakry-\'Emery condition {\bf (HV2)}, the entropy condition \eqref{e:2.11c} implies the non-negativity of the remainder \eqref{rem-condb}.
    \item[(b)]   After some elementary manipulations assuming $f'\neq0$ in $(\xi_{min},\infty)$, the condition \eqref{rem-cond2old} is equivalent to
       \begin{align}
    \label{rem-cond3}
    \left(\frac1{f'}\right)' + \kappa \le \frac1\alpha\frac1{f'}\,.
  \end{align}
    \item[(c)] In the linear case $P(u)=Du$ with some constant $D>0$, we have $\alpha(u)=\beta(u)=D$, and hence \eqref{rem-cond3} for $f'\neq 0$ in $\R$ reduces to
      \begin{align}\label{condD}
        \left(\frac1{f'}\right)' + 1 \le \frac1D\frac1{f'},
      \end{align}
      which is equivalent to  \eqref{f-inequ2}.
       \item[(d)] Assuming for nonlinear diffusions that $P'(0+)$ is finite, then $(\alpha(r),\beta(r))\to (P'(0+),P'(0+))$ as $r\to 0$. Assume that the strict McCann condition $d\beta(r)-(d-1)\alpha(r)>0$ holds for all $r >0$. Thus condition \eqref{rem-cond2old} for $r\to 0+$ and using $\phi(0+)-\bar C=\xi_{\min}$ in \eqref{rem-cond} implies 
       $$
       \beta(0) (f')^2 \leq \beta(0) f'' + f'\qquad \forall\,z\in (\xi_{\min},\infty)
       $$
       in the limit, distinguishing the following two cases:   
       \begin{itemize}
           \item In the particular case of degenerate diffusions, $P'(0+)=0$, then this reduces to $f'\ge 0$.
           
           \item  For regular non-degenerate diffusions with $P'(0+)\in(0,\infty)$, we have $\xi_{\min}=-\infty$, and then a scaled version of Lemma \ref{f-solutions}-(a) implies that $0\leq f'\leq \beta(0)^{-1}$. 
           
           With the scaling from the Remark \ref{linear}-(b), and $D$ now replaced by $\beta(0)$, we again deduce $\tilde f''(\bar \xi)+\tilde f'(\bar \xi)-\tilde f'^2(\bar \xi)\geq 0$ for all $\bar\xi\in\R$. 
           
           This has an important consequence: The linear behavior of $P$ at the origin, i.e. $P'(0+)\in(0,\infty)$, implies that corresponding admissible entropies must satisfy \eqref{f-inequ2} with $D=\beta(0)$. Hence, they form a subset of the admissible entropies in the linear diffusion case.

           \item For nonlinearities with $P'(0+)=0$ or $P'(0+)=\infty$, however, we will in general \emph{not} obtain the conditions \eqref{e:2.11c} or \eqref{f-inequ2}.
       \end{itemize}
    \end{enumerate}
\end{remark}
{F}rom this point on, we can proceed in different ways: 
On the one hand we can fix the nonlinearity $P(u)$, and hence the (continuous) \emph{nonlinearity curve} $(\alpha(u),\,\beta(u))$, $u\ge0$ in the quarter plane $(\R_0^+)^2$ as well as the range $(\ximin,\infty)$ of the function $\xi$. Then, the goal is to find all entropies such that the generalized Bakry-\'Emery procedure of \S\ref{sec:4.1} is feasible. More precisely, we define:
\begin{definition}\label{def:adm-entr}
    Let a nonlinearity $P$ with associated $(\alpha,\beta)$ from \eqref{eq:P2ab} and $\kappa$ from \eqref{eq:def-nu}, as well as an entropy generating function $g\in C^3((\ximin,\infty))$ with associated $f$ from \eqref{entrofgen} be given such that $f'\ge0$. We say that the relative entropy functional $\mathcal H_g$ from Definition \ref{genrelentropy} or Definition \ref{genrelentropy2}, respectively, is \emph{admissible for $P$} if for all $r>0$ and for all $z\in[\phi(r)-\bar C,\infty)$, either $d\beta(r)=(d-1)\alpha(r)$ and $f'(z)=0$, or $d\beta(r)>(d-1)\alpha(r)$ and
    \begin{align}
      \label{rem-cond2}
      \kappa(r)f'(z)^2
      \le f''(z) + \frac1{\alpha(r)} f'(z).
    \end{align}
    If this is the case, we shall also call the generator $g$ and function $f$ admissible for $P$, as well as $P$ admissible for $g$ or $f $. 
\end{definition}
Note that the above hypothesis $f'\ge0$ follows from the condition in Lemma \ref{lem-char2}, see Proposition \ref{prop4.10} below (there written as $y\ge0$). Hence the hypothesis $f'\ge0$ could be dropped in Definition \ref{def:adm-entr}. A consequence of $f'\ge0$ is that if $f'(z)=0$ implies $f''(z)=0$. Hence by Lemma \ref{lem-char}, $\mathcal H_g$ is admissible if and only if for all $r>0$ and $z\in[\phi(r)-\bar C,\infty)$ it holds that
\begin{align}
    \label{rem-cond2c}
    \mu(r)\ge0 \quad \text{and}\quad Z\big(\alpha(r),\mu(r),f'(z),f''(z)\big)\le0. 
\end{align}
With the aim to find all entropies, the characterization of admissible entropies via a family of differential inequalities obtained by combining \eqref{rem-cond2} and \eqref{rem-cond} will be crucial. This is the main goal of \S \ref{sec-quasilin}. 

On the other hand we can fix a general entropy represented by the {\it entropy function} $f(\xi)$, with $\xi\in(\xi_{\min},\infty)$, 
and aim at finding all corresponding nonlinearities $P(u)$ such that the entropy method is applicable. This is the main goal of \S\ref{sec:entropygivennonlin}.  More precisely, $P(u)$ is called an \emph{admissible nonlinearity} for a given entropy  $\mathcal H_g$, if the latter is an admissible entropy for $P(u)$. 

We can also try to find the set of nonlinearities for which all admissible entropies of the linear case 
still lead to the entropy inequality \eqref{eq:alphaomega}.
To this end the equivalent remainder condition \eqref{rem-cond2} is convenient. These are the main goals of \S \ref{sec:admiss-sets-for alllin}.

In general, these results will depend on the dimension $d$. In fact, we anticipate that the remainder condition  \eqref{rem-cond2} becomes more restrictive with increasing dimension, see \S\ref{sec:geompic}. Hence, it will yield fewer admissible entropies for a given nonlinearity, and similarly fewer admissible nonlinearities for a given entropy as $d$ increases. This will be illustrated with a particular example already in \S\ref{sct:xmp-pme2}. 

%%%%%%%%%%%%%%%%%%%%%%%%%%%%%%%%%%%%%%%%%%%%%%%%%%%%%%%%%%%%%%%%%%%%%%%%%%%%%%%%

\section{Admissible relative entropies for nonlinear diffusion equations}\label{sec-quasilin}
\setcounter{equation}{0}

The goal of this section is to find all admissible entropies for a given nonlinearity. Recall that $P$ defines $\xi_{\min}\in[-\infty,0)$ via Remark \ref{xi-range}, and $g$ is defined on $(\xi_{\min},+\infty)$ with $g(0)=0$, and that $f=\log g'$. In addition, we also introduce $y:=f'$ and the notation 
\begin{equation}\label{def-varphi}
\varphi(z):=\overline{\phi}^{-1}(z+\bar C), 
\end{equation}
with the ``generalized'' inverse $\overline{\phi}^{-1}$ from \eqref{phi-inv}, and $\bar C$ given in Remark \ref{xi-range}. Note that $\varphi(z)>0$ for all $z>\xi_{\min}$ and it is increasing. Recall further the notations $\mu$ and $\kappa$ from \eqref{eq:def-nu}.

\subsection{A motivating example}
\label{sct:xmp-pme2}
To motivate the considerations in this section, we start with an example on the specific choice $P(r)=r^2$, for which a family of entropy functionals can be computed explicitly. The rest of this section is largely devoted to a generalization of the findings in this special case.

For $P(r)=r^2$, we have $\alpha(r)=r$, $\beta(r)=2r$, and $\phi(r)=2(r-1)$ with $\xi_\text{min}=-2-\bar C$. 
Here $\bar C>0$ is given by Proposition \ref{charac}, i.e., it is chosen to adjust the mass of the stationary solution 
\begin{align}
\label{eq:PME2stst}
    u_\infty(x) = \left(\frac{\bar C-V(x)}{2}+1\right)_+\,.
\end{align}
We shall now use Lemmas \ref{prop4.5} and \ref{lem-char} to obtain non-standard entropies. Introduce accordingly $\mu(r)=(d+1)r/d>0$ and $\kappa(r)\equiv \frac98\frac{d}{d+1}>0$. Since $\mu$ is always positive, the condition on a function $g$ to determine an entropy according to Definition \ref{genrelentropy2} is that its cousin $f=\log g'$ satisfies the differential inequality in \eqref{rem-cond2}. Below, we only consider $g$'s for which $f$ is increasing; this is actually no restriction as we shall see from Proposition \ref{prop4.10} below. 

To simplify the differential inequality in \eqref{rem-cond2}, we introduce $y:=f'\ge0$. Using that $1+z/2$ is the inverse of $\phi(r)$, we have $\varphi(z)=1+(z+\bar C)/2$ and conditions \eqref{rem-cond} and \eqref{rem-cond2} are combined to give 
\begin{equation}
\label{eq:dineq2}
    \begin{split}
      y'(z)&\ge  \sup_{\phi(r)-\bar C<z}\left[y(z)\left(\kappa y(z)-\frac1{\alpha(r)}\right)\right] =
      y(z)\left(\kappa y(z) - \inf_{r<\varphi(z)}\frac1r\right) \\ &=y(z)\left(\kappa y(z) - \frac1{1+(z+\bar C)/2}\right),\quad z>-2-\bar C.
    \end{split}
\end{equation}
Consider the corresponding differential equation. The general form of non-trivial solutions that are non-negative and defined for all $z>-2-\bar C$ is easily determined:
\begin{align*}
    \tilde y_B(z) = \frac1{(2+z+\bar C)(\kappa+B(2+z+\bar C))},
\end{align*}
with a parameter $B\geq 0$. Among these $\tilde y_B$, there is a pointwise largest one, namely $\tilde y_0(z)=\frac{1/\kappa}{2+z+\bar C}$. This gives rise to the entire family of functions 
\[ y_p := (p-1)\tilde y_0, \quad 1\le p\le 2, \]
satisfying the differential \emph{in}equality in \eqref{eq:dineq2} above. Indeed, $y_2=\tilde y_0$ clearly satisfies the inequality since it is a solution to the corresponding equation, and validity for the other $y_p$'s follows immediately from the special structure of \eqref{eq:dineq2}; recall that $\kappa>0$.

By Lemmas \ref{prop4.5} and \ref{lem-char}, the $g_p$'s associated to these $y_p$'s define entropies via Definition \ref{genrelentropy2} and \eqref{eq:7b}, for which the dissipation inequality \eqref{eq:alphaomega} holds. We make these entropies explicit choosing the integration constant of $f_p$ such that the resulting entropies $\mathcal H_p$ will then be increasing in $p$ and recover  $\mathcal H_1$ as the classical entropy (see Proposition \ref{prop:genm} for further details): 
\begin{align*}
    f_p'(z) &= \frac{(p-1)/\kappa}{2+z+\bar C}, 
    \quad 
    f_p(z) = \frac{p-1}\kappa\log\left(\frac{2+z+\bar C}{2+\bar C} \right)+\log\left(1+\frac{p-1}{\kappa}\right),\\
    g_p'(z) &= e^{f_p(z)} = \left(1+\frac{p-1}{\kappa}\right)\left(\frac{2+z+\bar C}{2+\bar C} \right)^{\frac{p-1}{\kappa}}, \\ 
    g_p(z) &= \left[\left(\frac{2+z+\bar C}{2+\bar C} \right)^{1+\frac{p-1}{\kappa}} - 1\right]\left(2+\bar C\right),
    \end{align*}
    \begin{align*}
    G_p(a,b;x) &= \int_b^a g_p\big(\phi(r)+V(x)-\bar C\big)\,dr \\
          &= (2+\bar C)^{-(p-1)/\kappa} \int_b^a \big(2r+V(x)\big)^{1+(p-1)/\kappa}\,dr -(a-b) (2+\bar C)\\
          &= \frac{(2+\bar C)^{-(p-1)/\kappa} }{2(2+(p-1)/\kappa)}
          \big(2r+V(x)\big)^{2+(p-1)/\kappa}\Big|_{r=b}^{r=a} -(a-b)(2+\bar C).
\end{align*}
Recalling the form \eqref{eq:PME2stst} of the stationary state $u_\infty$, we obtain
\begin{align*}
    G_p\big(u(x),u_\infty(x);x\big) 
    =\,&\frac{ \big(2u(x)+V(x)\big)^{2+(p-1)/\kappa} - \big(2u_\infty(x)+V(x)\big)^{2+(p-1)/\kappa}}{2(2+\bar C)^{(p-1)/\kappa}(2+(p-1)/\kappa)}\\
      &-(u(x)-u_\infty(x))(2+\bar C)\,.
\end{align*}
After integration in $x$, the second term disappears thanks to the conservation of mass. In summary, the entropy functionals take the form
\begin{align}\label{H_p}
    \mathcal H_p(u|u_\infty)
    =\int_{\R^d}  \frac{ \big(2u(x)+V(x)\big)^{2+(p-1)/\kappa} - \big(2u_\infty(x)+V(x)\big)^{2+(p-1)/\kappa}}{2(2+\bar C)^{(p-1)/\kappa}(2+(p-1)/\kappa)}\,dx.
\end{align}
These integrals cannot be split since both terms are not integrable separately (since $V\to\infty$ for $|x|\to\infty$). However, for $p=1$, the expression under the integral simplifies significantly and gives back the standard entropy,
\begin{align*}
    \mathcal H_1(u|u_\infty)
    = \int_{\R^d} \big[u(x)^2-u_\infty(x)^2+V(x)\big(u(x)-u_\infty(x)\big)\big]\,dx.
\end{align*}

Next we consider the dimensional dependence of the entropy functional $\mathcal H_2(u|u_\infty)$ which corresponds to the maximal admissible function 
$$
  Y_2(z)=y_{2,2}(z)=\frac{1}{\kappa_2(d)(z+\bar C+2)},
$$ 
see Proposition \ref{prp:Pintegrable} and Remark \ref{rem:Y-max-entropy} below. Then the exponent in \eqref{H_p} for $p=2$ is $2+1/\kappa_2(d)=\frac{26d+8}{9d}$; it decreases from $\frac{34}{9}$ for $d=1$ to $\frac{26}{9}$ in the limit $d\to\infty$. Hence, the $\mathcal H_2$ entropy with exponent $\frac{34}{9}$ is admissible for $d=1$, but not for higher dimensions -- in agreement with Remark \ref{noticevalues}(d).

We shall now demonstrate that for different values of $p\in[1,2]$, the corresponding entropies $\mathcal H_p$ indeed contain different information about the behaviour of $u$. Recall that for any exponent $q>1$, there are positive constants $c_q<C_q$ such that
\begin{align*}
    c_q (qs^{q-1}+t^{q-1}) \le (2s+t)^{q-1} \le C_q (qs^{q-1}+t^{q-1})
    \quad\text{for all $s,t\ge0$}.
\end{align*}
An integration in $s$ yields with new positive constants $c_q'<C_q'$ that
\begin{align*}
    c_q'(s^q+t^{q-1}s)\le(2s+t)^q-t^q\le C_q'(s^q+t^{q-1}s) \quad\text{for all $s,t\ge0$}.
\end{align*}
Let $q:=2+(p-1)/\kappa$, substitute $t:=V(x)$, and use first $s:=u(x)$, and then $s:=u_\infty(x)$. Subtraction of the resulting inequalities yields, after elementary manipulations:
\begin{align*}
    &c_q'\big[u^{2+(p-1)/\kappa}+V(x)^{1+(p-1)/\kappa}u\big]
    -C_q'\big[u_\infty^{2+(p-1)/\kappa}+V^{1+(p-1)/\kappa}u_\infty\big] \\
    &\quad \le \big(2u(x)+V(x)\big)^{2+(p-1)/\kappa} - \big(2u_\infty(x)+V(x)\big)^{2+(p-1)/\kappa} \\
    &\quad \quad \le C_q'\big[u^{2+(p-1)/\kappa}+V(x)^{1+(p-1)/\kappa}u\big].
\end{align*}
Assume (only for simplicity of presentation) that $V(x)=|x|^2$. Then, there are positive constants $c<C$ such that
\begin{align*}
    &c\int \big[u(x)^{2+(p-1)/\kappa}+|x|^{2+2(p-1)/\kappa}u(x)\big]\,dx - C \\
    &\le \mathcal H_p(u|u_\infty)
    \le C\int \big[u(x)^{2+(p-1)/\kappa}+|x|^{2+2(p-1)/\kappa}u(x)\big]\,dx.
\end{align*}
That is, $\mathcal H_p(u|u_\infty)$ is finite if and only if $u\in L^{2+(p-1)/\kappa}$ and $u$'s moment of order $2+2(p-1)/\kappa$ is finite. Our results imply that, if the initial datum $u_0$ in \eqref{GFP2} satisfies the aforementioned conditions for some $p\in[1,2]$, then the solution $u(t)$ to \eqref{GFP} satisfies the same conditions at any later time $t>0$. Moreover, one has the bound
\begin{align*}
    &c\int \big[u(x,t)^{2+(p-1)/\kappa}+|x|^{2+2(p-1)/\kappa}u(x,t)\big]\,dx \\
    &\le C\left(1+e^{-4t}\int \big[u_0(x)^{2+(p-1)/\kappa}+|x|^{2+2(p-1)/\kappa}u_0(x)\big]\,dx\right).
\end{align*}
It follows that the information contained in the $\mathcal H_p$'s are not equivalent for different values of $p\in[1,2]$, but become stronger as $p$ increases. That is, the standard entropy $\mathcal H_1$ admits the largest variety of initial data $u_0$, but also yields the least control on integrability of the solution $u(t)$, whereas $\mathcal H_2$ is most restrictive for $u_0$, but also provides the strongest control on $u(t)$.

As an important final remark, let us emphasize that the use of condition \eqref{rem-cond} instead of the easier condition \eqref{rem-condb} was essential here. In fact, combining \eqref{rem-condb} with \eqref{rem-cond2} would then give instead of \eqref{eq:dineq2} the following condition for a non-negative function $y:(\xi_\text{min},\infty)\to\R$:
\begin{align*}
    y'(z)\ge\sup_{r>0}\left[y(z)\left(\kappa y(z)-\frac1{\alpha(r)}\right)\right]
    = y(z)\left(\kappa y(z) - \inf_{r>0}\frac1r\right)
    = \kappa y(z)^2.
\end{align*}
However, this is a contradiction to $y$ being defined for all $z>\xi_\text{min}$, since all positive solutions to the associated differential equation $\tilde y'=\kappa\tilde y^2$ blow up in finite $z$. In particular, among the above entropies $\mathcal H_p$, only $\mathcal H_1$ --- the standard one, corresponding to $y\equiv0$ --- meets the criterion \eqref{rem-condb}.

%%%%%%%%%%%%%%%%%%%%%%%%%%%%%%%%%%%%%%%
\subsection{Fundamental characterization of admissible entropies}
%%%%%%%%%%%
Below, we consider general nonlinearities $P$ with the goal to characterize functions $g$ that satisfy conditions \eqref{rem-cond} and \eqref{rem-cond2}, and thus give rise to entropy functionals $\mathcal H_g$ that have the dissipation property \eqref{eq:alphaomega}. 

With the help of Lemma \ref{lem-char}, we start by deriving a simpler condition equivalent to that in Lemma \ref{prop4.5}. To simplify the remainder condition \eqref{rem-cond}, we define the following function for $z>\xi_\text{min}$ and $\eta\ge0$:
\begin{align}\label{deftheta}
  \vartheta(z,\eta) := \sup_{0<r<\varphi(z)} \left\{\kappa(r)\eta-\frac1{\alpha(r)}\right\}\,.
\end{align}
For later reference, we summarize some properties of $\vartheta$.
\begin{lemma}
    \label{lem:thetaprop}
    $\vartheta$ is non-decreasing in both arguments, and is convex lower semi-continuous with respect to $\eta$. Moreover, $\vartheta$ might attain $+\infty$, but never $-\infty$.
\end{lemma}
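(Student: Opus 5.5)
The proof rests on one observation: $\vartheta(z,\cdot)$ is a pointwise supremum of affine functions of $\eta$, namely
\[
\eta\mapsto \kappa(r)\eta-\frac1{\alpha(r)}, \qquad 0<r<\varphi(z),
\]
taken over an index set that grows with $z$. All four assertions should then follow from standard properties of suprema of affine families. Throughout we use the facts already recorded in the paper: $\alpha(r)=P(r)/r\in(0,\infty)$ for $r>0$, so each $1/\alpha(r)$ is a finite real number; $\kappa(r)\ge\bar\kappa_d>0$ by Remark \ref{noticevalues}(b), at least where $\mu(r)>0$; and $\varphi$ is increasing with $\varphi(z)>0$ for $z>\xi_{\min}$.

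The first step treats monotonicity.
\begin{itemize}
\item \emph{In $z$:} if $z_1<z_2$, then $(0,\varphi(z_1))\subseteq(0,\varphi(z_2))$ because $\varphi$ is increasing. A supremum over a larger index set is at least as large, so $\vartheta(z_1,\eta)\le\vartheta(z_2,\eta)$.
\item \emph{In $\eta$:} each member of the family has slope $\kappa(r)\ge0$, hence is non-decreasing in $\eta\ge0$. A pointwise supremum of non-decreasing functions is non-decreasing.
\end{itemize}

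The second step is convexity and lower semicontinuity in $\eta$. Each map $\eta\mapsto\kappa(r)\eta-1/\alpha(r)$ is affine and continuous on $[0,\infty)$ with values in $\R$. A pointwise supremum of convex functions is convex, and a pointwise supremum of continuous functions is lower semicontinuous. Both properties hold for functions valued in $(-\infty,+\infty]$, so they apply here even where $\vartheta=+\infty$.

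The third step shows that $\vartheta$ never equals $-\infty$. Since $\varphi(z)>0$, the index set $(0,\varphi(z))$ is nonempty. Fix any $r_0$ in it; then
\[
\vartheta(z,\eta)\ge \kappa(r_0)\eta-\frac1{\alpha(r_0)}>-\infty,
\]
because $\alpha(r_0)>0$ is finite. The value $+\infty$ can occur, for example when $1/\alpha$ stays bounded while $\kappa(r)\to\infty$ as $\mu\downarrow0$. No upper bound is claimed, so nothing further is needed here.

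The only delicate point is the degenerate set where $\mu(r)=0$ or $\mu(r)<0$, on which $\kappa(r)$ as given in \eqref{eq:def-nu} is undefined or negative. I would handle it by the convention that $\kappa(r)=+\infty$ when $\mu(r)=0$, consistent with Remark \ref{noticevalues}(b), and by restricting throughout to the McCann regime $\mu\ge0$, as Definition \ref{def:adm-entr} implicitly does.

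With that convention, the member $\eta\mapsto+\infty\cdot\eta-1/\alpha(r)$ equals $+\infty$ for $\eta>0$ and the finite value $-1/\alpha(r)$ at $\eta=0$. This function is still convex, non-decreasing and lower semicontinuous as a map into $(-\infty,+\infty]$, so all the preceding arguments go through unchanged.
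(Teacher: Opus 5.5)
Your proof is correct and follows the paper's argument essentially step for step. The index set $(0,\varphi(z))$ grows with $z$, the slopes $\kappa(r)$ are positive, a supremum of affine functions of $\eta$ is convex and lower semicontinuous, and a supremum over a non-empty set of finite reals cannot be $-\infty$. Your convention $\kappa=+\infty$ where $\mu=0$ is a harmless extra: the paper simply uses $\kappa(r)>0$, implicitly working in the strict McCann regime $\mu>0$, which is the only setting in which $\vartheta$ is actually used (Proposition \ref{prop4.10}(a)).
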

\begin{proof}
    As $\varphi$ is non-decreasing, also the range of $r$'s on which the supremum in \eqref{deftheta} is taken can only grow as $z$ increases, proving $\vartheta$'s monotonicity with respect to $z$. Since $\kappa(r)>0$ for all $r\ge0$, the expression $\eta\mapsto\kappa(r)\eta$ is non-decreasing in $\eta$ for each fixed $r\ge0$, and so is the supremum with respect to $r\in(0,\varphi(z))$. Further, as supremum of affine functions in $\eta$, $\vartheta$ is convex lower-semicontinuous.
    
    Finally, observe that $\alpha(r)=P(r)/r$ is positive for each $r>0$. Thus for each $z>\xi_\text{min}$ and $\eta\ge0$, the supremum in \eqref{deftheta} is over a non-empty set of real values and thus possibly $+\infty$, but never $-\infty$. 
\end{proof}
\begin{prop}\label{prop4.10}
    Condition \eqref{rem-cond} is equivalent to the following:
    $\mu(r)\ge0$ for all $r>0$, $y(z)\ge0$ for all $z>\xi_{\min}$, 
    and either
    \begin{itemize}
    \item[(a)] $\mu(r)>0$ for all $r>0$,
        $\vartheta(z,y(z))<\infty$ for all $z>\xi_{\min}$,
        and
        \begin{equation}\label{cond-aa}
            y'(z) \ge y(z)\,\vartheta(z,y(z)),\quad z>\xi_{\min} \,,
        \end{equation}
   \end{itemize}
        or
    \begin{itemize}
    \item[(b)] $\mu(r)=0$ for some $r>0$, and $y\equiv 0$.
    \end{itemize}
    In particular, these conditions imply \eqref{eq:alphaomega}, and therefore also \eqref{mainconclusion1}, \eqref{mainconclusion2} and \eqref{mainconclusion3}.
\end{prop}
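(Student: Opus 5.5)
The plan is to reduce condition \eqref{rem-cond} pointwise via Lemma \ref{lem-char2}, and then reorganize the resulting family of inequalities, indexed by $r$, into a single differential inequality using the supremum $\vartheta$ from \eqref{deftheta}. The constraint $z\ge\phi(r)-\bar C$ together with $z>\xi_{\min}$ is the same as $r\le\varphi(z)$, with $\varphi$ from \eqref{def-varphi}. By Lemma \ref{lem-char2}, \eqref{rem-cond} is therefore equivalent to the following: for every pair with $0<r\le\varphi(z)$, either $\mu(r)=0$, $y(z)=0$ and $y'(z)\ge0$, or $\mu(r)>0$ and
\begin{align*}
y'(z)\ge y(z)\big(\kappa(r)y(z)-1/\alpha(r)\big).
\end{align*}
Since every $r>0$ admits some admissible $z$ (take $z$ large), $\mu(r)\ge0$ is necessary for all $r>0$. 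Replacing $r\le\varphi(z)$ by $r<\varphi(z)$ changes nothing where $\mu>0$, because $\alpha$ and $\kappa$ are continuous there. The point $r=\varphi(z)$ with $\mu(r)=0$ is absorbed by the closedness of the sets involved.

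\emph{Step 1 (sign of $y$).} This is the main obstacle. Suppose $y(z_0)<0$ for some $z_0>\xi_{\min}$. For negative $y$, the term $-y/\alpha(r)$ is largest when $\alpha(r)$ is smallest, and small $r$ are always admissible. In the degenerate case $\alpha(r)\to P'(0+)=0$ as $r\to0$, so letting $r\to0$ would force $y'(z_0)=+\infty$, which is a contradiction. In the non-degenerate case $\xi_{\min}=-\infty$. Fix a small $r_0$ with $\mu(r_0)>0$; it is admissible for all $z$ below $z_0$ down to $\phi(r_0)-\bar C$. Shrinking $r_0$ makes this range as large as needed, since $\phi(0+)=-\infty$. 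We then have $y'\ge\kappa(r_0)y^2\ge\bar\kappa_d\,y^2$ wherever $y<0$. Consequently $(1/y)'\le-\bar\kappa_d$ on the set where $y<0$, so $y$ stays negative when going backward from $z_0$. The function $1/y$ then reaches $0$, i.e.\ $y\to-\infty$, after a backward distance of at most $|1/y(z_0)|/\bar\kappa_d$. This contradicts the global existence of $y$ on $(-\infty,\infty)$. Some care is needed when a point with $\mu(r)=0$ is involved; such points force $y=0$ there, which is again a contradiction.

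\emph{Step 2 (case split).} If $\mu>0$ everywhere, taking the supremum over $r<\varphi(z)$ gives exactly \eqref{cond-aa}. Finiteness of $\vartheta(z,y(z))$ follows in two ways. If $y(z)>0$, then $\vartheta(z,y(z))\le y'(z)/y(z)<\infty$. If $y(z)=0$, then $\vartheta(z,0)=\sup_{r<\varphi(z)}(-1/\alpha(r))\le0$, which is finite by Lemma \ref{lem:thetaprop}. If instead $\mu(r_*)=0$ for some $r_*$, then $y=0$ on $[\phi(r_*)-\bar C,\infty)$. For smaller $z$, fix $r'$ small with $\mu(r')>0$; such an $r'$ exists at least near $0$, or else $y\equiv0$ by the first alternative directly. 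Using $y\ge0$, the inequality gives $y'\ge-y/\alpha(r')$, i.e.\ $(e^{z/\alpha(r')}y)'\ge0$ on compact intervals. Since $y\ge0$ vanishes at the right endpoint, this forces $y=0$ backward, so $y\equiv0$.

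\emph{Step 3 (converse and conclusions).} The reverse implication is direct. In case (a), \eqref{cond-aa} yields the pointwise inequality for each $r<\varphi(z)$, and continuity in $r$ extends it to $r=\varphi(z)$. In case (b), $y\equiv0$ gives $f'=f''=0$, which satisfies both alternatives of Lemma \ref{lem-char2}, see Remark \ref{noticevalues}(a). Finally, Lemma \ref{prop4.5} and the Bakry--\'Emery consequences yield \eqref{eq:alphaomega} and then \eqref{mainconclusion1}--\eqref{mainconclusion3}.
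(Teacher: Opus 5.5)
Your proposal is correct and follows essentially the paper's route. You reduce pointwise via Lemmas \ref{lem-char} and \ref{lem-char2}, and take the supremum over $r<\varphi(z)$ to obtain \eqref{cond-aa}. Non-negativity of $y$ comes from $\alpha(0+)=0$ in the degenerate case and from backward blow-up under $y'\ge\bar\kappa_d y^2$ in the non-degenerate case. When $\mu$ vanishes somewhere, an exponential (Gronwall) bound forces $y\equiv0$. The differences are cosmetic: you fix one small $r_0$ instead of passing to the supremum with $\alpha(0)$, and you treat the endpoint $r=\varphi(z)$ explicitly by continuity, which the paper leaves implicit.
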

The function $y=(\log g')',\:(\xi_{\min},\infty)\to\R$ is called \emph{admissible} for $P(u)$ is the corresponding relative entropy $\mathcal H_g$ is admissible, as defined in Definition \ref{def:adm-entr}. 
Proposition \ref{prop4.10} yields the following characterization of admissible entropies, both in terms of the functions $y$ and $g$: 
\begin{corollary}\label{adm-entr}\
\begin{itemize}
    \item[(a)] 
    Assume $P(u)$ is such that $\mu(r)\ge0$ for all $r>0$, and let $y:=f'\: :\: (\xi_{\min},\infty)\to [0,\infty)$ be a $C^1$ function. Then $y$ is \emph{admissible for $P(u)$} if and only if it satisfies condition (a) or (b) from Proposition \ref{prop4.10} above. 
        
    \item[(b)] Assume $P(u)$ and the admissible function $y$ are as in Part (a), and let a function $g\: :\: (\xi_{\min},\infty)\to \R$ satisfy $y=(\log g')'$ with $g(0)=0$. Then the general relative entropy functional $\mathcal H_g$ from Definition \ref{genrelentropy} or Definition \ref{genrelentropy2} is \emph{admissible for $P(u)$}.
\end{itemize}
    Note that the admissible function $y$ defines $g$ uniquely, up to a positive multiplicative constant which appears as an integration constant for $f$.
\end{corollary}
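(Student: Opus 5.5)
The plan is to derive Corollary \ref{adm-entr} directly from Proposition \ref{prop4.10} together with Definition \ref{def:adm-entr} and Lemma \ref{lem-char2}. For part (a), the key observation is that Definition \ref{def:adm-entr} is, pointwise in $(r,z)$ with $z\ge\phi(r)-\bar C$, exactly the dichotomy of Lemma \ref{lem-char2}. By Lemmas \ref{lem-char} and \ref{lem-char2}, this dichotomy is equivalent to $\bar R(\alpha(r),\beta(r),z,v,S)\ge0$ for all $v,S$. Taking the intersection over all admissible pairs $(r,z)$ gives precisely condition \eqref{rem-cond}. Hence ``$y$ admissible for $P$'' is equivalent to \eqref{rem-cond}.

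Proposition \ref{prop4.10} then says that \eqref{rem-cond} holds if and only if $\mu\ge0$, $y\ge0$, and (a) or (b) holds. Since we assume $\mu\ge0$ and $y\ge0$, the equivalence follows at once.

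One bookkeeping point needs checking: Definition \ref{def:adm-entr} presupposes $f'\ge0$, while Lemma \ref{lem-char2} has no such assumption. This causes no problem, because $y\ge0$ is part of the hypothesis here.

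A second point is the role of the condition ``$f'(z)=0\Rightarrow f''(z)\ge0$'' from Lemma \ref{lem-char2} in the case $\mu(r)=0$. Definition \ref{def:adm-entr} only demands $f'(z)=0$ there. In case (b), $y\equiv0$, so $f''=y'\equiv0$ and the condition holds. More generally, if $y\ge0$ is $C^1$ and $y(z)=0$, then $z$ is an interior minimum of $y$, so $y'(z)=0$. This observation is already stated after Definition \ref{def:adm-entr}, so the two formulations agree.

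The one point needing care is the quantifier range. The $z$ in \eqref{rem-cond} satisfies $z\ge\phi(r)-\bar C$, i.e.\ $r\le\varphi(z)$, whereas $\vartheta$ takes the supremum over $r<\varphi(z)$. These coincide for the supremum by continuity of $\kappa$ and $\alpha$ in $r$; the equivalence is in any case already handled inside Proposition \ref{prop4.10}, so I would simply cite it.

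For part (b), the plan is to note that $g$ is determined from $y$ by $f=\int y$ and $g'=e^f>0$, with the normalization $g(0)=0$. Then $g$ is strictly increasing and $C^3$, since $y\in C^1$ gives $f\in C^2$ and hence $g'\in C^2$. So $g$ is a legitimate entropy generating function, and $\mathcal H_g$ is well defined by Definition \ref{genrelentropy} or \ref{genrelentropy2}.

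By part (a), $y$ satisfies the pointwise conditions of Definition \ref{def:adm-entr}, so $\mathcal H_g$ is admissible. Moreover, Proposition \ref{prop4.10} yields \eqref{eq:alphaomega} and its consequences. Finally, the integration constant in $f$ multiplies $g'$, and hence $g$, by $e^c>0$. Because $g(0)=0$ fixes the additive constant, $g$ is unique up to a positive factor.

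I expect no real obstacle beyond this bookkeeping of the boundary and degenerate cases.
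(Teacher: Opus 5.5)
Your proposal is correct and is essentially the paper's own route: the paper gives no separate proof and treats the corollary as an immediate consequence of Proposition~\ref{prop4.10}. The supporting observation is exactly yours, namely that Definition~\ref{def:adm-entr} coincides pointwise with the dichotomy of Lemma~\ref{lem-char2}, because for $C^1$ functions $y\ge0$ one has $f'(z)=0\Rightarrow f''(z)=0$ (the paper remarks this right after the definition). Your continuity argument for the endpoint $r=\varphi(z)$ covers a detail that the proof of Proposition~\ref{prop4.10} passes over silently; this is harmless, since $\mu>0$ in case (a) makes $\kappa$ and $\alpha$ continuous in $r$.
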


\begin{remark}
    The fact that $y$ is defined on all of $(\xi_{\min},+\infty)$ is an essential part of the definition. We shall see below that for a variety of relevant choices for $P(u)$, the only global non-negative solution to the differential inequality \eqref{cond-aa} is $y\equiv0$. 
\end{remark}
\begin{remark}
    In the linear case $P(u)=Du$ with $D>0$, Definition \ref{adm-entr} coincides with Definition \ref{d:2.1}: in this case, we have $\alpha=\beta\equiv D$, $\mu\equiv D/d$ and $\kappa\equiv1$. Hence, Condition \eqref{cond-aa} simplifies to $y'(z)\ge y(z)\big(y(z)-\frac1D\big)$, $z\in\R$, which is equivalent to \eqref{f-inequ}, cf.\ \eqref{condD}.
\end{remark}
\begin{proof}[Proof of Proposition \ref{prop4.10}]
First we show that $y$ is non-negative. Applying Lemma \ref{lem-char} to \eqref{rem-cond} yields $\mu(r)\ge0$ and one of the following two cases:
\begin{itemize}
\item[(a)] If $\mu\ne0$ on $\R^+$, then $\mu>0$ due to Lemma \ref{lem-char} and \eqref{rem-cond2} is equivalent to the differential inequality 
\begin{equation*}%\label{cond-d}
  y'(z) \ge y(z)\left(\kappa(r)y(z)-\frac1{\alpha(r)}\right)
\end{equation*}
for each $z>\xi_{\min}$, and all $r>0$ such that $\phi(r)-\bar C<z$. Notice that this family of differential inequalities is equivalent to
\begin{equation}\label{cond-a}
 y'(z) \ge \sup_{0<r<\varphi(z)} \left[y(z)\left(\kappa(r)y(z)-\frac1{\alpha(r)}\right)\right],
\end{equation}
for all $z>\xi_{\min}$.

\item[(b)] If $\mu$ has the zero $\bar r>0$, then Lemma \ref{lem-char} 
applied to the pairs $(\bar r,z)$ with $z\in[\phi(\bar r)-\bar C,\infty)$ according to \eqref{rem-cond} 
yields the condition $y(z)=0$ for $z\in [\phi(\bar r)-\bar C,\infty)$. Let us define $r_0:=\inf\{r>0\,|\,\mu(r)=0\}\geq 0$, then $y(z)=0$ for $z\in (\phi(r_0)-\bar C,\infty)$. Since $\phi$ is strictly increasing, then the differential inequality 
\begin{equation*}%\label{cond-d}
  y'(z) \ge y(z)\left(\kappa(r)y(z)-\frac1{\alpha(r)}\right)
\end{equation*}
holds for $0<r<r_0$ and $z\in(\phi(r)-\bar C,\phi(r_0)-\bar C]$. 
Notice that this family of differential inequalities is equivalent to
\begin{equation}\label{cond-d}
  y'(z) \ge \sup_{0<r<\varphi(z)} \left[y(z)\left(\kappa(r)y(z)-\frac1{\alpha(r)}\right)\right],
\end{equation}
for all $z\in(\xi_{\min},\phi(r_0)-\bar C)$.
\end{itemize}

\

Case (a): Assume first that $\mu>0$ on $\R^+$. We start by considering non-degenerate diffusions, i.e., $\xi_\text{min}=-\infty$. Let a $C^1$ function $y:(-\infty,\infty)\to\R$ be given that satisfies \eqref{cond-a} and there exists a $z_*$ such that $y(z_*)<0$.
Then \eqref{cond-a} implies particularly
\begin{equation}\label{e5.5a}
  y'(z_*)\ge y(z_*)\left(\bar\kappa_d y(z_*)-\frac1{\alpha(0)}\right)>0,
\end{equation}
where $\alpha(0)\in(0,\infty]$. The sign of $y'(z_*)$ implies $y(z)<0$ on the maximal left neighborhood $z\in(\tilde z,z_*]$ with $\tilde z\in[-\infty,z_*)$ and $y(\tilde z)=0$ (if $\tilde z\ne-\infty$). 
In analogy to \eqref{e5.5a} we also have
\begin{equation}\label{e5.6a}
  y'(z)\ge y(z)\left(\bar\kappa_d y(z)-\frac1{\alpha(0)}\right)
\end{equation}
on $(\tilde z,z_*]$. If $\tilde z$ were finite, $y$ would have a (negative) minimum at some $\bar z\in(\tilde z,z_*)$, i.e.\ $y(\bar z)<0$ and $y'(\bar z)=0$, contradicting the analog of \eqref{e5.5a}, with $\bar z$ replacing $z_*$. Hence $\tilde z=-\infty$. 
Then, the same argument as in Lemma \ref{f-solutions}-(b) implies that $y$ would diverge to $-\infty$ at some finite $\bar z<z_*$. Hence a global solution of \eqref{e5.6a} must satisfy $y\ge0$.

In the degenerate case, where $\xi_\text{min}=\phi(0+)-\bar C>-\infty$, we have to argue differently to show the non-negativity of $y$. We use instead that $\alpha(0+)=P'(0+)=0$. Given an arbitrary $z>\xi_\text{min}$, we have $\phi(r)-\bar C<z$ for sufficiently small $r>0$ due to the monotonicity of $\phi$. Rewriting the differential inequality \eqref{cond-a} yields
  \begin{align*}
    y(z)\ge\alpha(r)\big(\kappa(r)y(z)^2-y'(z)\big)
  \end{align*}
for any $z>\xi_\text{min}$ and sufficiently small $r>0$, using $\alpha(r)>0$. In the limit $r\to0+$, this produces $y(z)\ge0$.\\

Since $y\geq 0$, the inequality \eqref{cond-aa} in statement (a) follows directly from \eqref{cond-a} together with the definition of $\vartheta$ in \eqref{deftheta} since $y\in C^1$. Moreover, since $y\in C^1$ then $y'(z)=f''(z)<\infty$ in condition \eqref{rem-cond2}. This together with $\vartheta(z,0)\le0$ imply that  $\vartheta(z,y(z))<\infty$ for all $z>\xi_{\min}$ by taking the supremum in $0<r<\varphi(z)$ in condition \eqref{rem-cond2}.\\

Case (b): We show that $y\geq 0$, even if $\mu$ has a zero. It is enough to notice that that the proof above in Case (a) still applies to a differentiable $y\: :\:(\xi_{\min},\infty)\to\R$ satisfying \eqref{cond-d}, implying that $y\ge0$ in this case.

Next we shall show that condition \eqref{rem-cond} implies statement (b). As a consequence of being $y\geq 0$, this is equivalent to show that a positive function $y$ satisfying \eqref{cond-d} cannot be connected continuously to $y\equiv 0$ on $[\phi(r_0)-\bar C,\infty)$. From \eqref{cond-d}, $\kappa(r)>0$, and $y\ge0$ we conclude that
\begin{equation}\label{e5.6b}
  y'(z)\ge y(z)\left(-\frac1{\alpha(r_*)}\right)
\end{equation}
holds on any interval $[\bar z,\phi(r_0)-\bar C)$ with $\bar z>\xi_{\min}$ and some $r_*\in(0,\varphi(\bar z))$. Note that $\varphi(\bar z)<r_0$. For the uniform choice of $r_*$ (w.r.t. $z$) we used that $\varphi$ is strictly increasing.\\
\indent
Assume now that there exists a $z_*\in[\bar z,\phi(r_0)-\bar C)$ with $y(z_*)>0$. Then \eqref{e5.6b} implies 
$$
  y(z)\ge y(z_*)e^{-\frac{z-z_*}{\alpha(r_*)}},\quad 
  z\in[z_*,\phi(r_0)-\bar C).
$$
But this exponential lower bound does not allow for a continuous connection to $y\equiv0$ on $[\phi(r_0)-\bar C,\infty)$. Hence $y\equiv0$ on $(\xi_{min},\infty)$ follows.\\

For the reverse direction we first assume \eqref{cond-aa} (in the case $\mu\ne0$ on $\R^+$). This implies \eqref{rem-cond2} for each $z>\xi_{\min}$ with $r\in(0,\varphi(z))$. Hence, \eqref{rem-cond} follows due to Lemma \ref{lem-char}. If $\mu$ has a zero on $\R^+$, then we assume that $y\equiv 0$. Hence, both conditions of Lemma \ref{lem-char} trivially hold, and this implies \eqref{rem-cond} too in this case.
\end{proof}

%%%%%%%%%%%
\subsection{Classes of non-/admissible entropies}
%%%%%%%%%%%
We shall draw various conclusions from Proposition \ref{prop4.10}.
\begin{corollary}
  If $P$ satisfies the McCann condition $\mu(r)\ge0$ for all $r>0$, then $y\equiv0$ is admissible. The corresponding entropy is the standard one, given in Definition \ref{standrelentropy}. If the McCann condition is violated, then there is no admissible entropy at all.
\end{corollary}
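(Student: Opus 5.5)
This corollary follows directly from Proposition \ref{prop4.10} together with Corollary \ref{adm-entr}; I would treat its two assertions separately.

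For the first assertion, assume $\mu(r)\ge0$ for all $r>0$ and take $y\equiv0$, i.e.\ $f$ constant, $g'\equiv e^{f}$. There are two cases, according to whether $\mu$ vanishes somewhere.
\begin{itemize}
\item If $\mu(r)=0$ for some $r>0$, then condition (b) of Proposition \ref{prop4.10} holds trivially, since it requires exactly $y\equiv0$.
\item If $\mu>0$ on $\R^+$, we check condition (a). Since $\kappa>0$, definition \eqref{deftheta} gives $\vartheta(z,0)=\sup_{0<r<\varphi(z)}\{-1/\alpha(r)\}\le0$, because $\alpha(r)>0$ for $r>0$. In particular $\vartheta(z,0)<\infty$, and \eqref{cond-aa} reads $0\ge 0\cdot\vartheta(z,0)=0$, which holds.
\end{itemize}
In both cases Corollary \ref{adm-entr} shows that $y\equiv0$ is admissible. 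The normalization $g(0)=0$, with the multiplicative constant chosen as $1$, then forces $g(\xi)=\xi=g_1$. By Example \ref{sam1} in the non-degenerate case, and Example \ref{Ex:canon-entropy} in the degenerate case, $\mathcal H_{g_1}$ is the standard relative entropy (the latter being $E(u)-E(u_\infty)$, consistent with the Remark after Proposition \ref{key}). This proves the first assertion.

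For the second assertion, suppose $\mu(\bar r)<0$ for some $\bar r>0$. Proposition \ref{prop4.10} states that condition \eqref{rem-cond} is equivalent to a list of conditions that begins with $\mu(r)\ge0$ for all $r>0$. Hence no $y$ whatsoever satisfies \eqref{rem-cond}, and by Definition \ref{def:adm-entr} no $g$ is admissible. One can also see this directly from Lemma \ref{lem-char}: at the fixed $r=\bar r$, the pointwise condition \eqref{rem-cond2a} already fails for every choice of $f$. The reason is that the coefficient $a_2=d^2\mu(\bar r)<0$ makes $p_2$ unbounded below in $y_2$. In $d=1$ the same happens with the coefficient $\beta<0$ of $S^2$.

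There is no real obstacle here. The only point needing care is the identification of the normalized generator in the first assertion, and that is settled by the remark after Corollary \ref{adm-entr} that $y$ determines $g$ up to a positive multiplicative constant.
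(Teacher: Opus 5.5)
Your proposal is correct and takes the paper's approach: the paper's proof is the one-line observation that $y\equiv0$ satisfies \eqref{cond-aa}, with the necessity of $\mu\ge0$ coming from Proposition \ref{prop4.10}. Beyond that, you spell out two things the paper leaves implicit: the case where $\mu$ vanishes somewhere (handled by condition (b) of Proposition \ref{prop4.10}), and the fact that in the degenerate case the standard entropy is $E(u)-E(u_\infty)$ rather than literally $E(u|u_\infty)$.
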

\begin{proof}
  This is obvious since $y\equiv0$ satisfies \eqref{cond-aa}.
\end{proof}
\begin{example}
    In any dimension $d>1$, consider the following regular non-degenerate nonlinearity with sublinear growth:
    \[ P(r) = (r+1)^q-1 \quad\text{with some}\quad 0<q<1.\]
    We then have
    \begin{align*}
        \mu(r) 
        &= \beta-\frac{d-1}d\alpha 
        = q(1+r)^{q-1}-(d-1)\frac{(r+1)^q-1}{dr} \\
        &=\frac{(dq-(d-1))(1+r)^q-dq(1+r)^{-(1-q)}+(d-1)}{dr},
    \end{align*}
    and thus
    \begin{align*}
        \frac{dq-(d-1)}{dr}((1+r)^q-1)\le\mu(r)\le\frac{(d-1)-((d-1)-dq)(1+r)^q}{dr}. 
    \end{align*}
    If $(d-1)/d\le q<1$, then we have $\mu(r)\ge0$ for all $r>0$, and so at least $y\equiv0$ is admissible. If instead $0<q<(d-1)/d$, then $\mu(r)<0$ for all sufficiently large $r>0$. Consequently, there is no admissible function at all.
\end{example}
\begin{corollary}\label{cor:pfamily}
  If $y_*$ is admissible, then also $y_p:=(p-1)y_*$ is admissible, for each $p\in[1,2]$.
\end{corollary}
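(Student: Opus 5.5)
We reduce the claim to the characterization of admissibility in Proposition \ref{prop4.10} (equivalently Corollary \ref{adm-entr}(a)) and check its conditions for $y_p=(p-1)y_*$, with $s:=p-1\in[0,1]$. Since $y_*$ is admissible, the McCann condition $\mu(r)\ge0$ holds for all $r>0$ and $y_*\ge0$. Hence $y_p\ge0$, and $y_p$ is $C^1$ on $(\xi_{\min},\infty)$. The case $s=0$ gives $y_1\equiv0$, which is always admissible under McCann's condition by the preceding corollary. So we may assume $s\in(0,1]$, and we split according to the two alternatives (a) and (b) of Proposition \ref{prop4.10}.

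In case (b), $\mu$ vanishes somewhere and $y_*\equiv0$. Then $y_p\equiv0$ as well, and $y_p$ satisfies (b) directly.

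In case (a), $\mu>0$ on $\R^+$, $\vartheta(z,y_*(z))<\infty$, and $y_*'\ge y_*\,\vartheta(z,y_*)$. The key tool is the structure of $\vartheta$ from \eqref{deftheta}. For $s\in(0,1]$ and $\eta\ge0$, each affine function in the supremum satisfies $\kappa(r)s\eta-\frac1{\alpha(r)}\le s\bigl(\kappa(r)\eta-\frac1{\alpha(r)}\bigr)$, because $\frac1{\alpha(r)}>0$ and $s\le1$. Taking the supremum over $0<r<\varphi(z)$ gives
\begin{equation*}
\vartheta(z,s\eta)\le s\,\vartheta(z,\eta).
\end{equation*}
One could alternatively obtain this from convexity of $\vartheta$ in $\eta$ together with $\vartheta(z,0)\le0$ (Lemma \ref{lem:thetaprop}). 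Applying it with $\eta=y_*(z)$ shows first that $\vartheta(z,y_p(z))<\infty$.

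It then remains to verify the differential inequality \eqref{cond-aa} for $y_p$, where the only point needing care is the sign of $\vartheta$. If $\vartheta(z,y_*(z))\ge0$, then $y_*'\ge y_*\vartheta(z,y_*)$ and $s\le1$ give
\begin{equation*}
y_p'=s\,y_*'\ge s\,y_*\vartheta(z,y_*)\ge s\,y_*\cdot s\,\vartheta(z,y_*)\ge y_p\,\vartheta(z,y_p).
\end{equation*}
If instead $\vartheta(z,y_*(z))<0$, then $\vartheta(z,y_p)\le s\vartheta(z,y_*)<0$, so $y_p\vartheta(z,y_p)\le s^2 y_*\vartheta(z,y_*)$, and this is $\le s\,y_*\vartheta(z,y_*)\le s\,y_*'=y_p'$. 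The step from $s^2$ to $s$ uses $s\le1$ and the nonpositivity of $y_*\vartheta(z,y_*)$.

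In both cases $y_p$ satisfies (a) of Proposition \ref{prop4.10}, hence is admissible by Corollary \ref{adm-entr}. The only delicate point is this sign bookkeeping, and the affine estimate above handles it.
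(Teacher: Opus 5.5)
Your reduction to Proposition \ref{prop4.10}, the treatment of $s=0$ and of alternative (b), the estimate $\vartheta(z,s\eta)\le s\,\vartheta(z,\eta)$, and the case $\vartheta(z,y_*(z))\ge0$ are all correct. The case $\vartheta(z,y_*(z))<0$ does not work as written.

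Set $X:=y_*\vartheta(z,y_*)\le0$ and take $s\in(0,1]$. Then $s^2X-sX=s(s-1)X\ge0$, so $s^2X\ge sX$. This is the opposite of the inequality you use. You therefore only have $y_p\vartheta(z,y_p)\le s^2X$ and $y_p'\ge sX$, and these two bounds do not combine into $y_p'\ge y_p\vartheta(z,y_p)$.

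This is not a harmless corner case. Take $P(r)=r^2$ and $y_*=\tilde y_0$ from \S\ref{sct:xmp-pme2}, and write $w:=2+z+\bar C$. Then $\vartheta(z,y_*(z))=-1/w<0$ for every $z$, and $X=-1/(\kappa w^2)$. Your step $s^2X\le sX$ becomes $s\le s^2$, which is false for every $s\in(0,1)$. The underlying problem is that when $\vartheta$ is negative, the scaling bound $s\,\vartheta(z,y_*)$ is weaker than $\vartheta(z,y_*)$ itself.

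The fix is to use monotonicity of $\vartheta$ in $\eta$ (Lemma \ref{lem:thetaprop}) instead of the scaling bound. Monotonicity gives $\vartheta(z,y_p(z))\le\vartheta(z,y_*(z))$. Multiplying by $y_p(z)=s\,y_*(z)\ge0$ yields $y_p\vartheta(z,y_p)\le s\,y_*\vartheta(z,y_*)\le s\,y_*'=y_p'$, whatever the sign of $\vartheta$. The same monotonicity also gives $\vartheta(z,y_p)<\infty$ immediately. This one-line argument, with no case distinction, is exactly the paper's proof.
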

\begin{proof}
    If $y_*\equiv0$, there is nothing to prove.
    Assume that $y_*$ is not trivial, hence that $y_*$ satisfies the differential inequality \eqref{cond-aa}.
    Recall that $\kappa(r)>0$ by Remark \ref{noticevalues}, and so, for every $r>0$ and $z>\xi_{\min}$, we have $(p-1)\kappa(r)y_*(z)\le\kappa(r)y_*(z)$,
    and thus $\vartheta(z,y_p(z))\le\vartheta(z,y_*(z))$ by definition of $\vartheta$ in \eqref{deftheta}. It follows that
    \begin{align*}
        y_p'(z) = (p-1)y_*'(z) \ge (p-1)y_*(z)\vartheta(z,y_*(z)) \ge y_p(z)\vartheta(z,y_p(z)),
    \end{align*}
    meaning that $y_p$ satisfies the differential inequality as well.
\end{proof}

The first main observation is a negative result about the singular non-degenerate case, i.e., $\alpha(0+)=+\infty$.
\begin{prop}
  \label{cor:admin1}
    Assume that $\alpha(0+)=+\infty$. Then only $y\equiv0$ is admissible. 
\end{prop}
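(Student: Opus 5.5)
Assume $\alpha(0+)=+\infty$ and let $y$ be admissible for $P$. By Proposition \ref{prop4.10}, $\mu\ge0$ and $y\ge0$ on $(\xi_{\min},\infty)$. If $\mu$ vanishes somewhere, alternative (b) already gives $y\equiv0$. So assume $\mu>0$ on $\R^+$; then $y$ satisfies the differential inequality \eqref{cond-aa} with $\vartheta(z,y(z))<\infty$. Note that $\phi(0+)=-\infty$ here, since $P'(0+)=\infty$ forces the non-degenerate case by Remark \ref{degen}(a). Hence $\xi_{\min}=-\infty$, and $y$ is a nonnegative $C^1$ function on all of $\R$. The plan is to argue by contradiction: suppose $y(z_0)>0$ for some $z_0$, and show that $y$ cannot be extended to all $z<z_0$.

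The key step is a lower bound on $\vartheta(z,\eta)$ for $\eta>0$ that holds uniformly in $z$. For every $z$ we have $\varphi(z)>0$, so the supremum in \eqref{deftheta} ranges over all sufficiently small $r>0$. Along such $r$ we have $1/\alpha(r)\to0$ because $\alpha(0+)=\infty$, and $\kappa(r)\ge\bar\kappa_d$ by Remark \ref{noticevalues}(b). Taking $r\to0+$ inside the supremum therefore gives
\begin{align*}
\vartheta(z,\eta)\ge \limsup_{r\to0+}\Big(\kappa(r)\eta-\frac1{\alpha(r)}\Big)\ge\bar\kappa_d\,\eta
\end{align*}
for all $z\in\R$ and $\eta\ge0$. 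Inserting this into \eqref{cond-aa} yields
\begin{align*}
y'(z)\ge\bar\kappa_d\,y(z)^2\quad\text{for all } z\in\R .
\end{align*}
This is exactly the mechanism used at the end of \S\ref{sct:xmp-pme2}, where the decoupled condition led to blow-up.

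Next I close with a comparison argument backward in $z$. On any interval where $y>0$ we get $(1/y)'\le-\bar\kappa_d$. Integrating from $z<z_0$ up to $z_0$ gives $1/y(z)\ge 1/y(z_0)+\bar\kappa_d(z_0-z)$; I will check the direction of this integration carefully. This bound only says that $y$ decays as $z\to-\infty$, which is not yet a contradiction, so the argument must be run in the other direction. Since $y'\ge0$, $y$ is nondecreasing, so $y\ge y(z_0)>0$ on $[z_0,\infty)$. Integrating forward there gives $1/y(z)\le 1/y(z_0)-\bar\kappa_d(z-z_0)$. The right-hand side becomes zero at the finite point $z=z_0+1/(\bar\kappa_d y(z_0))$, so $y$ blows up there. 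This contradicts $y$ being a $C^1$ function on the whole of $(\xi_{\min},\infty)=\R$. Hence $y\equiv0$, and $y\equiv0$ is indeed admissible by the preceding corollary.

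I expect the main obstacle to be justifying the uniform lower bound on $\vartheta$. Two points need care. First, the interval $(0,\varphi(z))$ must always contain arbitrarily small $r$, which holds because $\varphi>0$. Second, the limit $\alpha(r)\to\infty$ has to hold along $r\to0+$ itself and not merely along a subsequence; if only a subsequence were available, a $\limsup$ along that subsequence would still suffice. I also need to confirm that $\mu>0$ near $0$ does not affect the bound, but since $\kappa\ge\bar\kappa_d$ holds regardless of $\mu$, it does not.
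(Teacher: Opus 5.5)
Your proof is correct and follows essentially the same route as the paper: reduce to $\mu>0$ via Proposition \ref{prop4.10}, use $\alpha(0+)=\infty$ together with $\kappa\ge\bar\kappa_d$ to get $y'\ge\bar\kappa_d\,y^2$, and conclude by forward blow-up from any point where $y>0$. One small correction: $\kappa\ge\bar\kappa_d$ holds because $\mu>0$, not ``regardless of $\mu$''; also, neither $\xi_{\min}=-\infty$ nor the backward-integration detour is needed, since the contradiction comes entirely from integrating forward.
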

\begin{proof}
    Let $y$ be admissible, and hence $y(z)\ge0$ on $\R$. We assume $\mu(r)>0$ for all $r>0$ as otherwise, there would be nothing to prove (due to Proposition \ref{prop4.10}-(b)). Using $\alpha(0+)=\infty$, \eqref{cond-aa} then implies
    \[ 
    y'(z)\ge\bar \kappa_d y(z)^2,\quad z\in\R, 
    \]
    taking into account \eqref{eq:def-nu}. 
    But this differential inequality possesses no global solution $y$ that is positive at some $z_*>\xi_{\min}$: by comparison, $y(z)$ would be bounded below by the solution to the corresponding differential equation for $z\ge z_*$, that is,
    \[ y(z) \ge \frac1{1/y(z_*)-\bar \kappa_d(z-z_*)}.\] 
    This bound blows up as $z\uparrow z_*+1/(\bar \kappa_d y(z_*))$. Thus $y\equiv0$.
\end{proof}
From now on, we focus on the two remaining cases, the regular non-degenerate and degenerate diffusions, that is $\alpha(0)\in[0,\infty)$.
\begin{prop}
  \label{prp:Pintegrable}
  Assume $\alpha(0+)<\infty$, and that $\phi\: :\: (0,\infty)\to(\xi_{\min}+\bar C,\infty)$ is a bijection, i.e., $r\mapsto P'(r)/r$ is not integrable at $r\to+\infty$. 
  
  If some non-trivial admissible function $y$ exists, then $r\mapsto \frac{\kappa(r)}{r}\,\frac{P'(r)}{P(r)}$ is integrable at $r\to\infty$. Moreover, if this is the case, then $y(z)\le Y(z)$, $z>\xi_{\min}$, where $Y:(\xi_{\min},\infty)\to[0,\infty)$ is defined by
    \begin{align}
    \label{eq:hatYdef}
        Y(z)
        := \left(P(\varphi(z))\int_{\varphi(z)}^\infty\frac{\kappa(r)}{r}\frac{P'(r)}{P(r)}dr \right)^{-1}.
    \end{align}
\end{prop}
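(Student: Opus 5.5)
The idea is to use the differential inequality \eqref{cond-aa} of Proposition~\ref{prop4.10}(a) with a specific, $z$-dependent choice of $r$ in the supremum defining $\vartheta$. This yields a Riccati-type inequality whose global solvability forces the integrability condition and the upper bound $Y$.

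\emph{Reduction.} If $\mu$ vanishes somewhere, Proposition~\ref{prop4.10}(b) gives $y\equiv0$, so we may assume $\mu>0$ and that $y\ge0$ satisfies \eqref{cond-aa}. Suppose $y(z_0)>0$ for some $z_0$. Then $y>0$ on $[z_0,\infty)$, because $\vartheta(z,y)\ge-1/\alpha(r)$ gives $y'\ge -y/\alpha(r_*)$ for a fixed $r_*$, which prevents $y$ from reaching zero in finite $z$. Set $w:=1/y$ on $[z_0,\infty)$.

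\emph{Choice of $r$.} For any $z>\xi_{\min}$ and any $r<\varphi(z)$, \eqref{cond-aa} gives $y'\ge \kappa(r)y^2-y/\alpha(r)$. Letting $r\uparrow\varphi(z)$ and using continuity of $\kappa,\alpha$ on $(0,\infty)$, this also holds at $r=\varphi(z)$. In terms of $w$:
\[
w'(z)+\frac{w(z)}{\alpha(\varphi(z))}\le -\kappa(\varphi(z)).
\]
Now change variables to $r=\varphi(z)$, i.e.\ $z=\phi(r)-\bar C$, which is legitimate since $\phi$ is a bijection onto $(\xi_{\min}+\bar C,\infty)$. Because $dz/dr=\phi'(r)=P'(r)/r$ and $\alpha(r)=P(r)/r$, we have $\frac{1}{\alpha}\frac{dz}{dr}=P'/P$. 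With $W(r):=w(\phi(r)-\bar C)$ this becomes
\[
\frac{d}{dr}\big(P(r)W(r)\big)\le -\kappa(r)\frac{P'(r)}{r}=-P(r)\,\frac{\kappa(r)}{r}\frac{P'(r)}{P(r)}.
\]
Dividing by $P(r)$ instead gives the cleaner form
\[
\frac{d}{dr}\big(\log P(r)\big)\cdot W+W'\le -\frac{\kappa P'}{rP}\cdot P,
\]
so it is better to keep the integrating factor: $(PW)'\le -\kappa P'/r$. Integrating from $r_1=\varphi(z)$ to $R$ gives
\[
P(R)W(R)\le P(r_1)W(r_1)-\int_{r_1}^R \kappa P'/r\,dr.
\]
Since $W>0$, this already shows $\int^\infty \kappa P'/r<\infty$.

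\emph{Main obstacle.} The statement involves the weight $\kappa P'/(rP)$ rather than $\kappa P'/r$, so I must check which integrating factor is correct. Redoing the computation: with $w=1/y$, we have $w'=-y'/y^2\le -\kappa+w/\alpha$, so the sign of the linear term is opposite to what I wrote above. This gives $W'\le -\kappa P'/r+W\,P'/P$, hence
\[
\left(\frac{W}{P}\right)'\le -\frac{\kappa P'}{rP}.
\]
Integrating from $r_1$ to $R$ and using $W(R)/P(R)>0$:
\[
\int_{r_1}^R\frac{\kappa(r)}{r}\frac{P'(r)}{P(r)}\,dr\le \frac{W(r_1)}{P(r_1)}.
\]
Letting $R\to\infty$ gives integrability, and $W(r_1)\ge P(r_1)\int_{r_1}^\infty\frac{\kappa P'}{rP}dr$, that is, $y(z)\le Y(z)$ for all $z\ge z_0$. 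For $z$ where $y(z)=0$ the bound is trivial.

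Two technical points remain: passing to $r=\varphi(z)$ via the supremum (approximate with $r<\varphi(z)$ and take the limit), and ensuring $W$ stays finite, which follows from positivity of $y$ propagating forward.
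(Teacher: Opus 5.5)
Your final argument is correct and is essentially the paper's proof: both evaluate the supremum in \eqref{cond-aa} at $r=\varphi(z)$ and use the Bernoulli structure of the resulting Riccati inequality. The paper argues by contradiction, comparing $y$ with the explicit blow-up solutions $y_1,\hat y$; these are just the integrated form of your $(W/P)'\le-\frac{\kappa(r)}{r}\frac{P'(r)}{P(r)}$. You instead integrate this linear inequality for $W=1/y$ directly, which gives integrability and $y\le Y$ in one step and avoids differentiating $\varphi$, at the small cost of first propagating $y>0$ forward, which you handle correctly.

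In a write-up, delete the initial wrong-sign computation ($w'+w/\alpha\le-\kappa$ and the ensuing claim $\int^\infty\kappa P'/r\,dr<\infty$). That claim is false and would even contradict the assumed non-integrability of $P'/r$, since $\kappa\ge\bar\kappa_d>0$.
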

\begin{remark}
    In some cases, detailed below, $Y$ itself is an admissible function in the sense of Corollary \ref{adm-entr}. If that happens, then $Y$ is obviously the pointwise maximal admissible function. 

    We also note that such a  pointwise maximal admissible function always exists, but we do not give the proof here, as it would not help our further discussion. 
    This pointwise maximal admissible function is the analog of the upper threshold $y\equiv1$, which corresponds to the quadratic entropy for linear Fokker-Planck equations \eqref{GFP}, see Lemma \ref{f-solutions}.
\end{remark}
\begin{proof}[Proof of Proposition \ref{prp:Pintegrable}]
  Let $y:(\xi_\text{min},\infty)\to [0,\infty)$ be a non-trivial admissible function. Since $y$ is non-trivial, there exists a $z_*>\xi_\text{min}$ with $y(z_*)>0$. It follows further from \eqref{cond-a} at any $z>\xi_{\min}$ that
  \begin{align}
    \label{eq:y2}
    y'(z) \ge y(z)\left(\kappa(\varphi(z))y(z)-\frac1{\alpha(\varphi(z))}\right),
  \end{align}
  since $\varphi(z)<\infty$ under the assumptions of this proposition. More precisely, $\varphi\::\: (\xi_{\min},\infty)\to(0,\infty)$ is a bijection.
  
    To construct a contradiction, assume that $u\mapsto\frac{\kappa(u)}{u}\,\frac{P'(u)}{P(u)}$ were not integrable. Define $y_1:[z_*,z^*)\to\R$ by
    \begin{align*}
        y_1(z) := \left(P(\varphi(z))\left[\frac{1}{P(\varphi(z_*))y(z_*)}
        -\int_{\varphi(z_*)}^{\varphi(z)}\frac{\kappa(u)}{u}\frac{P'(u)}{P(u)}du\right]\right)^{-1},
    \end{align*}
    where the unique $z^*\in(z_*,\infty)$ is such that the expression inside the squared brackets vanishes at $z=z^*$, i.e.,
    \begin{align*}
        \int_{\varphi(z_*)}^{\varphi(z^*)}\frac{\kappa(u)}{u}\frac{P'(u)}{P(u)}du=\frac{1}{P(\varphi(z_*))y(z_*)}.
   \end{align*}
   Moreover, $\lim_{z\uparrow z^*}y_1(z)=+\infty$.
   Observe that $y_1(z_*)=y(z_*)$. Using $\phi(\varphi(z))=z+\bar C$ one can verify that, for all $z\in[z_*,z^*)$,
   \begin{align}
        \label{eq:y1}
        y_1'(z) = y_1(z)\left(\kappa(\varphi(z))y_1(z)-\frac1{\alpha(\varphi(z))}\right).
   \end{align}
 Indeed, to evaluate the derivative of $y_1$, we need to compute the derivative of $\varphi$. Recall that $\phi:(0,\infty)\to(\xi_{\min}+\bar C,\infty)$ is a bijection since $r\mapsto P'(r)/r$ is not integrable at $r\to\infty$. By definition of $\varphi$ as the generalized inverse of $\phi$, we thus obtain
\begin{equation}\label{dervarphi}
    \varphi'(z) = \frac1{\phi'(\varphi(z))} = \frac{\varphi(z)}{P'(\varphi(z))}. 
\end{equation}
With this expression, we can compute
 \begin{align*}
        y_1'(z) 
        &= -y_1(z)^2\left[P'(\varphi(z))\varphi'(z)\left[\frac{1}{P(\varphi(z_*))y(z_*)}
        -\int_{\varphi(z_*)}^{\varphi(z)}\frac{\kappa(u)}{u}\frac{P'(u)}{P(u)}du\right] -\varphi'(z)\frac{\kappa(\varphi(z))}{\varphi(z)}{P'(\varphi(z))}\right] \\
        &= -y_1(z)\frac{P'(\varphi(z))\varphi'(z)}{P(\varphi(z))} + y_1(z)^2\frac{P'(\varphi(z))\varphi'(z)}{\varphi(z)}\kappa(\varphi(z)) \\
        &= y_1(z)\left[\kappa(\varphi(z))y_1(z)-\frac1{\alpha(\varphi(z))}\right].
    \end{align*}
   
   In view of \eqref{eq:y2} and \eqref{eq:y1}, the comparison principle for solutions of scalar ODEs yields that $y$ needs to blow up somewhere in between $z_*$ and $z^*$, contradicting the definition of $y$ on $(\xi_{\min},\infty)$.
   Consequently, $\frac{\kappa(u)}{u}\,\frac{P'(u)}{P(u)}$ needs to be integrable at $u\to\infty$.
   
   Now let $y:(\xi_{\min},\infty)\to\R$ be a non-negative solution to \eqref{cond-aa}. We need to verify that $y(z_*)\le Y(z_*)$ at each $z_*>\xi_{\min}$. 
    Since $y$ is admissible, it satisfies in particular
    \begin{align}
        \label{eq:hatygood}
        y'(z) \ge y(z)\left(\kappa(\varphi(z))y(z)-\frac1{\alpha(\varphi(z))}\right),
    \end{align}
    since $\varphi(z)<\infty$ under the assumptions of this proposition.  
    
    Towards a contradiction, assume that $y(z_*)>Y(z_*)$, so that
    \begin{align*}
        K_*:=\frac{y(z_*)}{Y(z_*)}>1.
    \end{align*}
    Define (the unique) $z^*>z_*$ by
    \begin{align*}
        K_*\int_{\varphi(z_*)}^{\varphi(z^*)}\frac{\kappa(u)}{u}\frac{P'(u)}{P(u)}du
        =\int_{\varphi(z_*)}^{\infty}\frac{\kappa(u)}{u}\frac{P'(u)}{P(u)}du,
    \end{align*}
    due to the integrability of $\frac{\kappa(u)}{u}\,\frac{P'(u)}{P(u)}$ and $\hat y:(\xi_{\min},z^*)\to\setR$ by
    \begin{align*}
        \hat y(z) :=\left(P(\varphi(z))\left[
        \frac1{K_*}\int_{\varphi(z_*)}^{\infty}\frac{\kappa(u)}{u}\frac{P'(u)}{P(u)}du
        -\int_{\varphi(z_*)}^{\varphi(z)}\frac{\kappa(u)}{u}\frac{P'(u)}{P(u)}du
        \right]\right)^{-1}.
    \end{align*}
    One verifies that
    \begin{align*}
        \hat y'(z) = \hat y(z)\left(\kappa(\varphi(z))\hat y(z)-\frac1{\alpha(\varphi(z))}\right),
    \end{align*}
    and from the definition of $z^*$, it is obvious that $\hat y(z)\to\infty$ as $z\uparrow z^*$.
    Now recall \eqref{eq:hatygood}, and observe that $\hat y(z_*)=y(z_*)$ by definition. The comparison principle for scalar ODEs implies that $y(z)\ge\hat y(z)$ for all $z\in[z_*,z^*)$, hence $y$ blows up somewhere in between $z_*$ and $z^*$, contradicting that $y$ is defined on all of $(\xi_{\min},\infty)$.
\end{proof}
\begin{example}\label{xmp:exponential}
  We consider the regular non-degenerate nonlinearity $P(u)=e^u-1$. 
  Accordingly, 
  \begin{align*}
    &\alpha(r) = \frac{e^r-1}r,\quad\beta(r)=e^r,\quad 
    \kappa(r) = 1 + \frac r8 \frac{1-\frac{1-e^{-r}}r}{1-e^{-r}}
    \frac{1-\left(1+\frac8d\right)\frac{1-e^{-r}}r}{1-\left(1-\frac1d\right)\frac{1-e^{-r}}r},\\
    &\mu(r)=e^r-(d-1)\frac{e^r-1}{dr}.
  \end{align*}  
  Since $\phi:(0,\infty)\to(-\infty,\infty)$ is a bijection, Proposition \ref{prp:Pintegrable} applies.
  Since
  $$
    \lim_{u\to\infty} \frac{\kappa(u)}{u}\frac{P'(u)}{P(u)}=\frac18,
  $$
  this function is not integrable at $u\to\infty$.
  So $y\equiv0$ is the only admissible function.\qed
\end{example}
For $P'(r)/r$ not integrable at $r\to\infty$, 
Proposition \ref{prp:Pintegrable} implies that integrability of $r\mapsto \frac{\kappa(r)}{r}\frac{P'(r)}{P(r)}$ at $r\to\infty$ is necessary for the existence of non-trivial admissible functions (see Example \ref{xmp:exponential}). Proposition \ref{prp:nontrivialy} below complements this: the aforementioned integrability is also sufficient, at least if $\alpha$ and $\kappa$ have the monotonicity property \eqref{eq:lengthy}.
\begin{prop}\label{prp:nontrivialy}
    Assume $\alpha(0+)<\infty$, and that $r\mapsto P'(r)/r$ is not integrable at $r\to\infty$. Assume further that there is some $r_*\ge0$ such that
    \begin{align}\label{eq:lengthy}
        \kappa(r')\le\kappa(r)
        \quad\text{and}\quad
        \alpha(r')\le\alpha(r)
        \quad\text{for all $r\ge r_*$, and all $0<r'\le r$.}
    \end{align}
    If the function
    \begin{equation}\label{r-map}
        r\mapsto\frac{\kappa(r)}{r}\frac{P'(r)}{P(r)}
    \end{equation} 
    is integrable at $r\to\infty$, then there exists a non-trivial admissible function $y:(\xi_{\min},\infty)\to\R$.
\end{prop}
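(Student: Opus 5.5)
The plan is to build an admissible $y$ by hand, checking it against the characterization in Proposition~\ref{prop4.10}(a) and Corollary~\ref{adm-entr}. Throughout I assume the strict McCann condition $\mu(r)>0$ for all $r>0$; without it Proposition~\ref{prop4.10}(b) permits only $y\equiv0$, so the statement implicitly needs it.

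\emph{Step 1: identify $\vartheta$ for large $z$.} Set $z_*:=\phi(\max\{r_*,1\})-\bar C$. For $z\ge z_*$ we have $\varphi(z)\ge r_*$. The monotonicity hypothesis \eqref{eq:lengthy} gives $\kappa(r)\eta-1/\alpha(r)\le\kappa(\varphi(z))\eta-1/\alpha(\varphi(z))$ for all $0<r<\varphi(z)$ and $\eta\ge0$. Since $\kappa,\alpha$ are continuous, the supremum in \eqref{deftheta} is therefore
\[
\vartheta(z,\eta)=\kappa(\varphi(z))\eta-\frac1{\alpha(\varphi(z))},\qquad z\ge z_*,\ \eta\ge0 .
\]
So on $[z_*,\infty)$, condition \eqref{cond-aa} reduces to the scalar Riccati-type inequality $y'\ge y\big(\kappa(\varphi)y-1/\alpha(\varphi)\big)$, exactly the one in \eqref{eq:y2}.

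\emph{Step 2: global solutions on $[z_*,\infty)$.} By the integrability of \eqref{r-map}, the function $Y$ from \eqref{eq:hatYdef} is finite and positive. Using \eqref{dervarphi}, the same computation as for $\hat y$ (with $K_*=1$) in the proof of Proposition~\ref{prp:Pintegrable} shows that $Y$ solves this ODE with equality. The right-hand side is locally Lipschitz in $y$ with coefficients continuous in $z$. Hence for any $0<y_0\le Y(z_*)$, the ODE solution $\tilde y$ with $\tilde y(z_*)=y_0$ stays in $(0,Y]$ by uniqueness and comparison. It therefore exists on all of $[z_*,\infty)$ and satisfies \eqref{cond-aa} there with equality. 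Alternatively one can just take $\tilde y=(p-1)Y$ with $p\in(1,2)$: since $\kappa>0$, the argument of Corollary~\ref{cor:pfamily} shows it satisfies the inequality on $[z_*,\infty)$.

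\emph{Step 3: the compact/left region and gluing.} This is the main obstacle, because for $z<z_*$ the supremum in $\vartheta$ no longer sits at $r=\varphi(z)$. On $(\xi_{\min},z_*]$ I only use the crude bound $\vartheta(z,\eta)\le K\eta$, where $K:=\sup_{0<r\le\varphi(z_*)}\kappa(r)$. This $K$ is finite since $\mu>0$ and $\kappa$ is continuous on $(0,\varphi(z_*)]$. At $r\to0$, $\alpha(0+)<\infty$ and $\mu(0+)\ge \alpha(0)/d$ keep $\kappa$ bounded, except in the degenerate case where $\alpha(0)=0$. In that case I would check via the asymptotics of $(\mu-3\alpha/d)^2/(8\alpha\mu)$, and, if necessary, avoid the issue entirely by making $y$ vanish near $\xi_{\min}$. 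So it suffices to produce a $C^1$ function $y\ge0$ on $(\xi_{\min},z_*]$ with $y'\ge Ky^2$. I take it to vanish for $z\le z_1$, for some $z_1\in(\xi_{\min},z_*)$; there \eqref{cond-aa} holds trivially, because $\vartheta(z,0)\le0$ is finite. On $[z_1,z_*]$ I take $y=\delta\psi$ with $\psi(z)=e^{-1/(z-z_1)}$. Then $\psi'/\psi^2=e^{1/(z-z_1)}/(z-z_1)^2$ is bounded below by a positive constant on $(z_1,z_*]$, so $y'\ge Ky^2$ holds for $\delta$ small. Finally, I start the ODE solution of Step 2 at $y_0:=\delta\psi(z_*)\le Y(z_*)$, again for $\delta$ small. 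To make the junction $C^1$, I insert a short transition interval $[z_*,z_*+1]$, on which $\vartheta$ already has the explicit form of Step 1. On it I match the left derivative using any $C^1$ interpolant lying above the solution of the \emph{equality} ODE; such an interpolant automatically satisfies the differential inequality and stays below $Y$ if $\delta$ is small. Past the transition interval I continue with the ODE solution. The resulting $y$ is $C^1$, non-negative, non-trivial, satisfies $\vartheta(z,y(z))<\infty$ and \eqref{cond-aa} everywhere, and is hence admissible by Corollary~\ref{adm-entr}. The delicate points are the boundedness of $\kappa$ near $r=0$ in the degenerate case and the $C^1$ gluing at $z_*$.
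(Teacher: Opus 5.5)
Your architecture works and differs from the paper's: \eqref{eq:lengthy} identifies $\vartheta$ for $z\ge z_*$, an ODE solution below $Y$ is used on the right, and a separate bump is used on the left. However, two steps fail as written.

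\emph{Boundedness of $\kappa$.} You leave the finiteness of $K=\sup_{0<r\le\varphi(z_*)}\kappa(r)$ open, but it is immediate from the hypothesis. Applying \eqref{eq:lengthy} with $r=\varphi(z_*)\ge r_*$ gives $\kappa(r')\le\kappa(\varphi(z_*))$ for all $0<r'\le\varphi(z_*)$, so $K=\kappa(\varphi(z_*))$. Your fallback would not have helped. At any $z$ with $y(z)>0$, the supremum in \eqref{deftheta} still runs over all $r\in(0,\varphi(z))$, including $r\downarrow0$. So if $\kappa(r)\eta-1/\alpha(r)$ were unbounded there, $\vartheta(z,\eta)=+\infty$ for every $z$ and every $\eta>0$. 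No non-trivial admissible $y$ would then exist, whatever $y$ does near $\xi_{\min}$.

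\emph{Gluing at $z_*$.} Write $F(z,\eta):=\eta\bigl(\kappa(\varphi(z))\eta-1/\alpha(\varphi(z))\bigr)$. Your claim that a $C^1$ interpolant lying above a solution of $y'=F(z,y)$ automatically satisfies $y'\ge F(z,y)$ is false. Comparison runs from ``supersolution'' to ``lies above'', not the other way. This matters here: your left piece arrives with $y'(z_*)=\delta\psi'(z_*)>0$, while $F(z_*,y_0)<0$ for small $\delta$.

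A correct transition is to let $y$ solve $y'=F(z,y)+\chi(z)$ with $y(z_*)=\delta\psi(z_*)$. Choose $\chi(z)=\chi(z_*)\,(1-(z-z_*))_+$ with $\chi(z_*)=\delta\psi'(z_*)-F(z_*,\delta\psi(z_*))$. This is non-negative because $F(z_*,\eta)\le K\eta^2$ and $\delta\psi'(z_*)\ge K\delta^2\psi(z_*)^2$.
\begin{itemize}
\item The resulting $y$ is $C^1$ at $z_*$ and is a supersolution by construction.
\item Since $\chi=O(\delta)$, it stays below $\min_{[z_*,z_*+1]}Y$ for small $\delta$.
\item Beyond $z_*+1$ it is an ODE solution trapped in $(0,Y]$.
\end{itemize}
A simpler alternative is not to discard the $-1/\alpha$ term. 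By Lemma~\ref{lem:thetaprop}, $\vartheta(z,\eta)\le\vartheta(z_*,\eta)=\kappa(\varphi(z_*))\eta-1/\alpha(\varphi(z_*))$ for $z\le z_*$, which permits a decreasing left piece.

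\emph{The paper's route.} That last observation is exactly what the paper uses. It takes $y=Y$ itself on $[z_*,\infty)$ (respectively $[z_*+\varepsilon,\infty)$). It then extends leftwards using only monotonicity of $\vartheta$ in both arguments: any $C^1$ extension with $0<y\le Y(z_*)$ and $y'/y\ge\vartheta(z_*,Y(z_*))$ is admissible.
\begin{itemize}
\item If $Y'(z_*)\ge0$, an exponential with rate $Y'(z_*)/Y(z_*)$ works.
\item Otherwise, a constant is joined to $Y$ by a log-linear interpolation on $[z_*,z_*+\varepsilon]$.
\end{itemize}
This reduces everything to the single finite number $\vartheta(z_*,Y(z_*))$. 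It also gives $y=Y$ for large $z$, the maximal admissible function in the tail (Remark~\ref{rem:Y-max-entropy}). Your repaired construction yields a $y$ vanishing near $\xi_{\min}$ and strictly below $Y$. That suffices for existence but gives less tail control.
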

\begin{remark}\label{rem:Y-max-entropy}\
    \begin{enumerate}
    \item[(a)] Hypothesis \eqref{eq:lengthy} is obviously fullfilled with $r_*=0$ if both $\alpha$ and $\kappa$ are non-decreasing functions. In that case, $Y$ from \eqref{eq:hatYdef} itself is admissible, and is actually the pointwise largest admissible function. A situation where $r_*>0$ is needed is analyzed in Example \ref{ex:power-exp} below.
    \item[(b)] The proof provides a quite explicit construction of a non-trivial $y$ with the help of $Y$ from \eqref{eq:hatYdef}. In particular, for any sufficiently small $\eps>0$, one may choose $y$ such that $y(z)=Y(z)$ for all $z\ge \phi(r_*)-\bar C+\eps$.
    \end{enumerate}
\end{remark}
\begin{proof}[Proof of Proposition \ref{prp:nontrivialy}]
    Let $z_*:=\phi(r_*)-\bar C$. We differentiate the expression for $Y(z)$ to show that it satisfies \eqref{cond-aa} on $(z_*,\infty)$. Recalling \eqref{dervarphi} and proceeding similarly to \eqref{Y-Y'}, one obtains 
    \begin{align}\label{Y-Y'}
        Y'(z) 
        &= -Y(z)^2\left[P'(\varphi(z))\varphi'(z)\int_{-\varphi(z)}^\infty\frac{\kappa(r)}r\frac{P'(r)}{P(r)}\,dr -\varphi'(z)\frac{\kappa(\varphi(z))}{\varphi(z)}{P'(\varphi(z))}\right] \nonumber \\
        &= -Y(z)\frac{P'(\varphi(z))\varphi'(z)}{P(\varphi(z))} + Y(z)^2\frac{P'(\varphi(z))\varphi'(z)}{\varphi(z)}\kappa(\varphi(z)) \\
        &= Y(z)\left[\kappa(\varphi(z))Y(z)-\frac1{\alpha(\varphi(z))}\right]. \nonumber 
    \end{align}
    Thanks to hypothesis \eqref{eq:lengthy}, we have for every $z\ge z_*$ and $\eta\ge0$ that
    \begin{align}\label{theta-Y}
        \vartheta(z,\eta) = \sup_{0<r<\varphi(z)} \left[\kappa(r)\eta-\frac1{\alpha(r)}\right]
        = \kappa(\varphi(z))\eta-\frac1{\alpha(\varphi(z))}. 
    \end{align}
    This shows that $Y$ satisfies \eqref{cond-aa} on $(z_*,\infty)$, even with equality. 
    
    It remains to find a $C^1$-extension $y:(\xi_\text{min},\infty)\to\R$ of $Y$ that satisfies \eqref{cond-aa} on all of its domain. 
    $Y(z_*)>0$ follows from \eqref{eq:hatYdef}, since $\varphi(z_*)=r_*<\infty$ and assumption \eqref{r-map}. 
    If $Y'(z_*)\ge0$, then such an extension is given by defining $y(z):=Y(z)$ for $z\ge z_*$, and by setting 
    \[ y(z) := Y(z_*)\,\exp\left((z-z_*)\,\frac{Y'(z_*)}{Y(z_*)}\right) \]
    for $z\in(\xi_\text{min},z_*)$. On the latter range of $z$'s, we have $0<y(z)\le Y(z_*)$ and
    \[ y'(z) = y(z)\frac{Y'(z_*)}{Y(z_*)} = y(z)\vartheta\big(z_*,Y(z_*)\big), \]
    by using \eqref{Y-Y'} and \eqref{theta-Y}.
    Global $C^1$-regularity of $y$ is obvious. Validity of \eqref{cond-aa} for $z\ge z_*$ is inherited from $Y$, while for $\xi_\text{min}<z<z_*$, it follows from the fact that $\vartheta(z,y(z))\le\vartheta(z_*,Y(z_*))$ by monotonicity of $\vartheta$ in both arguments, see Lemma \ref{lem:thetaprop}. 

    If instead $Y'(z_*)<0$, then we define a different extension in order to guarantee $y(z)\le Y(z_*)$ for $\xi_\text{min}<z<z_*$ also in this case. Thanks to $Y$'s $C^1$-regularity, $z\mapsto Y'(z)/Y(z) = \vartheta(z,Y(z))$ is a continuous function, 
    and the last equatily holds for $z\ge z_*$. 
    Thus we may choose an $\eps>0$ such that $\vartheta(z,Y(z))<0$ for $z_*\le z\le z_*+\eps$. In particular, 
    \begin{align}
        \label{eq:monomono}
        Y(z_*+\eps)\le Y(z') \le Y(z) \le Y(z_*) \quad \text{for $z_*\le z'\le z\le z_*+\eps$}.
    \end{align}  
    Now define $y$ by 
    \[
    y(z):=\begin{cases}
            \exp\left(\frac1\eps\int_{z_*}^{z_*+\eps}\log Y(z')\,dz'\right) & \text{for $\xi_\text{min}<z\le z_*$}, \\
            Y(z)^{(z-z_*)/\eps}\exp\left(\frac1\eps\int_z^{z_*+\eps}\log Y(z')\,dz'\right) & \text{for $z_*<z<z_*+\eps$},\\
            Y(z) & \text{for $z\ge z_*+\eps$}.
        \end{cases}
    \]
    Notice that, by Jensen's inequality, we have for $z_*<z<z_*+\eps$ that
    \begin{align}
        \label{eq:monomo}
        y(z) \le \frac{Y(z)^{(z-z_*)/\eps}}{z_*-z+\eps}\int_z^{z_*+\eps}Y(z')^{1-(z-z_*)/\eps}\,dz' \le Y(z),
    \end{align}
    where the last inequality follows from \eqref{eq:monomono}.
    
    Continuity of $y$ is immediately verified. $y'(z)=0$ for $\xi_\text{min}<z<z_*$, and
    \begin{align}
        \label{eq:yprime}
        y'(z) = \frac{z-z_*}{\eps}\frac{Y'(z)}{Y(z)}y(z)
    \end{align}
    for $z_*<z<z_*+\eps$, showing continuity of $y$'s derivative as well. Validity of \eqref{cond-aa} for $z>z_*+\eps$ is inherited from $Y$, and for $\xi_\text{min}<z<z_*$, it follows from 
    \[ \vartheta(z,y(z)) \le \vartheta(z_*,Y(z_*)) < 0  \]
    thanks to the monotonicity of $\vartheta$, see Lemma \ref{lem:thetaprop}. Now let $z_*<z<z_*+\eps$. Recalling \eqref{eq:monomo}, that $Y'(z)<0$, and once again the monotonicity of $\vartheta$, we conclude from \eqref{eq:yprime} that
    \begin{align*}
        \frac{y'(z)}{y(z)} 
        = \frac{z-z_*}{\eps}\frac{Y'(z)}{Y(z)}
        \ge \frac{Y'(z)}{Y(z)}
        = \vartheta(z,Y(z))
        \ge \vartheta(z,y(z)),
    \end{align*}
    showing validity of \eqref{cond-aa} also on this range of $z$'s.
\end{proof}
\begin{corollary}
    Assume $\alpha(0+)<\infty$, the monotonicity property \eqref{eq:lengthy}, and that the function $\frac{\beta(r)}{\alpha(r)}= rP'(r)/P(r)\to\infty$ as $r\to\infty$. Then, a non-trivial admissible $y$ exists if and only if $r\mapsto[P'(r)/P(r)]^2$ is integrable for $r\to\infty$.
\end{corollary}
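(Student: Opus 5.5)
The plan is to reduce the corollary to Propositions \ref{prp:Pintegrable} and \ref{prp:nontrivialy}. Under $\alpha(0+)<\infty$ and \eqref{eq:lengthy}, those two results together say: provided $r\mapsto P'(r)/r$ is not integrable at infinity, a non-trivial admissible $y$ exists if and only if $\frac{\kappa(r)}{r}\frac{P'(r)}{P(r)}$ is integrable at $r\to\infty$. So two things remain. First, we check the non-integrability hypothesis. Second, we show that, under $\beta/\alpha\to\infty$, integrability of $\frac{\kappa}{r}\frac{P'}{P}$ is equivalent to integrability of $(P'/P)^2$.

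For the first step, write $\rho(r):=\beta(r)/\alpha(r)=rP'(r)/P(r)$. Since $\rho\to\infty$, we have $\rho\ge1$ for large $r$, so $P'(r)/P(r)\ge 1/r$ there. Integrating gives $P(r)\ge cr$, hence $P'(r)/r\ge P(r)/r^2\ge c/r$, which is not integrable at infinity. Thus $\phi$ is a bijection onto $(\xi_{\min}+\bar C,\infty)$, and both propositions apply.

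For the second step, compute the asymptotics of $\kappa$. We have $\mu=\beta-\frac{d-1}{d}\alpha=\alpha(\rho-\frac{d-1}{d})$, which is positive for large $r$. Also
\begin{align*}
\kappa=\bar\kappa_d+\frac{\alpha^2(\rho-\frac{d-1}{d}-\frac3d)^2}{8\alpha^2(\rho-\frac{d-1}{d})}=\bar\kappa_d+\frac{(\rho-1-\frac2d)^2}{8(\rho-1+\frac1d)}.
\end{align*}
As $\rho\to\infty$ this gives $\kappa(r)=\frac{\rho(r)}{8}(1+o(1))$. Consequently
\begin{align*}
\frac{\kappa(r)}{r}\frac{P'(r)}{P(r)}=\frac{\kappa(r)\rho(r)}{r^2}\sim\frac{\rho(r)^2}{8r^2}=\frac18\Big(\frac{P'(r)}{P(r)}\Big)^2 .
\end{align*}
The two integrands are positive and asymptotically proportional for large $r$, so by limit comparison one is integrable at infinity if and only if the other is. Combining this with Propositions \ref{prp:Pintegrable} and \ref{prp:nontrivialy} proves the equivalence.

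The only delicate point is that $\rho\to\infty$ is what lets us drop the lower-order terms in $\kappa$. Positivity of $\mu$ for large $r$ is needed so that $\kappa$ is finite and the asymptotic comparison is legitimate, and it follows at once from $\rho\to\infty$. No other obstacle is expected; the conditions at small $r$ enter only through the two propositions and \eqref{eq:lengthy}.
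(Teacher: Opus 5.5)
Your proposal is correct and follows essentially the paper's own proof. The paper likewise checks that $P'(r)/r$ is not integrable, using $rP'/P\ge 2$ to get $P\ge cr^2$ where you use $rP'/P\ge1$ to get $P'(r)/r\ge c/r$ (either works). It then shows that $\kappa$ is comparable to $rP'(r)/P(r)$, so that $\frac{\kappa}{r}\frac{P'}{P}$ is comparable to $(P'/P)^2$, and invokes Propositions \ref{prp:Pintegrable} and \ref{prp:nontrivialy}.
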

\begin{proof}
    To apply Propositions \ref{prp:Pintegrable} and \ref{prp:nontrivialy}, we first verify that $r\mapsto P'(r)/r$ is not integrable for $r\to\infty$. Since $rP'(r)/P(r)\to\infty$, we have in particular that $(\log P)'(r)\ge 2/r$ for all sufficiently large $r$, and consequently also $P(r)\ge cr^2$ for these $r$ and some $c>0$. It follows that
    \[ \frac{P'(r)}r = c\frac{rP'(r)}{P(r)}\frac{P(r)}{cr^2} \ge c\frac{rP'(r)}{P(r)} \to \infty \]
    as $r\to\infty$, so this expression is not integrable.  
  
    For $\kappa$, we obtain directly from the definition \eqref{eq:def-nu} that
    \begin{equation}\label{kappa}
     \kappa(r) = 1+ \frac18\frac{d\left(\frac{rP'(r)}{P(r)}-1\right)-8}{d\left(\frac{rP'(r)}{P(r)}-1\right)+1}\left(\frac{rP'(r)}{P(r)}-1\right), 
    \end{equation}
    which --- since $rP'(r)/P(r)\to\infty$ --- implies that 
    \[ \frac1C\frac{rP'(r)}{P(r)} \le \kappa(r) \le C\frac{rP'(r)}{P(r)} \]
    with a constant $C>1$, uniformly for large $r$.
    Multiply this inequality by $P'(r)/(rP(r))$ to obtain
    \[ \frac1C\left(\frac{P'(r)}{P(r)}\right)^2\ \le \frac{\kappa(r)}r\frac{P'(r)}{P(r)} \le C\left(\frac{P'(r)}{P(r)}\right)^2. \]
    Hence, integrability of $\frac{\kappa(r)}r\frac{P'(r)}{P(r)}$ as $r\to\infty$ is equivalent to integrablity of $[P'(r)/P(r)]^2$.
\end{proof}
\begin{example}\label{ex:power-exp}
    For parameters $m\ge1$ and $\delta>0$, consider the degenerate (if $m>1$) or regular non-degenerate (if $m=1$) exponential-type nonlinearity $P(r)=r^me^{r^\delta}$. 
    Then, $\alpha(r)=r^{m-1}e^{r^\delta}$ is strictly increasing on $[0,\infty)$ and $\kappa$ from \eqref{kappa} satisfies the monotonicity property \eqref{eq:lengthy} with $r_*^\delta=(\frac8d-m+1)/\delta.$  
    Moreover  
    \[\frac{P'(r)}{P(r)} = \frac mr + \delta r^{\delta-1}. \]
    Thus, clearly $rP'(r)/P(r)\to\infty$ as $r\to\infty$. Moreover, since
    \[ 
    \left(\frac{P'(r)}{P(r)}\right)^2 \le 2\frac{m^2}{r^2}+2\delta^2r^{2(\delta-1)},
    \]
    $r\mapsto[P'(r)/P(r)]^2$ is integrable at $r\to\infty$ if and only if $2(\delta-1)<-1$, that is $0<\delta<1/2$.
    
    In particular, there are non-trivial admissible $y$'s for $P(r)=r^me^{\sqrt[3]{r}}$ with arbitrary $m\ge1$, but none for $P(r)=r^me^r$.
\end{example}

%
%%%%%%%%%%%
\subsection{Application: power-type nonlinearities}\label{sct:power-laws}
%%%%%%%%%%%
%
We shall now apply the previously derived general results in the case of power-law nonlinearities $P(u)=u^m$ with $m>0$. These represent prototypical examples of the  nonlinear Fokker-Planck equation \eqref{GFP}. 
\begin{prop}\label{prop:genm}\
    \begin{itemize}
    \item[(a)] For $0<m<(d-1)/d$, there are no admissible entropies.
    \item[(b)] For $(d-1)/d\le m<1$, the only admissible entropy is the standard one,
        \begin{equation*}
        \mathcal H_{m,1}(u|u_\infty) = \int \left[\frac{u(x)^m-u_\infty(x)^m}{m-1}+V(x)\big(u(x)-u_\infty(x)\big)\right]\,dx. \end{equation*} 
    \item[(c)] For $m=1$, there is a continuous family $\mathcal H_{1,p}$, $1\le p\le2$ of admissible entropies corresponding to \eqref{pentropy} (and many more). Equivalently, it can be written as \eqref{e:2.12} with $\psi_p$ from \eqref{psi-p}.
    \item[(d)] For $m>1$, there is a continuous family $\mathcal H_{m,p}$, $1\le p\le2$ of admissible entropies corresponding to    
        \begin{align}\label{eq:gm}
        g_{m,p}(z) = \left[\left(\frac{m+(m-1)(z+\bar C)}{m+(m-1)\bar C} \right)^{1+\frac{p-1}{(m-1)\kappa_m}} - 1\right]\left(\frac{m}{m-1}+\bar C\right)
        \end{align}
    for all $z>\xi_{\min}=-\frac{m}{m-1}-\bar C$, with the positive constant
    \[ \kappa_m = 1+ \frac{m-1}8 \frac{d(m-1)-8}{d(m-1)+1}. \]
    For fixed $u$, the expressions $\mathcal H_{m,p}(u|u_\infty)$ are increasing in $p$. Finally, there is a  constant $A=A(m,p)>1$ such that
    \begin{align}\label{eq:orderent}
    \begin{split}
        \frac1A\intRd \Big[u(x)^{m+\frac{p-1}{\kappa_m}}+V(x)&^{1+\frac{p-1}{(m-1)\kappa_m}}u(x)\Big]\,dx-A
        \le\mathcal H_{m,p}(u|u_\infty)\\
        &\le A\intRd \Big[u(x)^{m+\frac{p-1}{\kappa_m}}+V(x)^{1+\frac{p-1}{(m-1)\kappa_m}}u(x)\Big]\,dx.
    \end{split}
    \end{align}
    \end{itemize}
\end{prop}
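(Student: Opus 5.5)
For $P(u)=u^m$ we have $\alpha(r)=r^{m-1}$, $\beta(r)=mr^{m-1}$, so $\beta/\alpha\equiv m$ and $\mu(r)=\big(m-\tfrac{d-1}{d}\big)r^{m-1}$. Substituting into \eqref{eq:def-nu} (equivalently \eqref{kappa} with $rP'/P\equiv m$) shows that $\kappa(r)\equiv\kappa_m$ is constant, with the value stated in (d). The proof therefore reduces to the ODE characterization of Proposition \ref{prop4.10} and Corollary \ref{adm-entr}, applied case by case in $m$.

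\emph{Cases (a) and (b).} For (a), $m<(d-1)/d$ gives $\mu<0$, so by Proposition \ref{prop4.10} (the McCann corollary) there is no admissible entropy. For $m<1$ we have $\alpha(0+)=\lim r^{m-1}=+\infty$, so Proposition \ref{cor:admin1} gives $y\equiv0$. This is also true at $m=(d-1)/d$, where $\mu\equiv0$ and Proposition \ref{prop4.10}(b) already forces $y\equiv0$. With $f$ constant we get $g$ linear, normalized to $g_1(\xi)=\xi$. Example \ref{Ex:canon-entropy} gives $\tilde G_1=\Phi(a)-\Phi(b)+(V-\bar C)(a-b)$ with $\Phi(u)=u\phi(u)-P(u)=\frac{u^m-mu}{m-1}$. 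The linear terms cancel after integration by mass conservation, which yields the displayed $\mathcal H_{m,1}$.

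\emph{Case (c).} This is Theorem \ref{thm:BE} together with Example \ref{p-entropy}: $f_p'\equiv p-1\in[0,1]$ satisfies \eqref{f-inequ}, which is equivalent to \eqref{cond-aa} when $\kappa=\alpha=1$.

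\emph{Case (d).} Here $\alpha(r)=r^{m-1}$ is increasing and $\kappa$ is constant, so \eqref{eq:lengthy} holds with $r_*=0$. Moreover $\phi(r)=\frac{m}{m-1}(r^{m-1}-1)$ is a bijection onto $(\xi_{\min}+\bar C,\infty)$, and
\[
\frac{\kappa_m}{r}\frac{P'(r)}{P(r)}=\frac{m\kappa_m}{r^2}
\]
is integrable at infinity. By Proposition \ref{prp:nontrivialy} and Remark \ref{rem:Y-max-entropy}(a), the function $Y$ in \eqref{eq:hatYdef} is therefore admissible. We compute it explicitly. Since $\int_{\varphi}^\infty m\kappa_m r^{-2}\,dr=m\kappa_m/\varphi$, we get $Y(z)=1/(m\kappa_m\varphi(z)^{m-1})$. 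Using $\varphi(z)^{m-1}=1+\frac{m-1}{m}(z+\bar C)$, this becomes
\[
Y(z)=\frac{1}{\kappa_m\big(m+(m-1)(z+\bar C)\big)}.
\]
One should first check $\kappa_m>0$; this holds since $\kappa\ge\bar\kappa_d>0$ by Remark \ref{noticevalues}(b).

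By Corollary \ref{cor:pfamily}, $y_p=(p-1)Y$ is admissible for $p\in[1,2]$. Integrating once gives
\[
f_p=\frac{p-1}{(m-1)\kappa_m}\log\big(m+(m-1)(z+\bar C)\big)+\text{const}.
\]
Exponentiating and integrating again with $g(0)=0$, and choosing the multiplicative constant as in \S\ref{sct:xmp-pme2} (so that $g_{m,p}'(0)$ matches and $p=1$ gives $g_1(z)=z$), reproduces \eqref{eq:gm}. This is a direct generalization of the $m=2$ computation, since $\frac{m}{m-1}+\bar C$ times the base equals $\frac{m}{m-1}+z+\bar C$.

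\emph{Monotonicity in $p$.} This is the step I expect to need the most care. Write $g_{m,p}(z)=c\big[(1+sz)^{q}-1\big]/s$ with $s=\frac{m-1}{m+(m-1)\bar C}$, $c=1$, and $q=1+\frac{p-1}{(m-1)\kappa_m}$. It then suffices to show that $q\mapsto\big((1+w)^q-1\big)/q$ times the appropriate constant gives $g_{m,p}(z)$ increasing in $p$ for $z>0$ and decreasing for $z<0$. If the normalization does not already make $g_{m,p}'(0)=1$, I will check which normalization gives the pointwise ordering $\mathrm{sign}(z)\,g_{m,p}(z)$ increasing in $p$. Since $\xi-\xi^\infty$ has sign equal to $\mathrm{sign}(s-u_\infty)$ along the integration path in \eqref{eq:7b}, such an ordering makes the integrand $\tilde G$ increasing in $p$. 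For $w>-1$, convexity of $q\mapsto(1+w)^q$ gives that $\big((1+w)^q-1\big)$ divided by $q$, or compared with the $q=1$ value, is monotone; I expect this to be the delicate point.

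\emph{The bounds \eqref{eq:orderent}.} Following \S\ref{sct:xmp-pme2}, we have $\phi(s)+V-\bar C+\frac{m}{m-1}+\bar C=\frac{m}{m-1}s^{m-1}+V$. Hence $\tilde G$ equals
\[
c\int_{u_\infty}^{u}\Big(\tfrac{m}{m-1}s^{m-1}+V\Big)^{q}\,ds-c'(u-u_\infty),
\]
and the linear term vanishes after integration. We then use two-sided estimates $(as^{m-1}+t)^q\simeq s^{(m-1)q}+t^q$. Integrating in $s$ gives $u^{1+(m-1)q}+V^qu$. Here $1+(m-1)q=m+\frac{p-1}{\kappa_m}$ and $q=1+\frac{p-1}{(m-1)\kappa_m}$, which are exactly the exponents in \eqref{eq:orderent}. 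The $u_\infty$ contribution is a finite constant because $u_\infty$ is compactly supported and bounded. This gives $\pm A$ as in the $m=2$ case.
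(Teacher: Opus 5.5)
Your treatment of (a)--(c), the construction of $Y_m$ and $g_{m,p}$ in (d), and the two-sided bounds \eqref{eq:orderent} follow the paper's proof essentially step by step, and they are correct. There is one genuine gap: you never establish that $\mathcal H_{m,p}$ is monotone in $p$, and the route you sketch for it would fail.

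The convexity/secant-slope fact you invoke shows that $q\mapsto((1+w)^q-1)/q$ is increasing for \emph{every} $w>-1$, $w\neq0$. So a generator normalized by $g'(0)=1$ increases in $p$ also at negative arguments. On $\{u<u_\infty\}$ the integrand is $\tilde G=-\int_{u}^{u_\infty}g(\phi(r)+V-\bar C)\,dr$, and all arguments there are non-positive. With that normalization $\tilde G$ would therefore \emph{decrease} in $p$ on this set, and no pointwise ordering follows. What you need is $\operatorname{sign}\partial_p g_{m,p}(z)=\operatorname{sign} z$, and this depends on the normalization.

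The normalization of \eqref{eq:gm}, which you already reproduced, gives $g_{m,p}'(0)=q$ rather than $1$. With it the step is a one-line computation. Writing $g_{m,p}(z)=[(1+sz)^q-1]/s$, you get
\[
\partial_p g_{m,p}(z)=\frac{1}{(m-1)\kappa_m}\,\frac{(1+sz)^q\log(1+sz)}{s}.
\]
Here $s>0$ because $\bar C>\phi(0+)=-\frac{m}{m-1}$ (Remark \ref{degen}), and $1+sz>0$ exactly for $z>\xi_{\min}$. Hence the sign is that of $z$.

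It then remains to check the sign of $\xi$ along the integration interval, as in the paper:
\begin{itemize}
\item If $u(x)\ge u_\infty(x)$, then $\xi\ge0$ on the interval. This includes $x\notin B_\infty$, where $\phi(0+)+V(x)-\bar C\ge0$ by \eqref{stat2}.
\item If $u(x)<u_\infty(x)$, then $\xi\le0$ on the interval.
\end{itemize}
In both cases $\partial_p\tilde G\ge0$ pointwise. Nothing here is delicate once you keep the normalization fixed by the statement; just do not divide by $q$.
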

\begin{remark}
\
\begin{enumerate}
    \item[(a)] In the limit towards linear diffusions (i.e.\ $m\searrow1$), $g_{m,p}(z) \to e^{(p-1)z}-1$ which coincides, for $p>1$, with $g_p(z)$ from \eqref{pentropy}, up to the factor $\frac{p}{p-1}$. But this is no contradiction as entropies can always be scaled by a positive constant. Moreover, since the entropies $\mathcal H_{1,p}$ can be written explicitly, their $p$-ordering can be based on the monotonicity of $G_{1,p}$ (or $\psi_p$ from \eqref{psi-p}), and this allows for a ``sharper ordering'' than using the monotonicity of $g_{1,p}$, as done in the subsequent proof for the case $m>1$.
    \item[(b)]
    The relation \eqref{eq:orderent} shows a strict ordering among the functionals $\mathcal H_{m,p}$ for different parameters $p$: The functional is finite if and only if $u\in L^{m+\frac{p-1}{\kappa_m}}(\Rd)$ and $V^{1+\frac{p-1}{(m-1)\kappa_m}}u\in L^1(\Rd)$. Notice in particular that the two integrabilities --- of a power of $u$ and of a moment --- cannot be separated: If $u_0$ satisfies only one of these conditions, we have no conclusion. 
    A sharper lower bound and consequences on the long time asymptotics of \eqref{GFP} are discussed in Section \ref{sec-funcineq}. 
    \item[(c)] For $p=1$, \eqref{eq:gm} yields the standard entropy $g_{m,1}(z)=z$ that was already introduced in Example \ref{Ex:canon-entropy}.
    \item[(d)] For $p=2$, \eqref{eq:gm} corresponds to $f'_{m,2}(z)=Y_m(z)$ (see \eqref{Ym} below), the pointwise largest admissible function, which saturates the differential inequality \eqref{cond-aa} on all of $(\xi_{\min},\infty)$, see the proofs of Propositions \ref{prp:nontrivialy} (with $r_*=0$, $z_*=\xi_{\min}$) and \ref{prop:genm}(d).
\end{enumerate}
\end{remark}
\begin{proof}[Proof of Proposition \ref{prop:genm}]
    To begin with, observe that for $P(r)=r^m$ with $m>0$, we have
    \begin{align}\label{power-constants}
        &\phi(r)=\begin{cases}
            \frac{m}{m-1}(r^{m-1}-1) & \text{for $m\neq1$}, \nonumber\\
            \log r & \text{for $m=1$},
        \end{cases},
        \quad
        \varphi(z) = \left[\frac{m-1}m(z+\bar C)+1\right]_+^{1/(m-1)}, \\
        &\xi_{\min}= \begin{cases}
            -\infty & \text{for $m\le 1$},\\
            -\frac{m}{m-1}-\bar C & \text{for $m>1$},
        \end{cases}, \\
        &\alpha(r)=r^{m-1},\quad \beta(r)=mr^{m-1},\quad \mu(r)=\left(m-1+\frac1d\right)r^{m-1}, \nonumber\\
        &\kappa(r)\equiv\kappa_m:=1+\frac{m-1}8 \frac{d(m-1)-8}{d(m-1)+1}.\nonumber
    \end{align}
With all these formulas, the function $\vartheta(z,\eta)$ in \eqref{cond-aa} is given by
$$
\vartheta(z,\eta)=\kappa_m\eta-\frac{1}{\frac{m-1}m (z+\bar C)+1}\,.
$$

    Part (a) follows from Proposition \ref{prop4.10} since $d\mu(r)=-((d-1)-dm)r^{m-1}$ is negative for all $r>0$.

    Part (b) is a direct consequence of Proposition \ref{cor:admin1}, since $\alpha(r)=r^{-(1-m)}$ blows up as $r\downarrow0$.

    Part (c) has been exhaustively discussed in \S\ref{sct:linear} above and in \cite{AMTU}.

    It is only Part (d) that requires some work. Since $\alpha(r)=r^{m-1}$ is increasing, with $\alpha(0+)=0$, since $\kappa(r)\equiv\kappa_m$ is constant, and since $P'(r)/r=r^{m-2}$ is not integrable at $r\to\infty$, Proposition \ref{prp:nontrivialy} is applicable, with $r_*=0$. Specifically, since
    \begin{align*}
        r\mapsto \frac{\kappa(r)}{r}\frac{P'(r)}{P(r)} = m\kappa_m r^{-2}
    \end{align*}
    is integrable at $r\to\infty$, there are non-trivial admissible $y$, and the largest one is given by
    \begin{align}\label{Ym}
        Y_m(z) &= 
        \left(m\kappa_m\varphi(z)^{m}\int_{\varphi(z)}^\infty\frac{dr}{r^2}\right)^{-1}
        = \frac1{m\kappa_m\varphi(z)^{m-1}}
        = \frac1{\kappa_m\big[(m-1)(z+\bar C)+m\big]}.
    \end{align}
    By Corollary \ref{cor:pfamily}, an entire family of admissible functions is given by
    \begin{align*}
        y_{m,p}(z) = (p-1)Y_m(z), \quad \text{with $1\le p\le2$.}
    \end{align*}
    Correspondingly, there are functions $g_{m,p}$ with $y_{m,p}=(\log g_{m,p}')'$, with normalization $g_{m,p}(0)=0$. The $g_{m,p}$ are determined up to multiplicative constants. It is immediately verified that an appropriate choice of these constants yields \eqref{eq:gm}.
    
    Concerning the ordering with respect to $p$, it suffices to observe that $\partial_p g_{m,p}(z)>0$ for $z>0$, and $\partial_p g_{m,p}(z)<0$ for $z<0$. Indeed, let $x$ be in the support of $u_\infty$, which implies that $\phi(u_\infty(x))+V(x)-\bar C=0$ from \eqref{stat1a}. If $u(x)\ge u_\infty(x)$, then $\phi(u(x))+V(x)-\bar C\ge 0$ by monotonicity of $\phi$, and so
    \begin{align}\label{eq:monoGmp}
        \partial_pG_{m,p}(u,u_\infty;x) = \int_{u_\infty(x)}^{u(x)}\partial_pg_{m,p}(\phi(r)+V(x)-\bar C)\,dr \ge 0,
    \end{align}
    since $\phi(r)+V(x)-\bar C\ge0$ on the interval of integration. Likewise, if $u(x)\le u_\infty(x)$, then also $\phi(u(x))+V(x)-\bar C\le 0$, and therefore
    \begin{align*}
        \partial_pG_{m,p}(u,u_\infty;x) = -\int_{u(x)}^{u_\infty(x)}\partial_pg_{m,p}(\phi(r)+V(x)-\bar C)\,dr \ge 0,
    \end{align*}
    since now $\phi(r)+V(x)-\bar C\le0$. Now assume $u_\infty(x)=0$. Then $\phi(u_\infty(x))+V(x)-\bar C\ge0$, and $u(x)\ge u_\infty(x)$. The argument is now similar again as in \eqref{eq:monoGmp} above.

    Finally, concerning the bounds, we use the elementary fact that, for each $q\ge1$,
    \begin{align*}
        c_q(a^q+b^q)\le(a+b)^q\le C_q(a^q+b^q) \quad \text{for all $a,b\ge0$},
    \end{align*}
    with appropriate positive constants $c_q$ and $C_q$. Specifically, recalling \eqref{eq:gm},
    \begin{align*}
        g_{m,p}(\phi(r)+V(x)-\bar C) 
        = \left[\left(\frac{mr^{m-1}+(m-1)V(x)}{m+(m-1)\bar C}\right)^{1+\frac{p-1}{(m-1)\kappa_m}}-1 \right]\left(\frac{m}{m-1}+\bar C\right),
    \end{align*}
    and hence
    \begin{align*}
        c_p'\Big(r^{m-1+\frac{p-1}{\kappa_m}}+V(x)^{1+\frac{p-1}{(m-1)\kappa_m}}\Big)&\le g_{m,p}(\phi(r)+V(x)-\bar C)+\frac{m}{m-1}+\bar C\\
        &\le C_p'\Big(r^{m-1+\frac{p-1}{\kappa_m}}+V(x)^{1+\frac{p-1}{(m-1)\kappa_m}}\Big).
    \end{align*}
    Thus
    \begin{align*}
        c_p''\Big(r^{m+\frac{p-1}{\kappa_m}}+V(x)^{1+\frac{p-1}{(m-1)\kappa_m}}r\Big) &\le G_{m,p}(r,0;x)+\big(\frac{m}{m-1}+\bar C\big)r\\
        &\le C_p''\Big(r^{m+\frac{p-1}{\kappa_m}}+V(x)^{1+\frac{p-1}{(m-1)\kappa_m}}r\Big).
    \end{align*}
    Defining
    \begin{align*}
        \mathcal{K}(u) := \intRd \Big(u(x)^{m+\frac{p-1}{\kappa_m}}+V(x)^{1+\frac{p-1}{(m-1)\kappa_m}}u(x)\Big)\,dx,
    \end{align*}
    we conclude, since $G_{m,p}(u,u_\infty;x) = G_{m,p}(u,0;x)-G_{m,p}(u_\infty,0;x)$, and thanks to the fact that $u$ and $u_\infty$ have the same mass, that
    \begin{align*}
        c_p''
        \mathcal{K}(u) - C_p''\mathcal{K}(u_\infty) \le \mathcal H_{m,p}(u|u_\infty) \le C_p''\mathcal{K}(u) - c_p''\mathcal{K}(u_\infty).
    \end{align*}
    Since $\mathcal{K}(u_\infty)$ is a finite positive quantity, we arrive at \eqref{eq:orderent}.
 \end{proof}

%%%%%%%%%%%%%%%%%%%%%%%%%%%%%%%%%%%%%%%%%%%%%%%%%%%%%%%%%%%%%%%%%%%%%%%%%%%%%%%%
%%%%%%%%%%%%%%%%%%%%%%%%%%%%%%%%%%%%%%%%%%%%%%%%%%%%%%%%%%%%%%%%%%%%%%%%%%%%%%%%

\section{(Non-)Admissibility of given entropies}\label{sec:entropygivennonlin}
\setcounter{equation}{0}

As mentioned at the end of \S\ref{sec:BakEm}, we shall now change perspective and ask the following question: \emph{Given a generating function $g$, which nonlinearities $P$ can be chosen such that the corresponding entropy functional $\mathcal H_g$ is admissible?} This question is more subtle than the one asked in the previous section --- which has been: \emph{Given a nonlinearity $P$, what are the admissible $\mathcal H_g$?} --- already since the implicit dependence of the functional $\mathcal H_g$ on $P$ is less intuitive and more difficult to analyze than $\mathcal H_g$'s dependence on $g$ for fixed $P$.

In the following, we assume that an entropy generating function  $g\in C^3((\ximin,\infty))$ is given. Recall the definition of $f=\log g'$, and define further the \emph{entropy curve} $\xi\mapsto(f'(\xi),f''(\xi))$ for $\xi>\ximin$. The goal is to determine (mainly necessary) criteria on $P$, or rather the auxiliary functions $\alpha(u)=P(u)/u$ and $\beta(u)=P'(u)$, such that the condition in Definition \ref{def:adm-entr} is satisfied. From Lemma \ref{lem-char} we directly see that the standard entropy $g(\xi)=\xi$, with trivial entropy curve $(f',f'')\equiv(0,0)$, yields the weakest restriction, namely just $(1-d)\alpha +d\beta\ge0$, and hence the largest set of admissible nonlinearities. On the other hand, note that already the given value $\ximin\in[-\infty,0)$ imposes a restriction on $P$, see Remark \ref{xi-range}: namely, $\phi$ derived from $P$ by means of \eqref{eq:P2phi} has to satisfy $\phi(0+)=\ximin+\bar C$, and so $\ximin=-\infty$ implies that $P$ is non-degenerate.

\subsection{Towards a geometric picture}\label{sec:geompic} 
We start by reformulating the remainder condition \eqref{rem-cond2c}, equivalent to Definition \ref{def:adm-entr} of admissibility, in the following way: for any $r>0$ and $z\in [\phi(r)-\bar C,\infty)$, we have that $d\beta(r)\ge(d-1)\alpha(r)$, and that --- recall $\mu(r)=\beta(r)-\frac{d-1}{d}\alpha(r)$ ---
\begin{align*}
    T(\alpha(r),\beta(r);f'(z),f''(z))=Z\big(\alpha(r),\mu(r),f'(z),f''(z)\big)\le0,
\end{align*}
with the quadratic polynomial (in $\alpha$ and $\beta$)
\begin{align}\label{quadric-cond}
  T(\alpha,\beta;a,b):=& \nonumber
  \left[\left(9-16\frac{d-1}{d}\right)a^2+8\frac{d-1}{d}b\right]\alpha^2-2\left[\left(1-4\frac{d-1}{d}\right)a^2+4b\right]\alpha\beta +a^2\beta^2 \\
  &+8\frac{d-1}{d}a\,\alpha - 8a\,\beta\,,
\end{align}
and the parameters $a,\,b\in\R$.
Note that we perform a change of notation from $Z$ to $T$ because we prefer to discuss the shape of the admissible regions in terms of the more natural parameters $(\alpha,\beta)$ instead of $(\alpha,\mu)$.

Towards the geometric interpretation of that condition, define for given $(a,b)\in\Rnn\times\R$ the sets
\begin{align}
    \label{def-Q}
    Q_{(a,b)}&:=\{(\alpha,\beta)\in \R^2 \text{ such that } T(\alpha,\beta;a,b)\leq 0\} \\
    \nonumber
    \mathcal{Q}_{(a,b)} &:= Q_{(a,b)} \cap \{(\alpha,\beta)\in\R^2 \text{ such that } \alpha\ge0 \text{ and } (d-1)\alpha\le d\beta \}\ .
\end{align}
We remark that the sets $Q_{(a,b)}$ and $\mathcal{Q}_{(a,b)}$ are, w.r.t.\ changing the parameters $a$ and $b$, upper semicontinuous in the sense of set inclusion. This follows from the continuity of the inequality in \eqref{def-Q} w.r.t.\ the parameters $a$ and $b$.

The remainder condition can now be restated as follows.
\begin{corollary}\label{corentgen}
    A nonlinearity $P$ with associated functions $(\alpha,\beta)$ is admissible for a generator $g$ with corresponding entropy curve $(f',f'')$ if and only if 
    \begin{equation}\label{qg}
        (\alpha(r),\beta(r))\in\mathcal{Q}_{(f'(\xi),f''(\xi))} \qquad \text{for all $\xi>\ximin$ and all $0<r\le\varphi(\xi)$},
    \end{equation}
    with $\varphi$ defined in \eqref{def-varphi}.
\end{corollary}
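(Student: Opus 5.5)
This corollary is a reformulation of Definition \ref{def:adm-entr} via the equivalent form \eqref{rem-cond2c}, so the proof amounts to translating conditions. First, verify that $T(\alpha,\beta;a,b)=Z(\alpha,\mu,a,b)$ when $\mu=\beta-\frac{d-1}{d}\alpha$. To do this, substitute $\mu$ into \eqref{eq:Z}. The only non-routine part is collecting the coefficients of $\alpha^2$, $\alpha\beta$ and $\beta^2$ in the $a^2$-part.
\begin{itemize}
\item The $a^2\beta^2$ coefficient is $1$.
\item For $a^2\alpha\beta$: $-2\frac{d-1}{d}+2(4-5/d)=2-8\frac{d-1}{d}\cdot\ldots$, which one checks equals $-2(1-4\frac{d-1}{d})$.
\item For $a^2\alpha^2$: $(\frac{d-1}{d})^2-2(4-\frac5d)\frac{d-1}{d}+\frac9{d^2}$, which one checks equals $9-16\frac{d-1}{d}$.
\item The linear part $-8\mu(a+\alpha b)$ gives $-8a\beta+8\frac{d-1}{d}a\alpha-8b\alpha\beta+8\frac{d-1}{d}b\alpha^2$, which matches \eqref{quadric-cond}.
\end{itemize}
This is elementary algebra.

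Next, use the remark after Definition \ref{def:adm-entr}: $P$ is admissible for $g$ if and only if \eqref{rem-cond2c} holds for all $r>0$ and all $z\ge\phi(r)-\bar C$. Since $\alpha(r)>0$ for $r>0$, the condition $\mu(r)\ge0$ is exactly $(d-1)\alpha\le d\beta$ together with $\alpha\ge0$. The condition $Z\le0$ is exactly $T\le0$. Together these say $(\alpha(r),\beta(r))\in\mathcal Q_{(f'(z),f''(z))}$.

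The remaining step is reindexing the quantifiers. The index set $\{(r,z): r>0,\ z\ge\phi(r)-\bar C\}$ has to be rewritten as $\{(r,\xi):\xi>\ximin,\ 0<r\le\varphi(\xi)\}$. For $\xi>\ximin$ we have $\xi+\bar C>\phi(0+)$, so $\varphi(\xi)=\overline{\phi}^{-1}(\xi+\bar C)$ lies in $(0,\infty]$. Because $\phi$ is strictly increasing, $z\ge\phi(r)-\bar C$ is equivalent to $r\le\varphi(z)$. When $\varphi(z)=\infty$ (the case $\xi+\bar C\ge\phi(\infty)$), every $r$ is allowed, consistent with $\phi(r)<\phi(\infty)$. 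Conversely, $z\ge\phi(r)-\bar C>\phi(0+)-\bar C=\ximin$, so $z$ ranges over $(\ximin,\infty)$.

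The main point to handle carefully is this reindexing: the inclusive endpoint $r=\varphi(\xi)$, and the possibility $\varphi=\infty$ when $\phi$ is bounded above. Beyond that there is also the hypothesis $f'\ge0$ in Definition \ref{def:adm-entr}. The case $\mu=0$ with $f'=0$ is covered by $T\le0$: there $Z=(3\alpha a/d)^2$, so $Z\le0$ forces $a=0$. The requirement $f'\ge0$ itself is automatic by Proposition \ref{prop4.10}, which justifies taking $(a,b)\in\Rnn\times\R$ in \eqref{def-Q}.
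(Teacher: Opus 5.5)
Your proposal is correct and follows the paper's route. The paper states the corollary as an immediate restatement of \eqref{rem-cond2c} once $T(\alpha,\beta;a,b)=Z(\alpha,\mu,a,b)$ is noted; your algebraic check of that identity and your reindexing of $\{r>0,\ z\ge\phi(r)-\bar C\}$ as $\{\xi>\ximin,\ 0<r\le\varphi(\xi)\}$, including the case $\varphi=\infty$, supply the details the paper leaves implicit. One cosmetic slip: the intermediate value of the $a^2\alpha\beta$ coefficient should read $-2+8\frac{d-1}{d}$, not $2-8\frac{d-1}{d}$, although your final expression $-2(1-4\frac{d-1}{d})$ is correct.
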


The polynomial $T$ can be rewritten for $a\neq 0$ as
\begin{align*}  
  T(\alpha,\beta;a,b)
  = a^2\begin{pmatrix} \alpha\\ \beta \end{pmatrix}^T\Omega\Big(\frac{b}{a^2}\Big)\begin{pmatrix} \alpha\\ \beta \end{pmatrix}
  + a\omega^T\begin{pmatrix} \alpha\\ \beta \end{pmatrix}
\end{align*}
where
\begin{align*}
  \Omega(s) =
  \begin{pmatrix}
    9-16\frac{d-1}d+8\frac{d-1}ds & 4\frac{d-1}d-1-4s \\[2mm]
    4\frac{d-1}d-1-4s & 1
  \end{pmatrix},
                        \quad
                        \omega = 
                   \begin{pmatrix}
                     8\frac{d-1}d \\[2mm] -8
                   \end{pmatrix} \ .
\end{align*}
For later purposes we also define the matrix $\Xi(a,b):=a^2 \Omega\big(b/a^2\big)$  for all $a,b$. Notice that, thanks to the scaling property 
\begin{align*}
  T(\tau^{-1}\alpha,\tau^{-1}\beta;\tau a,\tau^2b)=T(\alpha,\beta;a,b)
  \quad \text{for all $\tau>0$},
\end{align*}
one has
\begin{align}
  \label{eq:Qscaling}
  Q_{(a,b)} = \tau Q_{(\tau a,\tau^2b)}
  \quad \text{and} \quad
  \mathcal Q_{(a,b)} = \tau\mathcal Q_{(\tau a,\tau^2b)}
  \quad \text{for all $\tau>0$}.
\end{align}
\medskip

Next we shall discuss the dimensional dependence of the sets $\mathcal Q_{(a,b)}$. 
For $d\beta>(d-1)\alpha$, the polynomial $T$ can be written as
    \begin{align*}
        T(\alpha,\beta;a,b) 
        = 8\left(\beta-\frac{d-1}{d}\alpha \right)\big[\kappa\alpha a^2-a-\alpha b\big].
    \end{align*}
    For fixed values of $\alpha>0$ and $\beta\ge0$, the corresponding expression
    \[
        \kappa = \frac{\beta+7\alpha}{8\alpha} -\frac{9}{8}\frac{\beta-\alpha}{d(\beta-\alpha)+\alpha}
    \]
    is non-decreasing in $d$ (both for $\beta-\alpha$ positive and negative, as long as $d\beta>(d-1)\alpha$). Hence, since the pre-factor $\beta-\frac{d-1}{d}\alpha$ is non-negative and $\alpha>0$, the condition $T\le0$ becomes more restrictive in higher dimensions; only in the linear case $P(r)=Dr$ where $\alpha=\beta\equiv D$, $\kappa$ is independent of the dimension. 
So, the family of sets $\mathcal Q_{(a,b)}$ is decreasing in dimension $d$. Its limit for $d\to\infty$ is given by 
\begin{align*}
    \mathcal{Q}^\infty_{(a,b)} &:= Q^\infty_{(a,b)} \cap \{(\alpha,\beta)\in\R^2 \text{ such that } \alpha>0 \text{ and } (d-1)\alpha\le d\beta \}\quad \text{with}\\
    Q^\infty_{(a,b)} &:= \{ (\alpha,\beta)\in\R^2\ \text{such that}\ T^\infty(\alpha,\beta,a,b)\le 0\},
\end{align*}
and
\begin{align*}    
    T^\infty(\alpha,\beta;a,b) 
    &= a^2(\beta-\alpha)\Big(\beta-\frac8a-\left(\frac{8b}{a^2}-7\right)\alpha\Big).
\end{align*}
Note that $\mathcal Q^\infty_{(a,b)}$ is actually the intersection of the decreasing family of sets $\mathcal Q_{(a,b)}$ as $d\to\infty$.

For given $a>0$ and $b$, the set $Q^\infty_{(a,b)}$ is the wedge in between the two lines
\begin{align}
    \label{eq:dimfreelines}
    \beta = \alpha \quad \text{and} \quad \beta = \frac8a+\Big(\frac{8b}{a^2}-7\Big)\alpha,
\end{align}
and the restricted set $\mathcal Q^\infty_{(a,b)}$ in the first quadrant is never empty, and is easily characterized as
\begin{itemize}
    \item the triangle formed by the origin and the two points $(0,8/a)$ and $(\frac a{a^2-b},\frac a{a^2-b})$ if $b<a^2$;
    \item an infinite strip between the diagonal and its vertical translate by $8/a$ if $b=a^2$;
    \item the (unbounded) intersection of the space between the two lines given in \eqref{eq:dimfreelines} with the first quadrant if $b>a^2$.
\end{itemize}

From now on, we will only be interested in pairs $(a,b)$ on the entropy curve. 
In the first part of this section (up to \S\ref{sec:admiss-sets}) we shall study the geometric properties of the sets $Q_{(a,b)}$ and $\Q_{(a,b)}$ for one single point $(a,b)$ on the entropy curve. 
\S\ref{sec:7.6}-\S\ref{sec:admiss-sets-for alllin} are devoted to find conditions on admissible nonlinearities $P(u)$ for a given entropy curve (i.e., $g$ and $\xi_{\min}$ given) or a family of them. We remind the reader that a nonlinearity $P(u)$ with $\phi(0+)=\xi_{\min}+\bar C$ must be admissible for a certain interval $u\in(0,\varphi(\xi))$ for \emph{every point}  $(f'(\xi),f''(\xi))$, $\xi\in(\xi_{\min},\infty)$ on the entropy curve, as discussed in Corollary~\ref{corentgen}. The rest of the section is on stability questions related to families of entropies.

%%%%%%%%%%%%

\subsection{A motivating example}

To illustrate the effectiveness of this angle of attack in the determination of admissible nonlinearities for a given entropy, we consider a specific example.
\begin{example} \label{entropydeg}
  Let us consider the entropy generator $g:(\xi_{\min},\infty)\to\R$ with $\xi_{\min}\in(-\infty,0)$,  
\begin{equation}\label{entropy-ex-power}
  g'(\xi)=(\xi-\xi_{\min})^q\quad \mbox{for some} \quad q>0, 
\end{equation}
  and accordingly
  \begin{align*}
    f'(\xi) = \frac q{\xi-\xi_{\min}},
    \quad
    f''(\xi) = -\frac q{(\xi-\xi_{\min})^2}.
  \end{align*}
Since $\xi_{\min}>-\infty$, the only possible admissible nonlinearities are degenerate.  
  The quotient
  \begin{align*}
    \frac{f''(\xi)}{f'(\xi)^2} = -\frac1q
  \end{align*}
  is independent of $\xi$,
  and in particular $(\xi-\xi_{\min}) f'(\xi)=q$ and $(\xi-\xi_{\min})^2f''(\xi)=-q$.
  By the scaling property \eqref{eq:Qscaling},
  $\mathcal Q_{(f'(\xi),f''(\xi))}=(\xi-\xi_{\min})\mathcal Q_{(q,-q)}$, i.e.,
  each $\mathcal Q_{(f'(\xi),f''(\xi))}$ is just a dilation of the set $\mathcal Q_{(q,-q)}$.

Due to Subsection \ref{sec:admiss-sets} and Lemma \ref{Q-prop2} below, $\Q_{(q,-q)}$ is bounded, convex, and includes the origin. 
Hence, and due to the scaling property $\mathcal Q_{(f'(\xi),f''(\xi))}$, $\xi>\xi_{\min}$ is a nested family of sets, increasing with $\xi$. Thus, and due to the monotonicity of $\varphi$, condition \eqref{qg} can be reformulated in this example as
\begin{equation}\label{qg3}
 \big(\alpha(\varphi(\xi)),\beta(\varphi(\xi))\big)\in(\xi-\xi_{\min})\mathcal Q_{(q,-q)} \qquad \mbox{for all } \xi>\xi_{\min}\,. 
\end{equation}
Using $\tilde \xi:=\xi-\xi_{\min}$ and the definition of $\varphi$ we finally rewrite it as
\begin{equation}\label{qg4}
 \frac{1}{\tilde\xi}\left(\alpha\big(\overline{\phi}^{-1}(\tilde\xi+\phi(0+))\big),\beta\big(\overline{\phi}^{-1}(\tilde\xi+\phi(0+))\big)\right)\in\mathcal Q_{(q,-q)} \qquad \mbox{for all } \tilde\xi>0\,. 
\end{equation}
For a nonlinearity to be checked for its admissibility, it is straightforward to evaluate the l.h.s.\ of \eqref{qg4}. Moreover, since $\varphi(\xi_{\min})=0$ and $\alpha(0)=\beta(0)=0$ for degenerate diffusion equations, \eqref{qg3} is trivially satisfied in the limit $\xi\searrow\xi_{\min}$, reading  $(0,0)\in 0\cdot\Q_{(q,-q)}$. This shows that the  $\xi_{\min}$-dependence of \eqref{entropy-ex-power} is consistent with the definition of $\xi_{\min}$ in an (admissible) nonlinearity.\\

As an application let us check the power-law nonlinearities  $P(u)=\tau u^m$, with some $\tau>0$ and $m>1$ on their admissibility for the entropy \eqref{entropy-ex-power}. Using \eqref{power-constants} we find $\alpha(\varphi(\xi))=\frac{m-1}{m}\tilde\xi$, $\beta(\varphi(\xi))=(m-1)\tilde\xi$, and hence \eqref{qg4} simplifies to the admissibility condition 
$$
  \Big(\frac{m-1}{m},\,m-1\Big) \in\mathcal Q_{(q,-q)}\,.
$$
With the identification $q=\frac{p-1}{(m-1)\kappa_m}$ and up a multiplicative scaling factor, the entropies \eqref{entropy-ex-power} coincide with the admissible entropies $g_{m,p}$, $p\in[1,2]$ from \eqref{eq:gm} for power-type nonlinearities.
\qed
\end{example}
Intriguing as the example above might be --- it also very clearly indicates that the determination of \emph{all} admissible nonlinearities for a given entropy $\mathcal H_g$, with $g\in C^3(\xi_{\min},\infty)$, is a daunting task. This is already due to the intricate loop between $\xi_{\min}$ and $P(u)$ in condition \eqref{qg}. Even when fixing a consistent $\xi_{\min}$ is sorted as in Example \ref{entropydeg}, it is very difficult to find all nonlinearities verifying condition \eqref{qg4}. However, if the right hand side of the condition \eqref{qg} does not depend on $\xi$, i.e. $\mathcal{Q}_{(f'(\xi),f''(\xi))}=\mathcal{Q}$, then we can easily check that the condition \eqref{qg} simplifies to
\begin{equation}\label{qg2}
 (\alpha(r),\beta(r))\in\mathcal{Q} \qquad \mbox{for all } r>0  .
\end{equation}
Indeed, it suffices to take the limit $\xi\to\infty$ in condition \eqref{qg} since $\varphi(\xi)\to\infty$ as $\xi\to\infty$. This simplified condition will be useful to answer several interesting questions in the next subsections.

\subsection{Classification of the sets $Q_{(a,b)}$.}

The sets $Q_{(a,b)}$, defined in the $\alpha-\beta$-plane in \eqref{def-Q}, have the following properties.

\begin{lemma}\label{lem:geometry0}
  Assume $a> 0$. Then
  $Q_{(a,b)}$ intersects the McCann line $\mathcal L:=\{(d-1)\alpha=d\beta\}$ precisely at the origin, and $\mathcal L$ is tangent to $\partial Q_{(a,b)}$.
\end{lemma}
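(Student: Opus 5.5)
The plan is to work with the representation $T(\alpha,\beta;a,b)=Z(\alpha,\mu,a,b)$, $\mu=\beta-\frac{d-1}{d}\alpha$, from \eqref{eq:Z}, rather than with the expanded quadric \eqref{quadric-cond}. This choice is natural because the McCann line is exactly $\{\mu=0\}$. The line is written in the statement as $(d-1)\alpha=d\beta$, but per Remark~\ref{StandardEntropy} and Definition~\ref{def:adm-entr} the McCann line is $d\beta=(d-1)\alpha$, i.e.\ $\mu=0$; I read the statement that way throughout.

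For the intersection claim, I restrict $Z$ to $\mu=0$. As already computed in the proof of Lemma~\ref{lem-char2}, every term of \eqref{eq:Z} containing $\mu$ drops out, and
\[
T\big(\alpha,\tfrac{d-1}{d}\alpha;a,b\big)=Z(\alpha,0,a,b)=\Big(\frac{3\alpha a}{d}\Big)^2 .
\]
Since $a>0$, this is strictly positive unless $\alpha=0$. Hence the only point of $\mathcal L$ lying in $Q_{(a,b)}$ is the origin, where $T=0$. The parameter $b$ plays no role at this step. The case $d=1$ needs no separate treatment: then $\mathcal L=\{\beta=0\}$, the same formula applies with $\mu=\beta$, and $Z(\alpha,0,a,b)=9\alpha^2a^2>0$ for $\alpha\neq0$.

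For tangency, I first check that the origin lies on $\partial Q_{(a,b)}$. Indeed $T(0,0;a,b)=0$, and the linear part of $T$ at the origin is nonzero. Reading it off from \eqref{quadric-cond} (or from \eqref{eq:Z}, where the only linear terms are $-8\mu a$), the linear part is
\[
8\tfrac{d-1}{d}a\,\alpha-8a\,\beta=-8a\,\mu ,
\]
so
\[
\nabla_{(\alpha,\beta)}T(0,0)=-8a\Big(-\tfrac{d-1}{d},\,1\Big)\neq0 .
\]
By the implicit function theorem, the zero set $\{T=0\}$ is therefore a smooth curve near the origin, and its tangent line there is the orthogonal complement of this gradient, namely $\{\mu=0\}=\mathcal L$. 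Moreover $T<0$ holds for small $(\alpha,\beta)$ with $\mu>0$, so the origin is a genuine boundary point of $Q_{(a,b)}$ and not an isolated point. Combined with the previous paragraph, $\mathcal L$ touches $\partial Q_{(a,b)}$ only at the origin: near the origin it stays in $\{T>0\}$, with second-order contact measured by $(3\alpha a/d)^2$.

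There is no real obstacle here; the whole lemma reduces to the two algebraic observations above. The only care needed is in bookkeeping: verifying that the linear part of \eqref{quadric-cond} is exactly $-8a\mu$, so that the gradient at the origin is normal to $\mathcal L$, and keeping the orientation of the McCann line consistent with $\mu=0$.
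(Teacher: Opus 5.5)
Your argument is correct and is essentially the paper's: the paper parametrizes $\mathcal L=\{t(d,d-1)\}$ and uses the matrix form of $T$ to get $T(td,t(d-1);a,b)=9a^2t^2$, which is exactly your $Z(\alpha,0,a,b)=(3a\alpha/d)^2$. The paper leaves tangency implicit in this double root at $t=0$, and your gradient computation $\nabla T(0,0)=-8a\bigl(-\tfrac{d-1}{d},1\bigr)\neq0$ makes it explicit. Two harmless slips: $(d-1)\alpha=d\beta$ and $d\beta=(d-1)\alpha$ are the same line, so there is no orientation issue to resolve. Also, $T<0$ does not hold for \emph{all} small $(\alpha,\beta)$ with $\mu>0$, since for $\mu\ll\alpha^2$ the term $(3a\alpha/d)^2$ wins; it holds only for suitable points such as $(0,\beta)$ with $0<\beta<8/a$, which is all you need.
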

\begin{proof}
  Since $\mathcal L=\{t(d,d-1)|t\in\R\}$,
  and since
  \begin{align*}
    T\big(td,t(d-1);a,b)
    = a^2t^2
    \begin{pmatrix} d \\ d-1 \end{pmatrix}^T
    \Omega(b/a^2)
    \begin{pmatrix} d \\ d-1 \end{pmatrix}
    +
    at\omega^T \begin{pmatrix} d \\ d-1 \end{pmatrix}
    = 9a^2t^2
  \end{align*}
  is positive except for $t=0$,
  it follows that $\mathcal L\cap Q_{(a,b)}=\{(0,0)^T\}$.
\end{proof}
\begin{lemma}\label{lem-elip}
For $a> 0$, the closed set $Q_{(a,b)}$ is
\begin{enumerate}
\item[(a)] the bounded region enclosed by an ellipse
  if and only if $\frac{2d-4}{2d}<\frac b{a^2}<\frac{2d-1}{2d}$;
\item[(b)] the unbounded convex region enclosed by a parabola 
  if and only if  $\frac b{a^2}=\frac{2d-4}{2d}$ or $\frac b{a^2}=\frac{2d-1}{2d}$;
\item[(c)] the union of the two convex regions enclosed by either of the two branches of a hyperbola
  if and only if $\frac b{a^2}<\frac{2d-4}{2d}$ or $\frac b{a^2}>\frac{2d-1}{2d}$.
\end{enumerate}
\end{lemma}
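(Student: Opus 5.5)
The plan is to read $T(\cdot,\cdot;a,b)\le 0$ as the sublevel set of a quadratic polynomial in $(\alpha,\beta)$ and classify it by its quadratic and linear parts. Since $a>0$, the representation after \eqref{quadric-cond} gives
\[
T=a^2\, x^T\Omega(s)\,x + a\,\omega^T x,\qquad x=(\alpha,\beta)^T,\quad s=b/a^2 .
\]
The type of the conic $\partial Q_{(a,b)}$ is therefore governed by the sign of $\det\Omega(s)$. Whether the set is nondegenerate and convex will be settled by completing the square and checking one sign.

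\emph{Step 1 (determinant).} Put $c=\frac{d-1}{d}$. Expanding,
\[
\det\Omega(s)=(9-16c+8cs)-(4c-1-4s)^2=-16s^2+(40c-8)s+8-8c-16c^2 .
\]
The two roots should be $s_1=\frac{2d-4}{2d}$ and $s_2=\frac{2d-1}{2d}$. To confirm this, check that $s_1+s_2=2-\frac5{2d}=\frac{5c-1}{2}$ and $s_1s_2=1-\frac5{2d}+\frac1{d^2}=c^2+\frac c2-\frac12$. Hence
\[
\det\Omega(s)=-16(s-s_1)(s-s_2),
\]
which is positive exactly when $s_1<s<s_2$, zero exactly when $s\in\{s_1,s_2\}$, and negative otherwise. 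The $(2,2)$ entry of $\Omega$ equals $1>0$. So $\Omega$ is positive definite in case (a), positive semidefinite of rank one in case (b), and indefinite in case (c).

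\emph{Step 2 (one sign computation).} The key quantity is $\omega^T\operatorname{adj}\Omega(s)\,\omega$. With
\[
\operatorname{adj}\Omega=\begin{pmatrix}1 & -(4c-1-4s)\\ -(4c-1-4s) & 9-16c+8cs\end{pmatrix}
\quad\text{and}\quad \omega=8(c,-1)^T,
\]
a direct computation gives
\[
64\big[c^2+2c(4c-1-4s)+9-16c+8cs\big]=64\cdot 9(1-c)^2=\frac{576}{d^2}>0 ,
\]
independently of $s$. This is the crux of the proof. Note also that $T(0,0)=0$ and $\nabla T(0,0)=a\omega\ne0$, so the origin lies on a smooth part of $\partial Q_{(a,b)}$; in particular the set is neither empty nor a single point.

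\emph{Step 3 (the three cases).}

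In case (a), $\Omega$ is positive definite. Completing the square gives $T=a^2(x-x_0)^T\Omega(x-x_0)-K$ with $K=\tfrac14\omega^T\Omega^{-1}\omega=\tfrac14\,\omega^T\operatorname{adj}\Omega\,\omega/\det\Omega>0$. Hence $Q_{(a,b)}$ is a filled ellipse: bounded, convex, and nondegenerate.

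In case (b), $T$ is a convex function, so $Q_{(a,b)}$ is convex. Let $k$ span the kernel of $\Omega$. We have $\operatorname{adj}\Omega=\operatorname{tr}(\Omega)\,\hat k\hat k^T$, where $\hat k$ is the unit vector of $k$. Step 2 then gives $\omega^T k\neq 0$, i.e. $\omega$ is not in the range of $\Omega$. So the conic is a genuine parabola and $Q_{(a,b)}$ is the unbounded convex region it encloses.

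In case (c), complete the square as in (a). Now $K=\tfrac14\,\omega^T\operatorname{adj}\Omega\,\omega/\det\Omega<0$ because $\det\Omega<0$ while the numerator is positive. Diagonalize $\Omega$ with eigenvalues $\lambda_+>0>\lambda_-$ and coordinates $w_\pm$. Then $Q_{(a,b)}=\{\lambda_+w_+^2+|K|/a^2\le|\lambda_-|w_-^2\}$. This is the union of the two convex regions bounded by the two branches of a hyperbola, not the connected non-convex region between them.

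Since the three ranges of $s$ are exhaustive and yield mutually exclusive conic types, every ``if'' in the statement upgrades to ``if and only if''. The only delicate point is the sign of $K$ in case (c), which decides between the two lobes and the region between the branches; Step 2 settles it uniformly in $s$.
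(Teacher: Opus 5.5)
Your proof is correct, and it reaches the conclusion by a partly different route from the paper.

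The classification step is the same in both. You use $\det\Omega(s)=-16(s-s_1)(s-s_2)$ together with $\Omega(s)_{22}=1$, exactly as the paper does.

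You differ in how you show that $Q_{(a,b)}$ is the \emph{enclosed} region and not a degenerate or complementary one.

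\begin{itemize}
\item The paper argues geometrically from Lemma~\ref{lem:geometry0}. The McCann line $\mathcal L$ is tangent to $\partial Q_{(a,b)}$ at the origin and meets $Q_{(a,b)}$ only there. That rules out the outside of an ellipse or parabola, and it rules out the region between the two branches of a hyperbola.
\item You complete the square instead. You then fix the sign of $K=\tfrac14\,\omega^T\operatorname{adj}\Omega(s)\,\omega/\det\Omega(s)$ using the $s$-independent value $\omega^T\operatorname{adj}\Omega(s)\,\omega=576/d^2$.
\end{itemize}

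These are the same fact in different form. Write $c=\frac{d-1}{d}$ and let $J$ be the rotation by a right angle. Then $J\omega=8(1,c)^T$, which is parallel to the McCann direction $(d,d-1)^T$. For $2\times 2$ symmetric matrices one has $\operatorname{adj}\Omega=J^T\Omega J$. So your invariant equals $64\,(1,c)\,\Omega(s)\,(1,c)^T=64\cdot 9/d^2$. Up to the factor $a^2t^2d^2$, this is the identity $T(td,t(d-1);a,b)=9a^2t^2$ from Lemma~\ref{lem:geometry0}.

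What your algebraic version buys is explicitness. The single positivity $576/d^2>0$ gives three things at once:
\begin{itemize}
\item $K\neq 0$ in cases (a) and (c), which excludes a point-ellipse and a pair of crossing lines;
\item $\omega\cdot\hat k\neq 0$ in case (b), which excludes a pair of parallel lines;
\item the sign of $K$ in case (c), which selects the two lobes.
\end{itemize}
The paper's ``follows straightforwardly'' leaves these degenerate cases implicit. They can be excluded there only by combining Lemma~\ref{lem:geometry0} with $\nabla T(0,0)=a\omega\neq 0$.

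The paper's version is shorter and reuses a fact it needs again later, namely that the McCann line separates the two hyperbola branches when analysing $\mathcal Q_{(a,b)}$.

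One small point you leave unstated is why each lobe in (c) is convex. It is immediate: the upper lobe is $w_-\ge\sqrt{(\lambda_+w_+^2+|K|/a^2)/|\lambda_-|}$, the epigraph of a convex function, and the lower lobe is its mirror image.
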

\begin{proof}
  The fact that the boundary $\partial Q_{(a,b)}$ is 
  an ellipse, a parabola, or the two branches of a hyperbola, respectively,
  follows straightforwardly from the form of the matrix $\Omega$ in the definition of $T$.
  Specifically, one uses that $\Omega(s)_{22}=1$, 
  and that the quadratic polynomial
  \begin{equation*}
    \det \Omega(s) = -16\left(s-\frac{d-2}{d}\right)\left(s-\frac{2d-1}{2d}\right)
  \end{equation*}
  is positive if and only if $\frac{2d-4}{2d}<s<\frac{2d-1}{2d}$.
  
  That $Q_{(a,b)}$ is in each case the described enclosed region 
  follows from Lemma \ref{lem:geometry0} above,
  particulary from the fact that there is a line, namely $\mathcal L$, that intersects $Q_{(a,b)}$ only in one point.
\end{proof}

\begin{remark}
    Notice that the parameters $(a,b)$ are in fact not independent but determined by the entropy function $f(\xi)$, i.e., $a=f'(\xi)$ and $b=f''(\xi)$.
\end{remark}

\subsection{Properties of the quadrics $\partial Q_{(a,b)}$ and their ``interiors'' $Q_{(a,b)}$.}
The center of the ellipses and hyperbolas is given by
\begin{equation*}
\alpha_c = \frac{4\left(4b +\frac{5-4d}{d}a^2\right)}{a^3\det\Omega(b/a^2)}
\end{equation*}
and
\[
    \beta_c = -\frac{4\left(4\frac{1-d}{d} b +\frac{4d-9-\frac{4}d}{d}a^2\right)}{a^3\det\Omega(b/a^2)} \,.
\]
Let $\theta$ be the angle of the major axis of the hyperbola/ellipse with the positive part of the $\alpha$-axis. Let us define the parameter 
\begin{equation}\label{defk}
k:=-\frac{b}{a^2}\,;
\end{equation}
then we can express this angle in terms of $k$ through
\begin{equation}\label{theta}
\tan 2\theta = \left\{ \begin{array}{cc}
                                 -\frac14 + k & \mbox{for } d=1\\[2mm]
                                 \frac{d}{1-d} + \frac{d^2-d-4}{4(1-d)(2-d+k(1-d))} & \mbox{for } d\geq 2 
                              \end{array}\right. .
\end{equation}
Observe that the angle is an increasing function of $k$ for dimensions 1 and 2 while decreasing for $d\geq 3$. Formula \eqref{theta} also gives the angle between the axis of symmetry of a parabola with the positive part of the $\alpha$-axis.

Special points on $\partial Q_{(a,b)}$: Notice that the origin always belongs to the boundary of the set $Q_{(a,b)}$. The slope of the tangent line at the origin is given by
$\frac{d-1}{d}$ since the tangent line $(1-d)\alpha+d\beta=0$ is determined from the McCann condition. We also observe that the equation defining $Q_{(a,b)}$ evaluated on this tangent line satisfies
$$
T\left(\alpha, \frac{d-1}{d}\alpha; a,b\right)=9a^2\alpha^2\left(\frac{2d-1}{d}\right)^2\,.
$$
\begin{itemize}
\item 
If $a> 0$ then $T\left(\alpha, \frac{d-1}{d}\alpha; a,b\right)>0$ and we conclude that $Q_{(a,b)}$ is ``inside'' the quadric. Moreover, the McCann line intersects $Q_{(a,b)}$ only at the origin. Therefore, in the case of hyperbolas, it lies outside and it separates the two connected components of $Q_{(a,b)}$. Moreover, due to Lemma \ref{lem-char}, the only admissible connected component is the one above the McCann line.

\item If $a=0$ then
\begin{equation}\label{a0-case}
T\left(\alpha, \beta; 0,b\right)=-8b\alpha\left(\beta-\frac{d-1}{d}\alpha\right)\,,
\end{equation}
from which we read that $Q_{(0,b)}$ is a double-wedge (degenerate hyperbola) for $b\neq 0$.
\end{itemize}

We can also find another intersection point of $\partial Q_{(a,b)}$ with the $\beta$-axis. It is given by $\beta_0=\frac{8}{a}$ whenever $a> 0$. Moreover, we can find the slope of the tangent line to the quadric at this point; it is given by
$$
s_0=-7+\frac{9}{d}+\frac{8b}{a^2}=-7+\frac{9}{d}-8k\,.
$$

For later reference we also note that the point $(1,1)\in \Q_{(a,b)}$ if and only if $b\ge a(a-1)$. This inequality is exactly condition \eqref{f-inequ}, which was derived for linear diffusion equations.

\subsection{Properties of the admissible sets $\mathcal{Q}_{(a,b)}$.}\label{sec:admiss-sets}
We shall divide this discussion into two cases, depending on the value of $a=f'$. This is motivated by Lemma \ref{f-solutions}, where it was shown that $f'=0$ and $f'=1$ are particular cases.\\

\noindent
\underline{Case $a=0$:} From \eqref{a0-case} we see that, for $b\ge0$, $\mathcal{Q}_{(a,b)}$ consists of the closed wedge between the McCann line and the positive $\beta$-axis. For $b<0$ it consists only of the McCann ray with $\alpha\ge 0$.\\

\noindent
\underline{Case $a>0$:} Using Lemma \ref{lem-elip} together with the fact that the McCann line separates the two branches of the hyperbola and that $\beta_0>0$, then the relevant part of $Q_{(a,b)}$ coincides with the closure of the interior of the ellipse, parabola, or the branch of the hyperbola lying above the McCann line. Moreover, the McCann line is tangent at the origin to the boundary $\partial Q_{(a,b)}$. 

\begin{figure}[ht!]
\begin{center}
\includegraphics[width=10cm]{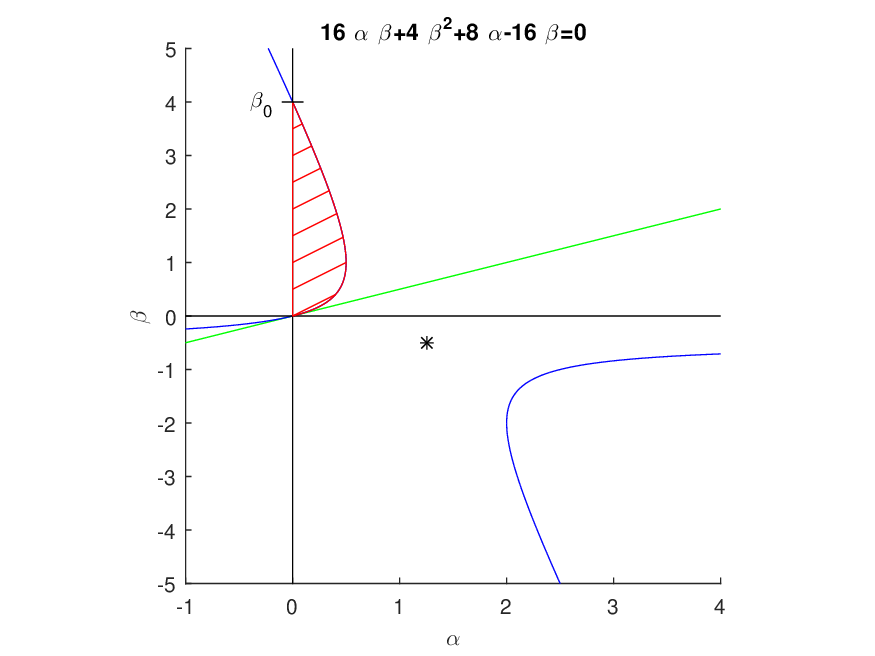}
\end{center}
%\vspace{-0.3cm}
\caption{{\footnotesize This plot visualizes the arguments in the proof of Lemma \ref{Q-prop2}. Plotted are the McCann line for $d=2$ (in green) and the hyperbola $Q_{(2,-1)}$. The admissible set $\mathcal{Q}_{(2,-1)}$ consists of the closure of the interior of the upper  hyperbola branch restricted to the first quadrant (in red). The center of the hyperbola is marked with a star. [colors only online]}}
\end{figure}

Therefore, the admissible set $\mathcal{Q}_{(a,b)}$ coincides with the intersection of these quadric-interiors with the wedge between the McCann line and the positive $\beta$-axis. As a consequence, $\mathcal{Q}_{(a,b)}$ is convex for all quadric types and the origin lies at its boundary. The boundedness (or not) of the set is obviously only relevant in the case of the parabolas and hyperbolas:

\begin{lemma}\label{Q-prop2}
Let $a>0$. The set $\mathcal{Q}_{(a,b)}$ is unbounded if and only if $b\geq \frac{2d-1}{2d} a^2$.
\end{lemma}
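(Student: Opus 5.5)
The plan is to reduce unboundedness of the closed convex set $\mathcal Q_{(a,b)}$ to the existence of a ray from the origin, and then to test rays one at a time using the factorized form of $T$. By the preceding discussion, $\mathcal Q_{(a,b)}$ is closed and convex and contains the origin. A closed convex set is unbounded if and only if it has a nonzero recession direction $w$, and then it contains the ray $\{tw: t\ge0\}$ from each of its points, in particular from the origin. So $\mathcal Q_{(a,b)}$ is unbounded if and only if there is a direction $w=(\alpha,\beta)\neq0$ in the closed wedge $\{\alpha\ge0,\ d\beta\ge(d-1)\alpha\}$ with $T(t\alpha,t\beta;a,b)\le0$ for all $t\ge0$.

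Next, I discard the two boundary directions of the wedge.
\begin{itemize}
\item The McCann direction: by Lemma \ref{lem:geometry0}, $Q_{(a,b)}$ meets the McCann line only at the origin, so this direction fails.
\item The direction $\alpha=0$: here $T(0,t\beta;a,b)=a^2t^2\beta^2-8at\beta$, which is positive for large $t$ since $a>0$. This ray leaves $Q_{(a,b)}$ at $\beta_0=8/a$.
\end{itemize}
So we may assume $\alpha>0$ and $\mu=\beta-\frac{d-1}{d}\alpha>0$. In that region I use the factorization already noted,
\[
T(\alpha,\beta;a,b)=8\mu\,[\kappa\alpha a^2-a-\alpha b].
\]
Here $\kappa$ depends only on the ratio $\rho=\beta/\alpha$, and $\mu$ is homogeneous of degree one. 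Along the ray $t(\alpha,\beta)$ the condition $T\le0$ becomes $t\alpha(\kappa(\rho)a^2-b)\le a$. This holds for all $t\ge0$ if and only if $b\ge\kappa(\rho)a^2$, because $\alpha>0$, and in the equality case the left side vanishes while $a>0$.

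Hence $\mathcal Q_{(a,b)}$ is unbounded if and only if $b\ge a^2\kappa(\rho)$ for some $\rho>\frac{d-1}{d}$, that is, if and only if $b\ge a^2\min_\rho\kappa(\rho)$, provided the infimum is attained. Using
\[
\kappa(\rho)=\frac{\rho+7}{8}-\frac98\,\frac{\rho-1}{d(\rho-1)+1},
\]
or directly $\kappa=\bar\kappa_d+\frac{(\mu-3\alpha/d)^2}{8\alpha\mu}$ from \eqref{eq:def-nu}, the minimum is $\bar\kappa_d=1-\frac1{2d}=\frac{2d-1}{2d}$. It is attained where $\mu=3\alpha/d$, i.e.\ at $\rho=1+\frac2d$, which lies inside the admissible range. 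This gives the claimed threshold $b\ge\frac{2d-1}{2d}a^2$.

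The main point of care is the equivalence between unboundedness and a recession ray from the origin. This rests on the convexity and closedness of $\mathcal Q_{(a,b)}$ established just before the lemma. Beyond that, one must remember to handle the two boundary directions of the wedge separately, as done above, and to check that the minimum of $\kappa$ is attained, so that the boundary case $b=\frac{2d-1}{2d}a^2$ is correctly classified as unbounded. This is consistent with the parabola case in Lemma \ref{lem-elip}.
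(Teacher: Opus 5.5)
Your proof is correct, and it takes a different route from the paper's.

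The paper argues with two tangent lines: the McCann line at the origin, and the tangent at $(0,8/a)$, whose slope is $s_0=-7+\frac9d+\frac{8b}{a^2}$. It asserts that for parabolas and hyperbolas the truncated set is unbounded exactly when this wedge opens to the right, i.e.\ $s_0>\frac{d-1}{d}$. That inequality is equivalent to $b/a^2>\frac{4d-5}{4d}$, not to the stated threshold. So the paper must invoke Lemma~\ref{lem-elip} to rule out the ellipse window $\frac{4d-5}{4d}<b/a^2<\frac{2d-1}{2d}$. Moreover, only the ``bounded'' half follows cleanly, from the two supporting half-planes. The claim that an opening wedge forces unboundedness is read off the picture rather than proved.

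Your argument avoids the conic classification altogether. You use the factorization $T=8\mu[\kappa\alpha a^2-a-\alpha b]$ and the fact that $\kappa$ depends only on $\beta/\alpha$ while $\mu$ is $1$-homogeneous. From this, each ray $t(\alpha,\beta)$ in the open wedge meets $\mathcal Q_{(a,b)}$ in an interval $[0,t^*]$, with $t^*<\infty$ if and only if $\kappa(\beta/\alpha)\,a^2>b$. The threshold is therefore exactly $a^2\min\kappa=\bar\kappa_d a^2$ from \eqref{eq:def-nu}. The minimizer $\beta/\alpha=1+\frac2d$ gives an explicit recession ray, which in the borderline case is precisely the axis of the parabola. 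This proves both implications and identifies all recession directions.

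The only structural input you need is closedness and convexity of $\mathcal Q_{(a,b)}$, to reduce unboundedness to a ray from the origin. Even that can be dropped. Your factorization already gives star-shapedness about the origin. When $b<\bar\kappa_d a^2$, $t^*$ is uniformly bounded over directions: it tends to $8/a$ near the $\beta$-axis and to $0$ near the McCann line.
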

\begin{proof}
In order to distinguish the boundedness we use the slopes of the tangent lines at the origin and the point $(0,\beta_0)$. In fact, hyperbolas and parabolas will be unbounded if and only if the wedge between the tangent lines at the origin, given by $\mathcal L$, and at the point $(0,\beta_0)$ opens to the right, that is $s_0>\frac{d-1}d$. This gives the desired result taking into account Lemma \ref{lem-elip}.
\end{proof}

\begin{remark}\label{rm:pentroparam}
Recall from Example \ref{p-entropy} that each $p$-entropy curve $f_p(\xi)$, $1\le p\le 2$, corresponds to the single point $(p-1,0)$. Therefore, the admissible set of nonlinearities for the $p$-entropy is determined by the set $\mathcal{Q}_{(p-1,0)}$, independently of $\xi$. Let us point out that, for $p>1$ in $d=1$, this is a truncated ellipse, in $d=2$ it is a bounded truncated parabola, and in $d\ge3$ it is a bounded truncated branch of a hyperbola. Observe that Corollary \ref{cor:boundedsets} below implies that the nonlinearities
$P(u)=u^m$, $m\neq 1$, do not admit the $p$-entropies of the linear diffusions with $1< p\le 2$.

For $p=1$ the admissible set $\mathcal{Q}_{(0,0)}$ is the closed wedge between the McCann line and the positive $\beta$-axis.
\end{remark}

As a summary, we give a sketch of the sets $Q_{(a,b)}$ and admissible sets $\mathcal{Q}_{(a,b)}$ in Figure \ref{fig:sketchsets}. We remind the reader that the plotted curves fit the one dimensional values. But in other dimensions the lower red parabola changes its shape being the $a$-axis for $d=2$, and convex for $d\geq 3$. In any case, the structure of sketch of the different cases does not change depending on the dimension.

Notice that in this section we were just interested in the properties of the admissible sets for one single point on an entropy curve. If we insist in discussing admissible nonlinearities for entropies satisfying condition \eqref{f-inequ}, such as entropies for linear and non-degenerate diffusions, then our relevant parameters $(a,b)=(f'(\xi),f''(\xi))$ have to be on or above the blue dotted parabola, $b=a(a-1)$, in Figure \ref{fig:sketchsets}. Moreover, the set of relevant values correspond to $0\leq a<1$ or $(a,b)=(1,0)$ due to Lemma \ref{f-solutions}-(a).

\begin{figure}[htbp]
\begin{center}
\includegraphics[width=10cm]{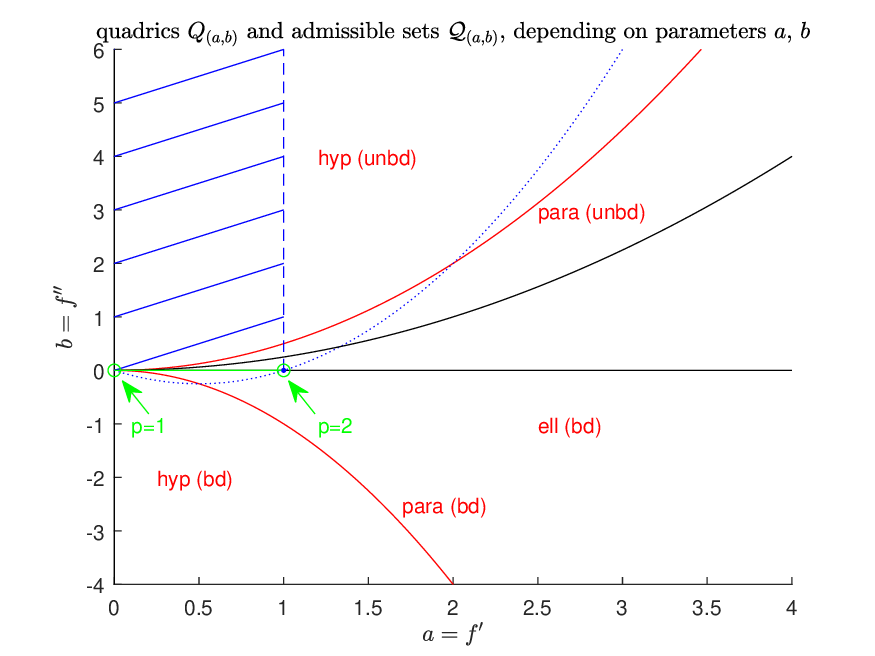}
\end{center}
%\vspace{-0.3cm}
\caption{\label{fig:sketchsets} {\footnotesize This plot shows the different types of quadrics $\partial Q_{(a,b)}$ in the parameter plane $(a,b)\in\R^+_0\times\R$, for $d=1$. The regions of ellipses, parabolas, and hyperbolas are separated by the two solid, red parabolas and marked with the abbreviations \emph{ell}, \emph{para}, and \emph{hyp}. Moreover, the un/boundedness of the sets $\Q_{(a,b)}$ is marked by \emph{bd}, \emph{unbd}.\\
The closure of the interior of the dotted, blue parabola $b=a(a-1)$ corresponds to the entropy condition \eqref{f-inequ}. Moreover, its relevant subset with $0\le a<1$ (cf.\ Lemma \ref{f-solutions} for linear diffusion and Remark \ref{remlin}-(d) for nonlinear non-degenerate diffusions) is shaded in blue. \\
Each $p$-entropy (of linear diffusion equations) with $1\le p\le2$ corresponds to the single parameter point $(a,b)=(p-1,0)$ on the non-negative $a$-axis, plotted in green (cf.\ Example \ref{p-entropy}).\\
Finally, the black parabola $b=a^2/4$ illustrates the indices in Lemma \ref{Qldecre} for $l=\tfrac14$. [colors only online]}}
\end{figure}

\subsection{First implications on the nonlinearity}
\label{sec:7.6}
We can already obtain information on the behavior of the nonlinearity at the origin and at infinity from condition \eqref{qg}. The cases (a), (b), and (c) from Proposition \ref{prop:behaviorinfty} and the classification of diffusions in \S\ref{sec:5.1} imply the following important consequence.

\begin{corollary}\label{cor:boundedsets}
Given any entropy $g(\xi)$ such that 
$$
\tilde{\mathcal{Q}}:=\bigcup_{\xi>\xi_{\min}} \mathcal{Q}_{(f'(\xi),f''(\xi))}
$$
is bounded, then the only admissible diffusions are covered by the following two cases:
\begin{itemize}
    \item $P$ is regular non-degenerate; or
    \item $P$ is degenerate and has either linear, sublinear, or saturating behavior at infinity. 
\end{itemize}
\end{corollary}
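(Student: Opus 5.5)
The argument rests on Corollary~\ref{corentgen}. If $P$ is admissible for $g$, then $(\alpha(r),\beta(r))\in\mathcal Q_{(f'(\xi),f''(\xi))}\subset\tilde{\mathcal Q}$ for every $\xi>\xi_{\min}$ and every $0<r\le\varphi(\xi)$. Every $r>0$ is reached this way: for $\xi$ large we have $\varphi(\xi)>r$, since $\varphi(\xi)\to\infty$ as $\xi\to\infty$ by the definition of the generalized inverse and $\phi(\infty)$. Hence the whole nonlinearity curve $\{(\alpha(r),\beta(r)):r>0\}$ lies in the bounded set $\tilde{\mathcal Q}$. In particular there is a constant $K$ with
\[
\sup_{r>0}\alpha(r)\le K\quad\text{and}\quad \sup_{r>0}\beta(r)\le K .
\]
The rest is reading off the classification of \S\ref{sec:5.1} and Proposition~\ref{prop:behaviorinfty} under these two bounds.

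The first step treats the behavior at the origin. We have $\alpha(0+)=\beta(0+)=P'(0+)$ by \eqref{limatzero}. Boundedness of $\alpha$ near $0$ excludes $P'(0+)=\infty$, that is, it excludes singular non-degenerate diffusions. Only the regular non-degenerate case $P'(0+)\in(0,\infty)$ and the degenerate case $P'(0+)=0$ remain.

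The second step treats the behavior at infinity. Boundedness of $\alpha$ excludes case (d) of Proposition~\ref{prop:behaviorinfty}, where $\alpha(\infty)=\infty$. What remains are cases (a), linear at infinity; (b), sublinear or asymptotically linear; and (c), saturating. This gives the stated description in the degenerate case. In the regular non-degenerate case the statement places no condition at infinity, so nothing further is needed there, although the same restriction to (a)--(c) holds as well.

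The main point requiring care is that the union over $\xi$ really covers all $r>0$, i.e.\ that $\varphi(\xi)\to\infty$. If $\phi(\infty)<\infty$, then $\varphi(\xi)=\infty$ for large $\xi$ by \eqref{phi-inv}, so every $r$ is covered anyway. If $\phi(\infty)=\infty$, then $\varphi$ is an increasing bijection onto $(0,\infty)$. Both cases therefore work without extra hypotheses. One should also remark that unbounded $\beta$ with bounded $\alpha$ (for instance an oscillating $P'$) is likewise excluded, because $\beta$ is bounded too. This is consistent with case (c), where $\beta$ stays bounded but need not converge.
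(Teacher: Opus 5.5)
Your proposal is correct and follows the paper's own argument: admissibility (via Corollary~\ref{corentgen} and $\varphi(\xi)\to\infty$) places the entire nonlinearity curve in the bounded set $\tilde{\mathcal Q}$, and bounded $\alpha$ then rules out $P'(0+)=\infty$ at the origin and case (d) of Proposition~\ref{prop:behaviorinfty} at infinity. One caveat you share with the paper is that the four-case list in Proposition~\ref{prop:behaviorinfty} is not literally exhaustive, because L'H\^opital's rule only works in one direction (e.g.\ $P(u)=u(2+\sin\log u)$ for large $u$ has neither $\alpha(\infty)$ nor $\beta(\infty)$); so what your second step robustly yields is just $P(u)\le Ku$, i.e.\ at most linear growth at infinity.
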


Corollary \ref{cor:boundedsets} again follows by taking the limit $\xi\to\infty$ in condition \eqref{qg} since $\varphi(\xi)\to\infty$ as $\xi\to\infty$ to deduce
\begin{equation*}
 (\alpha(r),\beta(r))\in\tilde{\mathcal{Q}} \qquad \mbox{for all } r>0  .
\end{equation*}
Case (d) from Proposition \ref{prop:behaviorinfty} is only possible for entropies $g(\xi)$ with unbounded $\tilde{\mathcal{Q}}$. For instance, $P(u)\simeq u^m$, $m>1$, or $P(u)=e^u-1$ as $u\to \infty$ are included here.

We finally give a general result for entropies satisfying natural bounds on the derivatives of $f$.
\begin{theorem}\label{fconvex}
  Let $g\in C^3(\R)$ be such that the entropy function $f$,  
  defined by \eqref{entrofgen} satisfies: $f$ is convex and $f'\ge0$ is bounded from above. Then all admissible diffusions satisfy 
  $$
  (\alpha(r),\beta(r))\in\mathcal{Q}_{(A,0)} \qquad \mbox{for all } r>0  \,,
$$
with $A:=\sup_{\xi>\xi_{\min}} f'(\xi)\geq 0$. \\
\end{theorem}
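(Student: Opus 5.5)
Fix an admissible $P$ and $r>0$. The plan is to combine Corollary~\ref{corentgen} with monotonicity properties of the sets $\mathcal Q_{(a,b)}$ in the parameters. By Corollary~\ref{corentgen}, $(\alpha(r),\beta(r))\in\mathcal Q_{(f'(\xi),f''(\xi))}$ for every $\xi$ with $\varphi(\xi)\ge r$. Since $\varphi(\xi)\to\infty$ as $\xi\to\infty$, this holds for all sufficiently large $\xi$. The aim is to pass to the limit $\xi\to\infty$, where the entropy curve concentrates near $(A,0)$.

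\textbf{Step 1: the curve approaches $(A,0)$.} Since $f$ is convex, $f'$ is non-decreasing, so $f'(\xi)\nearrow A$ as $\xi\to\infty$. Moreover $f''\ge0$ and $\int^\infty f''=A-f'(\xi_0)<\infty$. Hence $\liminf_{\xi\to\infty}f''(\xi)=0$, and we can pick a sequence $\xi_n\to\infty$ with $f'(\xi_n)\to A$ and $f''(\xi_n)\to0$.

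\textbf{Step 2: closedness in the parameters.} The defining inequality of $\mathcal Q_{(a,b)}$ reads
\[
T(\alpha,\beta;a,b)\le0,\qquad \alpha\ge0,\qquad (d-1)\alpha\le d\beta .
\]
Here $T$ is a polynomial in $(\alpha,\beta,a,b)$, so the set
\[
\{(\alpha,\beta,a,b): T\le0,\ \alpha\ge0,\ (d-1)\alpha\le d\beta\}
\]
is closed. This is the upper semicontinuity noted after \eqref{def-Q}. Since $(\alpha(r),\beta(r))\in\mathcal Q_{(f'(\xi_n),f''(\xi_n))}$ for all large $n$, passing to the limit gives $(\alpha(r),\beta(r))\in\mathcal Q_{(A,0)}$.

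\textbf{Expected obstacle.} The subtlety is that the limit $(A,0)$ need not lie on the curve, and that $f''$ itself may fail to converge. The sequence argument in Step 1 handles both issues, since only a subsequence with $f''(\xi_n)\to0$ is needed.

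\textbf{Alternative route via monotonicity.} Instead of extracting a subsequence, one can check monotonicity directly. For $\mu>0$ the inequality $Z\le0$ is equivalent to \eqref{rem-cond2old}, namely $\kappa a^2\le b+a/\alpha$. The condition $\kappa A^2\le A/\alpha$ is implied by $\kappa a^2\le b+a/\alpha$ only if $b$ is small. So monotonicity alone does not suffice, and the limiting argument above is the cleaner route.

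\textbf{Degenerate cases.} If $A=0$, then $f'\equiv0$. In that case admissibility reduces to McCann's condition, which is exactly membership in $\mathcal Q_{(0,0)}$ (the wedge described in \S\ref{sec:admiss-sets}). If $\mu(r)=0$, then Definition~\ref{def:adm-entr} forces $f'=0$ on $[\phi(r)-\bar C,\infty)$. Hence $A=0$, and we are back in the previous case.
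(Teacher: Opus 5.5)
Your proof is correct and follows essentially the paper's argument. You extract $\xi_n\to\infty$ with $(f'(\xi_n),f''(\xi_n))\to(A,0)$, use the characterization \eqref{qg} together with $\varphi(\xi)\to\infty$, and pass to the limit using closedness of $\{T\le0,\ \alpha\ge0,\ (d-1)\alpha\le d\beta\}$ in the parameters $(a,b)$. Two small remarks: you justify the existence of this sequence via integrability of $f''\ge0$, which the paper merely asserts, and your separate treatment of $A=0$ and $\mu(r)=0$ is harmless but already covered by the general limiting argument.
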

\begin{proof}
Due to the assumptions on the entropy function $f$, i.e.\ $f'$ is bounded and monotonously non-decreasing, there exists a sequence $\xi_n\to \infty$ such that $(f'(\xi_n),f''(\xi_n))\to (A,0)$ as $n\to \infty$. Then, condition \eqref{qg} implies
\begin{equation*}
 (\alpha(r),\beta(r))\in\mathcal{Q}_{(f'(\xi_n),f''(\xi_n))} \qquad \mbox{for all } 0<r<\varphi(\xi_n)  . 
\end{equation*}
By taking $n\to\infty$ the conclusion of the theorem follows since $\varphi(\xi)\to\infty$ as $\xi\to\infty$ and by using the upper semicontinuity of the sets $Q_{(a,b)}$ w.r.t.\  the parameters $a$ and $b$.
\end{proof}

We can draw similar consequences to Corollary \ref{cor:boundedsets} on the set of possible admissible nonlinearities in case that $\mathcal{Q}_{(A,0)}$ is bounded. We show in Lemma \ref{Q-prop2} this is the case for $A>0$.

%%%%%%

Now, let us concentrate on some properties of these sets depending on the parameters $(a,b)$. Let us consider the one parameter family of parabolas $b=la^2$, $l\in\R$, which include the separation lines in Lemma \ref{lem-elip} and the red lines in Figure \ref{fig:sketchsets}. Now we define the sets
$$
\mathcal{Q}_{a}^l :=\mathcal{Q}_{(a, l a^2)}\,
$$
for $a\ge 0$, $l\in\R$. The indices of the sets $\mathcal{Q}_{a}^l$ for $l$ fixed are parabolas corresponding to the red and the black curves in Figure \ref{fig:sketchsets}.

\begin{lemma}\label{Qldecre}
For any fixed $l\in\R$, $\mathcal{Q}_{a}^l$ is a decreasing family of sets with respect to increasing $a$.
\end{lemma}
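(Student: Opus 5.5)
The plan is to derive the monotonicity from the scaling property \eqref{eq:Qscaling} combined with star-shapedness of the admissible sets with respect to the origin.

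\emph{Reduction by scaling.} The case $a=0$ is trivial: $\mathcal Q_0^l=\mathcal Q_{(0,0)}$ is the full wedge between the McCann line and the positive $\beta$-axis, and it contains every $\mathcal Q_{(a,b)}$. For $a>0$ I apply \eqref{eq:Qscaling} with $\tau=1/a$. This gives
\[
\mathcal Q_a^l=\mathcal Q_{(a,la^2)}=\tfrac1a\,\mathcal Q_{(1,l)} .
\]
The restriction $\{\alpha\ge0,\ (d-1)\alpha\le d\beta\}$ is a cone, so it is invariant under positive dilations and the scaling is legitimate for $\mathcal Q$ as well as for $Q$. Now take $0<a_1<a_2$. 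It then suffices to show that $\mathcal Q_{(1,l)}$ is star-shaped with respect to the origin, i.e. $t\,\mathcal Q_{(1,l)}\subset\mathcal Q_{(1,l)}$ for all $t\in[0,1]$. Indeed, choosing $t=a_1/a_2$ yields $\tfrac1{a_2}\mathcal Q_{(1,l)}\subset\tfrac1{a_1}\mathcal Q_{(1,l)}$.

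\emph{Star-shapedness.} One could invoke the convexity of $\mathcal Q_{(a,b)}$ from \S\ref{sec:admiss-sets} together with the fact that the origin belongs to it. I would rather verify star-shapedness directly from the factorization of $T$, because that avoids any case distinction on the quadric type.

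Take $(\alpha,\beta)\in\mathcal Q_{(a,b)}$ with $a>0$, and first suppose $\mu=\beta-\frac{d-1}{d}\alpha>0$. Then
\[
T=8\mu\,[\kappa\alpha a^2-a-\alpha b],
\]
where $\kappa$ depends only on the ratio $\beta/\alpha$. Replacing $(\alpha,\beta)$ by $(t\alpha,t\beta)$ with $t\in(0,1]$ multiplies $\mu$ by $t$ and leaves $\kappa$ unchanged. The bracket becomes $t\alpha(\kappa a^2-b)-a$. If $\kappa a^2-b\ge0$, the bracket is non-decreasing in $t$, so it stays $\le0$ for $t\le1$ because it is $\le0$ at $t=1$. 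If $\kappa a^2-b<0$, the bracket is $\le -a<0$ for every $t$. In both cases $T(t\alpha,t\beta)\le0$.

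If instead $\mu=0$, Lemma \ref{lem:geometry0} shows that the only admissible point is the origin, and the origin is trivially preserved under dilation. The endpoint $t=0$ gives the origin, which lies in $\mathcal Q_{(a,b)}$ since $T(0,0;a,b)=0$. Finally, the cone constraints $\alpha\ge0$ and $(d-1)\alpha\le d\beta$ are clearly preserved under dilation.

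\emph{Main obstacle.} The only delicate point is star-shapedness. A naive expansion $T(t\alpha,t\beta)=t^2q+t\ell$ into quadratic and linear parts fails when $q\le0<\ell$. The factorized form through $\mu$ and $\kappa$, which is homogeneous of degree zero, is exactly what circumvents this. I also need to check that $\alpha=0$, where $\kappa$ is undefined, causes no trouble. In that case $T=a^2\beta^2-8a\beta$, so the admissible points are $0\le\beta\le 8/a$, and this set is visibly star-shaped.
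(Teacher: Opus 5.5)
Your proof is correct, but it takes a different route from the paper. The paper works at fixed $(\alpha,\beta)$. Dividing $T(\alpha,\beta;a,la^2)$ by $a^2$ leaves a quadratic form in $(\alpha,\beta)$ that does not depend on $a$, minus the term $\frac{8}{a}\mu$ with $\mu=\beta-\frac{d-1}{d}\alpha$. Since $\mu\ge0$ on the McCann cone, $a^{-2}T$ is non-decreasing in $a$, so its sublevel set $\{a^{-2}T\le0\}$ shrinks as $a$ grows.

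You instead use \eqref{eq:Qscaling} to write $\mathcal Q^l_a=a^{-1}\mathcal Q_{(1,l)}$. This reduces the lemma to star-shapedness of $\mathcal Q_{(1,l)}$ about the origin. You then prove star-shapedness via the factorization $T=8\mu[\kappa\alpha a^2-a-\alpha b]$, treating $\alpha=0$, $\mu=0$ and the sign of $\kappa a^2-b$ as separate cases. All of these steps check out.

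The obstacle you identify does not actually occur. In $T(t\alpha,t\beta)=t^2q+t\ell$, the linear part is $\ell=a\,\omega^T(\alpha,\beta)^T=-8a\mu$, which is $\le 0$ on the cone. Since $tq+\ell$ is affine in $t$, for $t\in(0,1]$ it is bounded by $\max(\ell,\,q+\ell)$, and both values are $\le0$. So the naive expansion already works. That sign observation is exactly the paper's argument, and it makes the $\kappa$-factorization and the case split unnecessary.

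What your route buys is structural: the whole family $\mathcal Q^l_a$ consists of dilates of a single star-shaped set. This also explains the paper's subsequent remark that the quadric type does not change with $a$. The paper's route is a one-line argument, uniform in all cases, with no separate handling of $\alpha=0$ or of the McCann line.
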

\begin{proof}
Notice that from \eqref{quadric-cond} we rewrite $T\leq 0$ as
\begin{align}\label{quadric-cond2bis}
  \frac1{a^2}  T(\alpha,\beta,a,l a^2)=&\left[\left(1+8\frac{2-d}{d}\right) -\frac{8(1-d)}{d} l\right]\alpha^2 - \left[\left(2+8\frac{1-d}{d}\right)+8l\right]  \alpha\beta \nonumber\\
  &+ \beta^2-\frac8{a}\left(\beta+\frac{1-d}{d}\alpha\right)\le0\,.
\end{align}
The claim is now a consequence of \eqref{quadric-cond2bis} together with the McCann's condition $\beta+\frac{1-d}{d}\alpha\ge 0$.
\end{proof}

Notice that by varying $a$, the type of quadric that defines $\mathcal{Q}_{a}^l$ does not change due to \eqref{quadric-cond2bis}.

\subsection{Admissible nonlinearities for the $p$-entropies.}\label{sec:admiss-sets-p}

Let us recall that we introduced in \eqref{psi-p} the term ``$p$-entropies'' for the linear equation \eqref{linFP} with $D=1$. They could be scaled for $D\ne1$ according to \eqref{entrof}. Their generating function $g_p(\xi)$ is defined in \eqref{pentropy} and $f'_p(\xi)\equiv p-1$ is given in Example \ref{p-entropy}. In this section we shall generalize this notion to nonlinear equations:

\begin{definition}\label{def:p-ent}
For a nonlinear diffusion equation \eqref{GFP}, the entropy functional $\H_g(u|u_\infty)$ (from Definitions \ref{genrelentropy} or \ref{genrelentropy2}) with the generating function 
\[
g_p(\xi)=\frac{p\big(e^{(p-1)\xi}-1\big)}{p-1},
\]
implying $f'_p(\xi)\equiv p-1$ is called \emph{$p$-entropy}.
\end{definition}

Let us note that these $p$-entropies do not coincide (except for $p=1$) with the family of entropies $\H_{m,p}$ from Proposition \ref{prop:genm} for $P(u)=u^m$, $m>1$. \\

Since Definition \ref{def:p-ent} depends via $\xi$ also on the nonlinear function $\phi(u)$, $p$-entropies, for $p\in [1,2]$ fixed, share the same generating function $g_p$, but the actual form of the functional $\H_{g_p}$ will still depend on the considered nonlinearity $P$. 
But let us now consider the following ``perturbation'' of the linear diffusion equation: Let the nonlinearity $P_2(u)$ coincide with $P_1(u):\equiv u$ on some interval $[0,u_0]$ (as it is the case in Example \ref{exnon2} with $\tau_o=1$). Moreover, assume that the mass of the initial condition, $M>0$ is so small that $u_{\infty,1}(x)\in[0,u_0],\,x\in\R^d$. Then, $u_{\infty,1}=u_{\infty,2}$, and $\H_{g,1}(u|u_{\infty,1})=\H_{g,2}(u|u_{\infty,2})$ for all functions with $u(x)\in[0,u_0],\,x\in\R^d$ (see Definition \ref{genrelentropy}). 
Concerning perturbations of the linear function $P(u)$ we shall give in \S \ref{Sec:stability} a more general stability result of relative entropies $\H_g[P]$ w.r.t.\ $P$ and for $g$ fixed. This motivates to consider the $p$-entropies from the linear case also for close-by nonlinearities. Let us write out explicitly the relative entropy for $p=2$ and one particular nonlinearity $P$:

\begin{example}
Let $P(u):=u-2\sqrt u+4\log(1+\frac12 \sqrt u)$, giving $\alpha(u)=1-\frac2{\sqrt u} +4\frac{\log(1+\frac12 \sqrt u)}{u}$, $\beta(u)=1-\frac{1}{2+\sqrt{u}}$, $\phi(u)=\log(u+2\sqrt u)$, and $u_\infty(x)=\big(\sqrt{1+e^{C-V(x)}}-1\big)^2$. Then we obtain
$$
  \H_{g_2}(u|u_\infty)=\int_{\R^d} G_2(u,u_\infty)(x)\,dx,
$$
with $G_2(a,b)=\frac{1}{b+2\sqrt b}\big(a^2+\frac{8}{3}a^{\frac32}-b^2-\frac{8}{3}b^{\frac32}\big)-2(a-b)$ obtained from \eqref{eq:7} with $g_2$ from Definition \ref{def:p-ent}. Using Corollary \ref{p-entropy2} below, one can verify that this nonlinearity $P$ is admissible for the 2-entropy (and hence all $p$-entropies by Corollary \ref{p-entropy-all} below) and for all dimensions $d\in\N$, since the nonlinearity curve $(\alpha(u),\beta(u))$, $u\ge0$ connects the point $(\frac12,\frac12)$ to the point $(1,1)$ along a curve that is graph of a monotone concave function $\beta=\beta(\alpha)$, and thus lies inside $\mathcal{Q}_{(1,0)}$. In 1D the latter set is depicted in Figure \ref{remark511}.
\qed  
\end{example}

% \bigskip

A direct consequence of Remark \ref{rm:pentroparam} and condition \eqref{qg2} is the following characterization of admissible nonlinearities.

\begin{corollary}\label{p-entropy2}
The $p$-entropy, $1\leq p\leq 2$, is admissible 
if and only if the nonlinearity curve $(\alpha(u),\beta(u))$, $u\ge 0$, lies in the set $\mathcal{Q}_{(p-1,0)}$.
\end{corollary}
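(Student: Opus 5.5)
The plan is to reduce the claim to the general criterion of Corollary \ref{corentgen}, using that the $p$-entropy generator has a constant entropy curve. By Definition \ref{def:p-ent} (and Example \ref{p-entropy}), $g_p'(\xi)=p\,e^{(p-1)\xi}$, so $f_p(\xi)=(p-1)\xi+\log p$. Hence $(f_p'(\xi),f_p''(\xi))=(p-1,0)$ for every $\xi>\ximin$. Corollary \ref{corentgen} then says that $P$ is admissible for $g_p$ if and only if
\[
(\alpha(r),\beta(r))\in\mathcal Q_{(p-1,0)}\quad\text{for all }\xi>\ximin\text{ and all }0<r\le\varphi(\xi).
\]
Since the target set does not depend on $\xi$, the remaining task is to show that this family of conditions is equivalent to the single condition $(\alpha(r),\beta(r))\in\mathcal Q_{(p-1,0)}$ for all $r>0$, as in \eqref{qg2}.

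For the direction ``curve in $\mathcal Q_{(p-1,0)}$ $\Rightarrow$ admissible'', there is nothing to do: every pair $(\xi,r)$ with $0<r\le\varphi(\xi)$ has $r>0$, so the hypothesis gives \eqref{qg} immediately.

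For the converse, I fix $r>0$ and need some $\xi>\ximin$ with $\varphi(\xi)\ge r$. Taking $\xi:=\phi(r)-\bar C$ gives $\varphi(\xi)=r$, because $\phi$ is strictly increasing and $\overline{\phi}^{-1}$ inverts it on the range of $\phi$. This is the only point that needs care, and it avoids assuming that $\varphi(\xi)\to\infty$, which would fail if $\phi$ is bounded above (the case where $P'(r)/r$ is integrable at infinity). I must also check $\xi>\ximin$. By Remark \ref{xi-range}, $\ximin=\phi(0+)-\bar C$, and $\phi(r)>\phi(0+)$, so $\xi>\ximin$ holds.

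The endpoint $u=0$ in ``the nonlinearity curve $(\alpha(u),\beta(u))$, $u\ge0$'' needs separate treatment. If $\alpha(0)=\beta(0)=P'(0+)$ is finite, $(\alpha(0),\beta(0))$ is a limit of points $(\alpha(r),\beta(r))$ with $r>0$. The set $\mathcal Q_{(p-1,0)}$ is closed, as an intersection of the sublevel set of the continuous polynomial $T$ with closed half-planes, so this point lies in it as well. If $P'(0+)=\infty$, the point $u=0$ corresponds to no point of the plane, and the statement is read for $u>0$, consistent with \eqref{qg}. Combining the two directions gives the claimed equivalence.
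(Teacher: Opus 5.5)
Your argument is correct and is essentially the paper's: since $(f_p',f_p'')\equiv(p-1,0)$, every set in \eqref{qg} equals $\mathcal Q_{(p-1,0)}$, so \eqref{qg} collapses to \eqref{qg2}, and your explicit choice $\xi=\phi(r)-\bar C$ (which gives $\varphi(\xi)=r$) simply replaces the paper's limit $\xi\to\infty$. One small correction to your aside: that limit does not fail when $\phi$ is bounded above, because $\overline{\phi}^{-1}$ equals $\infty$ on $[\phi(\infty),\infty)$, so $\varphi(\xi)=\infty$ for all large $\xi$.
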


Notice that the sets in condition \eqref{qg} are all identical, i.e. $\mathcal{Q}_{(f'(\xi),f''(\xi))}=\mathcal{Q}_{(p-1,0)}$,  for the $p$-entropy. We can also answer a similar question for the union of all $p$-entropies.

\begin{corollary}\label{p-entropy-all}
All $p$-entropies, $1\leq p\leq 2$, are simultaneously admissible 
if and only if the nonlinearity curve $(\alpha(u),\beta(u))$, $u\ge 0$, lies in the set $\mathcal{Q}_{(1,0)}$. 
\end{corollary}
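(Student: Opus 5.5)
The plan is to reduce the statement to Corollary \ref{p-entropy2} combined with a monotonicity property of the sets $\mathcal{Q}_{(p-1,0)}$ in $p$. By Corollary \ref{p-entropy2}, the $p$-entropy is admissible if and only if the nonlinearity curve $(\alpha(u),\beta(u))$, $u\ge0$, lies in $\mathcal{Q}_{(p-1,0)}$. Consequently, all $p$-entropies with $1\le p\le2$ are simultaneously admissible if and only if the curve lies in $\bigcap_{p\in[1,2]}\mathcal{Q}_{(p-1,0)}$. It therefore suffices to show that this intersection equals $\mathcal{Q}_{(1,0)}$. Equivalently, we need
\[
\mathcal{Q}_{(1,0)}\subseteq\mathcal{Q}_{(a,0)} \qquad\text{for all } a\in[0,1].
\]

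For $a\in(0,1]$ this inclusion follows from Lemma \ref{Qldecre} with $l=0$. Indeed, $\mathcal{Q}_{(a,0)}=\mathcal{Q}^0_a$, and this family is decreasing in $a$, so $\mathcal{Q}^0_1\subseteq\mathcal{Q}^0_a$ for $0<a\le1$. As a sanity check I will reread the proof of that lemma for $l=0$. The quadratic part of $\frac1{a^2}T(\alpha,\beta;a,0)$ is independent of $a$, and the linear part is $-\frac8a\big(\beta-\frac{d-1}{d}\alpha\big)$. This linear term is non-positive on the McCann wedge, and it becomes more negative as $a$ decreases. Hence the condition $T\le0$ becomes weaker as $a$ decreases, which gives the inclusion.

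The endpoint $a=0$, i.e. $p=1$, needs a separate check because Lemma \ref{Qldecre} assumes $a>0$ in its proof (it divides by $a$). From \eqref{a0-case} we have $T(\alpha,\beta;0,0)\equiv0$. Hence $\mathcal{Q}_{(0,0)}$ is the entire closed wedge $\{\alpha\ge0,\ (d-1)\alpha\le d\beta\}$, as recorded in Remark \ref{rm:pentroparam}. Every $\mathcal{Q}_{(a,b)}$ is contained in this wedge by its definition, so $\mathcal{Q}_{(1,0)}\subseteq\mathcal{Q}_{(0,0)}$.

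Combining the three steps, $\bigcap_{p\in[1,2]}\mathcal{Q}_{(p-1,0)}=\mathcal{Q}_{(1,0)}$. Together with Corollary \ref{p-entropy2} this gives both directions of the claim. The necessity direction is immediate by taking $p=2$, and sufficiency follows from the inclusions above.

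The only delicate point I anticipate concerns the origin $u=0$, where $(\alpha(0),\beta(0))=(P'(0+),P'(0+))$ may be the point $(0,0)$. This point lies in every $\mathcal{Q}_{(a,0)}$, since $T(0,0;a,0)=0$. Therefore the set-inclusion argument covers the whole curve, including its starting point, without any extra work.
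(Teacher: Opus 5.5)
Your proof is correct and follows the paper's argument: Corollary~\ref{p-entropy2} reduces the claim to showing $\bigcap_{1\le p\le 2}\mathcal{Q}_{(p-1,0)}=\mathcal{Q}_{(1,0)}$, and this equality is Lemma~\ref{Qldecre} with $l=0$. Your separate treatment of the endpoint $p=1$ is extra care the paper leaves implicit; it is needed because the lemma's proof divides by $a$, and the case is trivial since $\mathcal{Q}_{(0,0)}$ is the whole McCann wedge.
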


\begin{proof}
By Corollary \ref{p-entropy2} above, the nonlinearity curve should lie inside
\[ \bigcap_{1\leq p\leq 2}\mathcal{Q}_{(p-1,0)}=\mathcal{Q}_{(1,0)}, \]
where the equality follows by Lemma \ref{Qldecre} with $l=0$. 
\end{proof}

The previous result can be reformulated as follows: All $p$-entropies, $1\leq p\leq 2$, are admissible 
if and only if the $2$-entropy is admissible.

\begin{figure}[htbp]
\begin{center}
\includegraphics[width=10cm]{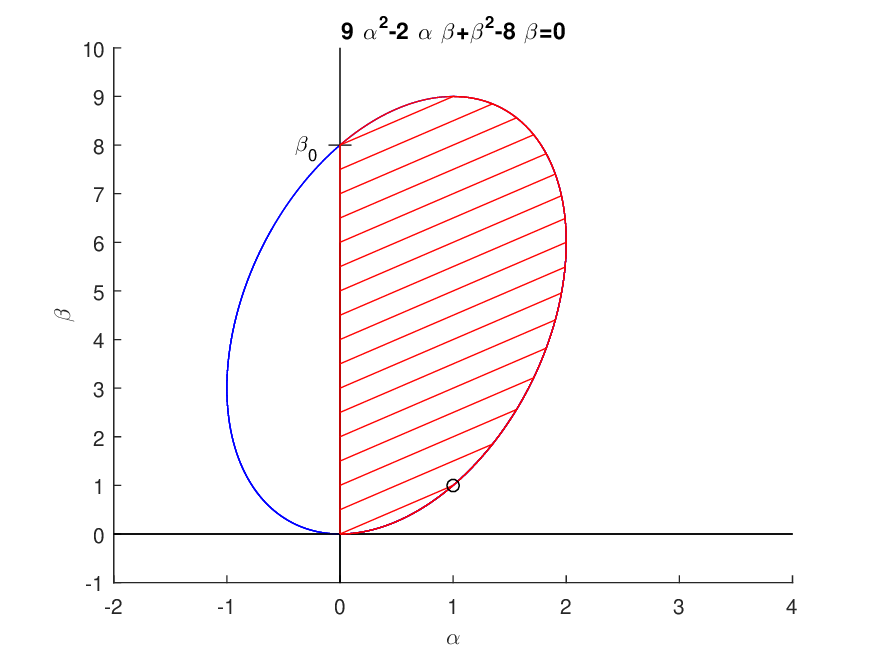}
\end{center}
%\vspace{-0.3cm}
\caption{\label{remark511} {\footnotesize The admissible set $\mathcal{Q}_{(1,0)}$ in 1D from Remark \ref{1dappl} is a truncated ellipse. The point $(1,1)$ is marked with a black circle.}}
\end{figure}

\begin{remark}\label{1dappl}
To illustrate the usefulness of the previous result, notice that we have proved that all $p$-entropies are admissible  
in one dimension if and only if the nonlinearity curve $(\alpha(u),\beta(u))$, $u\ge 0$, lies inside the ellipse
$$
9\alpha^2 -2\alpha\beta+\beta^2-8\beta\leq 0\,,
$$
intersected with the first quadrant, see Figure \ref{remark511}. Equivalently, this ellipse condition can be written as
$$
9P(u)^2-2uP(u)P'(u)+u^2P'(u)^2-8u^2P'(u)\leq 0\,,
$$
which is a 0-homogeneous differential inequality for the nonlinearity $P(u)$. 

This result also holds in two dimensions with the only change that the set in Figure \ref{remark511} becomes the inside of a parabola truncated with the first quadrant, remaining bounded. The same holds in higher dimensions with the parabola turning into a branch of a hyperbola.
\end{remark}

A direct application of Corollary \ref{cor:boundedsets} to the set of $p$-entropies gives the following characterization of their admissible nonlinearities.

\begin{corollary}
Given the $p$-entropy $g_p(\xi)$, then its admissible set $\mathcal{Q}_{g_p}$ is given by the ellipse $\mathcal{Q}_{(p-1,0)}$ for any $1< p\leq 2$, and the only admissible diffusions are the ones whose graph $(\alpha(u),\beta(u))$, $u\ge 0$, lies in the set $\mathcal{Q}_{(p-1,0)}$.
Therefore, the admissible diffusions are either regular non-degenerate or degenerate at the origin and have either linear, sublinear or saturating behavior at infinity. 
\end{corollary}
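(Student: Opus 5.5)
The corollary combines two earlier results: Corollary \ref{p-entropy2}, which describes the admissible set, and Corollary \ref{cor:boundedsets}, which gives the behaviour at the origin and at infinity.

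First, fix $1<p\le2$. For the $p$-entropy, Example \ref{p-entropy} gives $f_p'\equiv p-1$ and $f_p''\equiv0$, so the entropy curve is the single point $(p-1,0)$. Hence every set in condition \eqref{qg} equals $\mathcal{Q}_{(p-1,0)}$. Condition \eqref{qg} then collapses to \eqref{qg2}, by letting $\xi\to\infty$ and using $\varphi(\xi)\to\infty$. This is exactly Corollary \ref{p-entropy2}: $P$ is admissible if and only if $(\alpha(u),\beta(u))\in\mathcal{Q}_{(p-1,0)}$ for all $u\ge0$. So the admissible set $\mathcal{Q}_{g_p}$, defined as the union over $\xi$ in Corollary \ref{cor:boundedsets}, is $\tilde{\mathcal{Q}}=\mathcal{Q}_{(p-1,0)}$. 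For the wording ``ellipse'', I would point out that Remark \ref{rm:pentroparam} identifies this set as a truncated ellipse when $d=1$, a truncated parabola when $d=2$, and a truncated hyperbola branch when $d\ge3$. The essential property in all dimensions is boundedness.

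Second, I establish boundedness. Take $a=p-1>0$ and $b=0$. Lemma \ref{Q-prop2} says $\mathcal{Q}_{(a,b)}$ is unbounded if and only if $b\ge\frac{2d-1}{2d}a^2$. Since $0<\frac{2d-1}{2d}(p-1)^2$, the set $\mathcal{Q}_{(p-1,0)}$ is bounded.

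Third, I apply Corollary \ref{cor:boundedsets} with $\tilde{\mathcal Q}=\mathcal{Q}_{(p-1,0)}$. It yields that an admissible $P$ is either regular non-degenerate, or degenerate with linear, sublinear, or saturating behaviour at infinity.

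The main point to check is the reasoning behind Corollary \ref{cor:boundedsets}. Boundedness of the nonlinearity curve rules out two cases. First, it rules out $\alpha(0)=\infty$, the singular non-degenerate case. Second, it rules out case (d) of Proposition \ref{prop:behaviorinfty}, where $\alpha(\infty)=\infty$. The remaining cases (a)–(c) are exactly linear, sublinear, or saturating behaviour at infinity. The regular non-degenerate case needs no further restriction at infinity beyond boundedness.
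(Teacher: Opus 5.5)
Your proposal is correct and is essentially the paper's argument. The entropy curve of $g_p$ collapses to the point $(p-1,0)$, so Corollary~\ref{p-entropy2} gives the characterization; Lemma~\ref{Q-prop2} (with $b=0<\frac{2d-1}{2d}(p-1)^2$) gives boundedness of $\mathcal{Q}_{(p-1,0)}$; and Corollary~\ref{cor:boundedsets} then yields the behaviour at the origin and at infinity. Your caveat that $\mathcal{Q}_{(p-1,0)}$ is a truncated ellipse only for $d=1$ (a bounded truncated parabola for $d=2$ and a bounded hyperbola branch for $d\ge3$, cf.\ Remark~\ref{rm:pentroparam}) is accurate, and only boundedness is actually used.
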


Let us illustrate the previous results by giving some examples of admissible nonlinearity curves. 

\begin{example}\label{exnon1}
\emph{Connection between two limiting linear behaviors of $P(u)$.} Given 
\begin{equation}\label{par-segmP}
P(u)=u\left( \frac{1-\tau_o}{1+\tau_1 u^{1-\lambda}}+\tau_o \right)\,,
\end{equation}
with $\tau_o,\tau_1\in (0,\infty)$ and $\lambda\in [0,1)$, then the reader can easily check that
$\alpha(0)=\beta(0)=1$ and $\alpha(\infty)=\beta(\infty)=\tau_o$. More precisely, $P(u)\simeq u$ for $u\to 0+$ and $P(u)\simeq \tau_o u$ for $u\to\infty$. Moreover, the nonlinearity curve in non-parametric form is given by 
\begin{equation}\label{par-segm}
  \beta(\alpha)=\frac{1-\lambda}{1-\tau_o}(\alpha-\tau_o)^2 + \lambda (\alpha-\tau_o) + \tau_o\,,
\end{equation}
with $\alpha$ between 1 and $\tau_o$. Note that this segment of parabola joins the points $(1,1)$ and $(\tau_o,\tau_o)$ on the diagonal. If $\tau_o>1$, $\beta(\alpha)$ lies above the diagonal, and otherwise below, see Figure \ref{example1}. 

It is straightforward to check that $\lambda$ is the slope of this nonlinearity curve at $\alpha=\tau_o$. Therefore, the sharp range of $\lambda$ for making the $p$-entropy admissible can be obtained from Corollary \ref{p-entropy2}. For instance, taking $\lambda$ close enough to 1, we can ensure that the nonlinearity curve lies inside the truncated quadrics $\mathcal{Q}_{(p-1,0)}$ if $\tau_o<1$ for $1\leq p\leq 2$ and if $\tau_o>1$ for $1\leq p \le 1+\frac1{\tau_o}<2$.  
(cf. Figure \ref{example1P}). Hence all $p$-entropies $g_p$ are admissible for $\tau_o<1$.
\end{example}

\begin{figure}[htbp]
\begin{center}
\includegraphics[width=10cm]{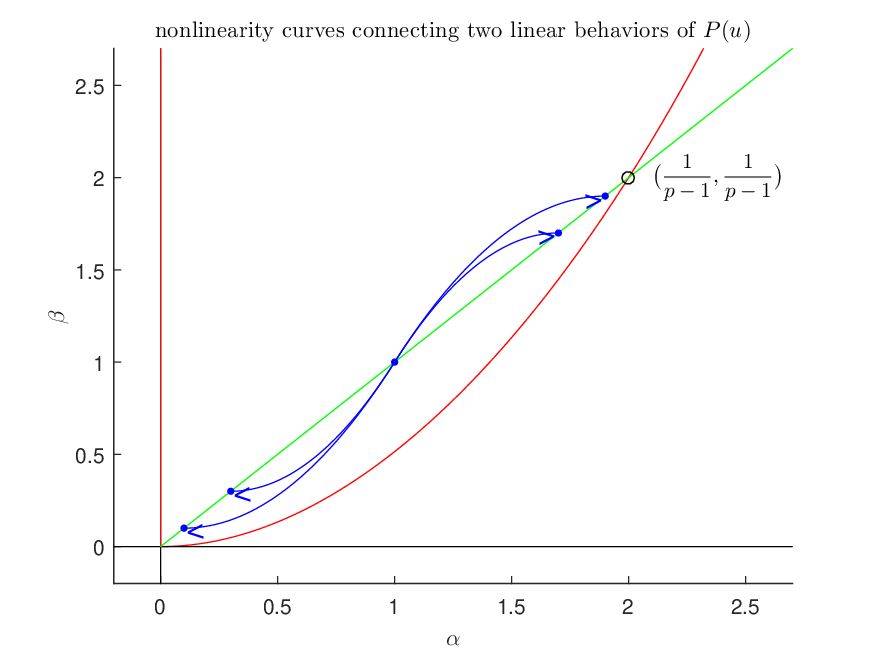}
\end{center}
%\vspace{-0.3cm}
\caption{\label{example1} {\footnotesize Illustration of Example \ref{exnon1}. The four nonlinearity curves (in blue) connect in the $\alpha-\beta$--plane the point $(1,1)$ to the points $(\tau_o,\tau_o)$ with $\tau_o=0.1,\,0.3,\,1.7,\,1.9$ (the maximum allowed value of $\tau_o$ is $\frac{1}{p-1}$). Their orientation w.r.t. $u$ is indicated by arrows. These curves are the parabola segments from \eqref{par-segm} with the choice $\lambda=0$. Since all starting and end points lie on the diagonal (in green), the limiting behavior of $P(u)$ as $u\to0+$ and $u\to\infty$ is linear. All four nonlinearity curves are admissible for the $p$-entropy with $p=\frac32$ and $d=1$, as they lie inside the truncated ellipse $\Q_{(0.5,0)}$; $\partial\Q_{(0.5,0)}$ is plotted in red. [colors only online]}}
\end{figure}

\begin{figure}[htbp]
\begin{center}
\includegraphics[width=10cm]{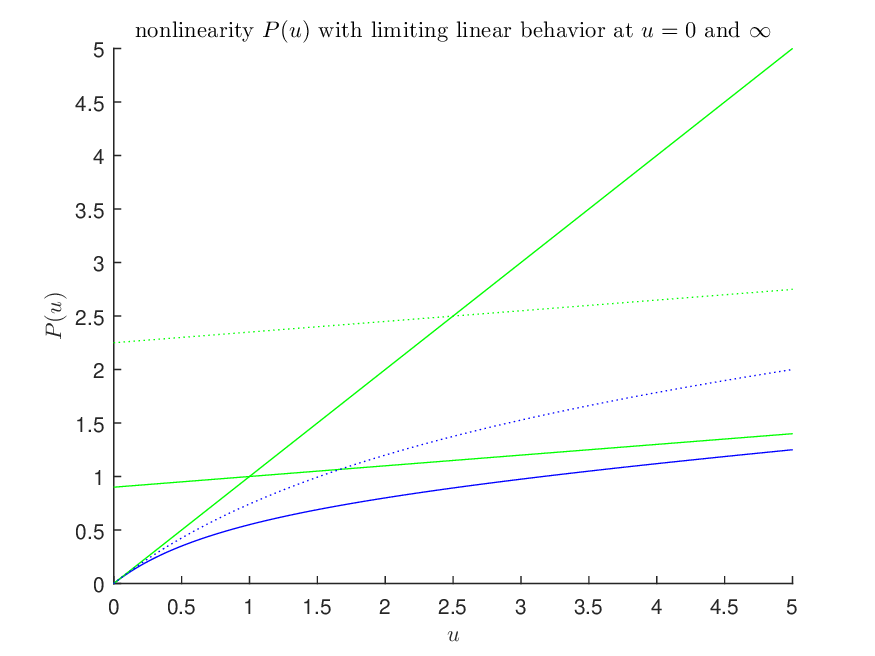}
\end{center}
%\vspace{-0.3cm}
\caption{\label{example1P} {\footnotesize Illustration of Example \ref{exnon1}. For the lowest curve in Figure \ref{example1} with $\tau_0=0.1$ we give two examples of corresponding nonlinearities $P(u)$ from \eqref{par-segmP} (both in blue; solid curve for $\tau_1=1$ and dotted curve for $\tau_1=0.4$). Note that their $\beta(\alpha)$-curves coincide for all $\tau_1\in(0,\infty)$. The two green solid lines and the green dotted line are the asymptotes at $u=0$ ($P_{asymp}=u$) and at $u=\infty$ ($P_{asymp}=\frac{1-\tau_0}{\tau_1}+\tau_o u$). [colors only online]}}
\end{figure}

\begin{example}\label{exnon2}
\emph{Connection between linear and sublinear limiting behaviors of $P(u)$.} Given 
\begin{equation}\label{lin-sublin}
P(u)=\left\{\begin{array}{cl} 
\tau_o u & \mbox{for } 0\leq u\leq 1\\[2mm]
\tau_o\frac{u^{\lambda}-1}{\lambda}+\tau_o & \mbox{for } u\geq 1
\end{array}\right. ,  
\end{equation}
with $\tau_o\in (0,\infty)$ and $\lambda\in (0,1)$, then the reader can easily check that
$\alpha(0)=\beta(0)=\tau_o$,  $\alpha(\infty)=\beta(\infty)=0$, and $\mu(u)>0$. 
More precisely, $P(u)\simeq \tau_o u$ for $u\to 0+$ and $P(u)\simeq \tfrac{\tau_o}{\lambda} u^\lambda$ for $u\to\infty$.
Moreover, the nonlinearity curve in non-parametric form is given by 
$$
\alpha(\beta)=\frac{\beta}{\lambda}-\frac{1-\lambda}{\lambda}\tau_o^{-\tfrac{\lambda}{1-\lambda}} \beta^{\tfrac1{1-\lambda}}\,,
$$
with $\alpha\in[0,\tau_o]$. From this formula, the reader can check that $\frac{d\beta}{d\alpha}(0+)=\lambda$ and $\frac{d\beta}{d\alpha}(\tau_o-)=\infty$ as predicted in \S\ref{sec:5.1}, and moreover $\beta(\alpha)$ is increasing and convex. As a consequence, this curve joins the point $(\tau_o,\tau_o)$ to the origin and it lies below the diagonal. 

Since the nonlinearity curve $\beta(\alpha)$ is convex, it lies above the ray with slope $\lambda$. Therefore, by choosing $\tau_o$ small enough, depending on $1<p\le 2$,  and $\lambda \in [\frac{d-1}{d},1)$ we can ensure that the nonlinearity curve lies inside the truncated quadric $\mathcal{Q}_{(p-1,0)}$. Note that $\frac{d-1}{d}$ is the slope of the tangent line to this quadric at the origin. In this case, the $p$-entropy is admissible. 

Finally we compare (for simplicity just for $d=1$) this nonlinearity with $\tau_o=1$ to the linear diffusion with $D=1$: The point $(\alpha,\,\beta)=(1,1)$ lies both on the nonlinearity curve and on $\partial\mathcal{Q}_{(1,0)}$, cf.\ Figure \ref{remark511}.  But since the nonlinearity curve satisfies $\frac{d\beta}{d\alpha}(1-)=\infty$, for any $\lambda\in(0,1)$, it cannot be fully included in $\mathcal{Q}_{(1,0)}$. Corollary \ref{p-entropy-all} thus inplies that the nonlinearity \eqref{lin-sublin} does \emph{not} admit all $p$-entropies, $1\le p\le2$, even though $P$ coincides with the linear diffusion function for $0\le u\le1$.
\end{example}

\begin{example}\label{exnon3}
Let us also point out that Example \ref{exnon1} with $\tau_o=0$ has an analogous limiting behavior as Example \ref{exnon2}. Actually, the nonlinearity becomes
$$
P(u)=\frac{u}{1+\tau_1 u^{1-\lambda}}\,,
$$
with $\tau_1\in (0,\infty)$ and $\lambda\in [\frac{d-1}{d},1)$. Since the slope of $\beta(\alpha)$ at $\alpha=0$ is $\lambda$, similar arguments as above imply that these curves will lie inside  $\mathcal{Q}_{(p-1,0)}$ for $\lambda$ close to 1.
\end{example}

%%%%%%%%%%%%%%%%%%%%%%%%%%%%%%%%%%%%%%%%%%%%%%%%%%%%%%%%%%%%%%%%%%%%%%%%%%%%%%%%

\subsection{Admissible nonlinearities for all entropies of the linear diffusion case.}\label{sec:admiss-sets-for alllin}

We recall that the admissible entropies for linear diffusion equations are characterized by Definition \ref{d:2.1} or, equivalently, by condition \eqref{f-inequ}, see Lemma \ref{f-solutions} for the precise statements.

In generalization of \S\ref{sec:admiss-sets-p} and Definition \ref{def:p-ent} we shall now refer to the \emph{admissible entropies for linear diffusion equations} by fixing their generating function $g$ from \eqref{entrof}. All of those $g$ will then be used in the entropy functional $\H_g(u|u_\infty)$. Note that these definitions depend via $\xi$ also on the nonlinear function $\phi(u)$ defined in \eqref{eq:P2phi}. Hence, a linear and nonlinear diffusion equation may share the same generating function $g$, but the actual form of the functional $\H_g$ will still depend on the considered nonlinearity $P$. 

\begin{theorem}\label{thmlinentropy}
All admissible entropies for linear diffusions are admissible for the equation \eqref{GFP} if and only if the nonlinearity curve $(\alpha(u),\beta(u))$, $u\ge 0$ lies in the set
$$
\mathcal{Q}_{lin}:= \mathcal{Q}_{(1,0)}\cap \{0\le \alpha\le 1\}.
$$
\end{theorem}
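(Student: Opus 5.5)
The plan is to reduce both directions to the pointwise criterion of Corollary \ref{corentgen}, written in the form \eqref{rem-cond2} of Definition \ref{def:adm-entr}. By Lemma \ref{f-solutions} and \eqref{f-inequ}, every point $(a,b)=(f'(\xi),f''(\xi))$ on the entropy curve of a linear-admissible entropy satisfies
\[ 0\le a\le 1 \quad\text{and}\quad b\ge a(a-1). \]
Note that the nonlinearity curve lies in $\mathcal Q_{(1,0)}$ and touches the McCann line only at the origin (Lemma \ref{lem:geometry0}). Moreover $\alpha(r)>0$ for $r>0$. Hence on the relevant range we have $\mu(r)>0$, and the admissibility condition \eqref{rem-cond2} at a pair $(r,\xi)$ reads
\[ \kappa(r)\alpha(r)a^2\le a+\alpha(r)b. \]

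\emph{Sufficiency.} Assume $(\alpha(r),\beta(r))\in\mathcal Q_{lin}$ for all $r$, and fix $r>0$ and a pair $(a,b)$ as above. If $a=0$, then $b\ge0$ and the inequality is trivial. If $a>0$, then $b\ge a^2-a$ shows that it suffices to prove
\[ \kappa\alpha a^2\le a+\alpha(a^2-a), \]
that is, after dividing by $a>0$,
\[ (\kappa-1)\alpha\,a\le 1-\alpha . \]
The left side is affine in $a$, so it is enough to check the endpoints $a=0$ and $a=1$. At $a=0$ the inequality is $\alpha\le1$, which is the second condition in $\mathcal Q_{lin}$. At $a=1$ it is $\kappa\alpha\le1$, which is exactly \eqref{rem-cond2} for $(a,b)=(1,0)$, i.e.\ the membership $(\alpha,\beta)\in\mathcal Q_{(1,0)}$. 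Hence the condition holds for every $\xi$ and every $r\le\varphi(\xi)$, and every linear-admissible $g$ is admissible for $P$.

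\emph{Necessity.} The $2$-entropy is linear-admissible, so Corollary \ref{p-entropy-all} (via Corollary \ref{p-entropy2}) forces the curve into $\mathcal Q_{(1,0)}$. In particular $\mu(r)>0$ for all $r>0$. For the bound $\alpha\le1$, I would use the extremal linear entropies with
\[ f'(\xi)=\frac{1}{1+ce^{\xi}},\qquad c>0. \]
These satisfy \eqref{f-inequ} with equality, so their curve is $b=a(a-1)$. Fix $r>0$ and choose $\xi$ with $r\le\varphi(\xi)$. Letting $c\to\infty$ makes $a=f'(\xi)\downarrow0$ while $a>0$. Admissibility at $(r,\xi)$, by the same computation as in the sufficiency part, gives
\[ \alpha(r)\big(1+(\kappa(r)-1)a\big)\le1 . \]
Sending $a\to0$ yields $\alpha(r)\le1$. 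The case $r=0$ follows by continuity, or because $\alpha(0)$ is a limit.

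\emph{Main obstacle.} The delicate point is the domain issue: linear entropies live on $\R$, whereas for degenerate $P$ only $\xi>\xi_{\min}$ is relevant. So I would first remark that restricting $g$ to $(\xi_{\min},\infty)$ is harmless. I would then check that for every $r>0$ some $\xi$ with $r\le\varphi(\xi)$ exists, which holds since $\varphi(\xi)\to\infty$ as $\xi\to\infty$. Finally, I would confirm that the possibility $\mu=0$ is excluded by Lemma \ref{lem:geometry0}, so that the reduction to $\kappa\alpha a^2\le a+\alpha b$ is legitimate.
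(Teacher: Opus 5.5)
Your proof is correct, and it reaches the result by a different route from the paper's.

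\textbf{The paper's route.} The paper argues geometrically with the regions $\mathcal Q_{(a,b)}$:
\begin{enumerate}
\item It asserts that linear-admissible entropies pass through every point of $\mathcal S=\{0<a<1,\ b\ge a(a-1)\}\cup\{(0,0),(1,0)\}$.
\item It uses the shift $\xi\mapsto\xi-\xi_o$ to conclude that the nonlinearity curve must lie in $\bigcap_{(a,b)\in\mathcal S}\mathcal Q_{(a,b)}$.
\item It reduces this intersection via Lemma~\ref{Qldecre} (monotonicity of $\mathcal Q_{(a,la^2)}$ in $a$ along parabolas) to the boundary pieces $\mathcal S_1=\{b=a(a-1),\,0<a<1\}$ and $\mathcal S_2=\{a=1,\,b\ge0\}$.
\item It evaluates the intersection using monotonicity in $b$ on $\mathcal S_2$, monotonicity in $a$ of $\mathcal Q_{(a,a(a-1))}\cap\{\alpha\le1\}$ on $\mathcal S_1$, and the blow-up of \eqref{quadric-cond2bis2} for $\alpha>1$ as $a\to0^+$.
\end{enumerate}

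\textbf{Your route.} You first secure $\mu>0$ via Lemma~\ref{lem:geometry0}. This lets you divide $T=8\mu\,[\kappa\alpha a^2-a-\alpha b]$ by $8\mu$ and work with the scalar form \eqref{rem-cond2}. Monotonicity in $b$ then pushes every point vertically down to the curve $b=a(a-1)$. There the condition becomes the affine inequality $(\kappa-1)\alpha a\le1-\alpha$, so checking the endpoints $a=0$ and $a=1$ suffices. This bypasses Lemma~\ref{Qldecre} and the reduction from $\mathcal S$ to $\mathcal S_1\cup\mathcal S_2$ altogether.

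\textbf{Comparison.} The underlying algebra is the same: \eqref{quadric-cond2bis2} equals $8\mu\,[\kappa\alpha-1+\tau(\alpha-1)]$ with $\tau=(1-a)/a$. Your necessity step with $f'=1/(1+ce^\xi)$ and $c\to\infty$ is a concrete instance of the paper's shift argument combined with its blow-up on $\mathcal S_1$.
\begin{itemize}
\item Your version is shorter and makes the sufficiency direction explicit. In the paper, sufficiency is only implicit in identifying the intersection with $\mathcal Q_{lin}$.
\item Your necessity argument needs only two concrete linear-admissible entropies (the $2$-entropy and the extremal family). The paper instead needs entropies through every point of $\mathcal S$.
\item The paper's version, in exchange, describes $\mathcal Q_{lin}$ structurally as an intersection of quadric regions. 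This fits the monotonicity picture of Figure~\ref{fig:sketchsets} that is used throughout that section.
\end{itemize}
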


\begin{proof}
We divide the proof into two steps.

{\bf Step 1.-} We shall first show that the nonlinearities should lie in the set
$$
\bigcap_{(a,b)\in\mathcal{S}_1\cup \mathcal{S}_2} \mathcal{Q}_{(a,b)},
$$
with
$\mathcal{S}_1=\left\{(a,b): 0<a<1 \mbox{ and } b= a (a-1) \right\}$ and  $\mathcal{S}_2=\left\{(1,b): b\ge 0 \right\}$.

We first realize that any admissible entropy for the linear diffusion case is represented by a curve passing through a point $(a,b)\in \mathcal{S}$ with
$$
\mathcal{S}=\left\{(a,b): 0<a<1 \mbox{ and } b\ge  a (a-1) \right\}\cup\{(0,0),(1,0)\}
$$
according to Lemma \ref{f-solutions}. Notice that the entropy curve could be just a single point, see Example \ref{p-entropy}. In fact, one can construct admissible entropies for the linear diffusion equation passing through any of those points. 

Moreover, any admissible entropy $(f'(\xi),f''(\xi))$ for the linear diffusion equation is globally defined, that is $\xi_{\min}=-\infty$. In fact, any reparameterization given by $(f'(\xi-\xi_o),f''(\xi-\xi_o))$ for a fixed $\xi_o \in \R$ is also an admissible entropy for the linear diffusion equation. Therefore, any nonlinearity for which all admissible entropies of the linear diffussion equation are admissible should satisfy \eqref{qg}, i.e.
$$
 (\alpha(r),\beta(r))\in\mathcal{Q}_{(f'(\xi-\xi_o),f''(\xi-\xi_o))} \qquad \mbox{for all } 0<r<\varphi(\xi)  ,
$$
for all $\xi,\xi_o\in\R$. By rewriting this condition as
$$
 (\alpha(r),\beta(r))\in\mathcal{Q}_{(f'(\xi),f''(\xi))} \qquad \mbox{for all } 0<r<\varphi(\xi+\xi_0)  ,
$$
for all $\xi,\xi_o\in\R$, and taking the limit $\xi_o\to\infty$, we conclude that the nonlinearity should satisfy
$$
 (\alpha(r),\beta(r))\in\mathcal{Q}_{(f'(\xi),f''(\xi))} \qquad \mbox{for all } r>0 \mbox{ and } \xi\in\R.
$$
Therefore, the nonlinearity curve $(\alpha(r),\beta(r))$ lies in
$$
\bigcap_{\xi\in\R} \mathcal{Q}_{(f'(\xi),f''(\xi))}
$$
for all admissible entropies $(f'(\xi),f''(\xi))$ of the linear diffusion equation. As a consequence, the nonlinearity curve $(\alpha(r),\beta(r))$ lies in
$$
\bigcap_{(a,b)\in\mathcal{S}} \mathcal{Q}_{(a,b)} 
$$
which is the blue shaded region in Figure \ref{fig:sketchsets}.

The statement of Step 1 follows now taking into account Lemma \ref{Qldecre}. This is due to the fact that every point in $\mathcal{S}_1\cup \mathcal{S}_2$ is the right end point of the intersection of the set $\mathcal{S}$ with a parabola passing through the origin of the form $(a,l a^2)$ for some $l \in \R$, see Figure \ref{fig:sketchsets}, and the sets $\mathcal{Q}_{(a,l a^2)}$ are decreasing in $a$ for every $l\in\R$. Notice that
$$
\mathcal{S}=\left\{(a,l a^2): 0\leq a < 1, l\in \R \mbox{ and } l a\ge  a-1 \right\}\cup\{(1,0)\}.
$$

{\bf Step 2.-} We now analyze the monotonicity of the sets $\mathcal{Q}_{(a,b)}$ along the parameterized sets $\mathcal{S}_1$ and $\mathcal{S}_2$. We start with the last one: notice that from \eqref{quadric-cond} we rewrite $T(\alpha,\beta,a,b)\leq 0$ as
\begin{align*}%\label{qabincreasing}
  \frac1{a^2} T(\alpha,\beta,a,b)=&\left(1+8\frac{2-d}{d}\right) \alpha^2 - \left(2+8\frac{1-d}{d}\right) \alpha\beta + \beta^2 \nonumber\\
  &-8\left(\beta+\frac{1-d}{d}\alpha\right)-\frac{8b\alpha}{a^2} \left(\beta+\frac{1-d}{d}\alpha\right) \le0\,.
\end{align*}
Since $\beta+\frac{1-d}{d}\alpha\ge 0$ due to McCann's condition and $\alpha \ge 0$, then the last term is decreasing with $b$ for any fixed $a>0$. Hence, the set $\mathcal{Q}_{(a,b)}$ is increasing in $b$ for any fixed $a>0$. Using this fact for $a=1$, we conclude
$$
\bigcap_{(a,b)\in\mathcal{S}_2} \mathcal{Q}_{(a,b)}=\mathcal{Q}_{(1,0)}.
$$
Analogously, we proceed with $\mathcal{S}_1$ to deduce that $T(\alpha,\beta,a,a(a-1))\leq 0$ is equivalent to
\begin{align}\label{quadric-cond2bis2}
   \frac1{a^2}T(\alpha,\beta,a,a(a-1))=&\left(1+8\frac{2-d}{d}\right) \alpha^2 - \left(2+8\frac{1-d}{d}\right) \alpha\beta  -8\left(\beta+\frac{1-d}{d}\alpha\right)\nonumber\\
  &+ \beta^2+8\tau(\alpha-1) \left(\beta+\frac{1-d}{d}\alpha\right) \le0\,,
\end{align}
where $\tau=\frac{1-a}a\ge 0$, $0<a\le 1$. Since $\beta+\frac{1-d}{d}\alpha\ge 0$ due to McCann's condition, then the last term is decreasing with $\tau$, and hence increasing with $a$, for $0\le \alpha \le 1$. Hence, the set $\mathcal{Q}_{(a,a(a-1))}\cap \{0\le \alpha\le 1\}$ is decreasing in $a$.
As a consequence, we obtain
\begin{equation}\label{aux1}
\left(\bigcap_{(a,b)\in\mathcal{S}_1} \mathcal{Q}_{(a,b)}\right)\cap \{0\le \alpha\le 1\}=\mathcal{Q}_{(1,0)}\cap \{0\le \alpha\le 1\}.
\end{equation}
Finally, the same argument for $\alpha> 1$ leads to the statement that the set $\mathcal{Q}_{(a,a(a-1))}\cap \{\alpha> 1\}$ is increasing in $a$. Observe that for every fixed $\alpha >1$ and $\beta\geq 0$ the last term in \eqref{quadric-cond2bis2} goes to infinity as $a\to 0^+$. Therefore, we deduce
\begin{equation}\label{aux2}
   \left(\bigcap_{(a,b)\in\mathcal{S}_1} \mathcal{Q}_{(a,b)}\right)\cap \{\alpha> 1\}=\emptyset. 
\end{equation}
Combining the statements \eqref{aux1} and \eqref{aux2} together with Step 1, we conclude the proof.
\end{proof}

Note that the point $(\alpha,\,\beta):=(1,1)\in\partial\mathcal Q_{lin}$, in all dimensions $d\in\N$. It corresponds to $P(u)\equiv u$ and, if fact, it is a corner point of $\mathcal Q_{lin}$; it is marked in Figure \ref{remark511}. Thus, any (small) perturbation of the linear diffusion $P(u)\equiv u$ (for small, intermediate, or large $u$) may move the nonlinearity curve out of $\mathcal Q_{lin}$. This would imply that some of the entropies of the linear diffusion case become inadmissible. More precisely we have:
\begin{corollary}
    Let the nonlinearity $P(u)\not\equiv u$ satisfy $P'(u_0)=P(u_0)/u_0$ for some $u_0>0$; for example, $P$ could coincide with the linear function $u$ on some non-trivial interval (as, e.g., in Example \ref{exnon2} with $\tau_o=1$). Then, it cannot happen that \emph{all} entropies of the linear diffusion case are admissible for $P$.
\end{corollary}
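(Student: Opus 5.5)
The plan is to argue by contradiction using Theorem \ref{thmlinentropy}: we show that the nonlinearity curve $(\alpha(u),\beta(u))$, $u\ge0$, cannot lie entirely in $\mathcal Q_{lin}=\mathcal Q_{(1,0)}\cap\{0\le\alpha\le1\}$. The hypothesis $P'(u_0)=P(u_0)/u_0$ means that $\alpha(u_0)=\beta(u_0)=:c>0$, so the curve meets the diagonal at a finite parameter value. By \eqref{signdir} it then has a vertical tangent there, i.e.\ $\alpha'(u_0)=0$.

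\textbf{Step 1: $c=1$.} The first step is to locate the diagonal inside $\mathcal Q_{lin}$. Plugging $\alpha=\beta=c$ into \eqref{quadric-cond} with $(a,b)=(1,0)$ gives
\[
T(c,c;1,0)=c^2\Big[9-16\tfrac{d-1}{d}-2\big(1-4\tfrac{d-1}{d}\big)+1\Big]+8c\big(\tfrac{d-1}{d}-1\big)=\tfrac8d\,(c^2-c).
\]
This is $\le0$ only for $0\le c\le1$. Hence the diagonal meets $\mathcal Q_{(1,0)}$ exactly in the segment from $(0,0)$ to $(1,1)$. Every interior point $(c,c)$ with $0<c<1$ satisfies $T<0$ and therefore lies in the interior of $\mathcal Q_{lin}$, so such a point alone gives no contradiction. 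The natural route is therefore to get a contradiction at the corner $(1,1)$, with a separate argument for $c<1$. This separate argument is the main obstacle; my first attempt would be a global one.

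\textbf{Step 2: the case $c<1$ (global argument).} Suppose the whole curve lies in $\mathcal Q_{lin}$. Then $0\le\alpha\le1$ and $\alpha(0)=\beta(0)=P'(0+)$. If $P\not\equiv u$, I would track where the curve enters and leaves the diagonal. At any $u_0$ with $\alpha(u_0)=\beta(u_0)\in(0,1)$ there seems to be no local obstruction. I therefore expect the intended reading of the corollary is the case $\alpha(u_0)=1$, i.e.\ the curve passes through the corner $(1,1)$. This matches the motivating remark (Example \ref{exnon2} with $\tau_o=1$) and the normalization implicit in ``linear diffusion $P(u)=u$''. I would state this normalization explicitly, rescaling $P\mapsto P/c$ if needed, and focus on the corner.

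\textbf{Step 3: the corner $(1,1)$.} Take the point $u_0$ with $\alpha(u_0)=\beta(u_0)=1$ and $P\not\equiv u$. There are two ways the curve can sit near the corner.
\begin{itemize}
\item[(i)] The curve is stationary at $(1,1)$ on a whole interval. Taking a boundary point of the maximal such interval (which exists because $P\not\equiv u$), the curve leaves the corner there.
\item[(ii)] The curve leaves $(1,1)$ directly, as in the transversal situation.
\end{itemize}
In either case, near the corner the curve leaves $(1,1)$ with vertical tangent. This follows from \eqref{signdir}: $\alpha'=(\beta-\alpha)/u$ vanishes at the corner, so $\alpha$ changes at higher order than $\beta$.

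Next I compute the local shape of $\mathcal Q_{lin}$ at $(1,1)$. The gradient of $T(\cdot,\cdot;1,0)$ at $(1,1)$ is
\[
\big(2(9-16\tfrac{d-1}{d})-2(1-4\tfrac{d-1}{d})+8\tfrac{d-1}{d},\;-2(1-4\tfrac{d-1}{d})+2-8\big)=\big(16-\tfrac{16(d-1)}{d},\,-8+\tfrac{8(d-1)}{d}\big),
\]
which is proportional to $(2,-1)$ and has nonzero $\alpha$-component. So the tangent line to $\partial\mathcal Q_{(1,0)}$ at $(1,1)$ has slope $2$, and it is not vertical. The set $\mathcal Q_{lin}$ is the intersection of this convex quadric region with the half-plane $\{\alpha\le1\}$, so it has a genuine corner at $(1,1)$ whose admissible cone is the wedge between the vertical line $\alpha=1$ (going down) and the tangent of slope $2$. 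Vertical directions are therefore admissible only downward along $\alpha=1$, with $\alpha\le1$.

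Finally I show that a vertical departure contradicts this wedge. If $\beta$ increases from $1$ while $\alpha\approx1$, the point leaves $Q_{(1,0)}$, because the slope-$2$ tangent lies below vertical upward motion. If $\beta$ decreases, then $\beta<\alpha$ and \eqref{signdir} forces $\alpha$ to decrease. Then $\alpha<1$ with $\beta$ dropping at a higher rate, and the point crosses below the boundary curve of slope $2$. Checking this last claim precisely, via a second-order expansion of $\alpha$ and $\beta$ in $u$ around $u_0$, is the delicate step. I would carry it out by writing $\alpha(u)-1=\int_{u_0}^u(\beta-\alpha)/s\,ds=o(\beta(u)-1)$ and inserting this into $T(\alpha,\beta;1,0)$ to see that $T>0$ for $u$ near $u_0$. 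This contradicts Theorem \ref{thmlinentropy}.
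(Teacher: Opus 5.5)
Your route is the paper's: reduce via Theorem \ref{thmlinentropy} to the corner $(1,1)$ of $\mathcal{Q}_{lin}$ and use \eqref{signdir}. However, Step 3 has the corner upside down, and this hides a case that cannot be closed. Writing $\alpha=1+a$, $\beta=1+b$, one has exactly
\[
T(1+a,1+b;1,0)=\tfrac8d(2a-b)+\big(9-16\tfrac{d-1}{d}\big)a^2-2\big(1-4\tfrac{d-1}{d}\big)ab+b^2,\qquad T(1,\beta;1,0)=(\beta-1)\big(\beta-1-\tfrac8d\big).
\]
So $\mathcal{Q}_{(1,0)}$ lies \emph{above} the slope-$2$ tangent, and it contains the vertical segment from $(1,1)$ to $(1,1+8/d)$. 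At the corner, vertical \emph{up} is admissible and vertical \emph{down} is not, the opposite of what you state. Hence your claim that increasing $\beta$ at $\alpha\approx1$ leaves $Q_{(1,0)}$ is false. Upward departures are excluded only by the constraint $\alpha\le1$, and by \eqref{signdir} this works only if the curve leaves $(1,1)$ as $u$ \emph{increases}. You never track the direction of $u$. Consider the configuration with $\beta>\alpha$ for $u<u_0$ and $(\alpha,\beta)\equiv(1,1)$ for $u\ge u_0$; this is your case (i) with maximal interval $[u_0,\infty)$. It has $\alpha<1$ for $u<u_0$ and reaches the corner through the admissible cone, so it is left open. The paper's short proof considers continuation out of $(1,1)$ only for increasing $u$, so it has the same blind spot.

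In fact this case yields counterexamples. Take $P(u)=u\big(1-\varepsilon(1-u/u_0)^2\big)$ for $0\le u\le u_0$ and $P(u)=u$ for $u\ge u_0$. Then $P\in C^1$, $P'\ge1-\varepsilon$, $P'(u_0)=P(u_0)/u_0=1$, and $P=u$ on $[u_0,\infty)$. With $s=1-u/u_0\in(0,1]$ one gets $a=-\varepsilon s^2$ and $b=\varepsilon s(2-3s)$, so $2a-b\le-\varepsilon s$. The quadratic part is at most $25\varepsilon^2s^2$, hence $T<0$ for $\varepsilon<8/(25d)$. Since also $0<\alpha\le1$, the curve lies in $\mathcal{Q}_{lin}$, and Theorem \ref{thmlinentropy} makes all linear entropies admissible. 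Likewise $P(u)=cu$ with $c<1$ refutes the literal statement when $\alpha(u_0)\neq1$. So your restriction to $c=1$ is forced, but it is not a normalization: admissibility is not invariant under $P\mapsto P/c$.

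What the corner analysis does prove is this: if the curve lies in $\mathcal{Q}_{lin}$ and $\alpha(u_0)=\beta(u_0)=1$, then $P(u)=u$ for all $u\ge u_0$. That covers Example \ref{exnon2} with $\tau_o=1$. For this, drop your pointwise claim $\alpha-1=o(\beta-1)$, which fails if the curve re-touches the diagonal. Instead, note that $w:=(1-\alpha)/u\ge0$ satisfies $u^2w'=2a-b$. Then $T\le0$ gives $w'\le Cw^2$ near $u_0$, and even $w'\le0$ if $d\le2$. Together with $w(u_0)=0$ this forces $w\equiv0$ for $u\ge u_0$.
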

\begin{proof}
Due to the discussion in \S\ref{sec:5.1}, a continuation of the nonlinearity curve out of the point $(1,1)$ and above the diagonal $\alpha=\beta$ would lead to $\alpha>1$. But such points would lie outside of $\mathcal Q_{lin}$. Moreover, a continuation below the diagonal has a vertical tangent line at $(1,1)$ (see \S\ref{sec:5.1}), and this would lead the nonlinearity curve again out of $\mathcal Q_{lin}$. The statement then follows from Theorem \ref{thmlinentropy}.
\end{proof}

%%%%%%%%%%%%%%%%%%%%%%%%%%%%%%%%%%%%%%%%%%%%%%%%%%

\subsection{Stability of relative entropies $\H_g[P]$ w.r.t.\ $P$.}\label{Sec:stability}
In view of the examples in the previous two subsections \ref{sec:admiss-sets-p} and \ref{sec:admiss-sets-for alllin}, we can generalize the setting
by comparing relative entropies generated by the same function $g$, but pertaining to two different, non-degenerate nonlinearities $P_1$ and $P_2$ that both satisfy the integrability condition \eqref{int-cond}.

Let us also assume that the corresponding diffusion equations \eqref{GFP} have the same potential $V$ with $\inf_{\R^d} V=0$, and that initial conditions, and hence their steady states $u_{\infty,1}$, $u_{\infty,2}$ have the same mass $M>0$. We recall that these steady states satisfy
\begin{equation}\label{C-bar}
  \phi_j(u_{\infty,j})+V(x)=\bar C_j;\quad j=1,2.
\end{equation}
Under this setup we have the following stability estimates, here $u$ is a generic density function in $L^1_M(\R^d)$.
\begin{prop}
    Let the entropy generator $g$ be globally Lipschitz with constant $L$. Let $u\in L^1_M(\R^d)$. Moreover, let one of the following conditions hold:
    \begin{enumerate}
        \item[(a)] 
        \begin{align*}
          K_1&:=\|P_1'-P_2'\|_{L^\infty\big(0,\max(\sup u,\max u_{\infty,1})\big)} <\infty, \\
          \mbox{and}& \\
          K_2&:=\int_{\R^d} |u-u_{\infty,1}|\big(|\log u|+|\log u_{\infty,1}|\big)dx < \infty. 
        \end{align*}
        \item[(b)] There is a constant $K_3$ such that, for all $s\in(0,\max(\sup u,\max u_{\infty,1}))$,
        \begin{equation}\label{Delta-phi}
        |\phi_1(s)-\phi_2(s)| = \Big|\int_1^s \frac{P_1'(r)-P_2'(r)}{r} dr\Big| \le K_3.
        \end{equation}
    \end{enumerate}
    Then the following stability estimates hold:
    \begin{equation*}%\label{H-stab-est}
       |\H_{g,1}(u|u_{\infty,1}) - \H_{g,2}(u|u_{\infty,2})| \le
       \H_{g,2}(u_{\infty,1}|u_{\infty,2}) +\left\{ \begin{array}{rc} {\displaystyle L\Big( K_1 K_2 + 2M\,|\bar C_1-\bar C_2|\Big)}, & \mbox{in case (a)}\\[0.3cm]
 2LM(K_3+|\bar C_1-\bar C_2|), & \mbox{in case (b)}\end{array}\right., \nonumber
    \end{equation*}
for any two non-degenerate nonlinearities $P_1$ and $P_2$ satisfying \eqref{int-cond}.
\end{prop}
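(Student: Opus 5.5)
The plan is to insert the intermediate quantity $\H_{g,2}(u|u_{\infty,1})$, i.e.\ the integral of $G_2(u,u_{\infty,1})$, and split
\[
\H_{g,1}(u|u_{\infty,1})-\H_{g,2}(u|u_{\infty,2}) = \big[\H_{g,1}(u|u_{\infty,1})-\H_{g,2}(u|u_{\infty,1})\big] + \big[\H_{g,2}(u|u_{\infty,1})-\H_{g,2}(u|u_{\infty,2})\big].
\]
For the second bracket I will use that, in the non-degenerate case, $G_2(a,b)=\int_b^a g(\phi_2(s)+V-\bar C_2)\,ds$. Hence
\[
G_2(u,u_{\infty,2})=G_2(u,u_{\infty,1})+\int_{u_{\infty,2}}^{u_{\infty,1}} g(\phi_2(s)+V-\bar C_2)\,ds ,
\]
and the last integral is $G_2(u_{\infty,1},u_{\infty,2})\ge0$. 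Integrating gives the identity $\H_{g,2}(u|u_{\infty,2})=\H_{g,2}(u|u_{\infty,1})'+\H_{g,2}(u_{\infty,1}|u_{\infty,2})$. Here the primed functional means the integrand $\int_{u_{\infty,1}}^{u} g(\phi_2(s)+V-\bar C_2)\,ds$, centered at $u_{\infty,1}$ but with the $x$-dependent shift of equation 2. This shows where the term $\H_{g,2}(u_{\infty,1}|u_{\infty,2})$ comes from.

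It then remains to compare $\int\!\int_{u_{\infty,1}}^{u} g(\phi_1(s)+V-\bar C_1)\,ds\,dx$ with the same expression for index $2$. By Lipschitz continuity of $g$, the integrand difference is bounded by
\[
L\,|\phi_1(s)-\phi_2(s)| + L\,|\bar C_1-\bar C_2|,
\]
integrated over $s$ between $u_{\infty,1}(x)$ and $u(x)$. The constant term gives $L|\bar C_1-\bar C_2|\int|u-u_{\infty,1}|\,dx\le 2LM|\bar C_1-\bar C_2|$, using the common mass $M$.

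In case (b), $|\phi_1-\phi_2|\le K_3$ on the relevant range of $s$, which is covered by $(0,\max(\sup u,\max u_{\infty,1}))$. This gives $LK_3\int|u-u_{\infty,1}|\le 2LMK_3$.

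In case (a), I write $\phi_1(s)-\phi_2(s)=\int_1^s (P_1'-P_2')(r)/r\,dr$, so $|\phi_1(s)-\phi_2(s)|\le K_1|\log s|$. Then $\big|\int_{u_{\infty,1}}^u K_1|\log s|\,ds\big|\le K_1|u-u_{\infty,1}|\,(|\log u|+|\log u_{\infty,1}|)$. This holds because $|\log s|$ on an interval is bounded by the larger of its endpoint values, which is at most the sum. Integrating gives $LK_1K_2$.

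The main obstacle is bookkeeping rather than analysis. First, the integrals must be finite so that the splitting is legitimate; this is where \eqref{int-cond} and the assumed finiteness of $K_2$ enter. Second, the triangle inequality must produce exactly the stated form, with $\H_{g,2}(u_{\infty,1}|u_{\infty,2})$ appearing as a nonnegative one-sided term rather than an absolute value. I will verify that sign via the Taylor representation \eqref{Taylorent}.
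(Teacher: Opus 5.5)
Your proposal is correct and follows the paper's own proof. The paper likewise splits off $\int_{\R^d}\int_{u_{\infty,2}}^{u_{\infty,1}} g\big(\phi_2(s)-\phi_2(u_{\infty,2})\big)\,ds\,dx=\H_{g,2}(u_{\infty,1}|u_{\infty,2})$, bounds the remaining integral from $u_{\infty,1}$ to $u$ via \eqref{C-bar} and the Lipschitz property of $g$, and uses $|\phi_1-\phi_2|\le K_3$ in case (b) and $|\phi_1(s)-\phi_2(s)|\le K_1|\log s|$ in case (a). One small slip: labelling the intermediate quantity $\H_{g,2}(u|u_{\infty,1})$ is inaccurate, since $\phi_2(u_{\infty,1})\neq\bar C_2-V$ in general, but your ``primed'' functional (with shift $V-\bar C_2$) already corrects this.
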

\begin{proof}
Using Definition \ref{genrelentropy}, we split the domain of integration and estimate 
\begin{align*}
    |\H_{g,1}(u|u_{\infty,1}) - &\, \H_{g,2}(u|u_{\infty,2})| \\ \le &\, \left|\int_{\R^d}\int_{u_{\infty,1}(x)}^{u(x)} \big[g\big(\phi_1(s)-\phi_1(u_{\infty,1}(x))\big) - g\big(\phi_2(s)-\phi_2(u_{\infty,2}(x))\big) \big]\,ds\,dx \right| \\
    & +\left|\int_{\R^d}\int_{u_{\infty,2}(x)}^{u_{\infty,1(x)}} g\big(\phi_2(s)-\phi_2(u_{\infty,2}(x))\big) \big]\,ds\,dx \right|\,,
\end{align*}
where the second term is equal to $\H_{g,2}(u_{\infty,1}|u_{\infty,2})$. It thus remains to estimate the first term of the r.h.s. We now use \eqref{C-bar} and the assumption that $g$ is Lipschitz to estimate it by
\begin{eqnarray}\label{error1}
    && \int_{\R^d}\left|\int_{u_{\infty,1}(x)}^{u(x)} \big|g\big(\phi_1(s)-\phi_1(u_{\infty,1}(x))\big) - g\big(\phi_2(s)-\phi_2(u_{\infty,2}(x))\big) \big|\,ds\right| dx \nonumber\\
    && \qquad \le L \int_{\R^d}\Big[ \,\Big|\int_{u_{\infty,1}(x)}^{u(x)} |\phi_1(s)-\phi_2(s)|\,ds\Big| + |\bar C_1-\bar C_2|\,|u(x)-u_{\infty,1}(x)| \Big]dx \,.
\end{eqnarray}
With \eqref{Delta-phi}, the stability estimate for case (b) is then immediate. In case (a) we  use for the first term in \eqref{error1} the following estimate
\begin{align*}
    \left|\int_{u_{\infty,1}(x)}^{u(x)} |\phi_1(s)-\phi_2(s)| ds\right|
    &= \left|\int_{u_{\infty,1}(x)}^{u(x)} \Big|\int_1^s \frac{P_1'(r)-P_2'(r)}{r} dr\Big| ds\right| \\
    &\le K_1\,\left|\int_{u_{\infty,1}(x)}^{u(x)} |\log s| ds\right| \\
    &\le K_1\big|u(x)-u_{\infty,1}(x)\big|\max\big(|\log u(x)|,|\log u_{\infty,1}(x)|\big)\,.
\end{align*}
\end{proof}

%%%%%%%%%%%%%%%%%%%%%%%%%%%%%%%%%%%%%%%%%%%%%%%%%%%%%%%%%%%%%%%%%%%%%%%%%%%%%%%%

\section{Improved decay estimates with weights}\label{sec-funcineq}
\setcounter{equation}{0}

In \S\ref{sec-quasilin} we characterized all admissible entropies for a given nonlinear diffusion. But even for power-law nonlinearities, the (analogs of the) $p$-entropies, i.e.,   $\mathcal{H}_{m,p}(u|\uinf)$ are not explicit, see Proposition \ref{prop:genm}. Hence it is not obvious, what is the additional information provided by knowing their exponential decay. In this section we shall illustrate this aspect by deriving decay estimates for explicit, weighted $L^1$-norms of $u-\uinf$, which are controlled by some of our new entropies.

\subsection{Generalized Csiszár-Kullback inequalities}
We first give a general result showing the control of the $L^1$-norm of the difference between a given function and the stationary $\uinf$ by our generalized relative entropies under suitable assumptions. Our result is valid for both the degenerate and non-degenerate diffusions. 
For the degenerate case let $B_\infty:=\mbox{\rm supp} (\uinf)$ be the (compact) support of the steady state $\uinf$ of \eqref{GFP} given by Definition \ref{genrelentropy2}. 
We also notice that our result supersedes the most general result in the literature for this type of inequalities \cite{AMTU2,CJMTU}, see Example \ref{ex:comp-CJMTU} below.

\begin{theorem}\label{gencz}
Assume that either $P$ is a regular non-degenerate diffusion satisfying $\inf_{r\ge0} P'(r)>0$ or a degenerate diffusion in which case we further assume that for some constant $n>0$, the map $r\mapsto P'(r)/r^n$ is continuous and positive on $(0,\infty)$, and is non-increasing on some interval $(0,r_0]$. Moreover, assume that the entropy function $g$ is such that $g'$ is positive on $(\xi_{\min},\infty)$.
Then there is a constant $C$ (depending on $P$, $V$, $M$, $g$, and possibly $n$ for degenerate diffusions) such that
    \begin{align}
        \label{eq:ourCK}
        \|u-\uinf\|_{L^1} \le C {\mathcal H}_g(u|\uinf)^{1/2}
    \end{align}
for all functions $u\in L^1_M(\R^d)$ such that ${\mathcal H}_g(u|\uinf)<\infty$.\\
\end{theorem}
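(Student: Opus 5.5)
The plan is to prove a pointwise lower bound for the integrand $G(u(x),u_\infty(x);x)$ of $\mathcal H_g$ in terms of $|u-u_\infty|$. A weighted Cauchy--Schwarz inequality then yields the square root in \eqref{eq:ourCK}. The starting point uses only $g'>0$ and $g(0)=0$. Put $c_g:=\min_{[-1,1]}g'>0$ (shrinking $[-1,1]$ inside $(\xi_{\min},\infty)$ if needed). Since $g$ is increasing,
\[ g(\xi)\ge c_g\min(\xi,1)\ \ (\xi\ge0),\qquad |g(\xi)|\ge c_g\min(|\xi|,1)\ \ (\xi\le0). \]
Hence, writing $\xi(s):=\phi(s)+V(x)-\bar C$, both for $a\ge b$ and for $a<b$ we get
\[ G(a,b;x)\ge c_g\Big|\int_b^a\min\big(|\xi(s)|,1\big)\,ds\Big|. \]
So everything reduces to a lower bound on $|\xi(s)|$, that is, on the growth of $\phi$.

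\emph{Non-degenerate case.} Here $\xi(s)=\phi(s)-\phi(b)$ with $b=u_\infty(x)$, and $\phi'(s)=P'(s)/s\ge p_0/s$ with $p_0:=\inf P'>0$. Thus $|\xi(s)|\ge p_0|\log(s/b)|$. An elementary one-variable computation, distinguishing $a\le 2b$ from $a>2b$ and, for $a<b$, $b-a$ small versus comparable to $b$, gives
\[ G(a,b;x)\ge c\,\frac{(a-b)^2}{a+b}. \]
When $a\gg b$ the cutoff $\min(\cdot,1)$ still leaves a contribution linear in $a$, which is $\gtrsim (a-b)^2/(a+b)$. Cauchy--Schwarz then gives
\[ \|u-u_\infty\|_{L^1}\le\Big(\int\tfrac{(u-u_\infty)^2}{u+u_\infty}\Big)^{1/2}\Big(\int (u+u_\infty)\Big)^{1/2}\le \sqrt{2M/c}\;\mathcal H_g(u|u_\infty)^{1/2}. \]

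\emph{Degenerate case.} Let $B_\infty=\supp u_\infty$. First, the equal-mass condition gives $\int_{B_\infty^c}u=\int_{B_\infty}(u_\infty-u)\le\int_{B_\infty}|u-u_\infty|$. Therefore $\|u-u_\infty\|_{L^1}\le 2\int_{B_\infty}|u-u_\infty|$, and only the compact set $B_\infty$ matters, on which $V$ is bounded and $\xi(s)=\phi(s)-\phi(u_\infty(x))$. Next, the hypothesis that $P'(r)/r^n$ is non-increasing on $(0,r_0]$ and continuous and positive on compacts gives, for $s,b$ in a bounded range, a bound of the form
\[ |\phi(s)-\phi(b)|=\Big|\int_b^s \tfrac{P'(r)}{r^n}\,r^{n-1}dr\Big|\ \gtrsim\ |s^n-b^n|\ \text{(up to a factor)}, \]
since the minimum of $P'(r)/r^n$ between $b$ and $s$ is attained at the larger endpoint near $0$. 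Large values of $u$ are again handled by the linear-growth cutoff as before. This should yield $G\gtrsim (a-b)^2 m^{n-1}$ on a bounded range, with $m=\max(a,b)$, and $G\gtrsim |a-b|$ when $a$ is large.

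The step I expect to be the main obstacle is converting this degenerate bound into a square root with a finite constant. For $n>1$ the naive Cauchy--Schwarz weight $m^{1-n}$ is not integrable near $\partial B_\infty$, where $u_\infty\to0$. I would first try to split $B_\infty$ into $B_\delta=\{u_\infty\ge\delta\}$, where $m\ge\delta$ and the argument above closes with constant $\sim\delta^{(1-n)/2}|B_\infty|^{1/2}$, and a boundary layer $B_\infty\setminus B_\delta$. On the layer I would use the sharper information $\xi(s)\ge\phi(s)-\phi(u_\infty)$ together with the explicit profile $\phi(u_\infty)=\bar C-V$ near $\partial B_\infty$, so that the layer's measure is traded against the size of $u_\infty$ there. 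A fallback is to run the same mass-transfer trick as above, now applied to the layer, so that the layer is controlled by $\int_{B_\delta}|u-u_\infty|$ plus a term handled by the entropy lower bound. The non-degenerate case and the reduction to $B_\infty$ I regard as routine.
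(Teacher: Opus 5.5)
Your non-degenerate argument is correct, and a little cleaner than the paper's. You use the weight $(u+u_\infty)^{-1}$ on all of $\R^d$ and need only $\inf P'>0$, whereas the paper restricts to $\{u<u_\infty\}$ and uses the weight $\phi'(u_\infty)$.

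The degenerate case, however, has a genuine gap exactly at the step you flag, and your diagnosis of it is off. Your bound $G\gtrsim(a-b)^2m^{n-1}$ uses only $P'(r)/r^n\ge c$ and throws away the factor $P'(m)/m^n$. The monotonicity hypothesis gives more: for $u_\infty/2<s<u_\infty$ one has $\phi'(s)=s^{n-1}P'(s)/s^n\ge\tilde c\,\phi'(u_\infty)$. Hence $G(u,u_\infty;x)\ge c\,\phi'(u_\infty)(u_\infty-u)^2$ on $\{u<u_\infty\}$, with the intrinsic weight $\phi'(u_\infty)=P'(u_\infty)/u_\infty$.

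This is not cosmetic. Take $P'(r)=r+r^3$ and $n=3$, which satisfy the hypotheses. Then $u_\infty$ vanishes linearly at $\partial B_\infty$, so $u_\infty^{1-n}=u_\infty^{-2}$ is not integrable. Indeed $\int_{B_\infty}(u-u_\infty)^2m^{2}\,dx$ cannot control $\|u-u_\infty\|_{L^1}$ at the square-root rate. Set $u=0$ on $\{\mathrm{dist}(x,\partial B_\infty)<\delta\}$ and put the removed mass on $\{\delta<\mathrm{dist}<2\delta\}$. The former quantity is $O(\delta^5)$, while the $L^1$ distance is of order $\delta^2$. Conversely, in the model case $P'(r)\sim r^n$ one has $u_\infty\sim\mathrm{dist}^{1/n}$, so $u_\infty^{1-n}\sim\mathrm{dist}^{1/n-1}$ \emph{is} integrable.

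The missing ingredient is that $\int_{B_\infty}dx/\phi'(u_\infty)<\infty$ always holds, as a consequence of the equilibrium relation $\phi(u_\infty)=\bar C-V$ and $\lambda$-convexity (the paper's Step 5). Along rays $x=x_0+r\theta$ from the minimizer of $V$, differentiating gives $1/\phi'(u_\infty)=-\partial_ru_\infty/\partial_rV\le-\partial_ru_\infty/(\lambda r)$. So on the annulus $\{\bar V/2<V<\bar V\}$ the integral is bounded by $\lambda^{-1}\int_{\SSS^{d-1}}\int r^{d-2}(-\partial_ru_\infty)\,dr\,d\theta<\infty$. On $\{V\le\bar V/2\}$ the weight is bounded because $u_\infty$ stays away from $0$ there.

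Combine this with $\|u-u_\infty\|_{L^1}=2\int_{\{u<u_\infty\}}(u_\infty-u)\,dx$. This is a sharper use of the mass constraint than your reduction to $B_\infty$: it also disposes of large $u$ and of the region $u>u_\infty$, since only $0\le u<u_\infty\le\max u_\infty$ remains. Cauchy--Schwarz then closes at once.

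Your proposed split into $B_\delta$ and a boundary layer with fixed $\delta$ cannot replace this. On the layer, $(u_\infty-u)_+$ is controlled by $\mathcal H_g$ only through the same weighted Cauchy--Schwarz. Without integrability of the inverse weight you get either an uncontrolled term $\int_{\{u_\infty<\delta\}}u_\infty$ or, after optimizing $\delta$, a power of $\mathcal H_g$ strictly below $1/2$. Making ``trade the layer's measure against the size of $u_\infty$'' precise amounts exactly to proving this integrability.
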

\begin{proof}
The proof needs to improve over the Taylor expansion in \eqref{Taylorent}. We structure it in the following steps:

\

{\bf Step 1.-}
In the regular non-degenerate diffusion case, we obtain
at every $x$ with $0\leq u(x)<\uinf(x)$ by integrating by parts using that $g(0)=0$:
 \begin{align}
        G\big(u,u_\infty\big)(x)
        &= \int^{u(x)}_{\uinf(x)} g\big(\phi(s)-\phi(\uinf(x))\big) \, ds \nonumber\\
        &= - (\uinf(x)-u(x))\,g(0) +\int
        _{u(x)}^{\uinf(x)}(s-u(x))\,g'\big(\phi(s)-\phi(\uinf(x))\big)\phi'(s)ds  \nonumber \\
        &\ge \int_{(\uinf(x)+u(x))/2}^{u_\infty(x)}(s-u(x))\,g'\big(\phi(s)-\phi(\uinf(x))\big)\phi'(s)ds \nonumber\\
        &\ge {\frac38} \left(\uinf(x)-u(x)\right)^2\inf_{\frac{\uinf(x)}2<s<\uinf(x)} g'\big(\phi(s)-\phi(\uinf(x))\big)\: \inf_{\frac{\uinf(x)}2<s<\uinf(x)}\phi'(s)\nonumber \\
        &=: {\frac38}\left(\uinf(x)-u(x)\right)^2 \, I_1\, I_2\,, \label{G-est-nondeg}
    \end{align}
where $I_1$ and $I_2$ are the two infima in the penultimate line.

In the degenerate diffusion case, recall that $\phi(u_\infty(x))+V(x)=\bar C$ for all $x\in B_\infty$. It then follows at every $x$ with $0\leq u(x) < \uinf(x)$ --- hence such an $x$ is in $B_\infty$ --- that
    \begin{align*}
        \tilde G\big(u(x),\uinf(x);x\big)
        &= \int^{u(x)}_{\uinf(x)} 
        g\big(\phi(s)+V(x)-\bar C\big)ds  \\
        &= \int^{u(x)}_{\uinf(x)} g\big(\phi(s)-\phi(\uinf(x))\big) \, ds 
        \geq {\frac38}\left(\uinf(x)-u(x)\right)^2 \, I_1\, I_2\,,
    \end{align*}
    proceeding similarly as for \eqref{G-est-nondeg}.

\

{\bf Step 2.-} The first infimum $I_1$ is positive, uniformly in $x\in B_\infty$. To see this, define $U:=\max u_\infty>0$, implying $\phi(U)=\bar C$. In both cases (regular non-degenerate diffusion and degenerate diffusion), $P'$ is continuous on $[0,U]$ and $\phi$ is monotonically increasing.    
In the degenerate case we recall that $\xi_{\min}=\phi(0)-\bar C=-\int_0^U P'(r)/r\,dr>-\infty$.
Hence we estimate for all $s\in(\uinf(x)/2,\uinf(x))$: 
\begin{align}\label{d-phi-estimate}
        0 &\ge \phi(s) - \phi(\uinf(x)) = - \int_s^{\uinf(x)}\frac{P'(r)\,dr}r 
        \ge - \int_{\uinf(x)/2}^{\uinf(x)}\frac{P'(r)\,dr}{r} \\
    &\ge
    \begin{cases}
        - \int_{0}^{U/2}\frac{P'(r)\,dr}{r}, \quad & u_\infty(x)\in[0,U/2]\,,\\
        - \int_{U/4}^{U}\frac{P'(r)\,dr}{r}, \quad & u_\infty(x)\in(U/2,U]
    \end{cases}  \nonumber \\
    &\ge \delta - \int_0^U \frac{P'(r)\,dr}{r}  = \delta+\xi_{\min} , \nonumber 
\end{align}
where $\delta:=\min\Big(\int_{0}^{U/4}\frac{P'(r)\,dr}{r},\,\int_{U/2}^{U}\frac{P'(r)\,dr}{r}\Big)>0$ is independent of $x\in B_\infty$. 
In the non-degenerate case we estimate the last integral in \eqref{d-phi-estimate} as:
$$
  - \int_{\uinf(x)/2}^{\uinf(x)}\frac{P'(r)\,dr}{r} 
        \ge -\|P'\|_{L^\infty(0,\infty)} \int_{\uinf(x)/2}^{\uinf(x)}\frac{dr}{r} =-\|P'\|_{L^\infty(0,\infty)} \log 2.
$$
In the degenerate case $g'$ is continuous and positive on $[\delta+\xi_{\min},0]$, and in the non-degenerate case on $[-\|P'\|_{L^\infty(0,\infty)} \log 2,0]$. Hence, in both cases there is a positive lower bound on $g'(\phi(s)-\phi(\uinf(x)))$ for all $s\in(\uinf(x)/2,\uinf(x))$. 

\

{\bf Step 3.-}
The second infimum $I_2$ is also positive: notice first that in the case of regular non-degenerate diffusions, we have
    \begin{align}\label{phi-est}
        \phi'(s) = \frac{P'(s)}{s} 
        \ge \frac{P'_\text{min}}{\uinf(x)} 
        \geq \frac{P'_\text{min}}{P'_{\text{max}}}  \frac{P'(\uinf(x))}{\uinf(x)}:= c\phi'(\uinf(x))
    \end{align}
for all $s\in(\uinf(x)/2,\uinf(x))$ with 
\[
    0<P'_\text{min} :=\min_{0\leq s\leq U} P'(s) \le P'_{\text{max}}:=\max_{0\leq s\leq U} P'(s). 
\]
By the assumed uniform positivity and continuity of $P'(r)/r^n$, 
we conclude that there is some $\bar c>0$ such that 
\[ 
    \frac{P'(s)}{s^n} \ge \bar c \frac{P'(r)}{r^n} 
\]
for all $0<s<r\le U$. Consequently,  we have for all $s\in(\uinf(x)/2,\uinf(x))$:
    \begin{align*}
        \phi'(s) = \frac{P'(s)}{s} 
        = s^{n-1}\frac{P'(s)}{s^n} 
        \ge \bar c s^{n-1}\frac{P'(\uinf(x))}{\uinf(x)^n}
        = \bar c\left(\frac{s}{\uinf(x)}\right)^{n-1}\frac{P'(\uinf(x))}{\uinf(x)}
        \ge \tilde c \phi'(\uinf(x))\,
    \end{align*}
with $\tilde{c}=\bar c\min(1,2^{1-n})$ achieving a similar estimate as in \eqref{phi-est} for the regular non-degenerate case.

\

{\bf Step 4.-} We now conclude by collecting the estimates of $I_1$ and $I_2$, that there is a constant $K$, independent of $u$, such that
 \begin{align*}
        \phi'(\uinf(x))(\uinf(x)-u(x))^2 \le K^2  G(u(x),\uinf(x))
    \end{align*}
in the regular non-degenerate diffusion case, and 
    \begin{align*}
        \phi'(\uinf(x))(\uinf(x)-u(x))^2 \le K^2 \tilde G(u(x),\uinf(x);x)
    \end{align*}
in the degenerate diffusion case, 
holds for all $x\in\R^d$ with $0\leq u(x)<\uinf(x)$. Now, both for non-degenerate and degenerate diffusion, recalling that $u$ and $u_\infty$ are non-negative and of the same mass $M$:
    \begin{align*}
        \|u-\uinf\|_{L^1} 
        &= 2\int_{\{u<\uinf\}} (\uinf-u)(x)\;dx \\
        & \le 2\left(\int_{\{u<\uinf\}}\phi'(\uinf)(u-\uinf)^2(x)\;dx\right)^{1/2}\left(\int_{u<\uinf}\frac{dx}{\phi'(\uinf)}\right)^{1/2} \\
        &\le 2K\left(\int_{B_\infty}\frac{dx}{\phi'(\uinf)}\right)^{1/2}\mathcal H_g(u|\uinf)^{1/2}
    \end{align*} 
leading to the desired estimate.

\
    
{\bf Step 5.-} It remains to show that 
    $$
    \int_{B_\infty}\frac{dx}{\phi'(\uinf)}<\infty\,.
    $$
In the regular non-degenerate diffusion case, notice first that $\supp(u_\infty)$ is $\R^d$ and we directly estimate the above integral as
    \begin{align*}
        \int_{\R^d}\frac{dx}{\phi'(u_\infty)}
        &= \int_{\R^d}\frac{\uinf}{P'(u_\infty)}dx\leq \frac1{P'_{\text{min}}}\int_{\R^d}\uinf dx=\frac{M}{P'_{\text{min}}}.
    \end{align*}
    In the degenerate diffusion case, define $\bar V:=\bar C-\phi(0+)=-\ximin$, so that $B_\infty=\{V\le\bar V\}$, and let $\rho:=\phi^{-1}(\bar C-\bar V/2)>0$. We have:
    \begin{align*}
        \int_{B_\infty}\frac{dx}{\phi'(u_\infty(x))}
        &= \int_{\{0<V<\bar V/2\}}\frac{dx}{\phi'\big(\phi^{-1}(\bar C-V(x))\big)}
        +\int_{\{\bar V/2< V<\bar V\}}\frac{dx}{\phi'\big(\phi^{-1}(\bar C-V(x))\big)}\\
        &\le \frac{\big|\{0<V<\bar V/2\}\big|}{\inf_{\rho<r< U}\phi'(r)}
        +\int_{\{\bar V/2< V<\bar V \}}\frac{dx}{\phi'\big(\phi^{-1}(\bar C-V(x))\big)}.
    \end{align*}
    Since $\phi'(r)=P'(r)/r$ has a positive lower bound on $[\rho, U]$ by hypothesis, the first expression is finite, and it suffices to estimate the remaining integral.

    For evaluation of the second integral, we pass to radial coordinates $x=x_0+r\theta$, where $\theta\in{\mathbb S}^{d-1}$ and $r\ge0$, and $x_0$ is the minimal point of $V$. Let $0<\underline r(\theta)<\overline r(\theta)$ such that
    \[ \{x\in\Rd\ |\ \bar V/2< V(x) < \bar V\} = \{ x_0+r\theta\ |\ \theta\in\mathbb{S}^{d-1},\ \underline r(\theta)<r<\overline r(\theta) \}. \]
    Since $r\mapsto V(x_0+r\theta)$ is a $\lambda$-convex function for each fixed $\theta$, with minimum zero at $r=0$, we have that $V(x_0+r\theta)\ge \lambda/2\,r^2$, and in particular we have that $\overline r(\theta)\le R:=(2\bar V/\lambda)^{1/2}$. Since further $\partial_r V(x_0+r\theta)\ge\lambda r$ for each $r>0$, and
    \begin{align*}
        \partial_r\phi^{-1}(\bar C-V(x_0+r\theta)) = \frac{-\partial_rV(x_0+r\theta)}{\phi'\big(\phi^{-1}(\bar C-V(x_0+r\theta))\big)},
    \end{align*}
    we conclude that
    \begin{align*}
        \int_{\{ \bar V/2< V<\bar V\}}\frac{dx}{\phi'\big(\phi^{-1}(\bar C-V(x))\big)}
        &= \int_{{\mathbb S}^{d-1}} \int_{\underline r(\theta)}^{\overline r(\theta)} \frac{r^{d-1} dr}{\phi'\big(\phi^{-1}(\bar C-V(x_0+r\theta))\big)} d\theta\\
        &\le -\frac1\lambda\int_{{\mathbb S}^{d-1}} \int_{\underline r(\theta)}^{\overline r(\theta)} r^{d-2}\partial_r\phi^{-1}(\bar C-V(x_0+r\theta)) dr d\theta \\
        &\le R^{d-2}|\mathbb{S}^{d-1}|\,\big[\phi^{-1}(\zeta)\big]_{\zeta=\phi(0+)}^{\zeta=\phi(\rho)}
        \le (2\bar V/\lambda)^{(d-2)/2}|\mathbb{S}^{d-1}|\,\rho,
    \end{align*}     
    where we have used that $\bar C-V(x_0+\overline r(\theta)\theta)=\bar C-\bar V=\phi(0+)$ by definition of $\bar V$, and $\bar C-V(x_0+\underline r(\theta)\theta)=\bar C-\bar V/2=\phi(\rho)$ by definition of $\rho$. 
    Thus, the integral is finite.
\end{proof}

A decay rate in $L^1(\R^d)$ can be obtained as a simple corollary of the previous theorem together with \eqref{mainconclusion2}:

\begin{corollary}
    Assume that the potential $V$ and the nonlinearity $P(u)$ satisfy {\bf (HV1')-(HV3'), (HP1)-(HP2)}, and {\bf (HPV)} and that the nolinearity $P$ satisfies the hypotheses of Theorem \ref{gencz}. Assume further that the initial data has finite relative entropy ${\mathcal H}_g(u_0|u_\infty)<\infty$, then
    \begin{align*}%\label{L1-decay}
        \|u(t)-\uinf\|_{L^1(\R^d)} \le C {\mathcal H}_g(u(t)|u_\infty)^{1/2} \leq C {\mathcal H}_g(u_0|u_\infty)^{1/2} \exp\left(-\lambda t\right),
    \end{align*}
    for all solutions $u$ of \eqref{eq:PDE0} with $\lambda$ from {\bf (HV2)}.
\end{corollary}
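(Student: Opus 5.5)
The corollary follows by composing Theorem \ref{gencz} with the exponential decay \eqref{mainconclusion2} delivered by the generalized Bakry--\'Emery procedure. I will apply the Csisz\'ar--Kullback type inequality \eqref{eq:ourCK} at each fixed time $t$ to $u(t)$. Its hypotheses on $u$ are that $u(t)\in L^1_M(\R^d)$ and ${\mathcal H}_g(u(t)|\uinf)<\infty$. Mass conservation for \eqref{GFP} gives $u(t)\in L^1_M$. Finiteness of the entropy at positive times follows from the monotonicity \eqref{tderent}--\eqref{tderent2}: since $J_g\ge0$, we have ${\mathcal H}_g(u(t)|\uinf)\le{\mathcal H}_g(u_0|\uinf)<\infty$. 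This yields the first inequality with the constant $C$ of Theorem \ref{gencz}, which depends only on $P,V,M,g$ and not on $t$.

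For the second inequality, I will invoke \eqref{mainconclusion2} with $s=0$, which gives ${\mathcal H}_g(u(t)|\uinf)\le e^{-2\lambda t}{\mathcal H}_g(u_0|\uinf)$. Taking square roots produces the rate $e^{-\lambda t}$. Here I must check the standing assumptions: \eqref{mainconclusion2} requires that $g$ be admissible for $P$, meaning that \eqref{eq:alphaomega} holds, for instance via Proposition \ref{prop4.10}. It also requires that ${\mathcal H}_g(u(t)|\uinf)\to0$ along the solution. I read the corollary as implicitly assuming the setting of \S\ref{sec:BakEm}, namely {\bf (HV1)}--{\bf (HV2)} together with admissibility of $\mathcal H_g$, since uniform convexity with constant $\lambda$ is needed for the rate. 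I will state this explicitly.

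The main obstacle is justifying that the entropy tends to zero, which is required to pass from the dissipation inequality \eqref{eq:alphaomega} to \eqref{mainconclusion2}. The standard route integrates $-\frac{d}{dt}J_g\ge 2\lambda J_g$ from $t$ to $\infty$ to get $J_g\ge 2\lambda{\mathcal H}_g$, and this step uses ${\mathcal H}_g(u(\tau)|\uinf)\to0$ and $J_g(u(\tau))\to0$ as $\tau\to\infty$. I would cite the convergence theory of \cite{CJMTU,otto} for the approximating non-degenerate, bounded-domain problems, and recover the degenerate case by the approximation procedure described at the beginning of \S\ref{sec:BakEm}. Alternatively, if one prefers to assume only that ${\mathcal H}_g$ vanishes asymptotically, one can simply take this as part of the hypotheses of \eqref{mainconclusion2}, as the paper does. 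The remaining steps are direct substitutions.
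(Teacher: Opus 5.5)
Your proposal is correct and follows the paper's route exactly: the paper obtains the corollary by applying the Csisz\'ar--Kullback-type bound \eqref{eq:ourCK} of Theorem \ref{gencz} to $u(t)$, then inserting the entropy decay \eqref{mainconclusion2} with $s=0$ and taking square roots. The additional hypotheses you make explicit are admissibility of $\mathcal H_g$ for $P$, {\bf (HV2)}, and $\mathcal H_g(u(t)|\uinf)\to0$; these are precisely the hypotheses the paper leaves implicit when it invokes \eqref{mainconclusion2}.
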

The previous generalization of the Csiszár-Kullback inequality shows the exponential convergence in $L^1_M$ of solutions to \eqref{eq:PDE0} superseding \cite[Theorem 31-32]{CJMTU}. 

\begin{example}\label{ex:comp-CJMTU}
In particular, our result improves the decay rates in \cite[Remark 34]{CJMTU} and \cite[Theorem 31-32]{CJMTU} for the degenerate power-law nonlinearities $P(r)=r^m$ with $m>2$ since it applies to the whole range $m>1$.
\end{example}

\subsection{Weighted moment estimates for degenerate diffusions}

In this subsection we shall derive functional inequalities for the moment control of some non-negative $u\in L^1(\R^d)$ in terms of the newly found entropies from \S\ref{sec-quasilin}.
\begin{prop}\label{prop:momentcontroldegenerate}
    Assume {\bf (HV1')-(HV3'), (HP1)-(HP2)}, and {\bf (HPV)}, and that $P$ is degenerate. 
    Given an increasing $C^3$ function $g: (\xi_{min},\infty)\to\R$ with $g(0)=0$, define the relative entropy ${\mathcal{H}}_g(u|\uinf)$ as in Definition \ref{genrelentropy2}. Then, for any $u\in L^1_M(\R^d)$,
    \begin{equation}\label{g-check}
        \int_{\R^d} U_g (x) u(x)\;dx \leq {\mathcal{H}}_g(u|\uinf),
        \quad\text{where}\quad
        U_g (x):=
        \begin{cases}
            g(\phi(0+)+V(x)-\bar C) & \text{for $x\notin B_\infty$}, \\
            0 & \text{for $x\in B_\infty$}.
        \end{cases}
    \end{equation}
\end{prop}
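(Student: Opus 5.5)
The plan is to work pointwise on the integrand $\tilde G(u(x),u_\infty(x);x)$ and split $\R^d$ into $B_\infty$ and its complement. On $B_\infty$ we have $U_g=0$, so the claim there reduces to $\tilde G(u,u_\infty;x)\ge0$. This holds because $\phi(u_\infty(x))+V(x)-\bar C=0$ on $B_\infty$. Hence the integrand $g(\phi(s)+V(x)-\bar C)$ vanishes at $s=u_\infty(x)$ and, since $g$ and $\phi$ are increasing, it has the same sign as $s-u_\infty(x)$. Consequently $\int_{u_\infty}^{u}$ of it is non-negative whether $u\ge u_\infty$ or $u<u_\infty$.

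Off $B_\infty$ we have $u_\infty(x)=0$, and $\tilde G(u,0;x)=\int_0^{u(x)} g(\phi(s)+V(x)-\bar C)\,ds$. Since $\phi$ is increasing and $g$ is increasing, for $s>0$ the integrand is bounded below by $g(\phi(0+)+V(x)-\bar C)$. Thus
\[
\tilde G(u(x),0;x)\ge u(x)\,g(\phi(0+)+V(x)-\bar C)=U_g(x)u(x).
\]
The argument of $g$ here lies in $[\xi_{\min},\infty)$; by the Euler--Lagrange relation \eqref{stat2} it is actually $\ge 0$ outside the support, so $g$ is evaluated where it is defined, at worst as the limit $g(0)=0$ on the boundary. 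Combining both regions gives $\tilde G(u,u_\infty;x)\ge U_g(x)u(x)$ pointwise for every $x$.

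Integrating over $\R^d$ yields \eqref{g-check}, using Definition \ref{genrelentropy2}. The only delicate point, and the step I expect to need the most care, is justifying the integration. On the complement of $B_\infty$ the lower bound $U_g u$ is non-negative, so no cancellation issue arises there. On $B_\infty$, which is compact, the integrand is also non-negative. Hence the pointwise inequality between non-negative measurable functions integrates directly, the left side is well defined in $[0,\infty]$, and no mass-conservation cancellation is needed.
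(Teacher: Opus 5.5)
Your proof is correct and is essentially the paper's argument. The paper bounds $\tilde G(u,u_\infty;x)$ from below by its tangent line in the first variable at $a=u_\infty(x)$, using convexity of $\tilde G$ in $a$ (which is exactly your monotonicity of the integrand). This gives $g(\phi(u_\infty)+V-\bar C)(u-u_\infty)$, which vanishes on $B_\infty$ and equals $U_g u$ off it. Your additional observation that $U_g\ge 0$ (via \eqref{stat2}) makes the final integration step fully rigorous, a point the paper leaves implicit.
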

\begin{proof}
    Directly from the definition of $\tilde G$ in \eqref{eq:7b}, we have that
    \begin{align*}
        \tilde G(b,b;x) = 0, \quad
        \partial_a\tilde G(a,b;x) = g(\phi(a)+V(x)-\bar C),\quad
        \partial_a^2\tilde G(a,b;x) = g'(\phi(a)+V(x)-\bar C)\phi'(a).
    \end{align*}
    In particular, $\tilde G(a,b;x)$ is convex in $a$ since $g$ and $\phi$ are increasing, and so
    \begin{align*}
        {\mathcal{H}}_g(u|\uinf)  
        &= \int_{\R^d} \tilde G(u(x),\uinf(x);x)\;dx \\
        &\ge \int_{\R^d} \big[\tilde G(\uinf(x),\uinf(x);x) + \partial_a \tilde G(\uinf(x),\uinf(x);x) (u-\uinf)(x)\big]\;dx \\
        & = \int_{\R^d} g\big(\phi(\uinf(x))+V(x)-\bar C\big) (u-\uinf)(x)\;dx.
    \end{align*}
    Now split the last integral into the part inside $B_\infty$ --- where $\phi(\uinf(x))+V(x)=\bar C$ --- and outside of $B_\infty$:
    \begin{align*}
        {\mathcal{H}}_g(u|\uinf)  
        &\geq g(0) \int_{B_\infty}  (u-\uinf)(x)\;dx +  \int_{\R^d\setminus B_\infty} g(\phi(0+)+V(x)-\bar C) u(x)\;dx \\
        &=  \int_{\R^d} U_g (x) u(x)\;dx,
    \end{align*}
    where we have used that $g(0)=0$.
\end{proof}
\begin{example}
    For the standard entropy $g(\xi)=\xi$, \eqref{g-check} becomes
    \begin{align*}
        \int_{\R^d\setminus B_\infty} \big(V(x)-\bar V\big)u(x)\;dx \le {\mathcal{H}}_g(u|\uinf),
    \end{align*}
    where $\bar V:=\bar C-\phi(0+)=-\ximin$ is the value of $V$ on the edge of $\uinf$'s support, i.e., on $\partial B_\infty$.
\end{example}
\begin{theorem}\label{th:x-weigths}
    Under the hypotheses of  Proposition \ref{prop:momentcontroldegenerate} and Theorem \ref{gencz}, we suppose additionally that there are $q>0$ and $c>0$ such that $g(\xi)\ge c\xi^q$ for all sufficiently large $\xi$. Then, there is a constant $A$ such that for all $u\in L^1_M(\Rd)$:
    \begin{align*}
        \int_{\R^d} \big(1+|x|^2)^q \,|u-\uinf|(x)\;dx \le A\big[\mathcal{H}_g(u|\uinf)^{1/2}+\mathcal{H}_g(u|\uinf)\big].
    \end{align*}
\end{theorem}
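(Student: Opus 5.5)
Split $\R^d$ into a large ball $B_R$ (with $R$ fixed, depending only on $V,P,M,g$) and its complement, and treat the two regions with different tools. Inside $B_R$ the weight $(1+|x|^2)^q$ is bounded by $(1+R^2)^q$. There we use the generalized Csisz\'ar--Kullback inequality of Theorem \ref{gencz} to obtain
\[
\int_{B_R}(1+|x|^2)^q|u-\uinf|\,dx\le (1+R^2)^q\,C\,\mathcal H_g(u|\uinf)^{1/2}.
\]
This produces the $\mathcal H_g^{1/2}$ term.

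Outside $B_R$ we choose $R$ so large that $B_R\supset B_\infty=\supp\uinf$. This is possible since $B_\infty=\{V\le \bar V\}$ is compact by uniform convexity. Then $|u-\uinf|=u$ on $\R^d\setminus B_R$, and it suffices to bound $\int_{\R^d\setminus B_R}(1+|x|^2)^q u\,dx$ by a multiple of $\mathcal H_g$. For this we invoke Proposition \ref{prop:momentcontroldegenerate}: $\int U_g u\,dx\le\mathcal H_g(u|\uinf)$, with $U_g(x)=g(V(x)-\bar V)$ outside $B_\infty$, using $\phi(0+)-\bar C=-\bar V$. Note that $U_g\ge0$ everywhere, since $g$ is increasing with $g(0)=0$ and $V>\bar V$ off $B_\infty$. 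Hence the integral may be restricted to $\R^d\setminus B_R$ while keeping the inequality.

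The remaining step is a pointwise comparison $(1+|x|^2)^q\le K\,g(V(x)-\bar V)$ for $|x|\ge R$. From (HV2) with minimum $0$ at the origin, $V(x)\ge\frac\lambda2|x|^2$. Hence $V(x)-\bar V\ge\frac\lambda4|x|^2$ once $|x|^2\ge 4\bar V/\lambda$. Enlarging $R$ further so that $\frac\lambda4R^2$ exceeds the threshold beyond which $g(\xi)\ge c\xi^q$, monotonicity of $g$ gives
\[
g(V-\bar V)\ge c\big(\tfrac\lambda4\big)^q|x|^{2q}\ge c'(1+|x|^2)^q\quad\text{for }|x|\ge R\ge1.
\]
Therefore
\[
\int_{\R^d\setminus B_R}(1+|x|^2)^q u\,dx\le \frac1{c'}\mathcal H_g(u|\uinf).
\]
Summing the two regions yields the claim with $A=\max\{(1+R^2)^qC,1/c'\}$.

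The main care point is this last comparison: $R$ must simultaneously contain $B_\infty$, lie beyond the growth threshold of $g$, and make $V-\bar V$ comparable to $|x|^2$, all uniformly in $u$. Each requirement depends only on $V$, $\bar C$ and $g$, so a single choice of $R$ works. One should also check that $U_g\ge0$ justifies discarding the region $B_R\setminus B_\infty$ in Proposition \ref{prop:momentcontroldegenerate}, which holds since $g$ is increasing with $g(0)=0$.
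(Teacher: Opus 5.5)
Your proof is correct and follows essentially the paper's argument. Both combine the Csisz\'ar--Kullback bound of Theorem \ref{gencz} with the moment bound $\int U_g u\,dx\le\mathcal H_g(u|\uinf)$ of Proposition \ref{prop:momentcontroldegenerate}, and both use the quadratic growth of $V$ together with $g(\xi)\ge c\xi^q$ to compare the weight with $U_g$. The only difference is cosmetic: the paper proves the global pointwise bound $(1+|x|^2)^q\le a'(1+U_g(x))$ and applies it on all of $\R^d$ (using that $U_g|u-\uinf|=U_g u$), whereas you split into $B_R$ and its complement.
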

\begin{proof}
    By $\lambda$-convexity of $V$, there is a constant $b$ such that $V(x)\ge \frac\lambda4|x|^2-b$. By definition of $\check g$ and the assumed growth behaviour of $g$, it follows further that $U_g(x)\ge a|x|^{2q}-b'$ for some constants $a>0$ and $b'$. Therefore, with yet another constant $a'>0$,
    \begin{align*}
        \big(1+|x|^2\big)^q 
        \le a' \big(1+U_g(x)\big) \quad \text{for all $x\in\Rd$}.
    \end{align*}
    It then follows that
    \begin{align*}
        \int_{\Rd} \big(1+|x|^2)^q \,|u-\uinf|(x)\;dx 
        \le a'\left(\int_{\Rd} |u-u_\infty|\,dx + \int_{\Rd} U_g(x) u(x)\,dx\right)\,.
    \end{align*}
    Finally use \eqref{eq:ourCK} and \eqref{g-check}.
\end{proof}
\begin{example}
    For the $m$-homogeneous nonlinearities with $m>1$, and for a corresponding $p$-entropy ${\mathcal H}_{m,p}$, we obtain from \eqref{eq:gm} that the exponent
    \begin{align*}
        q := 1+ \frac{p-1}{(m-1)\kappa_m} 
    \end{align*}
    is a possible choice in the assumptions of Theorem \ref{th:x-weigths}. More explicitly, we have
    \begin{align*}
        \int_{\R^d} \big(1+|x|^2)^{1+\frac{p-1}{(m-1)\kappa_m}} \,|u(x)-\uinf(x)|\;dx \le A_{m,p}\big[\mathcal{H}_{m,p}(u|\uinf)^{1/2}+\mathcal{H}_{m,p}(u|\uinf)\big]
    \end{align*}
    with suitable constants $A_{m,p}$, see  Proposition \ref{prop:genm} for properties of $\mathcal{H}_{m,p}(u|\uinf)$.
\end{example}

\subsection{Generalized Log-Sobolev inequalities}
Recall from our central result \eqref{mainconclusion3} that if $g$ generates an admissible entropy $\mathcal H_g$, then it is related to $J_g$ via
\begin{align}
    \label{eq:fncineq}
    \mathcal H_g(u|u_\infty) \le \frac1{2\lambda} J_g(u),
\end{align}
where the dissipation amounts to 
\begin{align*}
    J_g(u) = \int_{\R^d}u g'(\xi) |\nabla\xi|^2\,dx.
\end{align*}
Notice that \eqref{eq:fncineq} only holds for non-negative functions $u\in L^1(\R^d)$ that have the same total mass as $u_\infty$.
\begin{example}
    This follows up on the example from Section \ref{sct:xmp-pme2} on the quadratic nonlinearity $P(r)=r^2$. Recall that $u_\infty=(\frac{\bar C-V}{2}+1)_+$ and $\xi=2(u-1)+V -\bar C$ in that case. We have shown that for any $p\in[1,2]$, the function $g_p$ with
    \begin{align*}
        g_p'(z) = \left(1+\frac{p-1}\kappa\right)\left(\frac{2+z+\bar C}{2+\bar C}\right)^{\frac{p-1}\kappa}
    \end{align*}
    generates an admissible entropy $\mathcal H_p$, given in \eqref{H_p}. In this particular situation, setting $q:=2+\frac{p-1}\kappa$ for simplicity, the functional inequality \eqref{eq:fncineq} amounts to
    \begin{align*}
         \frac q{q-1}\int_{\R^d}\big[\big(2u+V)^q-\big(2u_\infty+V)^q\big]\,dx 
        \le \frac{4}\lambda \int_{\R^d}u\big|\nabla\big[(2u+V)^{q/2}\big]\big|^2\,dx.
    \end{align*}
    Notice that for $p=1$ (when $q=2$), one obtains
    \begin{align}
        \label{eq:funny}
        \int_{\R^d}\left(u^2+Vu\right)\,dx  - \int_{\R^d}\left(u_\infty^2+Vu_\infty\right)\,dx 
        \le \frac1{2\lambda} \int_{\R^d}u \big|\nabla(2u+V)\big|^2\,dx,
    \end{align}
    which is classical, see e.g. \cite{CJMTU}. Moreover, \eqref{eq:funny} is the special case for $m=2$ from the convex Sobolev inequalities associated to $P(u)=u^m$: 
    \begin{align*}
        \int_{\R^d}\left(\frac{u^m}{m-1}+Vu\right)\,dx - \int_{\R^d}\left(\frac{u_\infty^m}{m-1}+Vu_\infty\right)\,dx
        \le \frac1{2\lambda} \int_{\R^d} u\left|\nabla\left(\frac{mu^{m-1}}{m-1}+V\right)\right|^2\,dx.
    \end{align*}    
    For $p\in(1,2]$ and $m>1$ the resulting generalized Sobolev inequalities \eqref{eq:fncineq} are new. While the r.h.s.\ $J_g(u)$ is explicit, the l.h.s.\ $\H_{m,p}(u|u_\infty)$ is in general not explicit, but we already gave lower and upper estimates on it in \eqref{eq:orderent} inside Proposition \ref{prop:genm}.
\end{example}

%%%%%%%%%%%%%%%%%%%%%%%%%%%%%%%%%%%%%%%%%%%%%%%%%%%%%%%%%%%%%%%%%%%%%%%%%%%%%%%%
\section{Sharpness results}\label{sec:sharp}
\setcounter{equation}{0}

In this section we shall discuss non-trivial saturation in the functional inequality \eqref{mainconclusion3}, i.e.\ we analyze the possibility to find some function $\bar u\ne u_\infty$ with mass $M$ such that \eqref{mainconclusion3} becomes an equality. For reasons that we shall explain shortly, we limit ourselves to the case of the canonical entropy with $g(\xi)=\xi$, where the inequality \eqref{mainconclusion3} simplifies to
\begin{align}
    \label{eq:forsharpness}
    \intRd \big[\Phi(u)-\Phi(u_\infty) + V(u-u_\infty)\big]\,dx
    \le \frac1{2\lambda} \intRd u\big|\nabla(\phi(u)+V)\big|^2\,dx.
\end{align}
Finding a $\bar u\in L_M^+$ that saturates this inequality is equivalent to finding an initial condition $u(0)=u_0$ such that the decay estimate \eqref{mainconclusion2} is saturated for any $t\ge s\ge0$, or --- still equivalently, as we shall see --- the final estimate \eqref{finform1} is an equality at each time, with vanishing remainder $\mathcal R\equiv0$.

We shall fix a nonlinearity $P$ and mass $M$, and derive sufficient and necessary conditions on the potential $V$ (satisfying {\bf (HV2)} and the additional assumption $V\in C^2(\R^d)$) for saturation in \eqref{eq:forsharpness}. Below, we shall always assume the McCann condition \eqref{mccann}, which implies that the standard entropy on the left-hand side of \eqref{eq:forsharpness} --- generated by $g_1(\xi)=\xi$ --- is admissible, see Remark \ref{StandardEntropy}. To simplify the presentation, we shall further assume that $\beta=P'>0$ on $\R^+$, which is slightly more restrictive than {\bf (HP1)}.

Before carrying out this program, we briefly justify why the restriction to the standard entropy is reasonable. For linear Fokker-Planck equations of the form \eqref{linFP}, the question of saturation was analyzed in \S3.5 of \cite{AMTU}. There, it was found that optimality is possible only for logarithmic and quadratic entropies, with their prototypical generators $\psi_1$ and $\psi_2$ given in \eqref{psi-p}: these two generators turn the Bakry-\'Emery condition \eqref{e:2.11c} into an equality, and they correspond to the lower and upper bounds $y=f'=0$ and $1$, respectively, in the inequality \eqref{f-inequ}, see Lemma \ref{f-solutions}. While logarithmic entropies lead in \cite{AMTU} to non-negative optimal functions $\bar u$, the optimal functions for quadratic entropies (obtained as eigenfunctions of the linear Fokker-Planck operator in $L^2(\R^d;u_\infty)$) change sign --- which is outside of the setting considered here. It thus appears that only for the standard entropy, there is a chance of saturation in \eqref{mainconclusion3}.

We proceed in analogy to \S3.5 of \cite{AMTU}. Undoing the estimation in \eqref{eq:nonopt} we can rewrite \eqref{finform1} as 
\begin{eqnarray}\label{finform4}
  -\frac12\frac{d}{dt}J_g - \lambda J_g  
  &=&\int_{\R^d} ug'(\xi)\nabla\xi\cdot(\nabla^2 V-\lambda I_d) \cdot \nabla\xi \;dx
  + \int_{\R^d} u e^{f(\xi)} R(\alpha,\beta,\xi) \;dx \\
  &=&\int_{\{u>0\}} u\nabla\xi\cdot(\nabla^2 V-\lambda I_d) \cdot \nabla\xi \;dx
  + \int_{\{u>0\}} u \big(\alpha\|\nabla^2\xi\|^2 +(\beta-\alpha)(\Delta\xi)^2\big) \;dx \nonumber\\
  &=:& r(u(t)) =: r_1(u(t))+r_2(u(t)), \nonumber
\end{eqnarray}
where we used $g=g_1\equiv\xi$ and hence $f_1\equiv0$, and Remark \ref{StandardEntropy}. We emphasize that $r$ is always non-negative since both $r_1$ and $r_2$ are non-negative: the first because $V$ is $\lambda$-convex, and the second because we assume the McCann condition $\beta\ge\frac{d-1}d\alpha$, and $(\Delta\xi)^2\le d\|\nabla^2\xi\|^2$, see \eqref{CS} below.
Integrating \eqref{finform4} in time yields
\begin{equation}\label{error-int}
    \intRd \big[\Phi(u_0)-\Phi(u_\infty) + V(u_0-u_\infty)\big]\,dx
    = \frac1{2\lambda} \intRd u\big|\nabla(\phi(u_0)+V)\big|^2\,dx
  -\frac1{\lambda}\int_0^\infty r(u(t))\,dt,
\end{equation}
where $u(t)$ is the trajectory ``connecting" $u_0$ and $u_\infty$. By non-negativity of $r$, inequality \eqref{eq:forsharpness} saturates if and only if $r(u(t))=0$ along the entire trajectory.

We remark that, strictly speaking, the reasoning above applies a priori only to classical solutions, because the derivation of \eqref{finform4} involved a variety of integration by parts. If $u$ is only a weak solution (e.g.\ a compactly supported solution of a degenerate diffusion equation), one should proceed via approximation as discussed in the introduction, see \cite{otto,CJMTU} --- we shall not carry out that approximation here. We recall, as discussed in the introduction, that solutions for the class of equations of the form \eqref{GFP} are typically continuous functions in $(x,t)$ for all $t>0$ under the assumptions {\bf (HV1')-(HV3'), (HP1)-(HP2)}, and {\bf (HPV)}. Moreover, if the initial data is bounded, then solutions are uniformly bounded in time and space. This is assumed in the rest of the section.

As discussed above, saturation happens in \eqref{eq:forsharpness} if and only if $r_1(u(t))=0$ and $r_2(u(t))=0$ for a.e. $t>0$. This, in turn, happens if and only if the following three conditions hold on $(\supp u)^\circ\subset \R^d\times\R^+_t$, the interior of the $(x,t)$-support of $u$:
\begin{align}
    &\nabla\xi\cdot(\nabla^2 V-\lambda I_d) \cdot \nabla\xi=0\,, \label{sharp-cond1}\\
    & \nabla^2\xi = 0 \quad \mbox{on }\: \Omega_1:=\big\{(x,t)\in(\supp u)^\circ\,\big|\, \beta(u(x,t)) > \frac{d-1}{d} \alpha(u(x,t))\big\}\,,\label{sharp-cond2}\\
    & \nabla^2\xi = \sigma_2(x,t)I_d \quad \mbox{on }\: \Omega_2:=\big\{(x,t)\in(\supp u)^\circ\,\big|\, \beta(u(x,t)) =\frac{d-1}{d}  \alpha(u(x,t))\big\}\,,\label{sharp-cond3}
\end{align}
with an arbitrary scalar function $\sigma_2(x,t)$. For future reference we denote the time traces of $\Omega_1$ and $\Omega_2$ at any fixed time $t>0$ by $\Omega_1^t$, $\Omega_2^t\subset \R^d$, respectively.
While condition \eqref{sharp-cond1} is obvious from $r_1=0$, \eqref{sharp-cond2} and \eqref{sharp-cond3} need a justification in order to follow from $r_2=0$: We shall use the Cauchy-Schwarz inequality for symmetric matrices, 
\begin{equation}\label{CS}
  (\Delta \xi)^2=\big(\tr(I_d\,\nabla^2\xi)\big)^2 
  \le \|I_d\|^2 \|\nabla^2\xi\|^2 = d\|\nabla^2\xi\|^2\,,
\end{equation}
and hence $(\Delta \xi)^2=\theta\,d\|\nabla^2\xi\|^2$ with some scalar function $\theta=\theta(x,t)\in [0,1]$. Thus the remainder term \eqref{cjmtu}, i.e.\ the second factor in $r_2$, can be written as
\begin{equation}\label{R=0}
  R(\alpha,\beta,\xi) = [\alpha+(\beta-\alpha) d\theta] \,\|\nabla^2\xi\|^2 \ge\alpha(1-\theta) \|\nabla^2\xi\|^2\,,
\end{equation}
proceeding as in the proof of \cite[Theorem 11]{CJMTU}. In the last estimate we used the McCann condition \eqref{mccann}.
The remainder term in \eqref{R=0} is zero if $\nabla^2\xi=0$, i.e.\ condition \eqref{sharp-cond2}, or if $\beta =\frac{d-1}{d} \alpha$ along with $\theta=1$, which happens for equality in \eqref{CS}. The latter case corresponds to condition \eqref{sharp-cond3}.

Next we discuss the implications of the three above conditions: \eqref{sharp-cond1} will imply that $V$ is quadratic in at least one direction, see Lemma \ref{lem:9.1} below. \eqref{sharp-cond2} implies 
\begin{equation}\label{xi-form1}
   \xi(x,t)=\sigma_0+\sigma_1\cdot x,
\end{equation}
where the scalar function $\sigma_0=\sigma_0(t)$ and the vector function $\sigma_1=\sigma_1(t)$ are (for any fixed $t>0$) constant in $x$ on each connected component of $\Omega_1^t$. \eqref{sharp-cond3} implies 
\begin{equation}\label{xi-form2}
  \xi(x,t)=\sigma_0+\sigma_1\cdot x+\frac{\sigma_2}{2} |x|^2,
\end{equation}
and $\sigma_0=\sigma_0(t)$, $\sigma_1=\sigma_1(t)$, and $\sigma_2=\sigma_2(t)$ are (for any fixed $t>0$) constant in $x$ on each connected component of $\Omega_2^t$. Moreover, continuity of $u$, and hence of $\xi$, implies that $\sigma_j$, $j=0,1,2$, are continuous in $t$. In particular, $\sigma_j$ are independent on $\Omega_1^t$ and $\Omega_2^t$.

While the support of the initial condition, $\supp u_0$, may consist of several disconnected components, $u_\infty$ has a connected support due to the convexity of $V$. 
For the saturation analysis of \eqref{mainconclusion3} we shall consider here only initial data $u_0$ for which the positivity set $\{u_0>0\}$ is connected to simplify technicalities. We shall assume in the sequel that the positivity set $\{u(t)>0\}$ is connected for all $t\ge0$. This is motivated by the fact that an initially disconnected support of $u(.,t)$ would have to merge along the flow anyhow, as $u(t)$ converges to $u_\infty$ when $t\to\infty$. This qualitative behavior of the solutions, proven for the case of the porous medium equation with quadratic confinement \cite[\S18]{VazquezPME}, is not known up to our knowledge in this generality, although expected. 

Let us now discuss condition \eqref{sharp-cond3}  in some more detail: Since $\beta>0$ on $\R^+$, condition \eqref{sharp-cond3} is only relevant for $d\ge2$. Let McCann's equality in \eqref{sharp-cond3} now hold on a (maximal) interval $[u_1,u_2]$ of $u$-values. If $u_1<u_2$,\footnote{Actually it is possible that the McCann condition holds on $\R^+$, but as an equality only at one point, i.e.\ with $u_1=u_2>0$: E.g.\ consider $P(u)=1+(u-1)/2-(u-1)^2/8+(u-1)^3$ locally around $u=1$, for $d=2$.} then \eqref{finform3} implies that 
\begin{equation}\label{P-fast-diff}
  P(u)=p_0 u^{\frac{d-1}{d}},
\end{equation}
with some $p_0>0$, holds on the maximal interval $[u_1,u_2]$, i.e.\ the fast-diffusion equation with the minimal admissible exponent (cf.\ Proposition \ref{prop:genm}(b)). 
If the sets $\Omega_1^t$ and $\Omega_2^t$ touch for some fixed $t>0$, then $\partial\Omega_1^t\cap\partial\Omega_2^t$ is a level set of $u(t)$ with values either $u_1\geq 0$ or $u_2$. 

Let us distinguish some cases. If $\partial\Omega_1^t\cap\partial\Omega_2^t$ consists of isolated points (due to local extrema of $u(.,t)$), such points must belong to the set $\Omega_2^t$, and \eqref{xi-form1}, i.e. $\sigma_2(t)=0$, holds also there by continuity of $\xi(.,t)$. 
Otherwise, there is at least one accumulation point of $\partial\Omega_1^t\cap\partial\Omega_2^t$. Then the functions $\xi(.,t)$ from \eqref{xi-form1} and \eqref{xi-form2} coincide on this boundary set by continuity. From the analiticity of $\xi(.,t)$ hence, we conclude that the coefficients $\sigma_0(t)$, $\sigma_1(t)$, and $\sigma_2(t)$ in \eqref{xi-form1}, \eqref{xi-form2} must coincide. 
Hence, $\sigma_2(t)=0$ also in such a component of $\Omega_2^t$. Furthermore in this case, if $u_1>0$, continuity and finite mass of $u(.,t)$ imply that $u(.,t)$ must also take values in $[0,u_1)$ ``close to'' $\Omega_2^t$. Hence any component of $\Omega_2^t$ must touch $\Omega_1^t$, and hence $\sigma_2(t)=0$. 

By contrast, the quadratic term in \eqref{xi-form2} can only be present at time $t$ if \eqref{P-fast-diff} holds on some (non-trivial) interval $[0,u_2]$. Furthermore in this case, $u_2\ge \max u(.,t)$,  where the maximum is taken over the connected component of $\Omega_2^t$ with $\sigma_2(t)\ne0$. 
As we shall see in the proof of Lemma \ref{lem:9.2}, $\sigma_2(t)$ cannot become zero in finite time. Thus, when considering now the whole trajectory, $u_2$ must even satisfy $u_2\ge \sup_{\R^d\times\R^+} u(x,t)$. In this case, the evolution equation for the considered initial condition $u_0$ is purely the limiting fast-diffusion equation with confinement. Notice that $\supp u(t)=\R^d$ for $t>0$ in the case of the quadratic confinement, see for instance \cite{BV06} and the references therein. For general confinement potentials, the positivity of the solution is not known up to our knowledge, although expected.

Recall our assumption that the positivity set $\{u(t,\cdot)>0\}$ is connected for every $t\geq 0$. As a conclusion of the discussion above, saturation in \eqref{mainconclusion3} can only hold if either \eqref{xi-form1} or both \eqref{xi-form2} with $\sigma_2\ne0$ and \eqref{P-fast-diff} holds for all $t>0$, i.e.\ on $\{u>0\}$. This means that either $\Omega_1=\{u>0\}$, or $\Omega_2=\{u>0\}$ and \eqref{P-fast-diff} for all values attained by $u$.
Hence we shall now analyze the compatibility of the two solution forms \eqref{xi-form1} and \eqref{xi-form2} with the evolution equation \eqref{GFP} and with the terminal condition $u(t=\infty)=u_\infty$. Let us briefly anticipate the result: In the former case, saturation of the inequality \eqref{mainconclusion3} can hold only for the function $\bar u$ being a translate of $u_\infty$. In the latter case, also scaled versions of $u_\infty$ may saturate \eqref{mainconclusion3}.

We recall from {\bf (HV2)} that $V$ is assumed to attain its minimal value zero at $x=0$.
Let $B_\infty$ be the interior of $u_\infty$'s support. Note that $B_\infty$ is convex and contains the origin. $B_\infty$ is bounded for degenerate diffusion, and is $\R^d$ for non-degenerate diffusions. 

\subsection{Case 1: Strict McCann condition}
\begin{lemma}\label{lem:9.1}
    Assume that $P$ satisfies the strict McCann condition on some interval $(0,U]$, with $U\in(0,\infty]$. Assume the support of the initial datum $u_0$ is connected and that $u_0\neq u_\infty$.  If the corresponding solution $u$ is such that the sharpness condition $r(u)\equiv 0$ holds in \eqref{error-int} and that $u\le U$, then there are a unit vector $\textgoth{e}\in\R^d$ and a distance $r_0>0$ such that
  \begin{itemize}
  \item[(a)] $u_0$ is a translate of $u_\infty$ by $r_0\textgoth{e}$,
    \begin{align}
      \label{eq:u0opt}
      u_0(x)=u_\infty(x-r_0\textgoth{e})\quad \mbox{for } x\in\R^d;
    \end{align}
  \item[(b)] the solution $u$ is a translation of $u_0$ into $u_\infty$,
    \begin{align}
      \label{eq:uopt}
      u(x,t) = u_\infty(x-e^{-\lambda t}r_0\textgoth{e})\quad \mbox{for } x\in\R^d;
    \end{align}
  \item[(c)] the potential $V$ satisfies 
    \begin{align}
      \label{eq:Vopt}
      V(x) = \frac\lambda2(\textgoth{e}\cdot x)^2 
      + \tilde V\big(x-(\textgoth{e}\cdot x)\textgoth{e}\big)
      \quad \text{for all}\quad x\in B:=\bigcup_{0<r<r_0}(B_\infty+r\textgoth{e}).
    \end{align}
    Here $\tilde V$ is an arbitrary, $\lambda$-convex function of the $\textgoth{e}$-orthogonal component of $x$.
 \end{itemize}  
\end{lemma}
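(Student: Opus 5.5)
The plan is to exploit the three sharpness conditions derived just before the lemma. Since $u\le U$ and the strict McCann condition holds on $(0,U]$, we have $\Omega_1=(\supp u)^\circ$. Condition \eqref{sharp-cond2} together with the connectedness of the positivity set then yields the affine form \eqref{xi-form1}:
\[
\xi(x,t)=\phi(u)+V-\bar C=\sigma_0(t)+\sigma_1(t)\cdot x
\]
on $\{u(t)>0\}$. Hence $u(x,t)=\overline{\phi}^{-1}\big(\bar C+\sigma_0(t)+\sigma_1(t)\cdot x-V(x)\big)$ there. The first step is to insert this ansatz into the equation $\partial_t u=\nabla\cdot(u\nabla\xi)=\nabla u\cdot\sigma_1$ and derive ODEs for $\sigma_0,\sigma_1$.

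Concretely, $\partial_t u=\phi'(u)^{-1}(\dot\sigma_0+\dot\sigma_1\cdot x)$ and $\nabla u=\phi'(u)^{-1}(\sigma_1-\nabla V)$. Equating gives
\[
\dot\sigma_0+\dot\sigma_1\cdot x=|\sigma_1|^2-\sigma_1\cdot\nabla V(x)
\]
on the open positivity set. Differentiating in $x$ yields $\nabla^2V\,\sigma_1=-\dot\sigma_1$, which is independent of $x$. Next, \eqref{sharp-cond1} with $\nabla\xi=\sigma_1$ gives $\sigma_1\cdot(\nabla^2V-\lambda I)\sigma_1=0$. Because $\nabla^2V-\lambda I\ge0$, this forces $(\nabla^2V-\lambda I)\sigma_1=0$, so $\nabla^2V\sigma_1=\lambda\sigma_1$ on the support. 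Therefore $\dot\sigma_1=-\lambda\sigma_1$, i.e.\ $\sigma_1(t)=e^{-\lambda t}\sigma_1(0)$, and the direction $\textgoth{e}:=\sigma_1(0)/|\sigma_1(0)|$ is fixed.

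Here $\sigma_1(0)\ne0$: otherwise $\xi$ is constant in $x$, mass conservation forces $\sigma_0=0$, and then $u_0=u_\infty$, which contradicts the hypothesis. The identity $\nabla^2V\textgoth{e}=\lambda\textgoth{e}$ on the region swept by the supports means that $\partial_{\textgoth{e}}V-\lambda\,\textgoth{e}\cdot x$ is constant along every direction. Using $\nabla V(0)=0$, this gives the splitting \eqref{eq:Vopt} $V=\frac\lambda2(\textgoth{e}\cdot x)^2+\tilde V(x^\perp)$ on that region, with $\tilde V$ inheriting $\lambda$-convexity.

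The second step is to identify $u$ as a translate. Writing $s(t)=|\sigma_1(t)|/\lambda=r(t)$, complete the square:
\[
\sigma_1\cdot x-V(x)=-V(x-r\textgoth{e})+\tfrac\lambda2 r^2.
\]
Then $u(x,t)=\overline{\phi}^{-1}\big(\bar C+\sigma_0+\frac\lambda2r^2-V(x-r\textgoth{e})\big)$, provided $V$ is of the split form both at $x$ and at $x-r\textgoth{e}$. By mass conservation and the uniqueness of $\bar C$ from Proposition~\ref{charac}, which applies because the translated profile has the same mass, we get $\sigma_0+\frac\lambda2r^2=0$. The equation for $\sigma_0$, namely $\dot\sigma_0=|\sigma_1|^2-\sigma_1\cdot\nabla V$ restricted to its constant part, should be consistent with this. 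Hence $u(x,t)=u_\infty(x-r(t)\textgoth{e})$ with $r(t)=e^{-\lambda t}r_0$, which gives (a) and (b).

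The main obstacle is the domain issue. The Hessian identity is only known on the positivity set of $u(t)$, so the split form of $V$ holds only on $\bigcup_t\supp u(t)$, which is exactly the set $B$ in (c). The completing-the-square argument also needs $V(x-r\textgoth{e})$ in the same form. I would handle this by a continuity argument in $t$: the set where the formula holds is the translated $B_\infty+r(t)\textgoth{e}$, and it sweeps out $B$ as $r$ decreases from $r_0$ to $0$. For degenerate $P$, one must additionally check that the free boundary moves consistently, i.e.\ that the support of $u(t)$ equals the translated $B_\infty$; this follows from the generalized inverse $\overline{\phi}^{-1}$ vanishing where the argument drops below $\phi(0+)$.
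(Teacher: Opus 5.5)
Your first step is sound, and slightly more direct than the paper's. Inserting $\phi(u)=\bar C+\sigma_0+\sigma_1\cdot x-V$ into $\partial_t u=\sigma_1\cdot\nabla u$ gives $\dot\sigma_1=-\nabla^2V(x)\sigma_1$ on $\{u(t)>0\}$ without first needing $\nabla^2V(x)=\nabla^2V(x-\gamma_t)$. Combined with \eqref{sharp-cond1} it yields $\sigma_1(t)=e^{-\lambda t}\sigma_1(0)$.

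The gap is in your second step. Completing the square at a point $x\in\{u(t)>0\}$ requires $\partial_{\textgoth{e}}V(y)=\lambda\,\textgoth{e}\cdot y$ along the whole segment from $x-r(t)\textgoth{e}$ to $x$. Your mass argument for $\sigma_0+\frac\lambda2 r^2=0$ additionally needs this identity up to the free boundary, so that $\{u(t)>0\}$ coincides with the positivity set of the translated $u_\infty$. But $\nabla^2V\,\textgoth{e}=\lambda\textgoth{e}$ is only known on $\bigcup_t\{u(t)>0\}$. The claim that this union contains those segments, i.e.\ that $\{u(t)>0\}=B_\infty+r(t)\textgoth{e}$, is exactly what you are trying to prove. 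Your proposed continuity argument assumes it (``the set where the formula holds is the translated $B_\infty+r(t)\textgoth{e}$''). Nor is there an anchor from which to start a bootstrap, since at $t=\infty$ you have no Hessian information at all.

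The missing idea is one you already wrote down: $\partial_t u=\sigma_1(t)\cdot\nabla u$ is a transport equation with spatially constant velocity. The flux $\nabla P(u)+u\nabla V$ equals $u\sigma_1(t)$ on $\{u>0\}$ and vanishes a.e.\ on $\{u=0\}$, so the equation holds in the distributional sense on all of $\R^d$. Hence $u(x,t)=u_0\big(x+\int_0^t\sigma_1\big)$, and letting $t\to\infty$ gives $u(x,t)=u_\infty(x-\gamma_t)$ with $\gamma_t=\int_t^\infty\sigma_1=\sigma_1(t)/\lambda$. No information on $V$ is needed for this; it is how the paper proceeds.

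This gives (a) and (b) at once; your exponential law even supplies the integrability of $\sigma_1$ for free, which the paper instead takes from the a priori convergence. It also identifies $\bigcup_t\{u(t)>0\}$ with $B$, and your Hessian argument then yields (c) on $B$. The completing-the-square and mass-conservation steps become unnecessary.
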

We remark that this set  $B$ is just the linear interpolation between the two ``endpoint sets", i.e.\ $\supp u_0$ and $\supp u_\infty$.\\

\begin{proof}
  Due to the strict McCann condition \eqref{sharp-cond2} for all $0<u\leq U$, we have for all $t>0$ and all $x$ in the support of $u(\cdot,t)$ that
  \begin{align}
    \label{eq:add-1}
    \xi(x,t) = \sigma_0(t)+\sigma_1(t)\cdot x\,.
  \end{align}
  The evolution equation $\partial_tu=\nabla\cdot(u\nabla\xi)$ now implies that
  \begin{equation*}%\label{eq:9.1}
    \partial_tu = \sigma_1(t)\cdot\nabla u \quad \mbox{on }\Omega_1=(\supp u)^\circ.  
  \end{equation*}
Thus, $u$ satisfies this transport equation along the time-dependent vector field $\sigma_1$ which, for each $t$ fixed, is constant w.r.t. $x\in\Omega_1^t$. 
Since $u$ converges to $u_\infty$, it thus holds, for all $t\ge0$ and $x\in\R^d$:
  \begin{align*}
    u(x,t) = u_\infty(x-\gamma_t), \quad\text{with}\quad \gamma_t=\int_t^\infty\sigma_1(s)ds,
  \end{align*}
  and in particular, $u_0$ is of the form \eqref{eq:u0opt} with $r_0\textgoth{e}=\gamma_0$.
  By hypothesis, $u_0\neq u_\infty$, and thus $\gamma_0\neq0$.
  Note that integrability of $\sigma_1$ at infinity follows from our a priori information that $u$ converges to $u_\infty$, and thus $\gamma_t\to0$ as $t\to\infty$.
  
  For brevity, introduce $\Xi[u](x) := \phi(u(x))+V(x)-\bar C$,
  so that $\xi(x,t) = \Xi[u(t)](x)$.
  Since $\Xi[u_\infty]\equiv0$ on $B_\infty$, we obtain
  \begin{align}\label{sigma01}
    \sigma_0(t)+\sigma_1(t)\cdot x
    = \xi(x,t)
    &= \Xi[u(t)](x)
      = \phi(u_\infty(x-\gamma_t))+V(x)-\bar C \\
    & = \Xi[u_\infty](x-\gamma_t)-\big(V(x-\gamma_t)-\bar C\big)+\big(V(x)-\bar C\big)
      = V(x) - V(x-\gamma_t), \nonumber
  \end{align}
  for all $x\in B_\infty+\gamma_t$. Thus $V$ on $B_\infty+\gamma_t$ is identical to $V$ on $B_\infty$, up to an affine correction, and in particular, for all $x\in B_\infty+\gamma_t$,
  \begin{align}
    \label{eq:Vhomogen}
    \nabla^2V(x) = \nabla^2V(x-\gamma_t).
  \end{align}
  Differentiation of \eqref{sigma01} in $t$ yields
  \begin{align*}
    \dot\sigma_0(t) + \dot\sigma_1(t)\cdot x
    = -\sigma_1(t)\cdot\nabla V(x-\gamma_t),
  \end{align*}
  and a subsequent differentiation in $x$ yields
  \begin{align}
    \label{eq:add-777}
    \dot\sigma_1(t) = -\nabla^2V(x-\gamma_t)\sigma_1(t) = -\nabla^2V(x) \sigma_1(t),
  \end{align}
  where the last equality follows from \eqref{eq:Vhomogen}. Recalling that $\nabla\xi(x,t)=\sigma_1(t)$ because of the special form \eqref{eq:add-1} of $\xi$, we can conclude by means of condition \eqref{sharp-cond1} that, for all $x\in B_\infty+\gamma_t$,
  \begin{align}
    \label{eq:add-13}
    \sigma_1(t)\cdot\nabla^2V(x)\cdot\sigma_1(t) = \lambda|\sigma_1(t)|^2.
  \end{align}
  Since $\nabla^2V(x)$ is a symmetric matrix bounded below by $\lambda I_d$, it follows that $\sigma_1(t)$ is an eigenvector for the eigenvalue $\lambda$, i.e.,
  \begin{align*}
    %\label{eq:add-93}
    \nabla^2V(x)\sigma_1(t) = \lambda\sigma_1(t).
  \end{align*}
  Substitute this into \eqref{eq:add-777} to conclude that $\dot\sigma_1(t) = -\lambda\sigma_1(t)$, and consequently,
  \begin{align*}
    \sigma_1(t) = e^{-\lambda t}\sigma_1(0).
  \end{align*}
  This shows that $\gamma_t=e^{-\lambda t}\gamma_0$, proving \eqref{eq:uopt}. In particular, $\gamma_t=r_0e^{-\lambda t}\textgoth{e}$ always points in the same direction $\textgoth{e}$, which implies that
  \begin{align*}
    B:=\bigcup_{t>0}\operatorname{supp}u(\cdot,t) = \bigcup_{0<r<r_0}\big(B_\infty+r\textgoth{e}\big).
  \end{align*}
  The relation \eqref{eq:add-13} implies for all $x\in B$ that
  \begin{align*}
    \textgoth{e}\cdot\nabla^2V(x)\cdot\textgoth{e} = \lambda.
  \end{align*}
  Note that $\textgoth{e}$ is an eigenvector associated to $\lambda$ of the symmetric matrix $\nabla^2V(x)$.
  Since $\nabla^2V(x)\ge\lambda I_d$, it further follows that $\textgoth{e}'\cdot\nabla^2V(x)\cdot \textgoth{e}=0$ at every $x\in B$, for all vectors $\textgoth{e}'\in\R^d$ that are orthogonal to $\textgoth{e}$. This implies that $V$ is indeed of the form \eqref{eq:Vopt}.
\end{proof}
We remark that, in Lemma \ref{lem:9.1}, the potential $V$ may also be quadratic in more than one direction.

\subsection{Case 2: Equality in the McCann condition}
\begin{lemma}\label{lem:9.2}
  Assume the marginal case $P(r)=p_0r^{1-1/d}$ with $d\ge2$, and that the support of the initial datum $u_0$ is connected and that $u_0\neq u_\infty$.
  If the solution $u$ 
  satisfies the sharpness condition $r(u)\equiv0$, 
  then there are a unit vector $\textgoth{e}\in\R^d$, a distance $r_0>0$, and a scaling factor $s_0\in\R^+
  $ such that
  \begin{itemize}
  \item[(a)] $u_0$ is a scaled translate of $u_\infty$,
    \begin{align}
      \label{eq:u0opt2}
      u_0(x)=s_0^{-d}u_\infty\left(\frac{x-r_0\textgoth{e}}{s_0}\right)\quad \mbox{for } x\in\R^d;
    \end{align}
  \item[(b)] the solution $u$ is a scaling and translation of $u_0$ into $u_\infty$,
    \begin{align}
      \label{eq:uopt2}
      u(x,t) = (1+(s_0-1)e^{-\lambda t})^{-d}u_\infty\left(\frac{x-r_0e^{-\lambda t}\textgoth{e}}{1+(s_0-1)e^{-\lambda t}}\right)\quad \mbox{for } x\in\R^d.
    \end{align}
  \item[(c)] If $s_0\neq 1$ the potential $V$ is perfectly quadratic,
    \begin{align}
      \label{eq:Vopt2}
      V(x)=\frac\lambda2|x|^2,
      \qquad \mbox{and} \qquad u_\infty(x) = \left(\frac{\bar C}{p_0(1-d)}+1+\frac{\lambda}{2(d-1)p_0}|x|^2\right)^{-d}\quad \mbox{for } x\in\R^d,
    \end{align}
    otherwise, i.e.\ for $s_0=1$, $V$ satisfies \eqref{eq:Vopt} for all $x\in\R^d$.
 \end{itemize}  
\end{lemma}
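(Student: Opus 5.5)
Following the discussion preceding Lemma \ref{lem:9.1}, I would first split according to whether the quadratic term in \eqref{xi-form2} is present. For $P(r)=p_0r^{1-1/d}$, equality holds in the McCann condition for all $u>0$, so $\Omega_2=\{u>0\}$. Condition \eqref{sharp-cond3} gives $\xi(x,t)=\sigma_0(t)+\sigma_1(t)\cdot x+\frac{\sigma_2(t)}2|x|^2$ on the (connected) positivity set. Since $\alpha(0+)=\infty$, the equation is non-degenerate and $\phi(u_\infty)=\bar C-V$ on $\R^d$. If $\sigma_2\equiv0$, the argument of Lemma \ref{lem:9.1} carries over verbatim: transport along $\sigma_1(t)$, translates of $u_\infty$, and $V$ of the form \eqref{eq:Vopt}. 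This is the case $s_0=1$, which I dispose of first.

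For $\sigma_2\not\equiv0$, I would insert the ansatz into $\partial_tu=\nabla\cdot(u\nabla\xi)$. This gives the first order equation
\begin{align*}
\partial_tu=(\sigma_1+\sigma_2x)\cdot\nabla u+d\,\sigma_2u,
\end{align*}
whose characteristics are affine dilations, $\dot x=-(\sigma_1+\sigma_2 x)$. Solving backwards from $u_\infty$ at $t=\infty$ (the solution converges to $u_\infty$) yields
\begin{align*}
u(x,t)=s_t^{-d}u_\infty\big((x-\gamma_t)/s_t\big),
\end{align*}
with $\dot s_t=\sigma_2 s_t$, $s_t\to1$, and $\gamma_t\to0$. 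In particular, \eqref{eq:u0opt2} holds with $s_0$ and $r_0\textgoth{e}=\gamma_0$. Along the way I would check that $\sigma_2$ cannot vanish in finite time: $s_t$ solves a linear ODE, so $\sigma_2$ determines it, and vice versa. This is the claim promised in the discussion above.

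Next comes the compatibility step, which I expect to be the main obstacle. As in \eqref{sigma01}, I write $\xi=\phi(u)+V-\bar C$ with $\phi(r)=-p_0(d-1)(r^{-1/d}-1)$, using that $\phi(s^{-d}r)=-p_0(d-1)(s\,r^{-1/d}-1)$. Combined with $\phi(u_\infty(y))=\bar C-V(y)$ and $y=(x-\gamma_t)/s_t$, this gives
\begin{align*}
\sigma_0+\sigma_1\cdot x+\tfrac{\sigma_2}2|x|^2=V(x)-s_tV(y)+(1-s_t)\big(\bar C+p_0(d-1)\big)\quad\text{for all } x,t.
\end{align*}
Differentiating twice in $x$ gives $\nabla^2V(x)-s_t^{-1}\nabla^2V(y)=\sigma_2I_d$. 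Differentiating in $t$ and using \eqref{sharp-cond1} (which here says $\nabla\xi\cdot(\nabla^2V-\lambda I)\nabla\xi=0$ with $\nabla\xi=\sigma_1+\sigma_2x$ ranging over all directions as $x$ varies, when $\sigma_2\neq0$) forces $\nabla^2V\equiv\lambda I_d$ on the whole trajectory region, which is $\R^d$. Hence $V=\frac\lambda2|x|^2$, using that the minimum is at $0$ with value $0$. The explicit $u_\infty$ in \eqref{eq:Vopt2} then follows from inverting $\phi$.

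Finally, plugging the quadratic $V$ into the identity above and matching coefficients yields $\sigma_2=\lambda(1-1/s_t)$, i.e.\ $\dot s_t=\lambda(s_t-1)$ read backwards, so $s_t=1+(s_0-1)e^{-\lambda t}$. The linear coefficients give $\dot\gamma_t=-\lambda\gamma_t$, hence $\gamma_t=r_0e^{-\lambda t}\textgoth{e}$, proving \eqref{eq:uopt2}. The delicate points I would still need to check are the sign conventions in the time derivative and that the positivity set is all of $\R^d$ (needed to conclude the global quadratic form), which I would assume as in the preceding discussion.
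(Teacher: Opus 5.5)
Your proof is correct in outline, but it takes a different route from the paper's, and two steps need tightening.

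\textbf{Comparison with the paper.} The paper first proves $V=\frac\lambda2|x|^2$. It then substitutes $u=\varphi(\sigma_0+\sigma_1\cdot x+(\sigma_2-\lambda)|x|^2/2)$ into the equation and solves the resulting Riccati-type system ($\dot\sigma_2=\sigma_2(\sigma_2-\lambda)$, $\dot\sigma_1=\sigma_1(\sigma_2-\lambda)$, plus an equation for $\sigma_0$) explicitly. It fixes the constants by requiring $\sigma_j\to0$, and only at the end uses the representation $u=e^{-d\theta}u_\infty(e^{-\theta}(x-\gamma))$ to rewrite the solution.

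You instead start from that dilation--translation representation. You then use the homogeneity $\phi(s^{-d}r)=(d-1)p_0(1-s\,r^{-1/d})$ of the marginal nonlinearity. Once $V$ is quadratic, matching coefficients gives $\sigma_2=\lambda(1-1/s_t)$ and $\sigma_1=\lambda\gamma_t/s_t$. These close the characteristic equations into the linear ODEs $\dot s_t=-\lambda(s_t-1)$ and $\dot\gamma_t=-\lambda\gamma_t$.

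Your route has three advantages:
\begin{itemize}
\item it is shorter;
\item it avoids the unwieldy formula for $\sigma_0$;
\item it makes $\sigma_2\neq0$ for all $t$ automatic when $s_0\neq1$. Your separate remark on that point is not an argument as stated, but it is also not needed.
\end{itemize}
Two small corrections, neither of which affects the argument: the correct sign is $\dot s_t=-\sigma_2 s_t$ (so $s_t=e^{\theta(t)}$ with $\theta(t)=\int_t^\infty\sigma_2$, and no ``reading backwards'' is needed), and the constant in your identity is $(1-s_t)\big(p_0(d-1)-\bar C\big)$.

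\textbf{Full support.} You should not assume full support; the discussion before the lemma says explicitly that positivity is unknown for general $V$. The paper derives it instead. On $\Omega_2^t$ you have $u=\varphi(\xi-V)$, where $\xi-V$ extends continuously to all of $\R^d$ and $\varphi>0$ because $\phi(0+)=-\infty$. Continuity of $u(\cdot,t)$ therefore rules out any boundary of $\Omega_2^t$, which gives $\Omega_2=\R^d\times(0,\infty)$.

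\textbf{Identifying $V$.} Take a time $t_0$ with $\sigma_2(t_0)\neq0$ and set $x_0:=-\sigma_1(t_0)/\sigma_2(t_0)$. At each $x$, condition \eqref{sharp-cond1} only tells you that the single direction $x-x_0$ lies in the kernel of $\nabla^2V(x)-\lambda I_d$. That does not give $\nabla^2V(x)=\lambda I_d$ pointwise. Varying $t$ does not supply further directions either, since $\sigma_1/\sigma_2$ is constant along the actual solutions. What you need is the ray integration of Lemma \ref{lem-parabolic}:
\begin{itemize}
\item $V$ restricted to each line through $x_0$ is a parabola of curvature $\lambda$ with slope $\nabla V(x_0)\cdot e$ at $x_0$;
\item hence $V$ equals $\frac\lambda2|x|^2$ plus an affine function on all of $\R^d$;
\item the normalization in {\bf (HV2)} removes the affine part.
\end{itemize}
The ``differentiating in $t$'' step you mention is not needed for this.
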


\begin{remark}
The form of the nonlinearity $P$ in Lemma \ref{lem:9.2} can be slightly generalized: It would be enough that $P(r)=p_0r^{1-1/d}$ holds only of some interval $[0,U]$ with $U\ge u_\infty^M:=\max_{x\in\R^d} u_\infty(x)$. In this case the scaling factor would have to satisfy $s_0\ge \Big(\frac{u_\infty^M}{U}\Big)^{1/d}$.
\end{remark}
\begin{proof}[Proof of Lemma \ref{lem:9.2}]
  First notice that, for the given $P$, we have accordingly
  \begin{align*}
    \phi(r) = (d-1)p_0(1-r^{-1/d}).
  \end{align*} 
  For any $t>0$, the  condition \eqref{sharp-cond3} implies that
  \begin{align*}
    %\label{eq:add-4}
    \xi(x,t) = \sigma_0(t)+\sigma_1(t)\cdot x + \frac{\sigma_2(t)}2|x|^2,       \quad \mbox{on }\Omega_2=(\supp u)^\circ.
  \end{align*}
Here, $\sigma_j$, $j=0,1,2$, are constant in $(x,t)\in\Omega_2$ being connected. Since $\xi(x,t):=\phi(u(x,t))+V(x)-\bar C<\infty$ on $(x,t)\in\Omega_2$ and recalling \eqref{def-varphi}, we hence deduce
\begin{equation}\label{eq:u-form2aux}
  u(x,t)=\varphi\big(\xi(x,t)-V(x)\big)
  \,, 
\end{equation}
for all $(x,t)\in\Omega_2$. Notice that for $P(r)=r^m$, $0<m<1$, the function $\varphi(z)$ defined in \eqref{def-varphi} is positive, see \eqref{phi-inv}.
Taking into account that solutions to \eqref{eq:PDE0} are assumed to be continuous for $t>0$, see Section 2, then \eqref{eq:u-form2aux} holds on $x\in\R^d$ and $t>0$ or equivalently $\Omega_2=\R^d\times (0,\infty)$, otherwise there would be a jump discontinuity at the boundary of $\Omega_2^t$ for some $t>0$.

  We start by identifying the possible potentials $V$. First, assume that $\sigma_2(t)=0$ for all $t\geq 0$. We can apply the same proof as in Lemma \ref{lem:9.1}, and thus the potential satisfies \eqref{eq:Vopt}. Moreover, the conclusions about the solution $u$ in \eqref{eq:u0opt} and \eqref{eq:uopt} carry over, coinciding with \eqref{eq:u0opt2} and \eqref{eq:uopt2}  for $s_0=1$.

  Now, assume that there exists $t_0>0$, such that $\sigma_2(t_0)\neq 0$ and define $x_0:=-\sigma_1(t_0)/\sigma_2(t_0)$. We now show that the potential $V$ has the simple shape \eqref{eq:Vopt2}. Condition \eqref{sharp-cond1} implies that
  \begin{align}
    \label{eq:add-15}
    [\sigma_1(t_0)+\sigma_2(t_0)x]\cdot\nabla^2V(x) \cdot[\sigma_1(t_0)+\sigma_2(t_0)x] = \lambda|\sigma_1(t_0)+\sigma_2(t_0)x|^2\, \mbox{ for all } x\in \Rd.
  \end{align}
  Let $\textgoth{e}\in\R^d$ be an arbitrary unit vector, and consider for $s\in\R$ the point $x=x_0+s\textgoth{e}$. Then $\sigma_1(t_0)+\sigma_2(t_0)(x_0+s\textgoth{e})=s\sigma_2(t_0)\textgoth{e}$, and we obtain from \eqref{eq:add-15} that
  \begin{align*}
\textgoth{e}\cdot\nabla^2V(x_0+s\textgoth{e})\cdot \textgoth{e} = \lambda
\quad \forall\,s\in\R.
  \end{align*}
  Since $\textgoth{e}$ is an arbitrary unit vector, this means that $V$ is an exact parabola with coefficient $\lambda$ on each line through $x_0$. By smoothness of $V$, all these parabolas have the same value and continuous first derivatives at $s=0$ with respect to $\textgoth{e}$. Since we assumed that $V$ takes its minimum at $x=0$, it follows that $V(x)=\frac\lambda2|x|^2$ due to the convexity assumption on $V$ in {\bf (HV2)}, see all full details in Lemma \ref{lem-parabolic} in the appendix.

Now, we want to identify the solution $u$ for the case in which $\sigma_2(t)$ is not identically zero. Since $\xi:=\phi(u)+V(x)-\bar C$ on $\R^d$ and recalling \eqref{def-varphi}, we hence deduce
\begin{equation}\label{eq:u-form2a}
  u(x,t)=\varphi\big(\xi-V(x)\big)
  =\varphi\left(\sigma_0(t)+\sigma_1(t)\cdot x+(\sigma_2(t)-\lambda)\frac{|x|^2}{2}\right)\,.
\end{equation}
Next we shall determine the time evolution of the $\sigma_j$'s such that $u$ satisfies the evolution equation $\partial_tu=\nabla\cdot(u\nabla\xi)$ or equivalently
  \begin{align}
    \label{eq:add-14a}
    \partial_tu = \nabla\cdot\big(u\,(\sigma_1+\sigma_2x)\big)      \quad \mbox{on }\R^d\times (0,\infty).
  \end{align}
Plugging \eqref{eq:u-form2a} into \eqref{eq:add-14a}, we get
\begin{eqnarray}\label{diff-eqa}
 0&=& \partial_t u-\nabla\cdot(u\nabla\xi) = \partial_t u-\nabla u\cdot(\sigma_1+\sigma_2 x)-d\sigma_2 u \nonumber\\
 &=& \varphi'\left(\sigma_0(t)+\sigma_1(t)\cdot x+(\sigma_2(t)-\lambda)\frac{|x|^2}{2}\right) \\
 &&\times \Big(\dot\sigma_0(t)+\dot\sigma_1(t)\cdot x+ \dot\sigma_2(t)\frac{|x|^2}{2}
 -(\sigma_1(t)+(\sigma_2(t)-\lambda) x)\cdot(\sigma_1(t)+\sigma_2(t) x)\Big)-d\sigma_2(t) u, \nonumber
\end{eqnarray}
where $\dot\sigma_j$ denotes the time derivative. We also recall from \eqref{phi-inv} that $\varphi'(z)>0$ for $u=\varphi(z)>0$ and use \eqref{power-constants} to obtain
$$
  \varphi(z)=\Big(\frac{z+\bar C}{p_0(1-d)}+1\Big)^{-d}
  =-\frac{1}{d} \varphi'(z) \,\big(z+\bar C+p_0(1-d)\big)\,.
$$
Thus we conclude from \eqref{diff-eqa}, using $\varphi'>0$, that
\begin{equation*} %\label{diff-eq1}    
    \dot \sigma_0+\dot\sigma_1\cdot x+\dot\sigma_2\frac{|x|^2}{2}-(\sigma_1+(\sigma_2-\lambda) x)\cdot(\sigma_1+\sigma_2 x)
    +\sigma_2\Big(\sigma_0+\sigma_1\cdot x+(\sigma_2-\lambda)\frac{|x|^2}{2} +\bar C+p_0(1-d)\Big) =0\,
\end{equation*}
holds for all $x\in\R^d$ and $t> 0$.
The coefficients of the different $x$-powers then yield the following ODEs:
\begin{eqnarray*}
    && \dot\sigma_0 =|\sigma_1|^2
    -\sigma_2\big(\sigma_0
    +\bar C+p_0(1-d)\big), \\
    && \dot\sigma_1=\sigma_1(\sigma_2-\lambda), \\
    && \dot\sigma_2=\sigma_2(\sigma_2-\lambda)\,.
\end{eqnarray*}
Let us point out that a posteriori $\sigma_2(t)\neq 0$ for all $t>0$ since $\sigma_2(t_0)\neq 0$.

Disregarding the solution $\sigma_2\equiv0$ (which is ruled out in this case), the general solution of this ODE system is
\begin{eqnarray}
    && \sigma_0(t) =c_0(e^{-\lambda t}-c_2)-\frac{\big(c_3(c_2e^{\lambda t}-1)+\lambda|c_1|^2\Big)^2}{2c_2\lambda|c_1|^2(c_2e^{\lambda t}-1)e^{\lambda t}}, \label{sigma0}\\
    && \sigma_1(t)=\frac{\lambda c_1}{1-c_2 e^{\lambda t}}, \nonumber \\ % \label{sigma1}\\
    && \sigma_2(t)= \frac{\lambda}{1-c_2 e^{\lambda t}}\,, \label{sigma2}
\end{eqnarray}
with arbitrary constants $c_0,\,c_2\in\R\setminus\{0\}$, $c_1\in\R^d$, and $c_3:=\bar C+p_0(1-d)$. This analysis shows that $\sigma_j$, $j=0,1,2$, are given by \eqref{sigma0}-\eqref{sigma2}. Note that the solutions to the ODE system \eqref{sigma0}-\eqref{sigma2} are well defined up to $t=0$, thus the initial data must be of the form \eqref{eq:u-form2a}.

Since $u(t)\stackrel{t\to\infty}{\longrightarrow} u_\infty$ in \eqref{eq:u-form2a}, then the condition 
\begin{equation}\label{u-at-infty-a}
    u(x,t=\infty)=u_\infty(x)=\varphi(-V(x)),  
\end{equation}
implies that $\sigma_j(t)\stackrel{t\to\infty}{\longrightarrow}0,\,j=0,1,2$. This implies
\begin{equation*} %\label{c_j}
   c_0=-\frac{c_3^2}{2\lambda c_2|c_1|^2},\qquad \mbox{and hence } \quad \sigma_0(t)=\frac{e^{-\lambda t}}{c_2} \Big[\frac12 |c_1|^2 \frac{\lambda}{1-c_2 e^{\lambda t}}-c_3\Big]\,,
\end{equation*}
with $c_1\in\R^d$, $c_2\in(-\infty,0)\cup(1,\infty)$.
We also see that $\sigma_2(t)<\lambda$, and $\sigma_2(t)$ cannot become zero in finite time, as mentioned earlier. 

We finally want to rewrite the solution $u$ in terms of $u_\infty$. This could be done by working on the explicit, although involved, formula for $u$ given in \eqref{eq:u-form2a}, however we will find it in an alternative and shorter manner. With this objective, we first realize that if $\theta(t)$ and $\gamma(t)$ are defined by
\begin{align*}
    %\label{eq:add-18}
    \theta(t) = \int_t^\infty\sigma_2(s)\,d s,
    \quad
    \gamma(t)= \int_t^\infty e^{\theta(t)-\theta(s)}\sigma_1(s)\,d s,
  \end{align*}
then all solutions of the transport equation \eqref{eq:add-14a} satisfying \eqref{u-at-infty-a} can be represented as
\begin{align}
    \label{eq:add-92}
    u(x,t) = e^{-d\theta(t)}u_\infty\big(e^{-\theta(t)}(x-\gamma(t))\big).
\end{align}
This can easily be checked by direct inspection. Thus, we conclude that
\begin{align*}
    %\label{eq:add-18f}
    \theta(t) = \log\left(1-\frac{1}{c_2}e^{-\lambda t}\right)
    \qquad \mbox{and} \qquad
    \gamma(t)= -\frac{c_1}{c_2}e^{-\lambda t}.
  \end{align*}
Plugging these formulas into \eqref{eq:add-92}, we get
$$
u(x,t) = (1+(s_0-1)e^{-\lambda t})^{-d}u_\infty\left(\frac{x-r_0e^{-\lambda t}\textgoth{e}}{1+(s_0-1)e^{-\lambda t}}\right)
$$
with $s_0=1-\frac{1}{c_2}$ and $r_0\textgoth{e}=-\frac{c_1}{c_2}$.
\end{proof}

\begin{remark}\
\begin{enumerate}
\item[(a)] We note that the instantaneous positivity on the solution is known in the case of quadratic confinement \cite{BV06} combined with a time-dependent rescaling. This positivity property, although expected for general confinement potentials, is not present in the literature, so we do not assume it a priori in the statement.
\item[(b)] For the case $s_0=1$ in Lemma \ref{lem:9.2}, the solution $u$ is simply translated in the direction of $\textgoth{e}$, so parallel directions do not influence each other. This explains that the result is weaker than for $s_0\neq 1$, and thus, the potential $V(x)$ is only identified in the $\textgoth{e}$ direction.
\end{enumerate}
\end{remark}

\subsection{Summary of sharpness results}
In the following result we summarize the above discussion. Part (a) is the analog of \cite[Theorem 3.11]{AMTU} for linear Fokker-Planck equations with the logarithmic entropy.

\begin{theorem}\label{th:sharp}
Let $d\ge1$, and let $V\in C^2(\R^d)$ satisfy {\bf (HV1)},  {\bf (HV2)} with $\lambda>0$ as the largest possible constant, and assume {\bf (HPV)}. Let $P$ satisfy {\bf (HP1)},  {\bf (HP2)} as well as the McCann condition \eqref{mccann}, and $P'>0$ on $\R^+$. Let mass $M>0$ be fixed, and define $u_\infty^M:=\max_{x\in\R^d} u_\infty(x)$. Then, saturation of the convex Sobolev inequality \eqref{mainconclusion3} with $g=g_1$ holds if and only if the following two conditions (i) and (ii) hold -- under the (simplifying) assumption that the support of $\bar u$ is connected. Depending on $P$ these two conditions are formulated separately:
\begin{enumerate}
    \item[(a)] If $\beta(u)>\frac{d-1}{d}\alpha(u)$ on $(0,U]$ with some $U\ge u_\infty^M$:
    \begin{enumerate}
        \item[(i)] $\bar u(x)=u_\infty(x-r_0\textgoth{e})$ for some unit vector $\textgoth{e}\in\R^d$ and some $r_0>0$.
        \item[(ii)] $V$ satisfies \eqref{eq:Vopt}.
    \end{enumerate}
    \item[(b)] If $\beta(u)=\frac{d-1}{d}\alpha(u)$ on $[0,U]$ for some $U\ge u_\infty^M$ (and hence $d\ge2$):
     Either the conditions (a-i) and (a-ii) hold, or the following two (with $s_0\ne1$):
    \begin{enumerate}
        \item[(i)] $\bar u(x)=s_0^{-d}u_\infty\left(\frac{x-r_0\textgoth{e}}{s_0}\right)$ with some unit vector $\textgoth{e}\in\R^d$, an $r_0\ge0$, and some $s_0\ge \Big(\frac{u_\infty^M}{U}\Big)^{1/d}$. 
        \item[(ii)] $V(x)=\frac{\lambda}{2}|x|^2$ holds on $\R^d$.
    \end{enumerate}
\end{enumerate}
\end{theorem}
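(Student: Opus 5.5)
The proof assembles Lemmas \ref{lem:9.1} and \ref{lem:9.2} with the identity \eqref{error-int}, and argues the two directions separately.

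\textbf{Necessity.} Let $\bar u\ne u_\infty$ saturate \eqref{mainconclusion3} with $g=g_1$, and let $u(t)$ be the solution with $u(0)=\bar u$. By \eqref{error-int} and $r=r_1+r_2\ge0$, saturation forces $r(u(t))=0$ for a.e.\ $t>0$. Hence \eqref{sharp-cond1}--\eqref{sharp-cond3} hold on the interior of the support.

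In case (a) we need $u\le U$ along the trajectory, so that Lemma \ref{lem:9.1} applies. A posteriori, $u$ is a translate of $u_\infty$ and so satisfies $\sup u=u_\infty^M\le U$. A priori, we argue as follows. On the set where $u\le U$, the strict McCann condition gives $\xi$ affine. The discussion preceding Lemma \ref{lem:9.1} (connected positivity set, analyticity/continuation of $\xi$) then propagates the affine form \eqref{xi-form1} to the whole positivity set. The transport argument of Lemma \ref{lem:9.1} then gives $u(t)=u_\infty(\cdot-\gamma_t)$, so $u\le u_\infty^M\le U$. Lemma \ref{lem:9.1} now yields (a-i) with $\bar u=u_0$, and (a-ii).

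In case (b), $P=p_0r^{1-1/d}$ on $[0,U]$, and we apply Lemma \ref{lem:9.2} together with its subsequent remark. If $s_0=1$ we recover (a-i), (a-ii). Otherwise we get (b-i) and $V=\frac\lambda2|x|^2$. The constraint on $s_0$ comes from requiring $\max\bar u=s_0^{-d}u_\infty^M\le U$, so that $\bar u$ stays in the range where the power law holds. Along the flow the maximum $(1+(s_0-1)e^{-\lambda t})^{-d}u_\infty^M$ is monotone between $\max\bar u$ and $u_\infty^M$, so it stays $\le U$.

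\textbf{Sufficiency.} Given $V$ as in (a-ii) and $\bar u$ as in (a-i), we define $u(x,t)=u_\infty(x-e^{-\lambda t}r_0\textgoth{e})$. Using \eqref{eq:Vopt}, we check that $\xi=\phi(u)+V-\bar C=V(x)-V(x-\gamma_t)$ equals $\lambda e^{-\lambda t}r_0\,x\cdot\textgoth{e}$ plus a function of $t$. So $\xi$ is affine in $x$ with gradient $\sigma_1=\lambda\gamma_t$ parallel to $\textgoth{e}$. Then $\partial_tu=\sigma_1\cdot\nabla u=\nabla\cdot(u\nabla\xi)$, so $u$ is the solution. Moreover $r_2=0$ since $\nabla^2\xi=0$, and $r_1=0$ since $\textgoth{e}$ is a $\lambda$-eigenvector of $\nabla^2V$ on the swept set $B$. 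Then \eqref{error-int} gives equality.

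In case (b) with $V=\frac\lambda2|x|^2$, the explicit formula \eqref{eq:uopt2} solves the equation via \eqref{eq:add-92} with the $\sigma_j$ from the proof of Lemma \ref{lem:9.2}. There $\nabla^2\xi=\sigma_2I_d$ and $\nabla^2V-\lambda I_d=0$, so $r_1=0$, and $r_2=0$ by \eqref{R=0}, since $\beta=\frac{d-1}d\alpha$ and $\theta=1$. So again $r\equiv0$ and equality holds.

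\textbf{Main obstacle.} The delicate step is the a priori bound $u\le U$ needed to invoke Lemma \ref{lem:9.1} in case (a). The strict McCann condition is only assumed on $(0,U]$, and points where $u>U$ might lie in $\Omega_2$. I would first try the continuation argument sketched above: use $\Omega_1$ to force the affine form of $\xi$ on the whole connected positivity set. Failing that, I would simply restrict to bounded data with $\sup\bar u\le U$, as the statement implicitly does through the choice of $U$.
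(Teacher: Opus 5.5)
Your proposal is correct and follows the paper's proof. For sufficiency, you verify that the explicit translated (resp.\ scaled and translated) trajectory has $\xi$ affine (resp.\ $\nabla^2\xi=\sigma_2 I_d$), so that $r\equiv0$ in \eqref{error-int}. For necessity, you use $r\equiv0$ together with the discussion of touching components of $\Omega_1^t$ and $\Omega_2^t$ to obtain the affine or isotropic-quadratic form of $\xi$ on the whole positivity set, and then invoke Lemmas \ref{lem:9.1} and \ref{lem:9.2}. Your remark that the bound $u\le U$ needed for Lemma \ref{lem:9.1} follows a posteriori from the transport representation makes explicit a point the paper leaves implicit.
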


\begin{proof}
Let $u=u(t)$ be the trajectory with the initial condition $u(0)=\bar u\in L^1_M$, with $\bar u$ appearing in \eqref{mainconclusion3}. 

\noindent
\underline{Case (a):} For the backward direction assume that the conditions (a-i) and (a-ii) hold. Then the proof of Lemma \ref{lem:9.1} shows that $u(x,t)$ takes the form \eqref{eq:uopt} and $\xi(x,t)$ is of the form \eqref{xi-form1} with the vector $\sigma_1=\nabla \xi$ being aligned with the $\textgoth{e}$-direction. Then \eqref{sharp-cond1} and \eqref{sharp-cond2} hold on $(\supp u)^\circ$ and $r(u)=0$ on $\R^+_t$ follows. Hence \eqref{mainconclusion3} saturates.

Next we consider the forward direction. As established before, saturation implies that the conditions \eqref{sharp-cond1}, \eqref{sharp-cond2}, and \eqref{sharp-cond3} hold on $(\supp u)^\circ$, or equivalently the conditions \eqref{sharp-cond1} as well as  \eqref{xi-form1} on $\Omega_1$,  and  \eqref{xi-form2} on $\Omega_2$. The above discussion on the case when $\Omega_1^t$ touches $\Omega_2^t$ implies that \eqref{xi-form1} must actually hold on all of $(\supp u)^\circ$. Then Lemma \ref{lem:9.1} gives the result.\\
    
\noindent
\underline{Case (b):} The backward direction is analogous to before: Here $u(x,t)$ takes the form \eqref{eq:uopt2} and $\xi(x,t)$ is of the form \eqref{xi-form2}. Then \eqref{sharp-cond1} and \eqref{sharp-cond3} hold on $(\supp u)^\circ$ and $r(u)=0$ on $\R^+_t$ follows. Hence \eqref{mainconclusion3} saturates.

For the forward direction we recall from the proof of Lemma \ref{lem:9.2} that either $\sigma_2\equiv0$ on $(\supp u)^\circ$ and hence condition \eqref{xi-form1} holds or $\sigma_2\ne0$ everywhere on $(\supp u)^\circ$  and hence condition \eqref{xi-form2} holds. In the former case Lemma \ref{lem:9.1} gives the result, in the latter case Lemma \ref{lem:9.2}.
\end{proof}

Saturation of the convex Sobolev inequality in the previous theorem has to be understood for functions that satisfy the assumptions of Lemma \ref{lem:9.1} and \ref{lem:9.2} and the validity of the derivation of the identity \eqref{finform4}.

In the following example we shall verify for the porous medium equations that the ``optimal functions'' $\bar u$ from Theorem \ref{th:sharp} indeed saturate the functional inequality \eqref{mainconclusion3} for the standard entropy. This verification is particularly useful for degenerate diffusions since the derivation of \eqref{finform4} was based on formal computations.

\begin{example}
    Let $P$ satisfy the conditions of Theorem \ref{th:sharp}. 
\begin{enumerate}
    \item[(a)]     
    Assume that (possibly after a rotation) the coordinates are such that $V(x)=\frac{\lambda}{2}x_1^2 +W(x_2,...,x_d)$.  
    Moreover let $\bar u(x):=u_\infty(x-\bar x)$ for some $\bar x=r_0 e_1$, with $r_0\in\R$ and $e_1\in\R^d$ the first unit vector. 
From \eqref{stat1a} and \eqref{def-varphi} we have
$$
  u_\infty(x)=\varphi(-V(x))=\bar \phi^{-1}(\bar C-V(x)).
$$  
For $\bar u$ and $x\in \supp(\bar u)$ we have 
$$
  \xi:=\phi(u_\infty(x-\bar x))+V(x)-\bar C
  =-V(x-\bar x)+V(x) = \frac{\lambda}{2}(2r_0x_1-r_0^2),
$$
and hence $\nabla \xi=\lambda \bar x$.
From Example \ref{sam1} and Proposition \ref{key} (in the non-degenerate case) or Example \ref{Ex:canon-entropy}  (in the degenerate case) as well as \eqref{tderent2} we have
\begin{eqnarray*}
  \mathcal H_{g_1}(\bar u|u_\infty) &=& 
  \int_{\R^d} \big(\Phi(\bar u)-\Phi(\uinf)
+ V\,(\bar u-\uinf)\big)(x)\;dx \\
  &=&  \int_{\R^d} [V(x+\bar x)-V(x)] u_\infty(x)\,dx =\frac{\lambda}{2} r_0^2 M\,, \\
  J_{g_1}(\bar u)&=&\int_{\bar u>0} \bar u|\nabla\xi|^2\,dx=\lambda^2 r_0^2 M\,,
\end{eqnarray*}
where we used for $\mathcal H_{g_1}$ the  
symmetry $u_\infty(-x_1,x_2,...,x_d)=u_\infty(x)$ 
(inherited from $V(x)$). This verifies equality in \eqref{mainconclusion3}.
    \item[(b)] Assume that $V(x)=\frac{\lambda}{2} |x|^2$. Then the equality follows by direct inspection.
\end{enumerate}
Summing up we note that $r(u(t))\equiv 0$ in both cases. Hence, the functional inequality \eqref{mainconclusion3} saturates.
\end{example}

%%%%%%%%%%%%%%%%%%%%%%%%%%%%%%%%%%%%%%%%%%%%%%%%%%%%%%%%%%%%%%%%%%%%%%%%%%%%%%%%
\appendix

\section{A result on the shape of potentials}\label{sec-appendix-aux}
\setcounter{equation}{0}

\begin{lemma}\label{lem-parabolic}
Let a function $V\in C^2(\R^d)$ satisfy {\bf (HV2)}. For one fixed $x_0\in\R^d$, let $V$ also satisfy
\begin{equation}\label{line-cond}
  \textgoth{e}\cdot\nabla^2V(x_0+s\textgoth{e})\cdot \textgoth{e} = \lambda
\quad \forall\,\textgoth{e}\in \SSS^{d-1},\:\forall\, s\in\R,
\end{equation}
where $\SSS^{d-1}$ denotes the $d-1$ dimensional sphere. Then, 
\begin{equation*} %\label{V-quadr}
  V(x)=\frac\lambda2 |x|^2 \quad \mbox{on } \Rd.
\end{equation*}
\end{lemma}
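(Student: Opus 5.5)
The strategy is to show first that $W(x):=V(x)-\frac\lambda2|x-x_0|^2$ is affine on $\R^d$, and then to pin down the affine part and $x_0$ using the normalisation in {\bf (HV2)}.

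Fix $\textgoth{e}\in\SSS^{d-1}$ and set $h_{\textgoth e}(s):=V(x_0+s\textgoth{e})$. Condition \eqref{line-cond} says exactly that $h_{\textgoth e}''(s)=\lambda$ for all $s\in\R$. Integrating twice gives
\[ V(x_0+s\textgoth{e})=V(x_0)+s\,\nabla V(x_0)\cdot\textgoth{e}+\frac\lambda2 s^2. \]
Here we use that $h_{\textgoth e}(0)=V(x_0)$ and $h_{\textgoth e}'(0)=\nabla V(x_0)\cdot\textgoth e$, which holds because $V\in C^2$. Every $x\in\R^d$ can be written as $x=x_0+s\textgoth e$ with $s=|x-x_0|$. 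Hence
\[ V(x)=V(x_0)+\nabla V(x_0)\cdot(x-x_0)+\frac\lambda2|x-x_0|^2\quad\text{on }\R^d. \]
So $V$ is a quadratic polynomial with Hessian $\lambda I_d$. This consistency across directions is the only delicate point. It is handled because the first-order coefficient is the directional derivative of one fixed $C^1$ function at the common point $x_0$, and is therefore linear in $\textgoth e$.

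Next, complete the square. Write $V(x)=\frac\lambda2|x-x_1|^2+c$ with
\[ x_1=x_0-\lambda^{-1}\nabla V(x_0),\qquad c=V(x_0)-\frac{1}{2\lambda}|\nabla V(x_0)|^2. \]
This $V$ is strictly convex and attains its unique minimum $c$ at $x_1$. By {\bf (HV2)}, $\min V=0$ and the minimum is attained at $x=0$. Therefore $x_1=0$ and $c=0$, which gives $V(x)=\frac\lambda2|x|^2$.

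I expect no real obstacle. The only step needing care is the gluing along different lines, which the formula above resolves, together with invoking the normalisation from {\bf (HV2)} correctly. Note that $x_0$ itself need not be the origin; only the minimiser is fixed by the normalisation.
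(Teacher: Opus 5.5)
Your proof is correct and follows essentially the same route as the paper. The paper introduces $W(x)=V(x)-\frac\lambda2|x|^2+(\lambda x_0-\nabla V(x_0))\cdot(x-x_0)$, notes that $\nabla W(x_0)=0$ and that $W$ has vanishing second directional derivatives along every ray through $x_0$, and concludes that $W$ is constant; this is your twice-integration along rays, glued at the common point $x_0$ via $\nabla V(x_0)$. Your completing-the-square step simply spells out the paper's one-line remark that \textbf{(HV2)} forces the remaining affine part and constant to vanish.
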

\begin{proof}
Due to assumption \eqref{line-cond}, the function
$$
  W(x):= V(x) - \frac\lambda2 |x|^2 + (\lambda x_0- \nabla V(x_0) )\cdot (x -x_0)
$$
satisfies 
$$
  \textgoth{e} \cdot \nabla^2 W(x_0 + s\textgoth{e}) \cdot \textgoth{e} = 0 \quad \forall\,\textgoth{e}\in \SSS^{d-1},\:\forall\, s\in\R,
  \quad \mbox{and } \nabla W(x_0)=0. 
$$
Integration along each ray through $x_0$ shows that  $W$ is constant along each ray. As they all intersect, continuity implies this is a unique constant on all $\R^d$.

It thus follows that $V(x) = \frac\lambda2 |x|^2 + a\cdot x + b$. Condition {\bf (HV2)} implies $a=b=0.$
\end{proof}

\section{List of symbols}\label{sec-appendix-symbols}
\begin{itemize}
\item $P:\setR^+\to\setR^+$ is a given increasing pressure function. $P$ is \emph{degenerate} if $\phi(0+)>-\infty$,
  and is \emph{non-degenerate} if $\phi(0+)=-\infty$\,.
\item $V:\Rd\to\R$ is a given $\lambda$-convex potential, with $\lambda>0$.
\item We define auxiliary functions $\alpha,\beta,\phi,\Phi:\setR^+\to\setR$ by
  \begin{align*}
    \alpha(r) = \frac{P(r)}r, \quad \beta(r)=P'(r), \quad
    \phi(u) = \int_1^u\frac{P'(r)}r\,dr,
    \quad
    \Phi(u) = \int_0^u\phi(r)\,dr.
  \end{align*}
\item $\overline{\phi}^{-1}$ is the generalized inverse of $\phi$,
  \begin{align*}
    \overline{\phi}^{-1}(z) =
    \begin{cases}
      0 & \text{if $z\le\phi(0+)$}, \\
      \infty & \text{if $z\ge\phi(\infty)$}, \\
      \phi^{-1}(z) & \text{otherwise}.
    \end{cases}
  \end{align*}
\item $\varphi(z) := \overline{\phi}^{-1}(z+\tilde C)$, with
  \begin{align*}
    \tilde C =
    \begin{cases}
      \bar C & \text{if $P$ degenerate}, \\
      0 & \text{if $P$ non-degenerate}.
    \end{cases}
  \end{align*}
\item The steady state is
  \begin{align*}
    u_\infty(x) = \overline{\phi}^{-1}(\bar C-V(x))=\varphi(-V(x)),
  \end{align*}
  where $\bar C$ is such that $u_\infty$ is of prescribed mass $M>0$. 
\item $\xi$ and $\tilde\xi$ are pressure variables,
  \begin{align*}
    \xi & = \phi(u)-\phi(u_\infty) \quad \text{if $P$ is non-degenerate}, \\
    \tilde\xi &= \phi(u)+V \quad \text{in general}.
  \end{align*}
  Note that $\tilde\xi - \xi = V-\phi(u_\infty) = \bar C$ if $P$ is non-degenerate,
  and that the evolution equation becomes
  \begin{align*}
    \partial_tu = \nabla\cdot(u\nabla\tilde\xi) .
  \end{align*}
\item We define
  \begin{align*}
    \mu := \beta-\frac{d-1}d \alpha,\quad
    \kappa := 1+\frac{\beta-\alpha}{8\alpha}\frac{d(\beta-\alpha)-8\alpha}{d(\beta-\alpha)+\alpha},
  \end{align*}
  and note that $\mu>0$ implies
  \begin{align*}
    \kappa \ge \bar\kappa_d = 1-\frac1{2d}.
  \end{align*}
\item Relative entropies under consideration are
  \begin{align*}
    \mathcal{H}_g(u|u_\infty) &= \int G(u(x),u_\infty(x))\,dx
                                \quad \text{if $P$ is non-degenerate}, \\
    \tilde{\mathcal{H}}_g(u|u_\infty) &= \int\tilde G(u(x),u_\infty(x);x)\,dx
                                \quad \text{if $P$ is non-degenerate},    
  \end{align*}
  where 
  \begin{align*}
    G(a,b) &= \int_b^a g(\phi(s)-\phi(b)) \, ds,\\
    \tilde G(a,b;x) &= \int_b^a g(\phi(s)+V(x)-\bar C) \, ds. 
  \end{align*}  
\item The different versions of entropy generators are related as
  \begin{align*}
    g' = e^f, \quad y = f'.
  \end{align*}
\end{itemize}

%================== ACKNOWLEDGEMENTS =======================================
\bigskip
\subsection*{Acknowledgements}
AA was partially supported by the Austrian Science Fund (FWF) project \href{https://doi.org/10.55776/F65}{10.55776/F65}.
JAC was supported by the Advanced Grant Nonlocal-CPD (Nonlocal PDEs for Complex Particle Dynamics: Phase Transitions, Patterns and Synchronization) of the European Research Council Executive Agency (ERC) under the European Union Horizon 2020 research and innovation programme (grant agreement No. 883363), and partially supported by the EPSRC EP/V051121/1 and by the ``Maria de Maeztu'' Excellence Unit IMAG, reference CEX2020-001105-M, funded by MCIN\slash AEI \slash10.13039\slash501100011033\slash. We acknowledge the hospitality of Isaac Newton Institute during the program on kinetic equations, CIRM-Marseille (on ``Aggregation-Diffusion Equations \& Collective Behavior'') and of ESI, Vienna where part of this work was done.

%================== BIBLIOGRAPHY ======================================
\bibliography{genentropies}
\bibliographystyle{abbrv}

\end{document}